\UseRawInputEncoding
\documentclass[12pt,letterpaper]{amsart}

\usepackage{graphicx}
\usepackage[utf8]{inputenc}
\usepackage{amssymb}
\usepackage{epstopdf}
\usepackage{amsmath}
\usepackage{xcolor}
\usepackage{mathtools}
\usepackage{color}
\usepackage{mathrsfs}
\usepackage{empheq}

\newtheorem{theorem}{Theorem}[section]
\newtheorem{lemma}[theorem]{Lemma}
\newtheorem{prop}[theorem]{Proposition}

\newtheorem{example}[theorem]{Example}

\numberwithin{equation}{section}

\def\sech{\mathrm{sech}}
\def\ifl{\iffalse}

\def\R{{\mathbb R}}

\def\bc{\begin{center}}

\def\ec{\end{center}}
\def\be{\begin{equation}}
\def\ee{\end{equation}}
\def\ba{\begin{array}}
\def\ea{\end{array}}
\def\bea{\begin{eqnarray}}
\def\eea{\end{eqnarray}}
\def\beaa{\begin{eqnarray*}}
\def\eeaa{\end{eqnarray*}}

\def\lb{\label}

\def\x#1{(\ref{#1})}

\begin{document}
\title[Large-Amplitude steady solitary water waves ]
{Large-Amplitude steady solitary water waves with general vorticity}

\author[]{Jifeng Chu$^{1}$ \quad Zihao Wang$^2$ \quad Yong Zhang $^3$}

\address{$^1$  School of Mathematical, Hangzhou Normal University, Hangzhou 311121, China}
\address{$^2$  Department of Mathematics, Shanghai Normal University, Shanghai 200234, China}
\address{$^3$  School of Mathematical Sciences, Jiangsu University, Zhenjiang 212013, China}

\email{jifengchu@126.com (J. Chu)}
\email{wongtzuhao@163.com (Z. Wang)}
\email{18842629891@163.com (Y. Zhang)}

\thanks{Jifeng Chu was supported by the National Natural Science Foundation of China (Grant
No. 12571168), the Science and Technology Innovation Plan of Shanghai (Grant No. 23JC1403200) and
Zhejiang Provincial Natural Science Foundation of China (No. LZ26A010006). Yong Zhang was supported by National Natural Science Foundation of China (No. 12301133), the Postdoctoral Science Foundation of China (No. 2023M741441, No. 2024T170353) and Jiangsu Education Department (No.
23KJB110007).}

\subjclass[2000]{Primary 35Q35; 76B15; 76B45.}
\keywords{Solitary water waves; Large-amplitude; General vorticity; Overhanging
waves}

\begin{abstract}
In this paper, we study two-dimensional steady solitary gravity waves propagating along the surface of a finite-depth fluid. In particular, we present a novel setting that accommodates both general vorticity and overhanging wave profiles. By conformal mappings, we reformulate the problem into an overdetermined elliptic
system coupled with an elliptic boundary value problem in a fixed strip domain. To avoid imposing extra
constraints on vorticity function, we further reformulate the problem into the form of an abstract operator.
Based on the formulations, the existence of small-amplitude solitary waves is proved by the
center manifold reduction method, while the large-amplitude waves are obtained based on
the analytic global bifurcation theorem.
\end{abstract}

\maketitle

\section{Introduction}

The study of water waves has a long history and the rigorous mathematical analysis can be traced back
to the 18th century. In particular, Stokes \cite{sto} studied the water wave problem in infinite
depth and conjectured that there is a branch of nontrivial periodic waves bifurcating from a half-space and
limiting to an extrme. Gerstner \cite{ger} found an explicit solution in the Lagrangian setting for deep water gravity waves.
Although the exact solutions obtained by Gerstner are applicable to rotational water waves, most results at that time are
related to irrotational settings, which correspond to that the curl of the velocity field vanishes. For
 irrotational water waves, a big advantage is that we can reformulate the problem into a problem for a harmonic
 function in a fixed domain. In \cite{friedrichs}, Friedrichs and Hyers proved the existence of solitary waves
 in exponentially weighted spaces, based on an application of inverse function theorem. Mielke \cite{Mielke}
 developed a spatial dynamical approach, which involves looking for a center manifold for quasilinear partial differential
 equations, to the irrotational steady waves. For large-amplitude irrotational
 water waves, the first rigorous result was proved by Amick and Toland in \cite{amickarma}, in which they used a
 conformal mapping approach and constructed a connected set of solitary waves by taking the limit of a sequence of
 approximate problems with better compactness properties. The limiting wave is singular and has a stagnation point
 with $120^{\rm o}$  angle at the crest \cite{amickacta}, which thus solved the famous Stokes conjecture.

Compared with irrotational water waves, rotational waves are more commonly observed in nature. Rotational water
waves are the waves with vorticity, which is the hallmark of underlying non-uniform currents. The study of rotational
waves is relatively later than that of irrotational settings. In \cite{ter1961}, Ter-Krikorov used the methods
of \cite{friedrichs} to prove the first existence result of small-amplitude waves. Groves and Wahl\'{e}n used a
centre-manifold technique in \cite{gw} to reduce the system of solitary water waves with an arbitrary distribution
of vorticity to a locally equivalent Hamiltonian system with one degree of freedom and then obtained the existence
result. Hur proved the symmetry property for rotational solitary waves in \cite{hurmrl}. For large-amplitude
rotational waves, the first rigorous mathematical result was provided by Constantin and Strauss \cite{constantincpam04},
in which they used an appropriate hodograph change of variable devised by Dubeil-Jacotin \cite{dj} to transform the problem into an equivalent form of a quasilinear
elliptic equation in a fixed rectangular domain, and then used the method of bifurcation theory \cite{cr}
to construct a global connected set of periodic solutions. Wheeler in \cite{milesjma} provided the first construction of exact solitary waves of large amplitude with an arbitrary distribution of vorticity, and the results can be regarded as a generalization of the classical result of Amick and Toland. We refer to the monograph \cite{c-book} and the papers \cite{bbmm, dz, hcpde} for more results on rotational periodic water waves.  Results on rotational solitary water waves in the absence of critical layers can be found in \cite{chenpoincare, hur08, dan, wjfm, warma}.
Note that all results mentioned above exclude the possibility of critical layers and overhanging wave profiles.

During the last decade, one of the more interesting and more challenging problems in the theory of steady water waves is to prove the existence of waves with critical layers where the horizontal fluid velocity in the moving frame vanishes, or the existence of overhanging waves which means that the free surface is not the graph of a function. Such waves occur even in the case of non-zero constant vorticity. In \cite{wahlenjde09}, Wahl\'{e}n constructed for the first time small-amplitude periodic waves with critical layers for the flows with non-constant vorticity,  but the waves are not overhanging. A breakthrough work was done by Constantin, Strauss and Varvaruca \cite{constantinacta}, in which
they constructed large-amplitude periodic waves with non-zero constant vorticity and the waves can be overturning. Hur and Wheeler obtained in \cite{hw} the overturning periodic waves for the flows with constant vorticity and weak gravity, by perturbing a family of explicit solutions with zero gravity. However, the study of solitary overhanging waves is more difficult than that of the periodic counterparts, because the unboundedness of the surface leads to compactness issues. Based on the center manifold reduction method and analytic global bifurcation theory, Haziot and Wheeler in \cite{susannaarma} constructed continuous curves of large-amplitude solutions for flows with constant vorticity, which allows for overhanging waves profiles. The proof relies on a novel reformulation of the problem as an elliptic system for two scalar functions in a fixed domain, one describing the conformal map of the fluid region and the other the flow beneath the wave. The first construction of overhanging solitary gravity water waves having the approximate form of a disk joined to a strip by a thin neck was proved in \cite{dpmw} very recently by assuming that the vorticity is constant and the dimensionless gravitational constant $g>0$ is sufficiently small. Note that the method developed in \cite{dpmw} can be applied to other overdetermined elliptic problems.
See \cite{av, cvw, csv, constantinarma11,  hcmp, kkl, kl, Var, wg, ww} for more results on rotational water waves in the presence of critical layers or overhanging waves.

In this paper, we continue this topic and aim to construct the existence of large-amplitude solitary gravity waves which can be overhanging. Different from \cite{susannaarma}, we consider the settings allowing for general vorticity distributions.
To achieve the goal, we need to overcome two principal difficulties: the lack of compactness stemming from the unbounded domain, and the added complexity arising from a general vorticity distribution. To resolve these issues, we adopt a new functional formulation and apply a singular bifurcation argument introduced by Chen, Walsh and Wheeler in \cite{chenpoincare,chennonlinearity}.
The rest part of this paper is organized as follows. In section 2, we present the governing equations of our problem and reformulate the problem into an equivalent system in term of conformal variables. Then we further reformulate the problem into an operator form which is strongly related to an elliptic system. In section 3, we present the linearization of the operator along  the trivial solution and prove the functional properties of the linearized operator. In section 4, we construct the small-amplitude solitary water waves by a recently derived center manifold reduction theorem. In section 5, we give the velocity in the conformal variables, and prove that the solutions satisfy a monotonicity property which is referred to as the nodal property. In section 6, we construct an invariant known as the flow force, and apply its invariance to show the non-existence of bore solution, which implies the compactness properties.  Finally in section 7, we construct large-amplitude solution and prove our main result.

To finish this section, we introduce some notations used in this paper. Let $\Omega$ be a connected, open, possibly unbounded subset of $\R^n$. We say that $\varphi\in C_c^\infty(\bar\Omega)$ if $\varphi\in C^\infty$ and the support of $\varphi$ is a compact subset of $\bar\Omega$. For $\beta\in(0,1]$ and $k\in\mathbb{N}$ we denote by $C^{k+\beta}(\bar\Omega)$ the space of functions whose partial derivatives up to order $k$ are H\"older continuous (Lipschitz continuous if $\beta=1$) in $\bar\Omega$ with exponent $\beta$. We say that $u_n\to u$ in $C_{\mathrm{loc}}^{k+\beta}(\bar\Omega)$ if $\|\varphi(u_n-u)\|_{C^{k+\beta}(\Omega)}\to0$ for all $\varphi\in C_{c}^\infty(\bar\Omega)$.
Let $C_b^{k+\beta}(\bar\Omega)$ be the Banach space of functions $u\in C^{k+\beta}(\bar\Omega)$ such that $\|u\|_{C^{k+\beta}(\Omega)}<\infty$.
When $\Omega$ is unbounded, we denote $C_0^k(\bar\Omega)\subset C_b^k(\bar\Omega)$ the closed subspace of functions whose partial derivatives up to order $k$ vanish uniformly at infinity, that is
\begin{equation*}
    C_0^k(\bar\Omega):=\bigg\{u\in C_b^k(\bar\Omega):\lim_{r\to\infty}\sup_{|x|=r}|D^j u(x)|=0\quad\mathrm{for~}0\leq j\leq k\bigg\}.
\end{equation*}
We also define the weighted H\"older spaces allowing for exponential growth in the $x_1$ direction. Let $C_{\tau}^{k+\beta}(\bar\Omega)$ be the space of $u\in C^{k+\beta}(\bar\Omega)$ with $\|u\|_{C_{\tau}^{k+\beta}(\Omega)}<\infty$, where
\begin{equation}\label{wd}
    \|u\|_{C_{\tau}^{k+\beta}}:=\sum_{|\alpha|\leq k}\|\sech(\tau x_1)\partial^\alpha u\|_{C^0(\Omega)}+\sum_{|\alpha|=k}\|\sech(\tau x_1)|\partial^\alpha u|_\beta\|_{C^0(\Omega)},
\end{equation}
with $\tau>0$ and $|u|_\beta$ being a local H\"older semi-norm
$|u|_{\beta}(x):=\sup_{|y|<1}\frac{|u(x+y)-u(x)|}{|y|^\beta}.$

\section{Governing equations and reformulations}

In this section, we present the governing equations of our problem and give the equivalent reformulations in term of conformal mapping.
\subsection{Governing equations} Let $\Omega$ be an unbounded fluid domain in the $(X,Y)$-plane, bounded below by the flat bed $Y=0$ and above by the free surface $\mathcal{S}$. We assume that the fluid  domain $\Omega$ approaches to a horizontal asymptotic depth $d$  as $|X|\to \infty$, and
$\Omega$ is the image under a conformal mapping $X+iY=\xi(x,y)+i\eta(x,y)$ of the strip
\begin{equation*}
    \mathcal{R}:=\{(x,y)\in\R^2:0<y<d\}.
\end{equation*}
The upper and lower boundaries of $\mathcal{R}$ are denoted by
\begin{equation*}
    \Gamma:=\{(x,y)\in\R^2:y=d\}\quad\mathrm{and}\quad B:=\{(x,y)\in\R^2:y=0\}.
\end{equation*}
In this frame, we are working with the following two-dimensional Eulerian governing equations in terms of the velocity $(U,V)$ and pressure $P$
\begin{subequations}
	\label{euler}
	\begin{align}
		U_X+V_Y&=0 &&\mathrm{in~}\Omega\label{euler1},\\
       UU_X+VU_Y&=-P_X&&\mathrm{in~}\Omega \label{euler2},\\
      UV_X+VV_Y&=-P_Y-g&&\mathrm{in~}\Omega,\label{euler3}
		\end{align}
\end{subequations}
coupled with the kinematic boundary conditions
\begin{align}
  U\eta_x-V\xi_y&=0\quad&&\mathrm{on~}\mathcal{S}\tag{\ref{euler}{d}}\label{euler4},\\
V&=0 &&\mathrm{on~}Y=0\tag{\ref{euler}{e}}\label{euler5},
 \end{align}
and the dynamic boundary
\begin{equation}
    P=P_{\mathrm{atm}}\quad\mathrm{on~}\mathcal{S},\tag{\ref{euler}{f}}\label{euler6}
\end{equation}
here $P_{\mathrm{atm}}$ is the  constant atmospheric pressure and $g>0$ is the gravity acceleration.
We focus on the solitary waves, which are solutions to \eqref{euler} satisfying the asymptotic conditions
 \begin{equation}
 \eta(x,d)\to d,\quad\xi(x,d)\to\pm\infty\quad\mathrm{as~}x\to\pm\infty,\tag{\ref{euler}{g}}\label{euler7}
 \end{equation}
and
\begin{equation}
    V(X,Y)\to 0,\quad U(X,Y)\to U(Y)\quad\mathrm{as~}X\to\pm\infty\tag{\ref{euler}{h}}\label{euler8}
\end{equation}
uniformly in $Y$, where $U(Y)$ is the shear flow at $X=\pm\infty$.
For the convenience of our analysis, we introduce a one-parameter family of shear flows
\begin{equation*}
    U(Y)=FU^*(Y),
\end{equation*}
where $F>0$ is the Froude number to denote a dimensionless wave speed, given by
\begin{equation*}
F^2=\frac{U^2(d)}{gd},
\end{equation*}
and $U^*$ is the fixed function, normalized so that
\begin{equation*}
    U^*(d)=\sqrt{gd}.
\end{equation*}
%Thus, we have
%\begin{equation}\label{e2.2}
 %   U(d)=FU^*(d)=F\sqrt{gd}>0.
%\end{equation}
Due to the condition \eqref{euler1}, we can introduce the \emph{stream function} defined as
\begin{equation*}
  \Psi_X=-V,\quad\Psi_Y=U.
\end{equation*}
The kinematic boundary conditions  \eqref{euler4}-\eqref{euler5} imply that $\Psi$ is constant on the free surface and on the flat bed. Thus we can normalize $\Psi$ such that
\begin{equation*}
  \Psi=m \quad \mathrm{on} \quad \mathcal{S}\quad\mathrm{and}\quad\Psi=0 \quad \mathrm{on} \quad Y=0,
\end{equation*}
where $m$ is some constant to denote the relative mass flux.

Let us define the vorticity function as
$\mathrm{curl}(U,V)=U_Y-V_X$.
Taking the curl of \eqref{euler2} and \eqref{euler3}, we can see that there exists a vorticity function such that
\begin{equation*}
 \Delta\Psi=-\gamma(\Psi).
\end{equation*}
Moreover, it follows from the Bernoulli's law  that
\begin{equation*}
  P+\frac{1}{2}|\nabla\Psi|^2+gY+G(\Psi)=\mathrm{const}\quad\mathrm{in~}\Omega,
\end{equation*}
where $G(s)=\int_0^s\gamma(t)dt.$  To sum up, in the $(X,Y)-$plane, $\Psi$ satisfies the following overdetermined problem
\begin{subequations}
	\label{streamori}
	\begin{align}
		\Delta\Psi&=-\gamma(\Psi) &&\mathrm{in~}\Omega\label{streamori1},\\
        \Psi&=m &&\mathrm{on~}\mathcal{S}\label{streamori2},\\
        \Psi&=0 &&\mathrm{on~}Y=0\label{streamori3},\\
        |\nabla\Psi|^2+2g(Y-d)&=Q &&\mathrm{on~}\mathcal{S},
		\end{align}
\end{subequations}
where $Q$ is the total head.
A solitary wave solution to \eqref{streamori} must satisfy the asymptotic conditions
\begin{equation*}
    \Psi_X\to 0,\quad \Psi_Y\to FU^*(Y)\quad\mathrm{as~}X\to\pm\infty.
\end{equation*}
The total head $Q$ and the mass flux $m$ can be given in terms of the Froude number by
\begin{equation*}
    Q=F^2gd\quad\mathrm{and}\quad m=F\int_0^d U^*(Y)dY.
    \end{equation*}
The vorticity function $\gamma$ is given in  terms of $U^*$ and $F$ by
\begin{equation*}
    \gamma(s)=-FU^*_Y(Y)\quad\mathrm{with} \quad s=F\int_0^Y U^*(t)dt.
\end{equation*}
In the following, we require that the general vorticity function $\gamma$ satisfies
\begin{equation}\label{unidirectional}
    U^*(Y)>0 \quad \text{for} \quad 0<Y<d.
\end{equation}
By introducing the following dimensionless variables,
\begin{equation*}
    (\tilde{X},\tilde{Y})=\frac{1}{d}(X,Y),\quad \tilde{\Psi}(\tilde{X},\tilde{Y})=\frac{1}{m}\Psi(X,Y),\quad\tilde{\gamma}(\tilde{\Psi})=\frac{d^2}{m}\gamma(\Psi),
\end{equation*}
we obtain the final equations of the problem (for convenience, we drop the tilde)
\begin{subequations}\label{streamfin}
\begin{align}
\Delta\Psi&=-\gamma(\Psi) &&\mathrm{in~}\Omega\label{streamfin1},\\
\Psi&=1 &&\mathrm{on~}\mathcal{S}\label{streamfin2},\\
\Psi&=0 &&\mathrm{on~}Y=0\label{streamfin3},\\
|\nabla\Psi|^2+2\alpha(Y-1)&=\mu &&\mathrm{on~}\mathcal{S},
\end{align}
\end{subequations}
where $\alpha$ is the rescaled gravity and $\mu$ is the rescaled Bernoulli constant, given as
\begin{equation}\label{alphasca}
\alpha=\frac{gd^3}{m^2}=\frac{gd^3}{F^2}\bigg(\int_0^d U^*dY\bigg)^{-2},\quad \mu=\frac{Qd^2}{m^2}=F^2\alpha.
\end{equation}
The asymptotic conditions become
\begin{equation*}
\Psi_{X}\to 0,\quad\Psi_{ Y}\to\frac{dU^*( Yd)}{\int_0^d U^*( Y) dY}\quad \mathrm{as~}X\to\pm\infty,
\end{equation*}

\subsection{The laminar solution}
Let us first consider the laminar solutions of \eqref{streamfin}, which depend merely on the $Y$-variable with the flat surface $Y=1$.
Obviously the laminar solutions solve the following boundary value problem
\begin{subequations}\label{laminar}
\begin{align}
\psi_{yy}&=-\gamma(\psi),\label{laminar1}\\
	\psi(1)&=1,\\
	\psi(0)&=0.
	\end{align}
\end{subequations}
Note that problem \x{laminar} does not admit an explicit solution for general vorticity functions.  Let $\psi(y):=\psi_{\mathrm{triv}}(y)$
be the unique solution of the following initial problem
\begin{equation} \label{trivial}
\begin{cases}
		\psi_{yy}=-\gamma(\psi),\\
	\psi(1)=1,\\
	\psi_y(1)=\frac{dU^*(d)}{\int_0^dU^*(y)dy}.
	\end{cases}
\end{equation}
Moreover, it is easy to verify that
\begin{subequations}\label{lameq}
    \begin{align}
    &\psi_{\mathrm{triv},y}(y)=\frac{dU^*(dy)}{\int_0^dU^*(y)dy},\label{lameq1}\\
    &\psi_{\mathrm{triv}}(0)=0,\quad \psi_{\mathrm{triv}}(1)=1,\label{lameq2}\\
    &\psi^2_{\mathrm{triv},y}(1)=\mu. \label{lameq3}
    \end{align}
\end{subequations}
From \eqref{unidirectional} it follows that
\begin{equation} \label{nocritical}
\psi_{\mathrm{triv},y}(y)=\frac{dU^*(dy)}{\int_0^dU^*(y)dy}>0 \quad \text{for} \quad 0<y<1,
\end{equation}
which implies that the laminar flow is unidirectional.

\subsection{Conformal mapping}
From now on, we use the dimensionless notations but the same labels as before to denote the domain
and its boundaries, that is,
\[\mathcal{R}:=\{(x,y)\in\R^2:0<y<1\},\quad \Gamma:=\{(x,y)\in\R^2:y=1\},\quad B:=\{(x,y)\in\R^2:y=0\}.\]
Now we define a conformal map $H$ from the domain $\mathcal{R}$ to $\Omega$ as
\[H(x+iy)=\xi(x,y)+i\eta(x,y),\]  and we normalize it by requiring that
\begin{equation*}
    \xi_x+i\eta_x\to 1\quad\mathrm{as~}x\to\infty.
\end{equation*}
The free surface now can be parameterized as
\begin{equation}\label{freesurface}
    \mathcal{S}=\{(\xi(x,1),\eta(x,1):x\in\R\}.
\end{equation}
We rewrite the stream function in conformal variables as
\begin{equation}
    \label{stripstream}
    \psi(x,y)=\Psi(\xi(x,y),\eta(x,y)).
\end{equation}
By the chain principle and the Cauchy-Riemann equation, equations \eqref{streamfin} can be rewritten as the following forms in conformal variables
 \begin{subequations}\label{stripstreameq}
\begin{align}
\Delta\psi&=-\gamma(\psi)|\nabla\eta|^2\quad&&\mathrm{in~}\mathcal{R},\label{stripstreameq1}\\
\psi&=1\quad&&\mathrm{on~}\Gamma,\label{stripstreameq2}\\
\psi&=0&&\mathrm{on~}B,\label{stripstreameq3}\\
|\psi_y|^2&=(\mu-2\alpha(\eta-1))|\nabla\eta|^2&&\mathrm{on~}\Gamma,\label{stripstreameq4}\\
\Delta\eta &=0\quad&&\mathrm{in~}\mathcal{R}\tag{\ref{stripstreameq}{e}}\label{harmoniceta},\\
\eta &=0 \quad&&\mathrm{on~}B\tag{\ref{stripstreameq}{f}}\label{etalow},
\end{align} 	
 \end{subequations}
with the asymptotical conditions
\begin{equation}
\lim_{x\to\pm\infty}\psi(x,y)=\psi_{\mathrm{triv}}(y)\quad\mathrm{and}\quad\lim_{x\to\pm\infty}\eta(x,y):=\eta_{\mathrm{triv}}(y)=y.\tag{\ref{stripstreameq}{g}}\label{streamasy}
\end{equation}
Without loss of generality, we can introduce a new function $w \in C_b^{2+\beta}(\mathbb{R})$ depending only the variable $x$, such that
\begin{equation}
   \eta=1+w(x) \quad \mathrm{on~}\Gamma \tag{\ref{stripstreameq}{h}}\label{zy1}.
\end{equation}

Up to now, we have reformulated the original problem (\ref{streamfin}) to the system \x{stripstreameq}, which consists of an overdetermined elliptic problem (\ref{stripstreameq1})-(\ref{stripstreameq4}) for $\psi$, an elliptic boundary value problem (\ref{harmoniceta})-(\ref{etalow}) with (\ref{zy1}) for $\eta$, and the asymptotic condition (\ref{streamasy}). To make further analysis and from the physical viewpoint, we assume that
the regularity condition
\begin{equation}
    \eta,\psi\in C_b^{3+\beta}(\overline{\mathcal{R}}),\tag{\ref{stripstreameq}{i}}
\end{equation}
and the symmetry condition
\begin{equation}
 \eta\mathrm{~and~}\psi\mathrm{~are~even~in~}x.\tag{\ref{stripstreameq}{j}}
\end{equation}
Moreover, we assume that
 \begin{equation}\label{stripk}
 \inf_{\mathcal{R}} (\mu-2\alpha(\eta-1))|\nabla\eta|^2>0,\tag{\ref{stripstreameq}{k}}
\end{equation}
which has been introduced and explained in \cite{susannaarma}. That is, the first factor not vanishing implies that we cannot have a wave of greatest
height, and the second one being nonzero ensures that $\eta$ defines the imaginary
part of a conformal mapping. Moreover, one can check that (\ref{stripstreameq}{k}) holds whenever there are no stagnation points on the
free surface.

\subsection{Equivalent systems}
Because we are considering the solitary waves, we cannot adopt the reformulation presented in \cite{susannaarma} and \cite{wahlenduke}.
Now we try to develop a new formulation. To this end, let us first introduce the following Banach spaces
\begin{equation*}
    \begin{aligned}
    &\mathcal{X}:=\{(\phi,w)\in(C_b^{3+\beta}(\overline{\mathcal{R}})\cap C_0^2(\overline{\mathcal{R}}))\times (C_b^{2+\beta}(\Gamma)\cap C_0^1(\Gamma)):\phi=0\quad\mathrm{on~}\Gamma\cup B\} ,\\
&\mathcal{Y}:=(C_b^{3+\beta}{(\mathcal{R}})\cap C_0^2(\overline{\mathcal{R}}))\times(C_b^{2+\beta}(\Gamma)\cap C_0^1(\Gamma)),
    \end{aligned}
\end{equation*}
and
\begin{equation*}\label{spaces}\begin{aligned}
    &\mathcal{X}_b:=\{(\phi,w)\in C_b^{3+\beta}(\overline{\mathcal{R}})\times C_b^{2+\beta}(\Gamma):\phi=0\quad\mathrm{on~}\Gamma\cup B\} ,\\
&\mathcal{Y}_b:=C_b^{3+\beta}{(\mathcal{R}})\times C_b^{2+\beta}(\Gamma).
\end{aligned}\end{equation*}
Moreover, we define the open set
\begin{equation} \label{zy2}
\mathcal{U}:=\{(\phi,w,\alpha)\in\mathcal{X}\times \R:\alpha<\alpha_{\mathrm{cr}}, ~~\sigma(w,\alpha):=\inf_{\mathcal{R}} (\mu-2\alpha w)|\nabla\eta|^2>0\},
\end{equation}
where $\alpha_{\mathrm{cr}}$ is defined as in \eqref{criticalvalue}.
Set
 \begin{equation*}
\psi(x,y)=\psi_{\mathrm {triv}}(y)+\phi(x,y),\quad\eta(x,y)=\eta_{\mathrm {triv}}(y)+\zeta(x,y).
\end{equation*}
From the asymptotic conditions \eqref{streamasy}, it follows that the perturbation functions $\phi(x,y)$ and $\zeta(x,y)$
have compact support; that is, for each fixed $y\in [0,1]$, there holds that
$$
\lim_{x\rightarrow\infty}\phi(x,y)=0, \quad \lim_{x\rightarrow\infty}\zeta(x,y)=0.
$$
Then the governing equations (\ref{harmoniceta})-(\ref{zy1}) yields the following system for $\zeta(x, y)$:
\begin{equation}\label{zy5}\begin{cases}
        \Delta\zeta=0\quad&\mathrm{in~}\mathcal{R},\\
        \zeta=w &\mathrm{on~}\Gamma,\\
        \zeta=0 &\mathrm{on~}B.
\end{cases}\end{equation}
Let $\zeta = \zeta_{[w]}$ be the unique solution of (\ref{zy5}).
To avoid imposing extra constraints on the general vorticity function, we follow \cite{wahlenduke} and introduce an abstract operator $\mathcal{A}$ that depends on $(\phi, w, \alpha)$, so that $\mathcal{A}(\phi, w, \alpha)$ is the unique solution to
\begin{equation*}\begin{cases}
\Delta\mathcal{A}=-\gamma(\phi+\psi_{\mathrm{triv}})|(0,1)+\nabla\zeta_{[w]}|^2+\gamma(\psi_{\mathrm{triv}})\quad &\mathrm{in~} \mathcal{R},\\
    \mathcal{A}=0 &\mathrm{on~}\Gamma\cup B.
\end{cases}\end{equation*}
Obviously $\phi=\mathcal{A}(\phi,w,\alpha)$ is equivalent to the statement that
\[\psi=(\phi+\psi_\mathrm{triv})\circ H^{-1}\]
solves \eqref{stripstreameq1}-\eqref{stripstreameq3}.
The equation \eqref{stripstreameq4}  is transformed into
\begin{equation}\label{toplinear}
    \frac{\Big|(\partial_y\mathcal{A}(\phi,w,\alpha)+\partial_y\psi_{\mathrm{triv}})\big|_{\Gamma}\Big|^2}{2}=\left(\frac{\mu}{2}-\alpha w\right)\big|(0,1)+\nabla\zeta_{[w]}\big|_{\Gamma}\big|^2.
\end{equation}
To summarize up, we have rewritten the problem \eqref{stripstreameq} into its equivalent operator form $\mathcal{F}(\phi,w,\alpha)=0$, where $\mathcal{F}$ is given as
\begin{equation} \label{maine}
\mathcal{F}(\phi,w,\alpha)=(\mathcal{F}_1(\phi,w,\alpha),\mathcal{F}_2(\phi,w,\alpha)),
\end{equation}
where
\begin{equation}\label{maine1}\begin{cases}
\mathcal{F}_1(\phi,w,\alpha)=\phi-\mathcal{A}(\phi,w,\alpha),\\
\mathcal{F}_2(\phi,w,\alpha)=\frac{\Big|(\partial_y\mathcal{A}(\phi,w,\alpha)+\partial_y\psi_{\mathrm{triv}})\big|_{\Gamma}\Big|^2}{2}-\left(\frac{\mu}{2}-\alpha w\right)\big|(0,1)+\nabla\zeta_{[w]}\big|_{\Gamma}\big|^2.
\end{cases}\end{equation}

\section{Linearization and Functional Analytic Formulation}\label{linearization}

It follows from \eqref{maine}-\eqref{maine1} that the Fr\'echet derivative \[\partial_{(\phi,w)}\mathcal{F}(\phi,w,\alpha):=(\partial_{(\phi,w)}\mathcal{F}_1(\phi,w,\alpha),\partial_{(\phi,w)}\mathcal{F}_2(\phi,w,\alpha)):\mathcal{X} \to \mathcal{Y}\] is given by
\begin{subequations}\label{frechet1}\begin{align}
        \partial_{(\phi,w)}\mathcal{F}_{1}(\phi,w,\alpha)(\dot\phi,\dot w)=&\dot \phi-\mathcal{A}_\phi(\phi,w,\alpha)\dot\phi-\mathcal{A}_w(\phi,w,\alpha)\dot w,\label{frechet11}\\
        \partial_{(\phi,w)}\mathcal{F}_{2}(\phi,w,\alpha)(\dot\phi,\dot w)=&\bigg(\partial_y\mathcal{A}(\phi,w,\alpha)+\psi_{\mathrm{triv},y}(1)\bigg)\bigg(\mathrm{\partial}_y(\mathcal{A}_\phi(\phi,w,\alpha)\dot \phi+\mathcal{A}_w(\phi,w,\alpha)\dot w)\bigg)\label{frechet12}
        \\&+\alpha((0,1)+\nabla\zeta_{[w]})^2\dot w-(\mu-2\alpha w)\left\langle(0,1)+\nabla\zeta_{[w]},\nabla\zeta_{[\dot w]}\right\rangle\bigg|_\Gamma\notag,
\end{align}\end{subequations}
where $\langle\cdot,\cdot\rangle$ in \eqref{frechet12} is the standard inner product in $\R^2$. Besides, we use the norm in  $\mathcal{X}_b$ as
\[\|(\phi,w)\|_{\mathcal{X}_b}=\|\phi\|_{C_b^{3+\beta}(\overline{\mathcal{R}})}+\|w\|_{C_{b}^{2+\beta}(\Gamma)}.\]
It is easy to verify that $\mathcal{A}_w(\phi,w,\alpha)\dot w$ and $\mathcal{A}_\phi(\phi,w,\alpha)\dot \phi$ in \eqref{frechet1} are the unique solutions of
\begin{equation}\label{awphi}
\begin{cases}\Delta\mathcal{A}_w(\phi,w,\alpha)\dot w=-2\gamma(\phi+\psi_{\mathrm{triv}})\left\langle(0,1)+\nabla\zeta_{[w]}\big|_{\Gamma},\nabla\zeta_{[\dot w]}\big|_{\Gamma}\right\rangle\quad&\mathrm{in~}\mathcal{R},\\
        \mathcal{A}_w(\phi,w,\alpha)\dot w=0 &\mathrm{on~} \Gamma\cup B,
\end{cases}\end{equation}
and
\begin{equation}\label{aphiw}
\begin{cases}\Delta\mathcal{A}_\phi(\phi,w,\alpha)\dot\phi=-\gamma'(\phi+\psi_{\mathrm{triv}})\left((0,1)+\nabla\zeta_{[w]}\big|_{\Gamma}\right)^2\dot\phi\quad&\mathrm{in~}\mathcal{R},\\
        \mathcal{A}_\phi(\phi,w,\alpha)\dot\phi=0 &\mathrm{on~} \Gamma\cup B.
    \end{cases}\end{equation}
In particular, for $\phi=0$ and $w=0$, it follows from (\ref{awphi})-(\ref{aphiw}) that $\mathcal{A}_w^0\dot w$ and $\mathcal{A}_\phi^0\dot\phi$ are the unique solutions of
\begin{equation*}
\begin{cases}\Delta\mathcal{A}_w^0\dot w=-2\gamma(\psi_{\mathrm{triv}})\partial_y\zeta_{[\dot w]}\quad&\mathrm{in~}\mathcal{R},\\
        \mathcal{A}_w^0\dot w=0 &\mathrm{on~} \Gamma\cup B,
    \end{cases}\end{equation*}
and
\begin{equation*}
\begin{cases}\Delta\mathcal{A}_\phi^0\dot\phi=-\gamma'(\psi_{\mathrm{triv}})\dot\phi\quad&\mathrm{in~}\mathcal{R},\\
        \mathcal{A}_\phi^0\dot\phi=0 &\mathrm{on~} \Gamma\cup B.
    \end{cases}\end{equation*}
Obviously, it follows from \eqref{frechet1} that
\begin{equation}\label{frechet00}\begin{aligned}
 \partial_{(\phi,w)}\mathcal{F}_{1}(0,0,\alpha)(\dot\phi,\dot w)=&\dot \phi-\mathcal{A}_\phi^0\dot\phi-\mathcal{A}_w^0\dot w,\\
        \partial_{(\phi,w)}\mathcal{F}_{2}(0,0,\alpha)(\dot\phi,\dot w)=&\psi_{\mathrm{triv},y}(1)\left(\mathrm{\partial}_y(\mathcal{A}_\phi^0\dot \phi+\mathcal{A}_w^0\dot w)\big|_{\Gamma}\right)
        +\alpha\dot w-\mu\partial_y\zeta_{[\dot w]}\big|_{\Gamma}.
\end{aligned}\end{equation}

Before studying the functional properties of the linearized operator $\mathcal{F}_{(\phi,w)}$, let us introduce an isomorphism to facilitate the subsequent analysis. Such an isomorphism is the so-called $\mathcal{T}$-isomorphism. See \cite{alinhac, matssiam, lannesjams}.
To do this, we introduce the spaces
\begin{equation*}
    \tilde{\mathcal{X}}:=\{\theta\in C^{3+\beta}(\overline{\mathcal{R}})\cap C_0^2(\mathcal{R}):\theta|_\Gamma\in C^{2+\beta}(\Gamma),
        ~\theta=0~\mathrm{on~}B\}
\end{equation*}
and
\begin{equation*}
    \tilde{\mathcal{X}}_b:=\{\theta\in C_b^{3+\beta}(\overline{\mathcal{R}}):\theta|_\Gamma\in C_b^{2+\beta}(\Gamma),
        ~\theta=0~\mathrm{on~}B\}
\end{equation*}
with the norm
\begin{equation*}
	\|\theta\|_{\tilde{\mathcal{X}}_b}=\|\theta\|_{C_b^{3+\beta}(\overline{\mathcal{R}})}+\|\theta|_{\Gamma}\|_{C_b^{2+\beta}(\Gamma)}.
\end{equation*}
where the $\theta$ is called the ``good unknown" in \cite{alinhac}.

Define the linear isomorphism   $\mathcal{T}:\tilde{\mathcal{X}} \to\mathcal{X}$ by
\begin{equation}\label{tiso}
\mathcal{T}\theta=\left(\theta-\frac{\psi_{\mathrm{triv},y}}{\psi_{\mathrm{triv},y}(1)}
\zeta_{[\theta|_\Gamma]},-\frac{\theta|_\Gamma}{\psi_{\mathrm{triv},y}(1)}\right),
\end{equation}
which has the inverse given by
\begin{equation}\label{tinverse}[\mathcal{T}]^{-1}(\phi,w)=\phi-\psi_{\mathrm{triv},y}\zeta_{[w]}.\end{equation}

\begin{lemma}\label{identitylemma}For all $\theta\in\tilde{\mathcal{X}},$ it holds that
\[\mathcal{A}^0_w(\theta|_\Gamma)+\mathcal{A}^0_\phi(\psi_{\mathrm{triv},y}\zeta_{[\theta|_\Gamma]})
=(\psi_{\mathrm{triv},y}-\psi_{\mathrm{triv},y}(1))\zeta_{[\theta|_\Gamma]}.\]
\end{lemma}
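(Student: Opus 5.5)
Set $w=\theta|_\Gamma$ and $\zeta=\zeta_{[w]}$, so that $\zeta$ is harmonic in $\mathcal{R}$ with $\zeta=w$ on $\Gamma$ and $\zeta=0$ on $B$. Write $\Phi:=(\psi_{\mathrm{triv},y}-\psi_{\mathrm{triv},y}(1))\zeta$. The left-hand side of the identity is the unique bounded solution of a Dirichlet problem with zero boundary data on $\Gamma\cup B$. By uniqueness (which the paper uses when it says the solutions of \eqref{awphi}--\eqref{aphiw} are unique), it is therefore enough to check two things: that $\Phi$ vanishes on $\Gamma\cup B$, and that $\Delta\Phi$ equals the sum of the two source terms.

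\textbf{Boundary values.} On $\Gamma$ ($y=1$) the factor $\psi_{\mathrm{triv},y}(1)-\psi_{\mathrm{triv},y}(1)$ vanishes. On $B$ we have $\zeta=0$. So $\Phi=0$ on $\Gamma\cup B$. We also need $\Phi$ to be bounded, and in the $C_0^2$ setting to decay. This holds because $\zeta$ inherits boundedness and decay from $w$, and $\psi_{\mathrm{triv},y}$ is a smooth function of $y$ alone.

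\textbf{Laplacian.} Since $\zeta$ is harmonic and the coefficient depends only on $y$,
\[
\Delta\Phi=\psi_{\mathrm{triv},yyy}\,\zeta+2\psi_{\mathrm{triv},yy}\,\zeta_y .
\]
Differentiating \eqref{laminar1} gives $\psi_{\mathrm{triv},yy}=-\gamma(\psi_{\mathrm{triv}})$ and $\psi_{\mathrm{triv},yyy}=-\gamma'(\psi_{\mathrm{triv}})\psi_{\mathrm{triv},y}$. Hence
\[
\Delta\Phi=-\gamma'(\psi_{\mathrm{triv}})\psi_{\mathrm{triv},y}\zeta-2\gamma(\psi_{\mathrm{triv}})\partial_y\zeta .
\]
The source for $\mathcal{A}^0_\phi(\psi_{\mathrm{triv},y}\zeta)$ is $-\gamma'(\psi_{\mathrm{triv}})\psi_{\mathrm{triv},y}\zeta$. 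The source for $\mathcal{A}^0_w w$ is $-2\gamma(\psi_{\mathrm{triv}})\partial_y\zeta_{[w]}$. Their sum is exactly $\Delta\Phi$. Uniqueness of the Dirichlet problem in the strip among bounded functions (maximum principle or Phragmén–Lindelöf) then gives the identity.

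\textbf{Main point of care.} The main obstacle is justifying that $\psi_{\mathrm{triv}}$ is smooth enough, i.e. $C^3$, for $\psi_{\mathrm{triv},yyy}$ to make sense. This needs $\gamma\in C^1$, which is already implicit in the definition of $\mathcal{A}_\phi$ through $\gamma'$. The uniqueness class must also be one in which both sides lie, namely bounded $C^{2}$ functions. Beyond that the argument is a direct computation.
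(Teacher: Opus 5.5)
Your proposal is correct and is essentially the paper's proof. The paper sets $f$ equal to the left-hand side minus $(\psi_{\mathrm{triv},y}-\psi_{\mathrm{triv},y}(1))\zeta_{[\theta|_\Gamma]}$, uses $\psi_{\mathrm{triv},yy}=-\gamma(\psi_{\mathrm{triv}})$ and $\psi_{\mathrm{triv},yyy}=-\gamma'(\psi_{\mathrm{triv}})\psi_{\mathrm{triv},y}$ to get $\Delta f=0$, observes that $f=0$ on $\Gamma\cup B$, and concludes $f\equiv 0$ from Theorem \ref{strong}. Your remark that uniqueness on the unbounded strip needs boundedness, or the decay inherited from $\theta\in\tilde{\mathcal{X}}$, is a small but worthwhile sharpening of the paper's bare appeal to the maximum principle.
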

\begin{proof}
Let $f:=\mathcal{A}^0_w(\theta|_\Gamma)+\mathcal{A}^0_\phi(\psi_{\mathrm{triv},y}\zeta_{[\theta|_\Gamma]})-(\psi_{\mathrm{triv},y}-\psi_{\mathrm{triv},y}(1))\zeta_{[\theta|_\Gamma]}$. Then
\begin{equation*}\begin{aligned}
    \Delta f&=-2\gamma(\psi_{\mathrm{triv}})\partial_{y}\zeta_{[\theta|_\Gamma]}-\gamma'(\psi_{\mathrm{triv}})\psi_{\mathrm{triv},y}\zeta_{[\theta|_\Gamma]}-\psi_{\mathrm{triv},yyy}\zeta_{[\theta|_\Gamma]}-2
    \psi_{\mathrm{triv},yy}\partial_{y}\zeta_{[\theta|_\Gamma]}\\
    &=-2\gamma(\psi_{\mathrm{triv}})\partial_{y}\zeta_{[\theta|_\Gamma]}-\gamma'(\psi_{\mathrm{triv}})\psi_{\mathrm{triv},y}\zeta_{[\theta|_\Gamma]}+\gamma'(\psi_{\mathrm{triv}})\psi_{\mathrm{triv},y}\zeta_{[\theta|_\Gamma]}
    +2\gamma(\psi_{\mathrm{triv}})\partial_{y}\zeta_{[\theta|_\Gamma]}\\
    &=0
\end{aligned}\end{equation*}
  in $\mathcal{R}$ and vanishes at the top and bottom. By Theorem \ref{strong}, we can deduce that $f\equiv0$.
\end{proof}
Now we introduce the operators $\mathcal{L}(\phi,w,\alpha)$ and $\mathcal{L}^0$, which are defined by
\[\mathcal{L}(\phi,w,\alpha)=\mathcal{F}_{(\phi,w)}(\phi,w,\alpha)\circ \mathcal{T},\]
and
\[\mathcal{L}^0:=\mathcal{L}(0,0,\alpha)=\mathcal{F}_{(\phi,w)}(0
,0,\alpha)\circ\mathcal{T}=(\mathcal{L}_1^0,\mathcal{L}_2^0),\]
It follows from (\ref{frechet00}), (\ref{tiso}) and Lemma \ref{identitylemma} that
\begin{align*}
    &\mathcal{L}^0_1\theta=\theta-(\mathcal{A}_\phi^0\theta+\zeta_{[\theta|_\Gamma]}),\\
    &\mathcal{L}^0_2\theta=\left(\psi_{\mathrm{triv},y}(1)\partial_y\mathcal{A}_\phi^0\theta-\left(\frac{\alpha}{\psi_{\mathrm{triv},y}(1)}-\gamma(1)\right)\theta+\frac{\mu}{\psi_{\mathrm{triv},y}(1)}\partial_y\zeta_{[\theta|_\Gamma]}\right)\bigg|_{\Gamma}.
    \end{align*}
Note that $\mathcal{L}_1^0\theta$ is the unique solution of the problem
\begin{equation}\label{L1}\begin{cases}
\Delta[\mathcal{L}_1^0\theta]=\Delta\theta+\gamma'(\psi_{\mathrm{triv}})\theta\quad&\mathrm{in~}\mathcal{R},\\
        \mathcal{L}_1^0\theta=0 &\mathrm{on~}\Gamma\cup B,
\end{cases}\end{equation}
and $\mathcal{L}_2^0\theta$ can be rewritten as
\begin{equation}\label{L2}
    \mathcal{L}_2^0\theta=\left(\psi_{\mathrm{triv},y}(1)\partial_y(\theta-\mathcal{L}_1^0\theta-\zeta_{[\theta|_\Gamma]})-
    \left(\frac{\alpha}{\psi_{\mathrm{triv},y}(1)}-\gamma(1)\right)\theta+\frac{\mu}{\psi_{\mathrm{triv},y}(1)}\partial_y\zeta_{[\theta|_\Gamma]}\right)\bigg|_{\Gamma}.
\end{equation}
To study the kernel of $\mathcal{F}$, it sufficies to study the kernel of $\mathcal{L}^0$.
Besides, $\mathcal{L}^0\theta=0$ if and only if $\theta$ satisfies
\begin{equation*}\begin{cases}
        \Delta\theta+\gamma'(\psi_{\mathrm{triv}})\theta=0 \quad\mathrm{in~}\mathcal{R},\\
       \left( \psi_{\mathrm{triv},y}(1)\partial_y\theta-\left(\frac{\alpha}{\psi_{\mathrm{triv},y}(1)}-\gamma(1)\right)\theta+
       \left(\frac{\mu}{\psi_{\mathrm{triv},y}(1)}-\psi_{\mathrm{triv},y}(1)\right)\partial_y\zeta_{[\theta|_\Gamma]}\right)\bigg|_{\Gamma}=0.
\end{cases}\end{equation*}
It follows from (\ref{lameq3}) that
\begin{equation*}\begin{aligned}
		 \frac{\mu}{\psi_{\mathrm{triv},y}(1)}-\psi_{\mathrm{triv},y}(1)&=\frac{\mu-\psi_{\mathrm{triv},y}^2(1)}{\psi_{\mathrm{triv},y}(1)}=0.
\end{aligned}\end{equation*}
Thus we have deduced the following boundary value problem
\begin{equation}\label{kernelpde}\begin{cases}
        \Delta\theta+\gamma'(\psi_{\mathrm{triv}})\theta=0 \quad&\mathrm{in~}\mathcal{R},\\
      \partial_y\theta-\tilde\alpha\theta=0,&\mathrm{on~}\Gamma,\\
        \theta=0,&\mathrm{on~} B,
\end{cases}\end{equation}
where \[\tilde\alpha=-\frac{\gamma(1)}{\psi_{\mathrm{triv},y}(1)}+\frac{\alpha}{\psi_{\mathrm{triv},y}^2(1)}.\]

\begin{lemma}
For $(\phi,w,\alpha)\in\mathcal{U}$, the linearized operator $\partial_{(\phi,w)}\mathcal{F}(\phi,w,\alpha):\mathcal{X}_b\to \mathcal{Y}_b$ and the operator $\mathcal{L}(\phi,w,\alpha):\tilde{\mathcal{X}}_b\to \mathcal{Y}_b$ have the following estimates
   \begin{equation}
   	\label{schauderphiw}
   	\|(\dot\phi,\dot w)\|_{\mathcal{X}_b}\leq C\bigg(\|\partial_{(\phi,w)}\mathcal{F}(\phi,w,\alpha)(\dot \phi,\dot w)\|_{\mathcal{Y}_b}+\|\mathcal{A}_\phi(\phi,w,\alpha)\dot\phi+\mathcal{A}_w(\phi,w,\alpha)\dot w\|_{C_b^{0}(\overline{\mathcal{R}})}\bigg)
   \end{equation}
   and
\begin{equation}\label{schauder}
    \|\theta\|_{\tilde{\mathcal{X}}_b}\leq C\bigg(\|\mathcal{L}(\phi,w,\alpha)\theta\|_{\mathcal{Y}_b}+\|(\mathcal{A}_\phi(\phi,w,\alpha)\circ\mathcal{T})\theta+(\mathcal{A}_w(\phi,w,\alpha)\circ\mathcal{T})\theta\|_{C^0_b(\mathcal{R})}\bigg),
\end{equation}
where the constant $C$ relies on $\alpha$, the norm of $\phi, w$, the minor constant $\sigma(w,\alpha)$ and the norm of vorticity function.
\end{lemma}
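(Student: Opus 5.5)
We derive \eqref{schauderphiw} from Schauder estimates for the elliptic problems defining $\mathcal{A}_\phi$ and $\mathcal{A}_w$, together with an oblique-derivative estimate for the boundary condition carried by $\mathcal{F}_2$. Then we obtain \eqref{schauder} by composing with the bounded isomorphism $\mathcal{T}$.

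Set $(f_1,f_2):=\partial_{(\phi,w)}\mathcal{F}(\phi,w,\alpha)(\dot\phi,\dot w)$ and $\Phi:=\mathcal{A}_\phi(\phi,w,\alpha)\dot\phi+\mathcal{A}_w(\phi,w,\alpha)\dot w$, so that $\dot\phi=f_1+\Phi$. By \eqref{awphi}--\eqref{aphiw}, $\Phi$ vanishes on $\Gamma\cup B$ and satisfies
\[
\Delta\Phi=-\gamma'(\phi+\psi_{\mathrm{triv}})\,|(0,1)+\nabla\zeta_{[w]}|^2\,(f_1+\Phi)-2\gamma(\phi+\psi_{\mathrm{triv}})\,\langle(0,1)+\nabla\zeta_{[w]},\nabla\zeta_{[\dot w]}\rangle .
\]
Since the strip has uniformly smooth boundary, uniform (translation-invariant) Schauder estimates give
\[
\|\Phi\|_{C_b^{3+\beta}}\le C\big(\|f_1\|_{C_b^{1+\beta}}+\|\nabla\zeta_{[\dot w]}\|_{C_b^{1+\beta}}+\|\Phi\|_{C_b^0}\big),
\]
where the lower-order term $\Phi$ on the right has been interpolated away. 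For the Dirichlet problem \eqref{zy5} on the strip, $\|\zeta_{[\dot w]}\|_{C_b^{2+\beta}}\le C\|\dot w\|_{C_b^{2+\beta}}$, so $\|\nabla\zeta_{[\dot w]}\|_{C_b^{1+\beta}}\le C\|\dot w\|_{C_b^{2+\beta}}$. It therefore remains to control $\dot w$.

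The second component is the main obstacle. From \eqref{frechet12}, $f_2$ is a boundary expression of the form $a\,\dot w-b\,\langle e,\nabla\zeta_{[\dot w]}\rangle|_\Gamma+c\,\partial_y\Phi|_\Gamma$, with
\[
a=\alpha|(0,1)+\nabla\zeta_{[w]}|^2,\qquad b=\mu-2\alpha w,\qquad e=(0,1)+\nabla\zeta_{[w]}.
\]
On $\Gamma$ we have $\zeta_{[\dot w]}=\dot w$, so $\partial_x\zeta_{[\dot w]}=\dot w'$. Hence $\langle e,\nabla\zeta_{[\dot w]}\rangle=e_1\dot w'+e_2\,\partial_y\zeta_{[\dot w]}$, which is an oblique derivative of the harmonic function $\zeta_{[\dot w]}$. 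Its coefficient $b\,|e|^2$-type factor (really $b$ times the non-tangential component of $e$) is bounded below by $\sigma(w,\alpha)>0$ via the definition of $\mathcal{U}$, and $e_2=\eta_y=\xi_x$.

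I would rewrite this as an oblique boundary condition for $\zeta_{[\dot w]}$: $b\,\langle e,\nabla\zeta_{[\dot w]}\rangle-a\,\zeta_{[\dot w]}=c\,\partial_y\Phi-f_2$ on $\Gamma$, with $\zeta_{[\dot w]}=0$ on $B$. The vector field $e$ is non-tangential because $e_2$ stays away from zero on $\Gamma$. This follows from $|e|^2>0$ together with the conformal structure; if that is not directly available, I would use the full gradient $|\nabla\eta|^2\ge\sigma/\|\mu-2\alpha w\|_\infty$ and argue via the Dirichlet-to-Neumann operator.

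The Schauder estimate for this oblique-derivative problem then gives
\[
\|\zeta_{[\dot w]}\|_{C_b^{2+\beta}}\le C\big(\|f_2\|_{C_b^{1+\beta}}+\|\partial_y\Phi\|_{C_b^{1+\beta}(\Gamma)}+\|\zeta_{[\dot w]}\|_{C_b^0}\big).
\]
The only remaining loss is regularity: $f_2$ lies only in $C^{2+\beta}$ while $\dot w$ must be controlled in $C^{2+\beta}$. To handle this, I would apply the estimate one level higher, using $\Phi\in C^{3+\beta}$, to get $\zeta_{[\dot w]}\in C^{3+\beta}$ near $\Gamma$. Then I would interpolate $\|\partial_y\Phi\|_{C^{1+\beta}}\le\epsilon\|\Phi\|_{C^{3+\beta}}+C_\epsilon\|\Phi\|_{C^0}$. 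Combining this with the first Schauder bound and absorbing, the remaining low-order terms $\|\zeta_{[\dot w]}\|_{C^0}\le\|\dot w\|_{C^0}$ are bounded through $f_2$ and $\Phi$ by the algebraic identity for $\dot w$. This yields \eqref{schauderphiw}.

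Finally, for \eqref{schauder}, set $(\dot\phi,\dot w)=\mathcal{T}\theta$. By \eqref{tiso}--\eqref{tinverse}, $\|\theta\|_{\tilde{\mathcal{X}}_b}\le C\|\mathcal{T}\theta\|_{\mathcal{X}_b}$, because $\psi_{\mathrm{triv},y}$ is smooth and $\zeta_{[w]}$ is bounded by $w$. Applying \eqref{schauderphiw} to $\mathcal{T}\theta$ then gives \eqref{schauder} directly.
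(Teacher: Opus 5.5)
There is a genuine gap in your last step, where you remove the leftover term $\|\zeta_{[\dot w]}\|_{C^0}\le\|\dot w\|_{C^0}$ from the oblique-derivative estimate. The only identity available is \eqref{frechet12} solved for $\dot w$,
\[
\alpha|e|^2\,\dot w=f_2-c\,\partial_y\Phi+b\,\langle e,\nabla\zeta_{[\dot w]}\rangle\quad\mathrm{on~}\Gamma .
\]
It reintroduces $\nabla\zeta_{[\dot w]}$, which is of \emph{higher} order in $\dot w$. Interpolating that term only returns $\|\dot w\|_{C^0}$ with a large constant, and in any case $\alpha|e|^2$ has no positive lower bound on $\mathcal{U}$.

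This step cannot be repaired. For $\gamma\equiv0$ one has $\mathcal{A}\equiv0$, hence $\Phi\equiv0$, so \eqref{schauderphiw} would make $\partial_{(\phi,w)}\mathcal{F}$ injective at every point of $\mathcal{U}$. But at a nontrivial solitary wave $(0,w^*,\alpha)\in\mathcal{U}$ (e.g.\ from Theorem~\ref{smallthm} with $\gamma=0$), translation invariance puts $(0,\partial_x w^*)\neq0$ in the kernel.

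What your scheme (Dirichlet Schauder for $\Phi$, oblique Schauder for $\zeta_{[\dot w]}$, then interpolation and absorption) genuinely proves is \eqref{schauderphiw} with an extra $\|\dot w\|_{C^0_b(\Gamma)}$ on the right. That is still a lower-order term of the kind the local-properness argument of Lemma~\ref{localproper} uses.

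The paper's proof does not close this hole either. It takes the coefficient $\partial_y\mathcal{A}+\psi_{\mathrm{triv},y}(1)$ of $\partial_y\Phi$ as the oblique direction, via \eqref{toplinear}, which holds only at solutions. It then drops the source terms of the $\Phi$-equation in \eqref{schauder1} and reads $\dot w$ off \eqref{frechet12} in \eqref{schauder01}. Your identification of $-b\langle e,\nabla\zeta_{[\dot w]}\rangle$ as the principal part of the $\mathcal{F}_2$ equation is the correct one.

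Two smaller corrections. First, $e_2=\eta_y=\xi_x$ is not bounded away from zero on $\Gamma$. It changes sign precisely for the overhanging profiles the paper wants to include, so strictly oblique (Gilbarg--Trudinger type) estimates are unavailable. Your fallback is the right fix: in two dimensions the boundary operator $b\,e\cdot\nabla$ for $\Delta$ satisfies the complementing condition whenever $b|e|>0$. Its symbol $b(ie_1\xi+e_2|\xi|)$ never vanishes for $\xi\neq0$, and $b|e|\ge\sigma(w,\alpha)/\sup_\Gamma|e|$, so the Agmon--Douglis--Nirenberg estimate applies.

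Second, your \emph{one level higher} step is neither justified nor needed. It is unjustified because $e=\nabla\eta|_\Gamma$ is only $C^{1+\beta}$ for $w\in C_b^{2+\beta}$. It is unnecessary because the oblique estimate with $C^{1+\beta}$ data already bounds $\dot w$ in $C^{2+\beta}$ through $\|f_2\|_{C^{1+\beta}}\le\|f_2\|_{C^{2+\beta}}$, so there is no regularity loss to compensate.
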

\begin{proof}
In the following proof, the constant $C$ will vary from line to line.
Due to \eqref{frechet11}, we have
   \begin{equation}\label{schauder0}
   	\|\dot\phi\|_{C_b^{3+\beta}(\mathcal{R})}\leq \|\partial_{(\phi,w)}\mathcal{F}_1(\phi,w,\alpha)(\dot\phi,\dot w)\|_{C_b^{3+\beta}(\mathcal{R})}+\|\mathcal{A}_\phi(\phi,w,\alpha)\dot\phi+\mathcal{A}_w(\phi,w,\alpha)\dot w\|_{C_b^{3+\beta}(\mathcal{R})}.
   \end{equation}
   Moreover, by \eqref{frechet12} and the definition  of the set $\mathcal{U}$, we have
     \begin{equation}\label{schauder01}
   \begin{aligned}
     \|\dot w\|_{C_{b}^{2+\beta}(\Gamma)}\leq &C(\sigma^{-1}(w,\alpha))\bigg(\bigg\|\partial_{(\phi,w)}\mathcal{F}_2(\phi,w,\alpha)(\dot\phi,\dot w)\bigg\|_{C_b^{2+\beta}(\Gamma)}\\&+\bigg\|\mathcal{A}_\phi(\phi,w,\alpha)\dot\phi+\mathcal{A}_w(\phi,w,\alpha)\dot w\bigg\|_{C_b^{3+\beta}(\Gamma)}\bigg).
     \end{aligned}
   \end{equation}

   By \eqref{awphi} and \eqref{aphiw}, we notice that $\mathcal{A}_\phi(\phi,w,\alpha)\dot\phi+\mathcal{A}_w(\phi,w,\alpha)\dot w$ satisfied the following linear elliptic PDE
   \begin{equation*}
   	\begin{cases}
   	\begin{split}
   	\Delta[\mathcal{A}_w(\phi,w,\alpha)\dot w+\mathcal{A}_\phi(\phi,w,\alpha)\dot\phi]=&-2\gamma(\phi+\psi_{\mathrm{triv}})\left\langle(0,1)+\nabla\zeta_{[w]}\big|_{\Gamma},\nabla\zeta_{[\dot w]}\big|_{\Gamma}\right\rangle\\&-\gamma'(\phi+\psi_{\mathrm{triv}})\left((0,1)+\nabla\zeta_{[w]}\big|_{\Gamma}\right)^2\dot\phi\quad\mathrm{in~}\mathcal{R},
   	\end{split}\\
   	\mathcal{A}_w(\phi,w,\alpha)\dot w+\mathcal{A}_\phi(\phi,w,\alpha)\dot\phi=0\quad\mathrm{on~}\Gamma\cup B,
   	\end{cases}
   \end{equation*}
   coupled with the boundary condition  \eqref{frechet12}. By  the \eqref{toplinear} and the definition of the set $\mathcal{U}$ in \eqref{zy2}, we can deduce that
   \[\bigg|\partial_y\mathcal{A}(\phi,w,\alpha)\bigg|_{\Gamma}+\psi_{\mathrm{triv},y}(1)\bigg|^2\geq\sigma(w,\alpha)>0,\]
   which implies the uniform obliqueness condition holds.    Applying the Schauder estimates in Lemma \ref{schauderlemma}  yields
\begin{equation}\label{schauder1}
         \|\mathcal{A}_w(\phi,w,\alpha)\dot w+\mathcal{A}_\phi(\phi,w,\alpha)\dot\phi\|_{C_b^{3+\beta}(\overline{\mathcal{R}})}\leq C( \|\mathcal{A}_w(\phi,w,\alpha)\dot w+\mathcal{A}_\phi(\phi,w,\alpha)\dot\phi\|_{C^{0}_b(\overline{\mathcal{R}})}),
\end{equation}
where the constant $C=C\left(\|\gamma\|_{C^{1+1}(\mathbb{R})},\|(\phi,w)\|_{\mathcal{X}_b}\right)$.
Then, we can deduce from \eqref{schauder01}-\eqref{schauder1} that
\begin{equation}\label{schauder3}\begin{aligned}
		\|\dot w\|_{C_b^{2+\beta}(\Gamma)}&\leq  C(\|\partial_{(\phi,w)}\mathcal{F}_2(\phi,w,\alpha)(\dot\phi,\dot w)\|_{C_{b}^{2+\beta}(\Gamma)}+\|\mathcal{A}_w(\phi,w,\alpha)\dot w+\mathcal{A}_\phi(\phi,w,\alpha)\dot\phi\|_{C^0_b(\overline{\mathcal{R}})}),
	\end{aligned}\end{equation}
where the constant $C=C\left(\sigma(w,\alpha),\|\gamma\|_{C^{1+1}(\mathbb{R})},\|(\phi,w)\|_{\mathcal{X}_b}\right)$.
Thus, the inequality \eqref{schauderphiw} is established via \eqref{schauder0} and \eqref{schauder3}.

Moreover, by \eqref{tinverse}, we can deduce that
\begin{equation*}
	\|\theta\|_{C^{3+\beta}(\mathcal{R})}\leq C(\|\dot\phi\|_{C^{3+\beta}(\mathcal{R})}),\quad \|\theta\|_{C^{2+\beta}(\Gamma)}\leq C(\|\dot w\|_{C^{2+\beta}(\Gamma)}),
\end{equation*}
for some constant $C>0$ depending only on $\|\gamma\|_{C^{1+1}(\mathbb{R})}$. Consequently, the inequality \eqref{schauder} holds.
\end{proof}

\begin{lemma}\label{trivialkernel}
Let $\alpha_{\mathrm{cr}}$ be given in {\rm\x{criticalvalue}}. Then the operator $\mathcal{L}^0:\tilde{\mathcal{X}}_b\to \mathcal{Y}_b$ has a trivial kernel for any $\alpha<\alpha_{\mathrm{cr}}$.
\end{lemma}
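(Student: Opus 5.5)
The plan is to reduce $\mathcal{L}^0\theta=0$ to the boundary value problem \eqref{kernelpde}, as derived just above the statement. Then we show that for $\alpha<\alpha_{\mathrm{cr}}$ the only bounded solution $\theta\in\tilde{\mathcal{X}}_b$ is $\theta\equiv0$. I expect $\alpha_{\mathrm{cr}}$ in \eqref{criticalvalue} to be defined as the value of $\alpha$ at which the Sturm--Liouville problem
\[
-\varphi''-\gamma'(\psi_{\mathrm{triv}})\varphi=\nu\varphi\ \text{on }(0,1),\qquad \varphi(0)=0,\qquad \varphi'(1)=\tilde\alpha\varphi(1)
\]
has lowest eigenvalue $\nu_1(\alpha)=0$. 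Equivalently, $\alpha_{\mathrm{cr}}$ is where the quadratic form
\[
Q_\alpha(\varphi)=\int_0^1\bigl(\varphi'^2-\gamma'(\psi_{\mathrm{triv}})\varphi^2\bigr)\,dy-\tilde\alpha\,\varphi(1)^2
\]
ceases to be positive definite on $H^1$ functions with $\varphi(0)=0$. Since $\tilde\alpha$ is increasing in $\alpha$, the eigenvalue $\nu_1(\alpha)$ is strictly decreasing in $\alpha$. Hence $\alpha<\alpha_{\mathrm{cr}}$ gives $\nu_1(\alpha)>0$, i.e.\ $Q_\alpha(\varphi)\ge \nu_1\|\varphi\|_{L^2}^2$. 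I will use this spectral characterization as the input from \eqref{criticalvalue}.

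Given a bounded solution $\theta$, I would first exploit the fact that the coefficients depend only on $y$. The natural route is a Fourier transform in $x$. Since $\theta$ is only bounded (in $C_b^{3+\beta}$ with the $C_0^2$ decay), it is tempered, and $\hat\theta(\xi,\cdot)$ is supported where the $x$-symbol problem
\[
-\hat\theta''-\gamma'(\psi_{\mathrm{triv}})\hat\theta=-\xi^2\hat\theta,\qquad \hat\theta(0)=0,\qquad \hat\theta'(1)=\tilde\alpha\hat\theta(1)
\]
is solvable nontrivially. That requires $-\xi^2$ to be an eigenvalue, which is impossible because all eigenvalues are $\ge\nu_1>0$. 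So $\hat\theta\equiv0$.

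To avoid distributional Fourier technicalities, I would instead run an energy argument directly. Multiply \eqref{kernelpde} by $\theta\,\chi_R(x)^2$, where $\chi_R$ is a cutoff equal to $1$ on $[-R,R]$, supported in $[-2R,2R]$, with $|\chi_R'|\lesssim 1/R$, and integrate over $\mathcal{R}$. This gives
\[
\int\chi_R^2\Bigl(\int_0^1(\theta_y^2-\gamma'\theta^2)\,dy-\tilde\alpha\theta(x,1)^2\Bigr)dx+\int\chi_R^2\theta_x^2=-2\int\chi_R\chi_R'\theta\theta_x .
\]
By the fibrewise coercivity, the left side is at least $\nu_1\int\chi_R^2\theta^2+\int\chi_R^2\theta_x^2$. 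The right side is absorbed by Young's inequality, leaving $C\int|\chi_R'|^2\theta^2$. Since $\theta$ is bounded, this error is $O(R^{-1})$, while the left side is at least $\nu_1\int_{-R}^R\int_0^1\theta^2$. Letting $R\to\infty$ gives $\theta\equiv0$. Alternatively, one can use the decay $\theta\in C^2_0$, so that the remainder vanishes as $R\to\infty$ because it is supported in $R<|x|<2R$, where $\theta\to0$.

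The main obstacle is pinning down the precise definition of $\alpha_{\mathrm{cr}}$ and verifying that $\alpha<\alpha_{\mathrm{cr}}$ indeed gives strict positivity of the fibre problem at $\xi=0$, including the $-\xi^2$ shift. If instead \eqref{criticalvalue} is phrased through a dispersion relation (for instance via the solution $\varphi$ of $\varphi''+\gamma'\varphi=0$, $\varphi(0)=0$, $\varphi'(0)=1$, with $\alpha_{\mathrm{cr}}$ where $\varphi'(1)=\tilde\alpha\varphi(1)$), I would translate it into the variational statement by Sturm comparison or Prüfer angle monotonicity in $\tilde\alpha$.
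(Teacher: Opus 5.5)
Your proposal is correct and follows essentially the same route as the paper. The paper defines $\alpha_{\mathrm{cr}}$ as the value at which the explicit zero mode $\psi_{\mathrm{triv},y}(y)\int_0^y\psi_{\mathrm{triv},y}(t)^{-2}\,dt$ satisfies the Robin condition; since this zero mode is positive on $(0,1]$, $0$ is then the principal eigenvalue, which settles your hedge about the definition of $\alpha_{\mathrm{cr}}$. The paper then uses strict monotonicity of $\nu_0$ in $\tilde\alpha$ and combines the energy identity with the fibrewise Rayleigh bound, exactly as you do. Your cutoff $\chi_R$ is a genuine improvement: the paper integrates by parts over the whole strip as if $\theta\in H^1(\mathcal{R})$, which is not available for a merely bounded $\theta\in\tilde{\mathcal{X}}_b$, whereas your inequality $\nu_1\int\chi_R^2\theta^2\le\int(\chi_R')^2\theta^2=O(R^{-1})$ makes that step rigorous.
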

\begin{proof}
Let $\theta(x,y)=u(x)v(y)$ be the solution of problem \eqref{kernelpde}. Then we obtain the following self-adjoint Sturm-Liouville problem
\begin{subequations}\label{ev}\begin{align}
   &-v_{yy}- \gamma'(\psi_{\mathrm{triv}})v=\nu v\label{ev1},\\
   &v_y(1)=\tilde\alpha v(1),\label{ev2}\\
   &v(0)=0.\label{ev3}
    \end{align}\end{subequations}
It follows from the standard spectrum theory that problem \x{ev} admits a countable
number of simple eigenvalues $\nu_0<\nu_1<\cdots$ with $\nu_n\to\infty$.

First we show that $\nu=0$ is an eigenvalue of \eqref{ev} if and only if \begin{equation*}\tilde\alpha=\tilde\alpha_{\mathrm{cr}}=-\frac{\gamma(1)}{\psi_{\mathrm{triv},y}(1)}+\frac{1}{\psi_{\mathrm{triv},y}^2(1)\int_0^1\frac{dy}{\psi_{\mathrm{triv},y}^2(y)}}.\end{equation*}
Indeed, for $\nu=0$, differentiating \eqref{trivial} with respect to $y$, we have that $v_1=\psi_{\mathrm{triv},y}(y)$ is a non-trivial solution. By the Liouville's formulation, the general solution of \eqref{ev1} with \eqref{ev3} is
\be\label{generalsol}v(y)=C\psi_{\mathrm{triv},y}(y)\int_0^y\frac{dt}{\psi_{\mathrm{triv},y}^2(t)}\ee
with $C$ being an arbitrary constant.
It is easy to see that \eqref{ev2} is  satisfied if and only if $\tilde{\alpha}=\tilde{\alpha}_{\mathrm{cr}}$. Besides, the critical value
has the following relation
\begin{equation}\label{criticalvalue}
\alpha_{\mathrm{cr}}=\tilde\alpha\psi_{\mathrm{triv},y}^2(1)+\gamma(1)\psi_{\mathrm{triv},y}(1)=\frac{1}{\int_0^1\frac{dy}{\psi_{\mathrm{triv},y}^2(y)}}.
\end{equation}
Moreover, one readily see that the eigenspace for $\nu=0$ is two-dimensional and generated by $\varphi_0(y)$ and $x\varphi_0(y)$, where $\varphi_0(y)$ is the eigenfunction with $\nu_0=0$, that is,
\begin{equation}\label{kernel1}
	\mathrm{Ker~}(\mathcal{L}(0,0,\alpha_{\mathrm{cr}}))=\text{span}\{\varphi_0(y), x\varphi_0(y)\},
\end{equation}
where the eigenfunction $\varphi_0(y)$ (depending on the vorticity function $\gamma(\psi)$) corresponding to $\nu_0=0$ differs from the general solution $v$ in \eqref{generalsol} only by a constant factor, which is determined by the choice of the nontrivial solution $v_1$. By the classical Sturm-Liouville theory, we can choose $\varphi_0(y)>0$ for $y\in (0,1]$.

Next, we know that the eigenvalue of \eqref{ev} can be characterized by the following minimization problem
\begin{equation}\label{minivar}
	\nu_0(\tilde\alpha)=\inf_{v\in H^{1}(0,1),v(0)=0,v\not\equiv0}\frac{\int_0^1v_y^2-\gamma'(\psi_{\mathrm{triv}})v^2dy-\tilde\alpha v^2(1)}{\int_0^1v^2dy}.
\end{equation}
Differentiating \eqref{minivar} with respect to $\tilde\alpha$, we can easily see that $\nu_0(\tilde\alpha)$ is strictly decreasing in $\tilde\alpha$. Besides, we can also deduce that $\nu_0(\tilde\alpha)>0$ as $\tilde\alpha<\tilde\alpha_{\mathrm{cr}}$.

Multiplying the first equation of \eqref{kernelpde} by $\theta\in \tilde{\mathcal{X}}_b$ and integrating by parts, we can obtain
\begin{equation}\label{bilineartheta}
\int_{\mathcal{R}}|\nabla\theta|^2dxdy=\tilde\alpha\int_{\Gamma}\theta^2dx+\int_{\mathcal{R}}\gamma'(\psi_{\mathrm{triv}})\theta^2dxdy.
\end{equation}
For any fixed $x\in\R$, it follows from \eqref{minivar} that
\begin{equation}\label{rayleighfixed}
	\int_0^1(\partial_y\theta)^2-\gamma'(\psi_{\mathrm{triv}})\theta^2(y)dy-\tilde\alpha\theta^2(x,1)\geq\nu_0(\tilde\alpha)\int_0^1\theta^2dy.
\end{equation}
Now  integrating   \eqref{rayleighfixed} with respect to $x\in\R$, we have
\begin{equation*}
	\int_\R\int_0^1(\partial_y\theta)^2-\gamma'(\psi_{\mathrm{triv}})\theta^2(y)dydx-\tilde\alpha\int_{\R}\theta^2(x,1)dx\geq\nu_0(\tilde\alpha)\int_{\R}\int_{0}^1\theta^2dydx.
\end{equation*}
Using \eqref{bilineartheta}, we obtain that
\begin{equation*}
	-\int_{\mathcal{R}}(\partial_x\theta)^2dxdy\geq\nu_0(\tilde\alpha)\int_{\mathcal{R}}\theta^2dxdy.
\end{equation*}
Combining the monotonicity of $\nu_0(\tilde\alpha)$, we see that  equation \eqref{kernelpde} admits only the trivial solution for $\tilde\alpha < \tilde\alpha_{\mathrm{cr}}$, which is just $\alpha < \alpha_{\mathrm{cr}}$.
\end{proof}

\begin{example}\label{constcr}When the vorticity function is constant $\gamma(\psi)=-\gamma$, the laminar solution becomes
\[\psi_{\mathrm{triv}}(y)=\frac{\gamma}{2}y^2+\bigg(1-\frac{\gamma}{2}\bigg)y,\quad \psi_{\mathrm{triv},y}(1)=1+\frac{\gamma}{2},\]
and the critical value can be computed as
 \[\tilde\alpha=1,\quad \alpha_{\mathrm{cr}}=1-\gamma^2/4.\]
This critical value $\alpha_{\mathrm{cr}}$ is consistent with the definition  in \cite{milesjma}.
Moreover, using the non-dimensional variables, we can compute that
\[m=Fg^{1/2}d^{3/2}\big(1-\frac{\gamma}{2}\big),\quad\alpha=\frac{1}{F^2\big(1-\frac{\gamma}{2}\big)^2},\quad \mu=\big(1+\frac{\gamma}{2}\big)^2.\]
Note that such waves should satisfy $\gamma\in(-2,2)$ and of course $\psi_{\mathrm{triv},y}(1)>0$. In particular, for the irrotational flows $\gamma=0$, the critical value is $\alpha_{\mathrm{cr}}=1$, which corresponds to $F_{\mathrm{cr}}=1$.
\end{example}
In the following example, we consider the affine vorticity.
\begin{example}
    When the vorticity function is linear, that is $\gamma(\psi)=\lambda \psi$ with $\lambda\neq0$, the laminar solution is given by
    \begin{equation*}
        \psi_{\mathrm{triv}}(y)=\begin{cases}
            \frac{\sin(\sqrt{\lambda}y)}{\sin \sqrt{\lambda}},\quad&\lambda>0,\\
 \frac{\sinh (\sqrt{-\lambda}y)}{\sinh \sqrt{-\lambda}},&\lambda<0.
        \end{cases}
    \end{equation*}
The critical value can be computed as
    \begin{equation*}
        \tilde{\alpha}=\begin{cases}
\sqrt{\lambda}\cot{\sqrt{\lambda}},\quad&\lambda>0\\
\sqrt{-\lambda}\coth{\sqrt{-\lambda}},&\lambda<0,
        \end{cases}
    \end{equation*}
    and
    \begin{equation*}
        \alpha_{\mathrm{cr}}=\begin{cases}
(\lambda)^{3/2}\cot^3(\sqrt{\lambda})+\sqrt{\lambda}\cot(\sqrt{\lambda}),\quad&\lambda>0\\
(-\lambda)^{3/2}\coth^3(\sqrt{-\lambda})+\sqrt{-\lambda}\coth(\sqrt{-\lambda}),&\lambda<0.
        \end{cases}
    \end{equation*}
    It is easy to verify that when $\lambda<\frac{\pi^2}{4}$, the critical value $\alpha_{\mathrm{cr}}$ is well-defined and is strictly positive. Moreover, the eigenfunction $\varphi_0$ is given by
    \begin{equation*}
        \varphi_0(y)=\begin{cases}
            \sin(\sqrt{\lambda}y)\quad&\lambda>0,\\
            \sinh(\sqrt{-\lambda}y)&\lambda<0.
        \end{cases}
    \end{equation*}
\end{example}
Now we try to prove the local properness. The limiting operator of $\mathcal{L}(\phi,w,\alpha)$ is otained by taking its limit as $x\to\pm\infty$.
By the asymptotic conditions, it is easy to see that the limiting operator is  nothing other than $\mathcal{L}^0$.

\begin{lemma}\label{localproper}
    For $(\phi,w,\alpha)\in\mathcal{U}$, the operator $\mathcal{L}(\phi,w,\alpha)$ is locally proper both as a map $\tilde{\mathcal{X}}_b\to\mathcal{Y}_b$ and as a map  $\tilde{\mathcal{X}}\to\mathcal{Y}$. Moreover, the operator $\mathcal{F}_{(\phi,w)}(\phi,w,\alpha)$  is locally proper both as a map $\mathcal{X}_b\to\mathcal{Y}_b$ and as a map $\mathcal{X}\to\mathcal{Y}$.
\end{lemma}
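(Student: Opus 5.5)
We argue in the standard way for elliptic operators on unbounded strips, following the scheme of Chen--Walsh--Wheeler and Haziot--Wheeler. By definition, local properness means the following. If $\{\theta_n\}$ is bounded in $\tilde{\mathcal{X}}_b$ and $\mathcal{L}(\phi,w,\alpha)\theta_n$ converges in $\mathcal{Y}_b$, then $\{\theta_n\}$ has a convergent subsequence. Equivalently, $\mathcal{L}(\phi,w,\alpha)$ is semi-Fredholm with finite-dimensional kernel and closed range. The main tool is the Schauder-type estimate \eqref{schauder}: it bounds $\|\theta\|_{\tilde{\mathcal{X}}_b}$ by $\|\mathcal{L}\theta\|_{\mathcal{Y}_b}$ plus a lower-order $C^0_b$ norm. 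We combine it with the injectivity of the limiting operator, which is $\mathcal{L}^0$ by the asymptotic conditions \eqref{streamasy}. Since $\alpha<\alpha_{\mathrm{cr}}$ on $\mathcal{U}$, Lemma \ref{trivialkernel} says $\mathcal{L}^0$ has trivial kernel in $\tilde{\mathcal{X}}_b$.

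\emph{Step 1: the limiting operator.} We first upgrade Lemma \ref{trivialkernel} to a coercive estimate
\[\|\theta\|_{\tilde{\mathcal{X}}_b}\le C\|\mathcal{L}^0\theta\|_{\mathcal{Y}_b}.\]
Suppose not. Then there are normalized $\theta_n$ with $\mathcal{L}^0\theta_n\to0$. By \eqref{schauder} at $(0,0,\alpha)$, the lower-order term stays bounded below, so we can choose points $x_n$ where $|\theta_n|$ is not small. Translate by $x_n$; since $\mathcal{L}^0$ is translation invariant in $x$, Arzel\`a--Ascoli gives a subsequence converging in $C^{3}_{\mathrm{loc}}$ to a nonzero bounded limit $\theta_\infty$ with $\mathcal{L}^0\theta_\infty=0$. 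This contradicts Lemma \ref{trivialkernel}.

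There is one gap here. The energy argument in Lemma \ref{trivialkernel} uses integration over $\R$, which needs decay. For merely bounded solutions we would instead take the Fourier transform in $x$, or apply \eqref{rayleighfixed} with a cutoff in $x$. Since $\nu_0(\tilde\alpha)>0$, every mode $\xi$ satisfies $\nu_0+\xi^2>0$, so bounded kernel elements must vanish. We expect this bounded-kernel argument to be the main technical obstacle, since Lemma \ref{trivialkernel} was only stated with the integration-by-parts proof.

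\emph{Step 2: local properness of $\mathcal{L}(\phi,w,\alpha)$.} Let $\theta_n$ be bounded with $\mathcal{L}\theta_n\to f$. Extract a subsequence with $\theta_n\to\theta$ in $C^3_{\mathrm{loc}}$ and $\mathcal{L}\theta=f$, and set $\vartheta_n=\theta_n-\theta$. Applying \eqref{schauder} to $\vartheta_n$, it suffices to show that the nonlocal lower-order term
\[(\mathcal{A}_\phi\circ\mathcal{T}+\mathcal{A}_w\circ\mathcal{T})\vartheta_n\to0 \quad\text{in } C^0_b.\]
On compact sets this follows from local convergence, after splitting the elliptic solution operators into a near-field part and an exponentially decaying tail; the Green's function of the strip with Dirichlet data decays like $e^{-\pi|x|}$. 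Near infinity, suppose $|\vartheta_n(x_n,\cdot)|\ge\varepsilon$ with $|x_n|\to\infty$. Translate by $x_n$. Because $(\phi,w)\in C_0^2\times C_0^1$, the coefficients of $\mathcal{L}$ converge to those of $\mathcal{L}^0$, so the translates converge to a nonzero bounded solution of $\mathcal{L}^0\Theta=0$. This contradicts Step 1. Hence $\vartheta_n\to0$ uniformly, and the Schauder estimate upgrades this to convergence in $\tilde{\mathcal{X}}_b$.

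For $\tilde{\mathcal{X}}\to\mathcal{Y}$ the argument is identical. The subspaces are closed, so the limit stays in $\tilde{\mathcal{X}}$ and decay is preserved.

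\emph{Step 3: transfer to $\mathcal{F}_{(\phi,w)}$.} Recall $\mathcal{F}_{(\phi,w)}=\mathcal{L}\circ\mathcal{T}^{-1}$. Since $\mathcal{T}:\tilde{\mathcal{X}}_b\to\mathcal{X}_b$ is a bounded isomorphism with explicit bounded inverse \eqref{tinverse}, it preserves both bounded sets and convergent sequences. Local properness of $\mathcal{L}$ therefore transfers directly to $\mathcal{F}_{(\phi,w)}(\phi,w,\alpha)$ on $\mathcal{X}_b\to\mathcal{Y}_b$ and on $\mathcal{X}\to\mathcal{Y}$. Alternatively, one can run the same argument using \eqref{schauderphiw} directly.
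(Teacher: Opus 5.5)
Your proposal is correct and follows the paper's route. The paper cites Volpert's criterion (local properness on $\tilde{\mathcal{X}}_b\to\mathcal{Y}_b$ holds iff the limiting operator $\mathcal{L}^0$ has trivial kernel) and Lemma \ref{trivialkernel}, passes to $\tilde{\mathcal{X}}\to\mathcal{Y}$ by closedness of the subspace, and transfers to $\mathcal{F}_{(\phi,w)}$ via the $\mathcal{T}$-isomorphism, so your Steps 1--2 simply unpack the cited translation argument. You are also right that this criterion needs the kernel to be trivial among \emph{bounded}, non-decaying solutions, which the integration-by-parts proof of Lemma \ref{trivialkernel} does not literally give; either of your fixes (Fourier transform in $x$, or a cutoff in $x$ combined with \eqref{rayleighfixed} and $\nu_0(\tilde\alpha)>0$) closes that point.
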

\begin{proof}
By the translation argument and Lemma \ref{schauderlemma}, one can show that $\mathcal{L}(\phi,w,\alpha):\tilde{\mathcal{X}}_b\to\mathcal{Y}_b$   is locally proper if and only if the limiting operator $\mathcal{
L}^0:\tilde{\mathcal{X}}_b\to\mathcal{Y}_b$  has a trivial kernel \cite{vol}. By Lemma \ref{trivialkernel}, we can deduce that $\mathcal{L}(\phi,w,\alpha):\tilde{\mathcal{X}}_b\to\mathcal{Y}_b$ is locally proper.

 Let $\{\dot{\theta}_n\}\subset\tilde{\mathcal{X}}$ be a bounded sequence  such that $\{\mathcal{L}(\phi,w,\alpha)\dot{\theta}_n\}$ is a convergent sequence in $\mathcal{Y}$. Since $\mathcal{L}(\phi,w,\alpha):\tilde{\mathcal{X}}_b\to\mathcal{Y}_b$ is locally proper, we can extract a subsequence so that $\dot\theta_n\to\dot\theta\in\tilde{\mathcal{X}}_b$. Since $\tilde{\mathcal{X}}$ is a closed subspace, we must have $\dot\theta\in\tilde{\mathcal{X}}$. Thus $\mathcal{L}(\phi,w,\alpha):\tilde{\mathcal{X}}\to\mathcal{Y}$ is also locally proper.
Finally, since the operator $\mathcal{T}$ is an isomorphism, the operator $\mathcal{F}_{(\phi,w)}(\phi,w,\alpha)$ is hence locally proper both as a map from $\mathcal{X}_b$ to $\mathcal{Y}_b$ and as a map from $\mathcal{X}$ to $\mathcal{Y}$.
\end{proof}

\begin{lemma}\label{invertiblelemma}
    For $(\phi,w,\alpha)\in \mathcal{U}$, the operator $\mathcal{L}^0$ is invertible both as a map $\tilde{\mathcal{X}}_b\to \mathcal{Y}_b$ and as a map $\tilde{\mathcal{X}}\to\mathcal{Y}$. Meanwhile, the operator $\mathcal{F}_{(\phi,w)}(0,0,\alpha)$ is invertible both as a map  $\mathcal{X}_b\to\mathcal{Y}_b$ and as a map $\mathcal{X}\to\mathcal{Y}$.
\end{lemma}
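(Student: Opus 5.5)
The plan is to show that $\mathcal{L}^0$ is injective, semi-Fredholm, and of index zero, and then to move the conclusion to $\mathcal{F}_{(\phi,w)}(0,0,\alpha)$ through the isomorphism $\mathcal{T}$.

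\emph{Injectivity and closed range.} By Lemma \ref{trivialkernel}, $\mathcal{L}^0:\tilde{\mathcal{X}}_b\to\mathcal{Y}_b$ has trivial kernel for every $\alpha<\alpha_{\mathrm{cr}}$, and this holds for all $(\phi,w,\alpha)\in\mathcal{U}$. By Lemma \ref{localproper}, $\mathcal{L}^0=\mathcal{L}(0,0,\alpha)$ is locally proper. A locally proper operator with trivial kernel satisfies an injectivity estimate $\|\theta\|_{\tilde{\mathcal{X}}_b}\le C\|\mathcal{L}^0\theta\|_{\mathcal{Y}_b}$. To prove this I argue by contradiction. Take a normalized sequence with $\mathcal{L}^0\theta_n\to0$, translate in $x$ so that the $C^0$ mass stays near $x=0$, and extract a limit using the Schauder estimate \eqref{schauder}. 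Because the coefficients of $\mathcal{L}^0$ are translation invariant, the limit is a nonzero kernel element, which is a contradiction. So $\mathcal{L}^0$ is injective with closed range, i.e. semi-Fredholm with index $\le 0$.

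\emph{Surjectivity.} I will use a continuity (homotopy) argument in $\alpha$, since the Fredholm index is constant along a continuous path of semi-Fredholm operators. For every $\alpha'\le\alpha<\alpha_{\mathrm{cr}}$ the operator $\mathcal{L}^0(\alpha')$ is injective with closed range, by the same argument. It therefore suffices to find one $\alpha'$ (very negative, so that $\tilde\alpha'\ll0$) at which $\mathcal{L}^0$ is onto.

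To get onto-ness, solve $\mathcal{L}^0\theta=(f,h)$. Using \eqref{L1}, set $\Delta\theta+\gamma'(\psi_{\mathrm{triv}})\theta=\Delta f$ in $\mathcal{R}$, $\theta=0$ on $B$. From \eqref{L2}, after cancelling the $\zeta$ terms with \eqref{lameq3}, the boundary condition becomes
\[
\psi_{\mathrm{triv},y}(1)\bigl(\partial_y\theta-\tilde\alpha'\theta\bigr)=h+\psi_{\mathrm{triv},y}(1)\,\partial_y f\quad\text{on }\Gamma .
\]
For $\tilde\alpha'$ very negative, I construct the solution by coercivity. The quadratic form
\[
\int_{\mathcal{R}}|\nabla\theta|^2-\gamma'\theta^2\,dx\,dy-\tilde\alpha'\int_\Gamma\theta^2\,dx
\]
is coercive on $H^1$ functions vanishing on $B$, because $\nu_0(\tilde\alpha')>0$ and the $\partial_x$ part is controlled as in the proof of Lemma \ref{trivialkernel}. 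This gives a weak solution for compactly supported data. Schauder estimates (Lemma \ref{schauderlemma}) upgrade it to $C^{3+\beta}_b$, and the injectivity bound lets me pass to general bounded data by approximation with local convergence.

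An alternative route avoids the homotopy entirely: the coercivity argument works directly for any $\tilde\alpha<\tilde\alpha_{\mathrm{cr}}$, since $\nu_0(\tilde\alpha)>0$ there. I would prefer this direct route.

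\emph{The spaces $\tilde{\mathcal{X}}\to\mathcal{Y}$ and the transfer to $\mathcal{F}$.} For the decaying spaces, if the data lies in $\mathcal{Y}$, the solution $\theta\in\tilde{\mathcal{X}}_b$ decays. This follows from the injectivity estimate applied to translates: any nonzero limit at infinity would be a kernel element of $\mathcal{L}^0$, which is impossible. Hence $\theta\in\tilde{\mathcal{X}}$. Finally, $\mathcal{F}_{(\phi,w)}(0,0,\alpha)=\mathcal{L}^0\circ\mathcal{T}^{-1}$ with $\mathcal{T}$ an isomorphism, by \eqref{tiso}--\eqref{tinverse}, so it is invertible on both pairs of spaces.

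\emph{Main obstacle.} The hardest step is the existence part. The domain is unbounded and the data are only bounded, not $L^2$, so the energy solution must be built from compactly supported data and carried over to bounded data. This relies on the uniform a priori estimate together with local compactness from Lemma \ref{schauderlemma}.
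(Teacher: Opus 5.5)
Your proof is correct, and its skeleton is the paper's. Both reduce $\mathcal{L}^0\theta=(f,h)$ to the Robin problem \eqref{nonhomo}, use the trivial kernel from Lemma~\ref{trivialkernel}, obtain membership in $\tilde{\mathcal{X}}$ by a translation argument, and transfer the result through $\mathcal{T}$.

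The two proofs differ in the existence step. The paper simply cites ``the Fredholm alternative''. On the unbounded strip this presupposes that $\mathcal{L}^0$ is Fredholm of index zero, which is not established at that point; Lemma~\ref{fredprop} later deduces Fredholmness from the present lemma. You instead construct the solution in four steps:
\begin{itemize}
\item an a priori bound $\|\theta\|_{\tilde{\mathcal{X}}_b}\le C\|\mathcal{L}^0\theta\|_{\mathcal{Y}_b}$ from translation compactness;
\item Lax--Milgram on $H^1$ functions vanishing on $B$, using $\nu_0(\tilde\alpha)>0$;
\item Schauder regularity;
\item cutoff data $(\chi_n f,\chi_n h)$, passed to the limit using the uniform bound.
\end{itemize}
This is longer, but it actually closes the step the paper leaves unjustified. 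The direct route also makes your homotopy in $\alpha$ unnecessary.

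Several details need tightening.
\begin{itemize}
\item The fibrewise Rayleigh bound only yields $\int\theta_x^2+\nu_0\int\theta^2$. For $H^1$-coercivity, combine it with a small multiple of the crude bound $\int_0^1(v_y^2-\gamma'v^2)\,dy-\tilde\alpha v(1)^2\ge\frac12\|v_y\|^2-C\|v\|^2$, which follows from $v(1)^2\le 2\|v\|\,\|v_y\|$.
\item For the a priori bound, apply Lemma~\ref{schauderlemma} to \eqref{nonhomo} directly rather than using \eqref{schauder}, whose lower-order term is not $\|\theta\|_{C^0}$.
\item Lemma~\ref{schauderlemma} presupposes $u\in C_b^{k+\beta}$. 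The weak solution therefore first needs local regularity and an $L^\infty$ bound coming from its $L^2$ norm.
\item Decay in $\tilde{\mathcal{X}}$ comes from translation compactness plus the trivial kernel, not from the injectivity estimate itself.
\item Every limit you extract by translation is merely bounded, so you need the kernel to be trivial among \emph{bounded} solutions. The energy identity in the proof of Lemma~\ref{trivialkernel} is not justified for such $\theta$. A simple fix uses the eigenfunctions $v_k$ of \eqref{ev}: set $a_k(x)=\int_0^1\theta(x,y)v_k(y)\,dy$. Then $a_k''=\nu_k a_k$ with $\nu_k\ge\nu_0(\tilde\alpha)>0$, so boundedness forces $a_k\equiv 0$.
\item As in the paper, surjectivity really holds only onto the subspace of $\mathcal{Y}_b$ whose first component vanishes on $\Gamma\cup B$, since $\mathcal{L}^0_1\theta$ always does.
\end{itemize}
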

\begin{proof}
Let $\theta\in\tilde{\mathcal{X}}_b$, $f\in C_b^{3+\beta}(\mathcal{R})$ and $g\in C^{2+\beta}(\Gamma)$. Consider the operator equation \[\mathcal{L}^0_1\theta=f,\quad \mathcal{L}^0_2\theta=g.\]
By \eqref{L1} and the uniqueness, we can deduce that
$\mathcal{L}_1^0\theta=f$ if and only if $\theta$ satisfies
\[\Delta\theta+\gamma'(\psi_{\mathrm{triv}})\theta=\Delta f\in C^{1+\beta}(\mathcal{R}).\]
Together with \eqref{L2}, we obtain the following nonhomogeneous linear boundary value problem
\begin{equation}\label{nonhomo}\begin{cases}
		 \Delta\theta+\gamma'(\psi_{\mathrm{triv}})\theta=\Delta f \quad&\mathrm{in~}\mathcal{R},\\
      \partial_y\theta-\tilde\alpha\theta=g+\psi_{\mathrm{triv},y}(1)f_y&\mathrm{on~}\Gamma,\\
        \theta=0&\mathrm{on~} B.
\end{cases}\end{equation}
It follows from Lemma \ref{trivialkernel} and the Fredholm alternative principle that problem \eqref{nonhomo} admits a unique solution $\theta\in \tilde{\mathcal{X}}_b$. Thus $\mathcal{L}^0$ is invertible $\tilde{\mathcal{X}}_b\to \mathcal{Y}_b$.

Since $\tilde{\mathcal{X}}\subset\tilde{\mathcal{X}}_b$, $\mathcal{L}^0:\tilde{\mathcal{X}}\to\mathcal{Y}$ is injective.
Let $\dot f\in\mathcal{Y}$, since $\mathcal{L}^0:\tilde{\mathcal{X}}_b\to\mathcal{Y}_b$ is invertible, there exists $\dot\theta\in\tilde{\mathcal{X}}_b$ with $\mathcal{L}^0\dot\theta=\dot f$. Applying the translation argument  as in \cite[Lemma A.10]{chenpoincare}, we get $\dot\theta\in \tilde{\mathcal{X}}$.
Therefore, $\mathcal{L}^0$ is also invertible $\tilde{\mathcal{X}}\to\mathcal{Y}$.
Finally, since $\mathcal{T}$ is an isomorphism, the operator $\mathcal{F}_{(\phi,w)}(0,0,\alpha)$ is consequently invertible both from $\mathcal{X}_b$ to $\mathcal{Y}_b$ and from $\mathcal{X}$ to $\mathcal{Y}$.
\end{proof}
\section{Existence of Small-amplitude Waves}

In this section, we construct small-amplitude solutions when the parameter $\alpha$ is close to the critical value $\alpha_{\mathrm{cr}}$, based on
the center manifold reduction method developed in \cite{chennonlinearity}. For any small arbitrary $\varepsilon$, denote
$\alpha^\varepsilon=\alpha_{\mathrm{cr}}-\varepsilon$
and $\alpha^0=\alpha_{\mathrm{cr}}.$
We will need the following weighted version of the  space $\mathcal{X}_\tau, \mathcal{Y}_\tau$,  allowing for exponential  growth for any $\tau>0$,
\begin{equation*}\begin{aligned}
    \mathcal{X}_{\tau}:=\{(\phi,w)\in C_{\tau}^{3+\beta}(\overline{\mathcal{R}})\times C_{\tau}^{2+\beta}(\Gamma):\phi=0\quad\mathrm{on~}\Gamma\cup B\},\quad \mathcal{Y}_{\tau}:=C_{\tau}^{3+\beta}{(\mathcal{R}})\times C_{\tau}^{2+\beta}(\Gamma),
\end{aligned}\end{equation*}
where the norms of the spaces $C_{\tau}^{3+\beta}(\mathcal{R})$ or $C_{\tau}^{2+\beta}(\Gamma)$ are given in \eqref{wd}.
 For any $\tau>0$, the operator
\[\mathcal{L}^{\mathrm{cr}}:=\mathcal{L}(0,0,\alpha^0)=(\mathcal{L}_1^{\mathrm{cr}}, \mathcal{L}_2^{\mathrm{cr}})\]
extends to a bounded operator $\tilde{\mathcal{X}}_\tau\to\mathcal{Y}_\tau.$ For $\tau>0$ small enough, it follows from Lemma 3.3 that the kernel of $\mathcal{L}^{\mathrm{cr}}$ is given by
\begin{equation*}
    \mathrm{Ker~}\mathcal{L}^{\mathrm{cr}}=\{(A+Bx)\varphi_0(y):A,B\in\R\},
\end{equation*}
where $\varphi_0$ defined in \eqref{kernel1} is the principal eigenfunction of problem \eqref{ev}.
Now we define a function $\Theta(y)\in C_{\mathrm{loc}}^{0+1}(\R)$ by
\begin{equation}\label{thetadef}\begin{aligned}\Theta(y)=&-\gamma''(\psi_{\mathrm{triv}})\bigg(\varphi_0(y)+\frac{\psi_{\mathrm{triv},y}}{\psi^2_{\mathrm{triv},y}(1)}\varphi_0(1)y\bigg)^2\\
&-2\gamma'(\psi_{\mathrm{triv}})\bigg(\frac{\varphi_0(1)\varphi_0(y)}{\psi_{\mathrm{triv},y}(1)}y+
\frac{\psi_{\mathrm{triv},y}}{\psi^3_{\mathrm{triv},y}(1)}\varphi_0^2(1)y^2\bigg)-
2\gamma(\psi_{\mathrm{triv}})\frac{\varphi_0^2(1)}{\psi^2_{\mathrm{triv},y}(1)},\end{aligned}
\end{equation}
and two constants by
\begin{equation*}
C_0:=-\bigg(\varphi_0'(1)-\varphi_0(1)+\frac{\gamma(1)}{\psi_{\mathrm{triv},y}(1)}\bigg)^2+\left(\psi^2_{\mathrm{triv},y}(1)-
  \frac{4}{\int_0^1\frac{dy}{\psi_{\mathrm{triv},y}^2(y)}}\right)\frac{\varphi_0^2(1)}{\psi_{\mathrm{triv},y}^2(1)},
\end{equation*}
\begin{equation}\label{M0}
M_0:=\frac{C_0\varphi_0(1)}{\psi_{\mathrm{triv},y}(1)}-\int_0^1\Theta(y)\varphi_0(y)dy.
\end{equation}
Because the vorticity function satisfies that $\alpha_{\mathrm{cr}}$ as in \eqref{zy2} is well-defined, as a result we know that
the constants $C_0$ and $M_0$ are also well-defined. Moreover, $M_0$ is entirely determined by the vorticity function $\gamma$.

\begin{theorem}\label{coordinate}
For a fixed constant $\tau>0$, there exist neighborhoods $U\subset\tilde{\mathcal{X}_\tau}\times \R$ and $V\subset\R^3$ of the origin and a coordinate map $\Upsilon=\Upsilon(A,B,\alpha)$ satisfying
\begin{equation*}
    \Upsilon\in C^3(\R^3,\tilde{\mathcal{X}_\tau}),\quad \Upsilon(0,0,\alpha)=0~\mathrm{for~all~}\alpha,\quad
    \Upsilon_A(0,0,\alpha_{\mathrm{cr}})=\Upsilon_B(0,0,\alpha_\mathrm{cr})=0,
\end{equation*}
and such that the following hold:

\noindent $(i)$ if $(\theta,\alpha)\in U$ solves ${\mathcal{F}\circ\mathcal{T}}(\theta,\alpha)=0$, then $q(x)=\theta(x,1)$ solves the equation
\begin{equation}\label{reducedode1}
q''=f(q,q',\alpha)
\end{equation}
    where $f:\R^3\to\R$ is the $C^3$ mapping
\begin{equation*}\label{coordinatedif}
f(A,B,\alpha):=\frac{d^2}{dx^2}\bigg|_{x=0}\Upsilon(A,B,\alpha)(x,1)
    \end{equation*}
    and has the Taylor expansion
\begin{equation}\label{taylor}
        f(A,B,\alpha)=f_{101}\varepsilon A+f_{200}A^2+\mathcal{O}((|A|+|B|)(|A|+|B|+|\varepsilon|)^3),
    \end{equation}
    where \[f_{101}=\partial_{x}^2\Upsilon_{101}(0,1)>0,\quad f_{200}=\partial_x^2\Upsilon_{200}(0,1).\]
Moreover, $M_0f_{200}<0.$

\noindent $(ii)$ if $q:\R\to\R$ satisfies the equation \eqref{reducedode1} and $(q(x),q'(x),\alpha)\in V$ for all $x$, then $q=\theta(x,1)$, where  $(\theta,\alpha)\in U$ is a solution of problem $\mathcal{F}\circ\mathcal{T}(\theta,\alpha)=0$. Moreover, for all $t\in\R$,
\begin{equation*}    \theta(x+t,y)=q(x)\varphi_0(y)+q'(x)t\varphi_0(y)+\Upsilon(q(x),q'(x).\alpha)(t,y).
\end{equation*}
\end{theorem}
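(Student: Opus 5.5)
The plan is to apply the center manifold reduction theorem for elliptic problems in a strip from \cite{chennonlinearity} (Theorem 1.1 there) to $\mathcal{F}\circ\mathcal{T}(\theta,\alpha)=0$. We work with the unknown $\theta$ and the parameter $\varepsilon=\alpha_{\mathrm{cr}}-\alpha$.

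\textbf{Step 1: verify the hypotheses of the abstract theorem.}
\begin{itemize}
\item The nonlinearity $(\theta,\alpha)\mapsto\mathcal{F}\circ\mathcal{T}(\theta,\alpha)$ must be $C^{3}$ near the origin from $\tilde{\mathcal{X}}_\tau\times\R$ to $\mathcal{Y}_\tau$. We also need it to commute with translations in $x$, and to have the required locality, i.e. $x$-independent coefficients. This follows from the construction of $\mathcal{A}$ and $\zeta_{[w]}$ via Dirichlet problems on $\mathcal{R}$, provided $\gamma$ is smooth enough (say $\gamma\in C^{4}$).
\item The linearization $\mathcal{L}^{\mathrm{cr}}$ must be Fredholm $\tilde{\mathcal{X}}_\tau\to\mathcal{Y}_\tau$ for small $\tau>0$. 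Its kernel must be $\{(A+Bx)\varphi_0\}$, spanned by the two generalized eigenvectors $\varphi_0$ and $x\varphi_0$, exactly as in \eqref{kernel1}. Finally, no other purely imaginary spectrum may appear, i.e. the dispersion relation $\nu_0(\tilde\alpha)+k^2=0$ has $k=0$ as its only root on the real line near criticality. This is Lemma \ref{trivialkernel} and the Sturm--Liouville analysis, using that $\nu_0$ is strictly decreasing in $\tilde\alpha$ and that $\nu_n>0$ for $n\ge1$.
\item Invertibility on the complement comes from Lemma \ref{invertiblelemma} and the Schauder estimate \eqref{schauder}.
\end{itemize}
The abstract theorem then gives $\Upsilon$ with the stated regularity, $\Upsilon(0,0,\alpha)=0$ (the trivial solution persists), and $\Upsilon_A=\Upsilon_B=0$ at criticality. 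It also gives items (i) and (ii), including the representation $\theta(x+t,y)=q\varphi_0+q't\varphi_0+\Upsilon(q,q',\alpha)(t,y)$, with the normalization $\varphi_0(1)=1$ so that $q=\theta(\cdot,1)$.

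\textbf{Step 2: compute the Taylor coefficients of the reduced equation.} We expand $\Upsilon=\sum\Upsilon_{ijk}A^iB^j\varepsilon^k$ and substitute $\theta=(A+Bx)\varphi_0+\Upsilon$ into the equation, matching orders. Reversibility ($x\mapsto -x$ symmetry) together with $\Upsilon(0,0,\alpha)=0$ kills the terms $\varepsilon^k$, $B$, $AB$ and similar ones, so the expansion has the form \eqref{taylor}.

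\emph{Order $A\varepsilon$.} Here $\Upsilon_{101}$ solves $\mathcal{L}^{\mathrm{cr}}\Upsilon_{101}=-\partial_\alpha\mathcal{L}\,\varphi_0$. Writing $\Upsilon_{101}=\tfrac12 f_{101}x^2\varphi_0+\chi(y)$, the solvability condition is obtained by pairing with $\varphi_0$ and using the self-adjointness of \eqref{ev}. It gives $f_{101}\int_0^1\varphi_0^2\,dy=\partial_{\tilde\alpha}\tilde\alpha\cdot\varphi_0(1)^2/\psi_{\mathrm{triv},y}^2(1)>0$, which yields $f_{101}>0$.

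\emph{Order $A^2$.} Similarly, $\Upsilon_{200}=\tfrac12 f_{200}x^2\varphi_0+\chi_2(y)$. The quadratic part of $\mathcal{F}\circ\mathcal{T}$ evaluated on $(\varphi_0,\varphi_0)$ produces the interior source $\Theta$ from \eqref{thetadef}, through the second derivatives of $\gamma(\psi_{\mathrm{triv}}+\phi)|\nabla\eta|^2$ with $\phi,\zeta$ given by \eqref{tiso}. It also produces the boundary source proportional to $C_0$, through the quadratic terms of $\mathcal{F}_2$. The Fredholm solvability pairing against $\varphi_0$ then gives $f_{200}\int_0^1\varphi_0^2\,dy=-c\,M_0$ with an explicit positive constant $c$, hence $M_0f_{200}<0$.

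\textbf{Main obstacle.} The hardest step is this second-order computation. The general $\gamma$ enters through $\gamma''$ and $\gamma'$, and tracking the $\mathcal{T}$-isomorphism terms $\psi_{\mathrm{triv},y}\zeta_{[\theta|_\Gamma]}/\psi_{\mathrm{triv},y}(1)$, with $\zeta_{[\varphi_0(1)]}=\varphi_0(1)y$, is delicate. So is verifying that the resulting combination matches precisely $C_0$ and $\Theta$, including the sign. I would first check the computation against the constant vorticity case (Example \ref{constcr}), where $f_{200}$ should reproduce the KdV coefficient found in \cite{susannaarma}.
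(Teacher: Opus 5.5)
Your overall route is the paper's. You apply Theorem 1.1 of \cite{chennonlinearity} to $\mathcal{F}\circ\mathcal{T}$, and use the reversibility $\theta\mapsto\theta(-\cdot,\cdot)$ to make $f$ even in $B$. You then read off $f_{101},f_{200}$ from the ansatz $\tfrac12 b\,x^2\varphi_0+(\text{function of }y)$ plus the solvability condition from pairing with $\varphi_0$. The $f_{101}$ part is fine. Your normalization $\varphi_0(1)=1$ is actually needed for (ii) and for $f$ to be the right-hand side of the equation for $q=\theta(\cdot,1)$; the paper leaves it implicit. With it, the pairing gives $f_{101}\int_0^1\varphi_0^2\,dy=1/\psi_{\mathrm{triv},y}^2(1)$. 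Two small slips: with $\varepsilon=\alpha_{\mathrm{cr}}-\alpha$ one has $\mathcal{L}^{\mathrm{cr}}\Upsilon_{101}=+\partial_\alpha\mathcal{L}\,\varphi_0$, and the stray factor $\partial_{\tilde\alpha}\tilde\alpha$ should be dropped.

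The gap is the $A^2$ step, which carries the whole claim $M_0f_{200}<0$. By \eqref{M0}, $M_0$ is by definition the pairing $\frac{C_0\varphi_0(1)}{\psi_{\mathrm{triv},y}(1)}-\int_0^1\Theta\varphi_0\,dy$, so everything hinges on the quadratic sources being exactly $\Theta$ and $C_0$. You assert this without computing it, and a correct computation does not produce $\Theta$.

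By \eqref{tiso}, $\mathcal{T}\theta_0=(\phi_0,w_0)$ with $w_0=-\varphi_0(1)(A+Bx)/\psi_{\mathrm{triv},y}(1)$ and $\phi_0=(A+Bx)\chi$. Here $\chi:=\varphi_0-\frac{\psi_{\mathrm{triv},y}}{\psi_{\mathrm{triv},y}(1)}\varphi_0(1)\,y$, which vanishes on $\Gamma$ as it must. Also $\partial_y\zeta_{[w_0]}=w_0$, and Taylor's formula puts a factor $\tfrac12$ in front of $D^2(\mathcal{F}\circ\mathcal{T})[\theta_0,\theta_0]$. One then gets $\Delta\Upsilon_{200}+\gamma'(\psi_{\mathrm{triv}})\Upsilon_{200}=\hat\Theta$ with boundary datum $\tfrac12C_0$ on $\Gamma$, where
\[
\hat\Theta=-\tfrac12\gamma''(\psi_{\mathrm{triv}})\chi^2+\frac{2\varphi_0(1)}{\psi_{\mathrm{triv},y}(1)}\gamma'(\psi_{\mathrm{triv}})\chi-\frac{\varphi_0^2(1)}{\psi_{\mathrm{triv},y}^2(1)}\gamma(\psi_{\mathrm{triv}}).
\]
Hence $f_{200}\int_0^1\varphi_0^2\,dy=-\widehat M_0$ with $\widehat M_0:=\frac{C_0\varphi_0(1)}{2\psi_{\mathrm{triv},y}(1)}-\int_0^1\hat\Theta\varphi_0\,dy$. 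In the irrotational case this gives $q''=3\varepsilon q+\tfrac92q^2$, i.e.\ amplitude $\varepsilon\approx F^2-1$, as in KdV; \eqref{M0} would give $9$ instead of $\tfrac92$.

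The paper's \eqref{thetadef} differs from $\hat\Theta$ in three ways:
\begin{itemize}
\item it is built from $\varphi_0+\frac{\psi_{\mathrm{triv},y}}{\psi^2_{\mathrm{triv},y}(1)}\varphi_0(1)y$, the $\phi$-component of \eqref{w0}, which does not vanish on $\Gamma$;
\item its $\gamma'$-term has a different form, with an extra factor $y$;
\item it lacks the $\tfrac12$.
\end{itemize}
So $M_0=2\widehat M_0$ when $\gamma'\equiv0$, but not in general. For $\gamma(\psi)=\lambda\psi$ and $\varphi_0(1)=1$ one finds $M_0=2\widehat M_0+\lambda+O(\lambda^2)$. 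Your planned check against Example \ref{constcr} would therefore reveal only the factor $2$ and miss the real discrepancy.

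To close the argument you must carry out this computation yourself, not import the paper's version, and state the sign condition in terms of $\widehat M_0$. As written, $f_{200}\int\varphi_0^2=-cM_0$ with $M_0$ from \eqref{M0} does not follow.
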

\begin{proof}
Note that problem \eqref{maine} has been reformulated into a scalar problem ${\mathcal{F}\circ\mathcal{T}}
(\theta,\alpha)=0$. Thus we can apply \cite[Theorem 1.1]{chennonlinearity}. It is easy to verify that equation \eqref{maine} is invariant under the reversal transformation $\phi\to\phi(-\cdot,\cdot)$ and $w\to w(-\cdot)$. By the definition of the $\mathcal{T}$-isomorphism, we can deduce the invariant of $\theta\to\theta(-\cdot,\cdot)$. We can show that this
gives rise to a symmetry for the coordinate map
\[\Upsilon(A,B,\varepsilon)(x,y)=\Upsilon(A,-B,\varepsilon)(-x,y)\]
and thus $f$ is even in $B$. The map $\Upsilon$ admits the Taylor
expansion
\begin{equation*}
\Upsilon(A,B,\varepsilon)=\sum_{\mathcal{J}}\Upsilon_{ijk}A^iB^j\varepsilon^k+\mathcal{O}((|A|+|B|)(|A|+|B|+|\varepsilon|)^3),
\end{equation*}
where the index set is given as
\begin{equation*}
	\mathcal{J}=\{(i,j,k)\in\mathbb{N}^3:i+2j+k\leq3,i+j+k\geq 2,i+j\geq 1\}.
\end{equation*}
    Through the preceding analysis, we expand $\Upsilon$ in $\tilde{\mathcal{X}}_\tau$ as
    \begin{equation*}
        \Upsilon(A,B,\varepsilon)=\Upsilon_{200}A^2+\Upsilon_{101}\varepsilon A+ \mathcal{O}((|A|+|B|)(|A|+|B|+|\varepsilon|)^3).
    \end{equation*}
Inserting the Taylor expansion for $\theta$,
\begin{equation*}
    \theta=(A+Bx)\varphi_0(y)+\Upsilon_{200}A^2+\Upsilon_{101}\varepsilon A+\mathcal{O}((|A|+|B|)(|A|+|B|+|\varepsilon|)^3).
\end{equation*}
To calculate the expansion \eqref{taylor}, we shall apply  the iteration method \cite[Section 4.1]{chennonlinearity}, that is
\begin{equation}\label{iteration}
\mathcal{L}^{\mathrm{cr}}\bigg(\Upsilon_{101}\varepsilon A+\Upsilon_{200}A^2\bigg)=-\varepsilon\mathcal{L}_\varepsilon^{\mathrm{cr}}\theta_0-\mathcal{L}_\theta^{\mathrm{cr}}[\theta_0,\theta_0],
\end{equation}
where $\theta_0=(A+Bx)\varphi_0(y)$.
By the chain principle and direct  computation,  we see that
\begin{equation*}
\begin{aligned}
     &\partial_\theta\mathcal{L}^\mathrm{cr}_{1}:=\partial_{(\phi,w)}^2\mathcal{F}_{1}(0,0,\alpha^0)\circ\mathcal{T},\quad\partial_{\theta}\mathcal{L}^\mathrm{cr}_{2}:=\partial_{(\phi,w)}^2\mathcal{F}_{2}(0,0,\alpha^0)\circ\mathcal{T},\\
&\partial_\varepsilon\partial_{\theta}\mathcal{L}^\mathrm{cr}_{2}:=\partial_\varepsilon\partial_{(\phi,w)}\mathcal{F}_{2}(0,0,\alpha^0)\circ\mathcal{T}.
\end{aligned}
\end{equation*}
Moreover, we can compute that
\begin{equation*}
\partial_{(\phi,w)}^2\mathcal{F}_{1}(0,0,\alpha^0)[(\dot
\phi,\dot{w}),(\dot
\phi,\dot{w})]=-\mathcal{A}_{\phi\phi}^0[\dot
\phi,\dot{\phi}]-2\mathcal{A}_{\phi w}^0[\dot
\phi,\dot{w}]-\mathcal{A}_{ww}^0[\dot
w,\dot{w}],
\end{equation*}
\begin{equation*}
  \partial_\varepsilon\partial_{(\phi,w)}\mathcal{F}_{2}(0,0,\alpha^0)(\dot\phi,\dot w)=-\dot w,
\end{equation*}
where $\mathcal{A}_{\phi\phi}^0[\dot
\phi,\dot{\phi}]$, $\mathcal{A}_{\phi w}^0[\dot
\phi,\dot{w}]$, $\mathcal{A}_{ww}^0[\dot
w,\dot{w}]$ are the unique solutions of
\begin{equation*}
\begin{cases}\Delta\mathcal{A}_{ww}^0[\dot w,\dot w]=-2\gamma(\psi_{\mathrm{triv}})(\partial_y\zeta_{[\dot w]})^2\quad&\mathrm{in~}\mathcal{R},\\
        \mathcal{A}_{ww}^0[\dot w,\dot w]=0 &\mathrm{on~} \Gamma\cup B,
    \end{cases}
\end{equation*}
\begin{equation*}
\begin{cases}\Delta\mathcal{A}_{\phi\phi}^0[\dot \phi,\dot \phi]=-\gamma''(\psi_{\mathrm{triv}})\dot\phi^2\quad&\mathrm{in~}\mathcal{R},\\
       \mathcal{A}_{\phi\phi}^0[\dot \phi,\dot \phi]=0 &\mathrm{on~} \Gamma\cup B,
    \end{cases}
\end{equation*}
and
    \begin{equation*}
\begin{cases}\Delta\mathcal{A}_{w\phi}^0[\dot w,\dot \phi]=-\gamma'(\psi_{\mathrm{triv}})\dot\phi\partial_y\zeta_{[\dot w]}\quad&\mathrm{in~}\mathcal{R},\\
      \mathcal{A}_{w\phi}^0[\dot w,\dot \phi]=0 &\mathrm{on~} \Gamma\cup B,
    \end{cases}
\end{equation*}
respectively. For the second component of $\partial_\theta{\mathcal{L}}$, we can compute that
\begin{equation}\label{2ndderi}
\begin{aligned}
    \partial_{(\phi,w)}^2\mathcal{F}_{2}(0,0,\alpha^0)[(\dot
\phi,\dot{w}),(\dot
\phi,\dot{w})]=&\Big[\Big(\partial_y(\mathcal{A}_\phi^0 \dot\phi+\mathcal{A}_w^0\dot w)\Big)^2\\&+\psi_{\mathrm{triv},y}(1)\Big(\partial_y(\mathcal{A}_{\phi\phi}^0[\dot
\phi,\dot{\phi}]+2\mathcal{A}_{\phi w}^0[\dot
\phi,\dot{w}]+\mathcal{A}_{ww}^0[\dot
w,\dot{w}])\Big)\\&+4\alpha^0\dot w\partial_y\zeta_{[\dot w]}-\mu(\partial_y\zeta_{[\dot w]})^2\Big]\bigg|_{\Gamma}.
\end{aligned}
\end{equation}
For $\theta_0\in \tilde{\mathcal{X}_\tau}$, we can find that
\begin{equation}\label{w0}
    \mathcal{T}\theta_0:=(w_0,\phi_0)=\bigg(-\frac{\varphi_0(1)}{\psi_{\mathrm{triv},y}(1)}(A+Bx),\bigg(\varphi_0(y)+\frac{\psi_{\mathrm{triv},y}}{\psi^2_{\mathrm{triv},y}(1)}\varphi_0(1)y\bigg)(A+Bx)\bigg),
\end{equation}
and by the separation of variables, we have
\begin{equation}\label{zeta0}\zeta_{[w_0]}=-(A+Bx)\frac{\varphi_0(1)}{\psi_{\mathrm{triv},y}(1)}y.\end{equation}
Then the first component of the iteration equation \eqref{iteration} is
\begin{equation}\label{itefirst}
    A^2\mathcal{L}^{\mathrm{cr}}_1\Upsilon_{200}+\varepsilon A\mathcal{L}^{\mathrm{cr}}_1\Upsilon_{101}=\mathcal{A}_{\phi\phi}^0[
\phi_0,\phi_0]+2\mathcal{A}_{\phi w}^0[
\phi_0,w_0]+\mathcal{A}_{ww}^0[
w_0,w_0].
\end{equation}
Plugging $w_0$ and $\phi_0$ into \eqref{itefirst} and applying \eqref{L1}, we find that $\Upsilon_{200}$ and $\Upsilon_{101}$ satisfy \eqref{itefirst} if and only if  $\Upsilon_{200}$ and $\Upsilon_{101}$  satisfy
\begin{equation*}
\begin{aligned}
     &\Delta\Upsilon_{200}+\gamma'(\psi_{\mathrm{triv}})\Upsilon_{200}=\Theta(y),\quad \Delta\Upsilon_{101}+\gamma'(\psi_{\mathrm{triv}})\Upsilon_{101}=0,
\end{aligned}
\end{equation*}
with $\Theta$ is given in \eqref{thetadef}.
Likewise, by \eqref{L1}, we can deduce that $\mathcal{L}_1^{\mathrm{cr}}\Upsilon_{200}$ is the unique solution of
\begin{equation}\label{thetalap}
  \begin{cases}
  \Delta[\mathcal{L}_1^{\mathrm{cr}}\Upsilon_{200}]=\Theta(y)\quad&\mathrm{in~}\mathcal{R},\\
  \mathcal{L}_1^{\mathrm{cr}}\Upsilon_{200}=0&\mathrm{on~}\Gamma\cup B.
  \end{cases}
\end{equation}
By Theorem \ref{strong}, we can deduce that the unique solution of \eqref{thetalap} does not rely on the $x$-variable.

Note that the first term of \eqref{2ndderi} is
\begin{equation*}
	\bigg(\partial_y(\theta-\zeta_{[\theta|_{\Gamma}]})+\frac{\gamma(1)}{\psi_{\mathrm{triv},y}(1)}\theta\bigg)^2.
\end{equation*}
Plugging \eqref{w0}, \eqref{zeta0} and $\theta_0$ into \eqref{2ndderi}, we can obtain that
the second component of the iteration equation \eqref{iteration} is
\begin{equation*}
    \begin{aligned}
        &\mathcal{L}^\mathrm{cr}_2\Upsilon_{101}=-\frac{\varphi_0(1)}{\psi_{\mathrm{triv},y}(1)},\\
        &\mathcal{L}^\mathrm{cr}_2\Upsilon_{200}=-\bigg(\varphi_0'(1)-\varphi_0(1)+\frac{\gamma(1)}{\psi_{\mathrm{triv},y}(1)}\bigg)^2-\psi_{\mathrm{triv},y}(1)\partial_y[\mathcal{L}_1^{\mathrm{cr}}\Upsilon_{200}]+(\mu-4\alpha^0)\frac{\varphi_0^2(1)}{\psi_{\mathrm{triv},y}^2(1)}.
    \end{aligned}
\end{equation*}
Likewise, applying the argument to simplify \eqref{L2}, we find that $\Upsilon_{101}$ and $\Upsilon_{200}$ satisfy that
\begin{equation*}
\begin{aligned}
    &\psi_{\mathrm{triv},y}(1)\partial_y\Upsilon_{101}-\bigg(\frac{\alpha^0}{\psi_{\mathrm{triv},y}(1)}-\gamma(1)\bigg)\Upsilon_{101}=-\frac{\varphi_0(1)}{\psi_{\mathrm{triv},y}(1)}\quad\mathrm{on~}\Gamma,\\
    \end{aligned}
\end{equation*}
and
\begin{equation*}
\begin{aligned}
  \psi_{\mathrm{triv},y}(1)\partial_y\Upsilon_{200}-\bigg(\frac{\alpha^0}{\psi_{\mathrm{triv},y}(1)}-\gamma(1)\bigg)\Upsilon_{200}=&
  -\bigg(\varphi_0'(1)-\varphi_0(1)+\frac{\gamma(1)}{\psi_{\mathrm{triv},y}(1)}\bigg)^2+(\mu-4\alpha^0)\frac{\varphi_0^2(1)}{\psi_{\mathrm{triv},y}^2(1)}\\
  =&
  -\bigg(\varphi_0'(1)-\varphi_0(1)+\frac{\gamma(1)}{\psi_{\mathrm{triv},y}(1)}\bigg)^2\\&+\left(\psi^2_{\mathrm{triv},y}(1)-
  \frac{4}{\int_0^1\frac{dy}{\psi_{\mathrm{triv},y}^2(y)}}\right)\frac{\varphi_0^2(1)}{\psi_{\mathrm{triv},y}^2(1)}=C_0\quad\mathrm{on~}\Gamma.
  \end{aligned}
\end{equation*}
In summary, we conclude that $\Upsilon_{200}$ and $\Upsilon_{101}$ satisfy
\begin{equation}\label{U200}
  \begin{cases}
\Delta\Upsilon_{200}+\gamma'(\psi_{\mathrm{triv}})\Upsilon_{200}=\Theta(y)\quad&\mathrm{in~}\mathcal{R},\\
\psi_{\mathrm{triv},y}(1)\partial_y\Upsilon_{200}-\bigg(\frac{\alpha^0}{\psi_{\mathrm{triv},y}(1)}-\gamma(1)\bigg)\Upsilon_{200}=C_0&\mathrm{on~}\Gamma,\\
\Upsilon_{200}=0 &\mathrm{on~}B,
  \end{cases}
\end{equation}
and
\begin{equation}\label{101eq}
  \begin{cases}
\Delta\Upsilon_{101}+\gamma'(\psi_{\mathrm{triv}})\Upsilon_{101}=0\quad&\mathrm{in~}\mathcal{R},\\
\psi_{\mathrm{triv},y}(1)\partial_y\Upsilon_{101}-\bigg(\frac{\alpha^0}{\psi_{\mathrm{triv},y}(1)}-\gamma(1)\bigg)\Upsilon_{101}= -\frac{\varphi_0(1)}{\psi_{\mathrm{triv},y}(1)}&\mathrm{on~}\Gamma,\\
\Upsilon_{101}=0 &\mathrm{on~}B.
  \end{cases}
\end{equation}
Differentiating \eqref{U200} with respect to $x$, we find that $\Upsilon
_{200,x}$ satisfies
\begin{equation*}
    \begin{cases}
\Delta\Upsilon_{200,x}+\gamma'(\psi_{\mathrm{triv}})\Upsilon_{200,x}=0 \quad&\mathrm{in~
}\mathcal{R},\\
\psi_{\mathrm{triv},y}(1)\partial_y\Upsilon_{200,x}-\bigg(\frac{\alpha^0}{\psi_{\mathrm{triv},y}(1)}-\gamma(1)\bigg)\Upsilon_{200,x}=0&\mathrm{on~}\Gamma,\\
\Upsilon_{200,x}=0&\mathrm{on~}B.
    \end{cases}
\end{equation*}
In other words, $\Upsilon_{101,x}$ and $\Upsilon_{200,x}$ are in the kernel of $\mathcal{L}^{\mathrm{cr}}$.
Thus $\Upsilon_{101}$ must have the form
\begin{equation}\label{ansatz101}
	\Upsilon_{101}(x,y)=\bigg(A_1x+\frac{B_1}{2}x^2\bigg)\varphi_0(y)+g(y),
\end{equation}
for some function $g$ to be determined. Plugging \eqref{ansatz101} into \eqref{101eq} and applying the fact that $\varphi_0\in\mathrm{Ker~}\mathcal{L}^{\mathrm{cr}}$, we can see that $g$ satisfies
\begin{subequations}\label{geq}
	\begin{align}
		&g_{yy}+\gamma'(\psi_{\mathrm{triv}})g=-B_{1}\varphi_0(y), \label{geq1}\\
		&g_y(1)-\tilde\alpha g(1)=-\frac{\varphi_0(1)}{\psi_{\mathrm{triv},y}^2(1)},	\\
		&g(0)=0.\end{align}
\end{subequations}
Multiplying \eqref{geq1} by $\varphi_0$ and integrating by parts, we find that
\[-\frac{\varphi_0^2(1)}{\psi_{\mathrm{triv},y(1)}^2}=-B_1\int_0^1\varphi_0^2(y)dy,\]
which implies that $B_1>0$. Then we can deduce that $f_{101}>0.$

Likewise, we assume $\Upsilon_{200}(x,y)$ has the form
	\begin{equation*}
		\Upsilon_{200}=\bigg(A_2x+\frac{B_2}{2}x^2\bigg)\varphi_0(y)+k(y),
	\end{equation*}
we can see that $k$ satisfies
\begin{subequations}\label{keq}
	\begin{align}
		&k_{yy}+\gamma'(\psi_{\mathrm{triv}})k=\Theta(y)-B_2\varphi_0(y),\label{keq1}\\
		&k_y(1)-\tilde\alpha k(1)=\frac{C_0}{\psi_{\mathrm{triv},y}(1)} ,\\
		&k(0)=0,
	\end{align}
\end{subequations}
Multiplying equation \eqref{keq1} by $\varphi_0$ and integrating by parts, we obtain from \eqref{M0} that
\begin{equation*}
	-B_2\int_{0}^1\varphi_0^2(y)dy=\frac{C_0\varphi_0(1)}{\psi_{\mathrm{triv},y}(1)}-\int_0^1\Theta(y)\varphi_0(y)dy=M_0.
\end{equation*}
If $M_0<0$, we have $B_2 > 0$ and thus $f_{200} > 0$. Otherwise, if $M_0>0$, then $B_2 < 0$ and $f_{200} < 0$.
 \end{proof}

The reduced ordinary differential equation \x{reducedode1} corresponds to the operator equation \[\mathcal{F} \circ \mathcal{T}(\theta, \alpha) = 0.\] As the type of waves (elevation or depression) is determined by the vorticity function, the corresponding criteria can be established.
We assume that the vorticity function satisfies either
$M_0>0$ or $M_0<0$, where $M_0$ is defined as in \eqref{M0}.
We will show that the small-amplitude solitary waves are depression solitary waves if $M_0>0$, while $M_0<0$ corresponds to the elevation solitary waves.

\begin{theorem}\label{smallthm}
Let $\gamma\in C^{2+1}_{\mathrm{loc}}(\R)$ such that \eqref{nocritical} holds. Then there exists a continuous one-parameter curve
    \begin{equation*}
        \tilde{\mathscr{C}}_{\mathrm{loc}}=\{(\theta^\varepsilon,\alpha^\varepsilon):0<\varepsilon\ll 1\}\subset \tilde{\mathcal{X}}\times\R
    \end{equation*}
    of nontrivial symmetric solution $\mathcal{F}\circ\mathcal{T}(\theta,\varepsilon)=0$ with the asymptotic expansion in $C_b^{3+\beta}$
    \begin{equation*}
        \theta^\varepsilon(x,1)=-\frac{3\varepsilon f_{200}}{2f_{101}}\sech^2\bigg(\frac{\sqrt{\varepsilon f_{101}}}{2}X\bigg)+\mathcal O(\varepsilon^{2+\frac{1}{2}}).
    \end{equation*}
     In other word, there exists a continuous one-parameter curve
    \begin{equation*}
       \mathscr{C}_{\mathrm{loc}}=\{(\phi^\varepsilon,w^\varepsilon,\alpha^\varepsilon):0<\varepsilon\ll 1\}\subset\mathcal{X}\times\R,
    \end{equation*}
    where
    \begin{equation}\label{wsmall}
w^\varepsilon(x)=\frac{3\varepsilon f_{101}}{2f_{200}\psi_{\mathrm{triv},y}(1) }\sech^2\bigg(\frac{\sqrt{\varepsilon f_{101}}}{2}x\bigg)+\mathcal O(\varepsilon^{2+\frac{1}{2}}).
    \end{equation}
Moreover, the following properties hold

\noindent $(i)$ when the vorticity function $\gamma$ satisfies $M_0>0$, the solutions on $\mathscr{C}_{\mathrm{loc}}$ satisfy
    \begin{equation}
    \label{nodaldepression}
    \eta_x>0\mathrm{~in~}(\mathcal{R}\cup\Gamma)\cap\{x>0\}\quad\mathrm{
    and}\quad\eta_x<0\mathrm{~in~}(\mathcal{R}\cup\Gamma)\cap\{x<0\};
\end{equation} while the vorticity function $\gamma$ satisfies $M_0<0$, the solutions on $\mathscr{C}_{\mathrm{loc}}$  satisfy
\begin{equation}
	\label{nodalproperty}
	\eta_x<0\mathrm{~in~}(\mathcal{R}\cup\Gamma)\cap\{x>0\}\quad\mathrm{
    and}\quad\eta_x>0\mathrm{~in~}(\mathcal{R}\cup\Gamma)\cap\{x<0\}.
\end{equation}
$(ii)$ if $(\phi,w)\in \mathcal{X}$, $\varepsilon>0$ is sufficiently small and $w$ satisfies   \eqref{nodaldepression} or \eqref{nodalproperty}, then $\mathcal{F}(\phi, w,\alpha^\varepsilon)=0$ implies that $w=w^\varepsilon$.

\noindent $(iii)$ for all  $0<\varepsilon\ll 1$, the linearized operator $\mathcal{F}_{(\phi,w)}(\phi^\varepsilon,w^\varepsilon,\alpha^\varepsilon)$ is invertible.
\end{theorem}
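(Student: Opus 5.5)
The plan is to analyze the planar reduced ODE \eqref{reducedode1} from Theorem \ref{coordinate} via a KdV-type scaling. Then the solitary-wave orbit is lifted back through the coordinate map $\Upsilon$ and the $\mathcal{T}$-isomorphism.

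\textbf{Existence and the expansion.} By the reversibility noted in the proof of Theorem \ref{coordinate}, $f$ is even in $B$. We set
\[
q(x)=\varepsilon Q(X),\qquad X=\sqrt{\varepsilon}\,x .
\]
Using \eqref{taylor}, the rescaled equation becomes
\[
Q''=f_{101}Q+f_{200}Q^2+\mathcal{O}(\varepsilon^{1/2}).
\]
Its $\varepsilon=0$ limit has the explicit homoclinic orbit
\[
Q_0(X)=-\frac{3f_{101}}{2f_{200}}\,\sech^2\Big(\frac{\sqrt{f_{101}}}{2}X\Big),
\]
which uses $f_{101}>0$. This orbit is transversal within the class of even solutions. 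Reversibility then gives persistence of a symmetric homoclinic orbit for small $\varepsilon>0$, either by the standard argument that the unstable manifold of the hyperbolic origin meets the symmetry line $\{B=0\}$ transversally, or directly from the first integral of the reversible planar system. The persistent orbit satisfies $Q=Q_0+\mathcal{O}(\varepsilon^{1/2})$. For $\varepsilon$ small, $(q,q',\alpha)$ stays in $V$, so Theorem \ref{coordinate}(ii) produces $\theta^\varepsilon$. The estimate $\Upsilon=\mathcal{O}(\ldots)$ together with \eqref{tiso} gives the expansion of $\theta^\varepsilon(x,1)$. Since $w=-\theta|_\Gamma/\psi_{\mathrm{triv},y}(1)$, we obtain \eqref{wsmall}. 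Exponential decay of the homoclinic orbit yields $\theta^\varepsilon\in\tilde{\mathcal{X}}$, and continuity in $\varepsilon$ follows from smoothness of $f$.

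\textbf{(i) Nodal property.} On $\Gamma$, $\eta_x=w^\varepsilon_x$. The sign of $w^\varepsilon$ is that of $-f_{200}$, and $M_0f_{200}<0$ by Theorem \ref{coordinate}. Hence $M_0>0$ gives $w^\varepsilon>0$ (elevation), and we must match this with \eqref{nodaldepression} or \eqref{nodalproperty} carefully. The monotonicity of $Q$ on the half-line follows because the homoclinic orbit in the phase plane has $Q'\neq0$ for $X\neq0$. For the interior, $\eta_x$ is harmonic, odd in $x$, vanishes on $B$ and at $x=0$, and has a strict sign on $\Gamma\cap\{x>0\}$. Applying the maximum principle on the quarter strip, with decay at infinity, gives the strict sign in $\mathcal{R}\cap\{x>0\}$.

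\textbf{(ii) Uniqueness.} A solution satisfying the nodal property and having small norm lies in $U$. This smallness is where I would invoke a priori smallness; I expect to need the hypothesis that $(\phi,w)$ is close to zero, which is implicit in the center manifold statement. Then $q=\theta(\cdot,1)$ is a bounded solution of the reduced ODE that decays at infinity and is monotone on each half-line. The only such orbits are the origin and the homoclinic orbit. The nodal sign excludes the trivial solution, and evenness fixes the translation, so $w=w^\varepsilon$.

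\textbf{(iii) Invertibility.} This is the main obstacle. By Lemma \ref{localproper}, $\mathcal{F}_{(\phi,w)}(\phi^\varepsilon,w^\varepsilon,\alpha^\varepsilon)$ is Fredholm of index $0$, being a compact-type perturbation of the invertible limiting operator $\mathcal{L}^0$ from Lemma \ref{invertiblelemma} (the index is constant along the curve starting at the trivial solution). It therefore suffices to show a trivial kernel in the even space. A kernel element $\dot\theta$ decays and lies in the tangent space of the center manifold, so it corresponds to a decaying even solution of the linearization of the reduced ODE about $q^\varepsilon$. That linearization is
\[
v''=\partial_Af\,v+\partial_Bf\,v' .
\]
Its decaying solutions are spanned by $q^{\varepsilon\prime}$, which is odd. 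Hence the even kernel is trivial. The delicate point is justifying that kernel elements of the full linearization are captured by the linearized reduced equation. I would argue this by differentiating the center manifold reduction, or by a perturbative spectral argument in the scaled variables, where the limiting operator $\partial_X^2-f_{101}-2f_{200}Q_0$ has kernel spanned only by the odd function $Q_0'$.
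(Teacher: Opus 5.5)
Your overall route is the paper's. You use the KdV scaling of \eqref{reducedode1}, reversible persistence of the $\sech^2$ homoclinic, and lifting through $\Upsilon$ and $\mathcal{T}$. For (iii) you reduce kernel elements to bounded even solutions of the linearized reduced ODE, whose only decaying solution is the odd $q^{\varepsilon\prime}$, and combine this with index zero. The step you call delicate is exactly what the paper imports from Theorem 1.9 of Chen--Walsh--Wheeler, and the index-zero statement is Lemma \ref{fredprop}, not Lemma \ref{localproper}.

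The step that fails as written is the sign bookkeeping in (i). You assert that $w^\varepsilon$ has the sign of $-f_{200}$ and conclude that $M_0>0$ gives an elevation wave. Followed through, this proves (i) with the two cases interchanged. The minus sign in $\mathcal{T}$ has been lost. It is true that $\theta^\varepsilon(\cdot,1)\approx\varepsilon Q_0$ with $Q_0=-3f_{101}/(2f_{200})$ has the sign of $-f_{200}$. But $w=-\theta|_\Gamma/\psi_{\mathrm{triv},y}(1)$ with $\psi_{\mathrm{triv},y}(1)>0$, so $w^\varepsilon$ has the sign of $f_{200}$, exactly as \eqref{wsmall} displays. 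With $f_{101}>0$ and $M_0f_{200}<0$, the case $M_0>0$ gives $f_{200}<0$, hence $w^\varepsilon<0$ and $w^\varepsilon_x>0$ on $\Gamma^+$, which is \eqref{nodaldepression}. The case $M_0<0$ gives an elevation wave and \eqref{nodalproperty}. This computation should replace your ``match carefully'' remark.

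Once the sign is corrected, your interior argument is cleaner than the paper's. The paper reads off the sign of $\theta^\varepsilon_x$ on $\Gamma^+$, pushes it into $\mathcal{R}^+$ with Theorem \ref{strong}, and then transfers it to $\eta_x$ through $\mathcal{T}$. However, $\theta_x$ satisfies no scalar equation to which the maximum principle plainly applies. You instead use that $\eta_x=\partial_x\zeta_{[w]}$ is harmonic, vanishes on $B$ and (by evenness) on $\{x=0\}$, decays, and equals $w_x$ on $\Gamma$. So only the boundary sign is needed.

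In (ii) you are right that smallness of $(\phi,w)$ must be assumed; the paper uses it implicitly as well. But the assertion that the only decaying orbits staying in $V$ are the origin and the homoclinic orbit still needs a reason: you must rule out that the other branch of the unstable manifold returns to the origin inside $V$. One way is the following. A homoclinic orbit must cross $\{q'=0\}$. On the side opposite to $Q_0$, both $f_{101}\varepsilon q$ and $f_{200}q^2$ have the sign of $q$. So along that branch $q'$ and $q''=f(q,q',\alpha)$ both keep the sign of $q$ while in $V$ (after checking the remainder in \eqref{taylor} is dominated). Hence $|q'|$ only grows and the orbit leaves $V$. Equivalently, the first integral you invoke for existence shows the same.
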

\begin{proof}
Without loss of generality, we prove the result for the case $M_0<0$. The other case can be analyzed similarly.  Introducing the scaled variables
    \begin{equation*}
        x=|\varepsilon|^{-1/2}X,\quad q(x)=\varepsilon Q(X),\quad q_x(x)=\varepsilon|\varepsilon|^{1/2}Q(X),
    \end{equation*}
    the expansion \eqref{taylor} yields
    \begin{equation}
        \label{reducedq}
        Q_{XX}=f_{101}Q+f_{200}Q^2+\mathcal O(|\varepsilon|^{1/2}(|Q|+|P|)),
    \end{equation}
when $\varepsilon\to 0^+$, we have an explicit homoclinic orbit
\begin{equation*}
    Q(X)=-\frac{3f_{101}}{2f_{200}}\sech^2\bigg(\frac{\sqrt{f_{101}}}{2}X\bigg),
\end{equation*}
joining the point $(0,0)$ to itself and intersecting the $Q-$axis at the point $(Q_0,0):=(-\frac{3f_{101}}{2f_{200}},0)$. See Figure \ref{fig:enter-label}.
In particular, the unstable and stable manifolds meet at $(Q_0,0)$. By the stable manifold theorem, the unstable manifold depends uniformly smoothly on $\varepsilon$. Combining this with the fact that the reversibility symmetry of \eqref{reducedq} is independent of $\varepsilon$, we know that for sufficiently small $\varepsilon$ the unstable manifold intersects the $Q$-axis transversally at a point close to $(Q_0,0)$ (see for example \cite{kirch}). Thus we can deduce that, for $0<\varepsilon\ll1$, there exists a homoclinic orbit to the origin.
\begin{figure}
    \centering
    \includegraphics[width=0.5\linewidth]{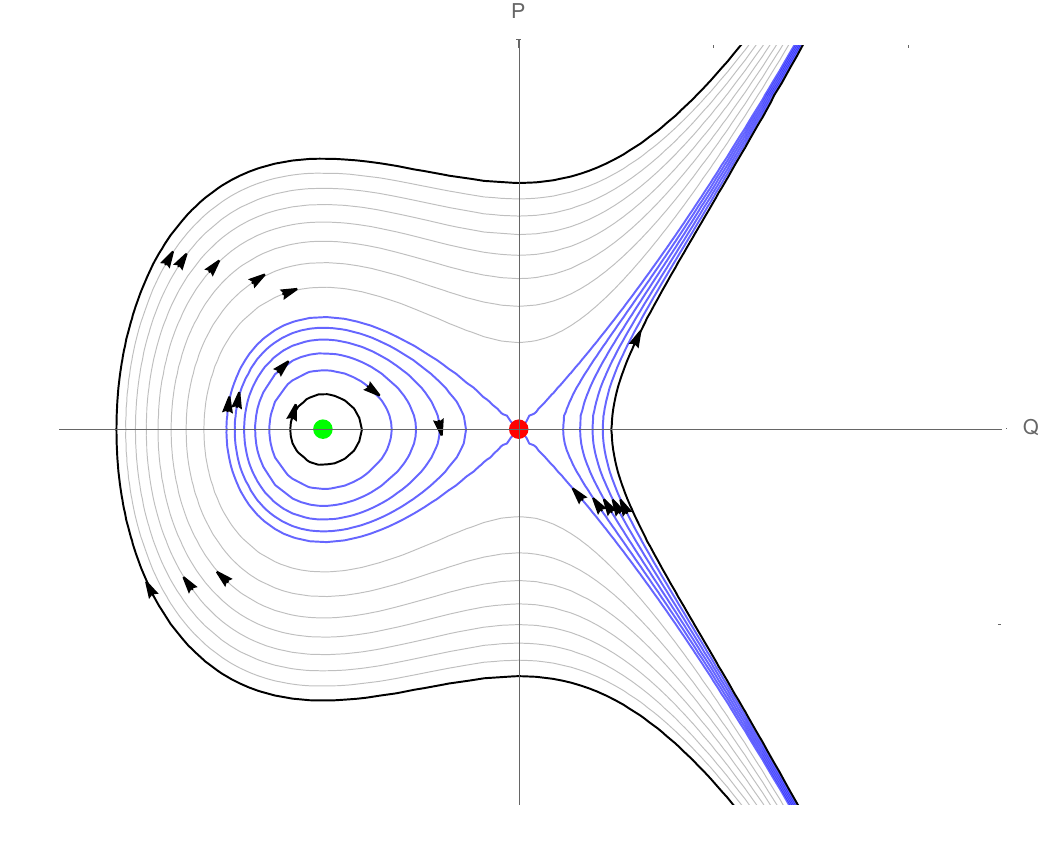}
    \caption{The phase portrait of equation \eqref{reducedq} when $\varepsilon\to0^+$ and $M_0<0$.}
    \label{fig:enter-label}
\end{figure}
The same argument shows that the trajectory connecting $(0,0)$ to $(Q_0,0)$ also remains in the quadrant $\{Q>0,P>0\}$. We therefore get that $(\theta ^\varepsilon,\alpha^\varepsilon)$ satisfies the monotonicity property
\begin{equation*}
  \theta_x^\varepsilon<0\quad\mathrm{on~}\Gamma\cap\{x>0\}.
\end{equation*}
It follows from Theorem \ref{strong} that
 \begin{equation*}
    \theta_x^\varepsilon<0\quad\mathrm{on~}(\Gamma\cup\mathcal{R})\cap\{x>0\}.
 \end{equation*}
Similarly, for $x<0$ we get that
\begin{equation*}
  \theta_x^\varepsilon>0\quad\mathrm{on~}(\Gamma\cup\mathcal{R})\cap\{x<0\}.
\end{equation*}
By the definition of $\mathcal{T}$-isomorphism and the sign of $\psi_{\mathrm{triv},y}(1)$, we know that \eqref{nodalproperty} holds.

Now we prove $(ii)$. We assume that we have a solution $(\theta,\alpha^\varepsilon)$ to problem $\mathcal{L}(\theta,\alpha^\varepsilon)=0$. By the properties of the center manifold, $\theta$ is determined by a homoclinic orbit of the reduced ODE. In particular, for this solution to be a wave of elevation satisfying \eqref{nodalproperty}, the homoclinic orbit needs to lie in the left half-plane $\{Q<0\}$. We must therefore necessarily have $\theta=\theta^\varepsilon$, up to translation. Indeed, any solution $\theta$ which is not a translation of $\theta^\varepsilon$ must lie in the right half-plane $\{Q>0\}$ for large $|x|$, and can therefore not be a wave of elevation. By the isomorphism property of $\mathcal{T}$ ,  we can deduce that the property $(ii)$ holds.

Finally, we show that $(iii)$ holds. It follows from \cite[Theorem 1.9]{chennonlinearity} that $\dot{\theta}\in\mathrm{Ker~}\mathcal{L}(\phi,w,\alpha^\varepsilon)$ only if $\dot{q}=\dot{\theta}(\cdot,0)$ solves the linearized reduced ODE
\begin{equation*}
  \dot{q}''=\nabla_{(q,q')}f(q,q',\varepsilon)\cdot(\dot{q},\dot{q}').
\end{equation*}
The corresponding rescaled quantities $(\dot{Q},\dot{P})$ solve the planar system
	\begin{equation}\label{ds}
		\begin{pmatrix}
		\dot{Q} \\ \dot{P}
		\end{pmatrix}_X
		=\mathcal{M}(X)
		\begin{pmatrix}
		\dot{Q} \\ \dot{P}
		\end{pmatrix}
	\end{equation}
with
\begin{equation*}
  \lim_{X\to\pm\infty}\mathcal{M}(X)=\begin{pmatrix}
		0 &1 \\f_{101}+\mathcal{O}(\varepsilon^{1/2}) &\mathcal{O}(\varepsilon^{1/2})
		\end{pmatrix}.
\end{equation*}
As $X\to\pm\infty$, the eigenvalues of $\mathcal{M}$ approach $\pm\sqrt{f_{101}}+o(\varepsilon)$. Since this means that the eigenvalues are not purely imaginary, the dynamical system \eqref{ds} only admits two linearly independent solutions: $q_1=\theta_x^\varepsilon$, which we can rule out since it is not even and $q_2$ which grows exponentially and is thus also not an admissible solution. Therefore, the only uniformly bounded solution to the reduced linearized problem is the trivial solution. \cite[Theorem 1.9]{chennonlinearity}  enables us to conclude that the kernel of $\mathcal{L}(\phi^\varepsilon,w^\varepsilon,\alpha)$ must be trivial. By Lemma \ref{fredprop}, the linearized operator $\mathcal{L}(\phi^\varepsilon,w^\varepsilon,\alpha^\varepsilon)$  is Fredholm of index zero $\mathcal{X}\to\mathcal{Y}$, and thus be invertible.
\end{proof}

\begin{example}
When the vorticity is constant $\gamma(s)=-\gamma$, the corresponding eigenfunction $\varphi_0(y)=y$. In this case,
 \[\Theta(y)=2\gamma\frac{\varphi_0^2(1)}{\psi_{\mathrm{triv},y}^2(1)}=\frac{8\gamma}{(2+\gamma)^2},\]
and
 \begin{equation*}
 	\begin{aligned}
 M_0=-\frac{\gamma^2}{\psi_{\mathrm{triv},y}^3(1)}+&\frac{(1+\gamma/2)^2-4+\gamma^2}{\psi_{\mathrm{triv},y}^3(1)}-2\int_{0}^1\gamma\frac{\varphi_0^2(1)}{\psi_{\mathrm{triv},y}^2(1)}ydy
 		&=-\frac{\frac{\gamma^2}{4}+3}{\psi_{\mathrm{triv},y}^3(1)}=-\frac{2\left(\gamma^2+12\right)}{(2+\gamma)^3}.
 	\end{aligned}
 \end{equation*}
 Then we can compute that \[f_{200}=B_2=-3M_0=\frac{\frac{3\gamma^2}{4}+9}{\psi_{\mathrm{triv},y}^3(1)}=\frac{6\left(\gamma^2+12\right)}{(2+\gamma)^3}>0,\]
\[\psi_{\mathrm{triv},y}(1)f_{200}=\frac{\frac{3\gamma^2}{4}+9}{\psi_{\mathrm{triv},y}^2(1)}=\frac{3\left(\gamma^2+12\right)}{(2+\gamma)^2},\]
 \[f_{101}=B_1=\frac{3}{\psi_{\mathrm{triv},y}^2(1)}=\frac{12}{(2+\gamma)^2}>0.\]
Consequently, it follows from \eqref{wsmall} that the small-amplitude solitary wave is given as
 \[w^{\varepsilon}(x)=\frac{6\varepsilon}{\gamma^2+12}\sech^2\bigg(\frac{\sqrt{3\varepsilon}}{|2+\gamma|}x\bigg)+\mathcal O(\varepsilon^{2+\frac{1}{2}}),\]
 which is clearly an elevation solitary wave.
  \end{example}

\begin{lemma}\label{fredprop}
    Let $\delta_1,\delta_2>0$ be sufficiently small. Then for any solution $(\phi, w,\alpha)\in \mathcal{U}$ to ${\mathcal{F}}(\phi,w,\alpha)=0$ with $\|(\phi,w)\|_{\mathcal{X}}<\delta_1$ and $0<\alpha_{\mathrm{cr}}-\alpha<\delta_2$, the linearized operator $\partial_{(\phi,w)}\mathcal{F}(\phi,w,\alpha)$ is Fredholm with index zero both as a map $\mathcal{X}_b\to\mathcal{Y}_b$ and as a map $\mathcal{X}\to\mathcal{Y}$.
\end{lemma}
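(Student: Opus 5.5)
The plan is a continuity (homotopy) argument for the Fredholm index, with semi-Fredholmness supplied by the local properness results of Section 3. Throughout we pass to the ``good unknown'' via the $\mathcal{T}$-isomorphism. Since $\mathcal{T}$ is a bounded isomorphism $\tilde{\mathcal{X}}_b\to\mathcal{X}_b$ and $\tilde{\mathcal{X}}\to\mathcal{X}$, the operator $\partial_{(\phi,w)}\mathcal{F}(\phi,w,\alpha)$ is Fredholm of index zero if and only if $\mathcal{L}(\phi,w,\alpha)=\partial_{(\phi,w)}\mathcal{F}(\phi,w,\alpha)\circ\mathcal{T}$ is. So it suffices to study $\mathcal{L}$.

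\emph{Step 1: semi-Fredholm property.} For $(\phi,w,\alpha)\in\mathcal{U}$, Lemma \ref{localproper} says that $\mathcal{L}(\phi,w,\alpha)$ is locally proper on both pairs of spaces. A bounded linear operator that is locally proper has finite-dimensional kernel and closed range; this is standard, since a bounded sequence in the kernel has a convergent subsequence. Hence $\mathcal{L}(\phi,w,\alpha)$ is semi-Fredholm with index in $\mathbb{Z}\cup\{-\infty\}$. This step uses the Schauder estimate \eqref{schauder} and the fact that the limiting operator at $x\to\pm\infty$ is $\mathcal{L}^0$ at the same $\alpha<\alpha_{\mathrm{cr}}$. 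That limiting operator has trivial kernel by Lemma \ref{trivialkernel}.

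\emph{Step 2: homotopy to the trivial solution.} Consider the path $t\mapsto(t\phi,tw,\alpha)$, $t\in[0,1]$. Because $\|(\phi,w)\|_{\mathcal{X}}<\delta_1$ with $\delta_1$ small, and $\sigma(0,\alpha)=\mu>0$, the quantity $\sigma(tw,\alpha)$ stays bounded below along the path. Thus the whole path lies in $\mathcal{U}$, and Step 1 applies at every $t$. The map $t\mapsto\mathcal{L}(t\phi,tw,\alpha)$ is continuous in operator norm, because $\mathcal{F}$ is $C^1$ (indeed analytic) in $(\phi,w)$ near the origin, being composed of the solution operators $\mathcal{A}$ and $\zeta_{[\cdot]}$ and Nemytskii maps of the $C^{2+1}$ function $\gamma$. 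The index of semi-Fredholm operators is locally constant in norm, so it is constant along the path. At $t=0$, Lemma \ref{invertiblelemma} shows that $\mathcal{L}^0=\mathcal{L}(0,0,\alpha)$ is invertible, hence has index zero. Therefore $\mathcal{L}(\phi,w,\alpha)$ is Fredholm of index zero on $\tilde{\mathcal{X}}_b\to\mathcal{Y}_b$ and on $\tilde{\mathcal{X}}\to\mathcal{Y}$. Composing with $\mathcal{T}^{-1}$ gives the claim for $\partial_{(\phi,w)}\mathcal{F}$.

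\emph{Main obstacle.} The delicate point is the uniformity behind Step 1 along the homotopy when $\alpha$ is close to $\alpha_{\mathrm{cr}}$. The local properness of Lemma \ref{localproper} rests on $\mathcal{L}^0$ at level $\alpha$ having trivial kernel, and this degenerates as $\alpha\uparrow\alpha_{\mathrm{cr}}$, where \eqref{kernel1} gives a two-dimensional kernel. The argument is nonetheless fine for each fixed $\alpha$ with $0<\alpha_{\mathrm{cr}}-\alpha<\delta_2$. The reason is that the index is locally constant along a compact path, so no uniformity in $\alpha$ is needed, only the strict inequality $\alpha<\alpha_{\mathrm{cr}}$ at that $\alpha$.

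I would double-check the closed-range claim in Step 1 by verifying a Peetre-type estimate $\|\theta\|\le C(\|\mathcal{L}\theta\|+\|\theta\|_{C^0(|x|\le R)})$. This estimate follows from \eqref{schauder} together with the translation/limiting-operator argument of \cite{vol}, and it is the precise form in which local properness yields semi-Fredholmness.
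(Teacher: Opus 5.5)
Your argument is correct and is essentially the paper's proof. Both obtain semi-Fredholmness from local properness (Lemma \ref{localproper}), homotope back to the invertible operator $\mathcal{L}^0$ (Lemma \ref{invertiblelemma}), use local constancy of the index, and transfer the result to $\partial_{(\phi,w)}\mathcal{F}$ via $\mathcal{T}$. The only difference is the homotopy: the paper uses the straight-line family $\mathcal{L}^{(t)}=(1-t)\mathcal{L}(0,0,\alpha)+t\mathcal{L}(\phi,w,\alpha)$ and has to re-run the local-properness argument for these interpolated operators, whereas your path $t\mapsto\mathcal{L}(t\phi,tw,\alpha)$ stays among genuine linearizations at points of $\mathcal{U}$ (smallness of $\delta_1$ keeps $\sigma(tw,\alpha)>0$), so Lemma \ref{localproper} applies verbatim at every $t$.
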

\begin{proof}
The limiting operator $\mathcal{L}(\phi,w,\alpha)$ is $\mathcal{L}^0$ which is invertible by Lemma \ref{invertiblelemma}. Moreover, from Lemma \ref{localproper}, we know that $\mathcal{L}(\phi,w,\alpha)$ is semi-Fredholm $\mathcal{X}_b\to\mathcal{Y}_b$ and $\mathcal{X}\to\mathcal{Y}$.
We define the family of the operators
\begin{equation*}
    \mathcal{L}^{(t)}=(1-t)\mathcal{L}(0,0,\alpha)+t\mathcal{L}(\phi,w,\alpha)\quad\mathrm{with~}t\in[0,1].
\end{equation*}
Applying the Schauder estimates \eqref{schauder1}and arguing as in Lemma \ref{localproper}, $\mathcal{L}^{(t)}$ is also locally proper both $\mathcal{X}_b\to\mathcal{Y}_b$ and $\mathcal{X}\to\mathcal{Y}$. By continuity of the index, we can deduce that $\mathcal{L}(\phi,w,\alpha)$ is a Fredholm operator with index zero both  $\mathcal{X}_b\to\mathcal{Y}_b$ and $\mathcal{X}\to\mathcal{Y}$. By the isomorphism property of the operator $\mathcal{T}$, we can deduce that the linearized operator $\partial_{(\phi,w)}\mathcal{F}(\phi,w,\alpha)$ of $\mathcal{F}$ is Fredholm with index zero both $\mathcal{X}_b\to\mathcal{Y}_b$ and $\mathcal{X}\to\mathcal{Y}$.
\end{proof}

\section{Velocity Fields and Nodal Properties}
Let us denote the velocity components $U$ and $V$ as  functions of the conformal variables $x$ and $y$:
\begin{equation*}
    u(x,y):=U(\xi(x,y),\eta(x,y))\quad\mathrm{and}\quad v(x,y):=V(\xi(x,y),\eta(x,y)).
\end{equation*}
By the chain principle, we have
\begin{equation}
    \label{conformalvelocity}
    (u,v)=\bigg(\frac{\psi_x\eta_x+\psi_y\eta_y}{\eta_x^2+\eta_y^2},\frac{
    \psi_y\eta_x-\psi_x\eta_y}{\eta_x^2+\eta_y^2}\bigg).
\end{equation}
It is easy to see that $u$ and $v$ solves
\begin{subequations}\label{conformalvelequa}
    \begin{align}
u_x+v_y&=\gamma(\psi)\eta_x\quad&&\mathrm{in~}\mathcal{R},\label{conformalvelequa1}\\
        u_y-v_x&=\gamma(\psi)\eta_y\quad&&\mathrm{in~}\mathcal{R},\label{conformalvelequa2}\\
        u\eta_x-v\eta_y&=0\quad&&\mathrm{on~}\Gamma,\label{conformalvelequa3}\\
        u^2+v^2&=\mu-2\alpha(\eta-1) &&\mathrm{on~}\Gamma,\label{conformalvelequa4}\\
        v&=0&&\mathrm{on~}B\label{conformalvelequa5},
    \end{align}
\end{subequations}
and the following asymptotic conditions hold
\begin{equation}\label{velasy}
    \lim_{x\to\pm\infty}u(x,y)=\psi_{\mathrm{triv},y}(y),\quad
    \lim_{x\to\pm\infty}v(x,y)=0.
\end{equation}
Note that
\begin{equation}\label{psixpsiy}
    \begin{aligned}
        \psi_x=u\eta_x-v\eta_y,\quad \psi_y=u\eta_y+v\eta_x.
    \end{aligned}
\end{equation}
Moreover, it is easy to see that $v$ satisfies the equation
\begin{equation*}
\Delta v=-\gamma'(\psi)v|\nabla\eta|^2\quad\mathrm{in~}\mathcal{R}.
\end{equation*}

Now we prove that solutions to \eqref{stripstreameq} possess a monotonicity known as the nodal property. For elevation solitary waves, we show that the vertical component $\eta$ of the wave profile $\mathcal{S}$ (see \eqref{freesurface}) is strictly decreasing on each side of the wave crest, that is \x{nodalproperty} holds.
It suffices to work in the positive half of the domain $\mathcal{R}$, we denote
\begin{equation*}
\mathcal{R}^+:=\{(x,y)\in\mathcal{R}:x>0\}\quad
\mathrm{and}\quad\Gamma^+:=\{(x,y)\in\Gamma:x>0\}
\end{equation*}
The monotone property in terms of $v$  is
\begin{equation}\label{nodalv}
    v< 0\quad\mathrm{in~}\Gamma^+\cup\mathcal{R}^+.
\end{equation}
For the depression solitary waves, we have the nodal property \x{nodaldepression}
and the monotone property in terms of $v$ becomes \begin{equation*}v>0\quad \mathrm{in~} \Gamma^+\cup\mathcal{R}^+.\end{equation*}
We only prove the result for the first case. For the depression waves we can prove by the same argument.

\begin{lemma}\label{lemma5.1}
Let $(\eta,\psi,\alpha)$ solve \eqref{stripstreameq} and $v$ be  defined by \eqref{conformalvelocity}. Assume that \eqref{nodalv} holds, then $\eta_x<0$ in $\Gamma^+\cup\mathcal{R}^+$.
\end{lemma}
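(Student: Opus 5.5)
The plan is to use only the harmonicity of $\eta$ and the boundary relations on $\Gamma$. First we get the sign of $\eta_x$ on $\Gamma^+$ directly from $v$, and then we spread it into $\mathcal{R}^+$ with the maximum principle.

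\textbf{Step 1 (the sign of $\psi_y$ on $\Gamma$).} Since $\psi=1$ on $\Gamma$, we have $\psi_x=0$ there. By \eqref{stripstreameq4} and \eqref{stripk},
\[
\psi_y^2=(\mu-2\alpha(\eta-1))|\nabla\eta|^2>0 \quad\text{on } \Gamma,
\]
so $\psi_y$ never vanishes on $\Gamma$. By continuity $\psi_y$ has a fixed sign on $\Gamma$. By the asymptotic condition \eqref{streamasy}, $\psi_y\to\psi_{\mathrm{triv},y}(1)>0$ as $x\to\pm\infty$, which holds by \eqref{nocritical} and \eqref{lameq3}. Hence $\psi_y>0$ on $\Gamma$.

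\textbf{Step 2 (the sign of $\eta_x$ on $\Gamma^+$).} Put $\psi_x=0$ into \eqref{conformalvelocity}. This gives, on $\Gamma$,
\[
v=\frac{\psi_y\eta_x}{\eta_x^2+\eta_y^2}.
\]
The denominator is positive by \eqref{stripk}. Combining this with Step 1, $\eta_x$ has the same sign as $v$ on $\Gamma$. The hypothesis \eqref{nodalv} therefore yields $\eta_x<0$ on $\Gamma^+$.

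\textbf{Step 3 (propagation into $\mathcal{R}^+$).} The function $h:=\eta_x$ is harmonic in $\mathcal{R}$ by \eqref{harmoniceta}. It satisfies the following on the remaining boundary pieces and at infinity:
\begin{itemize}
\item $h=0$ on $B$, since $\eta=0$ there by \eqref{etalow}.
\item $h=0$ on $\{x=0\}$, by the evenness of $\eta$ in $x$.
\item $h\to0$ as $x\to\infty$, since $\eta\to y$ in $C^1$; this uses $\zeta\in C^2_0$, i.e. $(\phi,w)\in\mathcal{X}$.
\end{itemize}
Suppose $\sup_{\mathcal{R}^+}h=\delta>0$. Because of the decay, this supremum is attained at some point of $\overline{\mathcal{R}^+}$. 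The boundary values on $B$, on $\{x=0\}$ and on $\Gamma^+$ are all $\le0$, so the maximum would be attained at an interior point. The strong maximum principle would then force $h\equiv\delta$, contradicting the decay. Hence $h\le0$ in $\mathcal{R}^+$. Since $h<0$ on $\Gamma^+$, $h\not\equiv0$, and the strong maximum principle (Theorem \ref{strong}) gives $h<0$ in $\mathcal{R}^+$. Together with Step 2 this gives $\eta_x<0$ in $\Gamma^+\cup\mathcal{R}^+$.

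\textbf{Main obstacle.} The delicate point is Step 1, which fixes the sign of $\psi_y$ on the whole of $\Gamma$ without assuming the absence of stagnation points in the interior. This is exactly where condition \eqref{stripk} and the connectedness of $\Gamma$ enter. The maximum principle on the unbounded half-strip in Step 3 only needs the uniform decay built into the space $\mathcal{X}$.
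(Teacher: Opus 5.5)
Your proof is correct and is essentially the paper's argument. The paper observes from \eqref{conformalvelequa3} that $(u,v)$ and $(\eta_y,\eta_x)$ are parallel and nonvanishing on $\Gamma$, with the proportionality factor (exactly your $\psi_y/|\nabla\eta|^2$) forced to be positive by continuity and the asymptotics $u\to\psi_{\mathrm{triv},y}(1)>0$, $\eta_y\to 1$, and then applies the maximum principle to the harmonic function $\eta_x$; your Step 3 simply spells out the decay and boundary-value bookkeeping on the half-strip that the paper leaves implicit.
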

\begin{proof}
	The proof  is similar to that of \cite[Lemma 4.1]{susannaarma}, we give a sketch. By \eqref{conformalvelequa3}, the vector fields $(u,v)$ and $(\eta_y,\eta_x)$ are parallel and non-vanishing on $\Gamma$.  By \eqref{velasy} and \eqref{trivial}, we can deduce that $u\to\psi_{\mathrm{triv},y}(1)>0$ and $\eta_y\to1$ as $x\to\infty$, which implies $v\cdot\eta_x>0$. The remaining inequality in $\mathcal{R}^+$ can be obtained by applying Theorem \ref{strong} to the harmonic function $\eta_x$.
\end{proof}

\begin{lemma}
Let $(\eta,\psi,\alpha)$ solve \eqref{stripstreameq} and $(u,v)$ be defined by \eqref{conformalvelocity}. Then $v$ solves the following problem of linear elliptic equations
    \begin{subequations}\label{vpde}
\begin{align}
&\Delta v+\gamma'(\psi)|\nabla\eta|^2 v=0\quad&&\mathrm{in~}\mathcal{R},\label{vpde1}\\
&u(uv_y-vv_x)-v\eta_y(\gamma(1)u+\alpha)=0&&\mathrm{on~}\Gamma,\label{vpde2}\\
&v=0&&\mathrm{on~}B.
\end{align}
\end{subequations}
\end{lemma}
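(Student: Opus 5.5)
The plan is to verify the three parts of \eqref{vpde} separately. The bottom condition and the interior equation follow from earlier identities. The boundary condition on $\Gamma$ is the real computation: we differentiate the Bernoulli condition \eqref{conformalvelequa4} and the kinematic condition \eqref{conformalvelequa3} tangentially along $\Gamma$ and combine them with the first-order system \eqref{conformalvelequa1}--\eqref{conformalvelequa2}.

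\emph{Bottom and interior.} On $B$ we have $\eta=0$ for all $x$, so $\eta_x=0$ there. By \eqref{conformalvelocity}, $v=(\psi_y\eta_x-\psi_x\eta_y)/|\nabla\eta|^2$, and $\psi_x=0$ on $B$ because $\psi=0$ there. Hence $v=0$ on $B$. For \eqref{vpde1}, note that $u-iv$ is, up to the conformal factor, the complex velocity pulled back by $H$. Concretely, $v=\mathrm{Im}\big((\psi_y+i\psi_x)/(\eta_y+i\eta_x)\big)$. Since $\eta_y+i\eta_x$ is holomorphic, we compute $\Delta v$ using \eqref{stripstreameq1}. Equivalently, we take $\partial_y$ of \eqref{conformalvelequa1} and $\partial_x$ of \eqref{conformalvelequa2} and subtract to get
\[
\Delta v=\partial_y(\gamma(\psi)\eta_x)-\partial_x(\gamma(\psi)\eta_y)=\gamma'(\psi)(\psi_y\eta_x-\psi_x\eta_y),
\]
using $\eta_{xy}=\eta_{yx}$. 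By \eqref{psixpsiy}, $\psi_y\eta_x-\psi_x\eta_y=v|\nabla\eta|^2$, which gives \eqref{vpde1}. This matches the identity the paper already records just before Lemma \ref{lemma5.1}.

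\emph{Top boundary.} On $\Gamma$ we have $\psi=1$, so $\gamma(\psi)=\gamma(1)$. The two boundary relations there are \eqref{conformalvelequa3}, $u\eta_x=v\eta_y$, and \eqref{conformalvelequa4}, $u^2+v^2=\mu-2\alpha(\eta-1)$.

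\begin{enumerate}
\item Differentiate \eqref{conformalvelequa4} in $x$ along $\Gamma$:
\[
uu_x+vv_x=-\alpha\eta_x .
\]
\item Replace $u_x$ using \eqref{conformalvelequa1}: $u_x=\gamma(1)\eta_x-v_y$. This gives
\[
-uv_y+vv_x+\gamma(1)u\eta_x+\alpha\eta_x=0 .
\]
\item Multiply by $u$ and use $u\eta_x=v\eta_y$ from \eqref{conformalvelequa3}:
\[
-u(uv_y-vv_x)+v\eta_y(\gamma(1)u+\alpha)=0 .
\]
\end{enumerate}
Changing the overall sign yields \eqref{vpde2}. Step 3 needs no division by $u$ or $\eta_y$, so no nondegeneracy assumption enters. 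The only inputs are the identity $u_x+v_y=\gamma(\psi)\eta_x$ and the kinematic condition, both of which are already available. We will also double-check the derivation of \eqref{conformalvelequa1} from \eqref{conformalvelocity} and the Cauchy--Riemann relations.

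\emph{Main obstacle.} The expected difficulty is bookkeeping rather than conceptual. We must confirm that tangential differentiation along $\Gamma$ is legitimate. This holds because $\eta,\psi\in C_b^{3+\beta}$, so $u,v\in C^{2+\beta}$ up to $\Gamma$. We must also track the signs in \eqref{conformalvelequa1}--\eqref{conformalvelequa2}, which depend on the Cauchy--Riemann convention $\xi_x=\eta_y$, $\xi_y=-\eta_x$. If a sign discrepancy appears, we will recompute $u_x+v_y$ directly from \eqref{conformalvelocity}, writing $(u,v)$ as $(\psi_y+i\psi_x)/(\eta_y+i\eta_x)$ and using $\Delta\psi=-\gamma(\psi)|\nabla\eta|^2$.
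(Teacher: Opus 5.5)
Your argument follows the paper's proof step for step. In the interior you take the curl of the first-order system. On $\Gamma$ you differentiate \eqref{conformalvelequa4} tangentially, substitute \eqref{conformalvelequa1}, and use \eqref{conformalvelequa3}. The problem is the sign bookkeeping you flagged as the main risk: it actually fails, and you asserted the conclusion without doing the check.

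With \eqref{conformalvelequa1}--\eqref{conformalvelequa2} as printed, your own curl computation gives $\Delta v=\gamma'(\psi)(\psi_y\eta_x-\psi_x\eta_y)=+\gamma'(\psi)|\nabla\eta|^2v$, i.e.\ $\Delta v-\gamma'(\psi)|\nabla\eta|^2v=0$. This is \emph{not} \eqref{vpde1}, and it contradicts the identity $\Delta v=-\gamma'(\psi)v|\nabla\eta|^2$ recorded before Lemma \ref{lemma5.1}. The source is that the printed identities have the wrong sign. By the chain rule with $\xi_x=\eta_y$, $\xi_y=-\eta_x$,
\[
u_x+v_y=(U_X+V_Y)\eta_y+(U_Y-V_X)\eta_x=(\Delta\Psi\circ H)\,\eta_x=-\gamma(\psi)\eta_x,
\]
and similarly $u_y-v_x=-\gamma(\psi)\eta_y$. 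You can confirm this in two ways:
\begin{itemize}
\item \eqref{psixpsiy} and $\Delta\eta=0$ give $\Delta\psi=(u_x+v_y)\eta_x+(u_y-v_x)\eta_y$, so the printed signs would force $\Delta\psi=+\gamma(\psi)|\nabla\eta|^2$, contradicting \eqref{stripstreameq1}.
\item For the laminar flow, $u_y=\psi_{\mathrm{triv},yy}=-\gamma(\psi_{\mathrm{triv}})$.
\end{itemize}
Relatedly, $u-iv=(\psi_y+i\psi_x)/(\eta_y+i\eta_x)$, so $v$ is minus the imaginary part you wrote.

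With the corrected identities, your curl computation does give \eqref{vpde1}. But your steps 1--3 on $\Gamma$, now with $u_x=-\gamma(1)\eta_x-v_y$, produce
\[
u(uv_y-vv_x)-v\eta_y\big(\alpha-\gamma(1)u\big)=0\qquad\text{on }\Gamma .
\]
This differs from \eqref{vpde2} by $2\gamma(1)uv\eta_y$. That term does not vanish for nontrivial waves with $\gamma(1)\neq0$: far out on $\Gamma^+$ one has $u\approx\psi_{\mathrm{triv},y}(1)>0$, $\eta_y\approx1$ and $v<0$ under \eqref{nodalv}. So no consistent sign convention lets this computation deliver both \eqref{vpde1} and \eqref{vpde2} as written. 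The paper's own proof is exactly this computation and contains the same slip.

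The corrected boundary condition is the one compatible with the rest of the paper. Linearizing it at the laminar flow ($u=\psi_{\mathrm{triv},y}(1)$, $\eta_y=1$, $v=0$) gives $v_y=\tilde\alpha v$ with precisely the $\tilde\alpha$ of \eqref{kernelpde}. It is also the sign that produces the limit of $b$ asserted in the proof of Lemma \ref{closelemma2}.

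You should therefore prove the boundary condition with $\alpha-\gamma(1)u$ in place of $\gamma(1)u+\alpha$. Your bottom condition and overall strategy are fine.
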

\begin{proof}
  Taking the curl of \eqref{conformalvelequa1}-\eqref{conformalvelequa2}, we can easily obtain \eqref{vpde1}. Differentiating \eqref{conformalvelequa4} with respect to $x$ and multiplying it by $-u/2$, we obtain that
  \begin{equation}\label{uvupbc}
    -u(vv_x+uu_x+\alpha\eta_x)=0.
  \end{equation}
  Inserting \eqref{conformalvelequa1} into \eqref{uvupbc} yields
  \begin{equation*}
    u(uv_y-vv_x)-u\eta_x(u\gamma(1)+\alpha)=0.
  \end{equation*}
  Finally, \eqref{conformalvelequa3} yields  \eqref{vpde2}.
\end{proof}

\begin{prop}\label{closedcondition}
{\rm(}Closed condition{\rm)} Let $(\eta,\psi,\alpha)$ solve \eqref{stripstreameq} and $v$ be defined by \eqref{conformalvelocity}. If $v\leq 0$ on $\Gamma^+$ and $\gamma'(\psi)\leq 0$, then the strict inequality \eqref{nodalv} holds, or else $v\equiv0$.
\end{prop}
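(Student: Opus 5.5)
Since $\gamma'(\psi)\le 0$ and $|\nabla\eta|^2>0$, the function $v$ satisfies the linear equation \eqref{vpde1} with zeroth-order coefficient $c(x,y):=\gamma'(\psi)|\nabla\eta|^2\le 0$. The plan is to combine the weak maximum principle on $\mathcal{R}^+$ with the strong maximum principle and Hopf lemma (Theorem \ref{strong}), in the standard way used in \cite{susannaarma} and \cite{chenpoincare}.

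\emph{Step 1: $v\le 0$ in $\mathcal{R}^+$.} The boundary of $\mathcal{R}^+$ consists of three pieces, and $v$ is nonpositive on each.
\begin{itemize}
\item On $B$ we have $v=0$.
\item On $\Gamma^+$ we have $v\le0$ by hypothesis.
\item On the symmetry line $\{x=0\}$, $\eta$ and $\psi$ are even in $x$, so $\eta_x$ and $\psi_x$ are odd. By \eqref{conformalvelocity}, $v$ is then odd in $x$, and hence $v(0,y)=0$.
\item As $x\to+\infty$, \eqref{velasy} gives $v\to0$ uniformly in $y$.
\end{itemize}
Suppose $\sup_{\mathcal{R}^+}v>0$. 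Because $v$ decays at infinity, this positive supremum is attained at an interior point. There $\Delta v\le 0$ and $c\,v\le0$, while \eqref{vpde1} requires $\Delta v = -c\,v\ge 0$. This is compatible, so it is not yet a contradiction. We therefore apply the strong maximum principle for $L=\Delta+c$ with $c\le0$ at a nonnegative interior maximum. It forces $v$ to be a positive constant on $\mathcal{R}^+$, which contradicts $v=0$ on $B$. Hence $v\le0$ in $\overline{\mathcal{R}^+}$.

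\emph{Step 2: strictness in $\mathcal{R}^+$.} Apply the strong maximum principle again to $v\le 0$, which has maximum value $0$ and $c\le0$. Either $v<0$ in $\mathcal{R}^+$, or $v\equiv0$ in $\mathcal{R}^+$. In the second case, oddness gives $v\equiv0$ in all of $\mathcal{R}$.

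\emph{Step 3: strictness on $\Gamma^+$ (the main obstacle).} Suppose $v<0$ in $\mathcal{R}^+$ but $v(x_0,1)=0$ for some $x_0>0$. Then $(x_0,1)$ is a boundary maximum, and $v$ vanishes there. Since $v=0$ and $v\le0$ on $\Gamma^+$, the point $x_0$ is a local maximum along $\Gamma$, so $v_x(x_0,1)=0$.
\begin{itemize}
\item Substituting $v=0$ and $v_x=0$ into \eqref{vpde2} yields $u^2v_y=0$ at $(x_0,1)$.
\item We have $u\neq0$ on $\Gamma$: since $v=0$ there, $u^2=u^2+v^2=\mu-2\alpha(\eta-1)>0$ by \eqref{stripk}. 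Consequently $v_y(x_0,1)=0$.
\item Hopf's lemma (Theorem \ref{strong}) applies because $c\le0$ and the maximum value is $0$. It gives $v_y(x_0,1)>0$, since the outward normal at the top is $+y$.
\end{itemize}
These two conclusions contradict each other, so $v<0$ on $\Gamma^+$, and \eqref{nodalv} follows.

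The delicate points are that \eqref{vpde2} is a nonlinear, oblique-type condition, and that applying Hopf requires an interior ball at boundary points of the unbounded domain. Both are handled locally by the argument above, using only the conditions at the single point $(x_0,1)$ and the smoothness $v\in C^{2}$, which follows from the regularity assumption (\ref{stripstreameq}{i}).
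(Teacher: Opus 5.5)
Your proof is correct and follows the paper's argument. In both, $v$ vanishes on $B$, on the symmetry line and at infinity. The maximum principle with $c=\gamma'(\psi)|\nabla\eta|^2\le 0$ then gives $v<0$ in $\mathcal{R}^+$ unless $v\equiv 0$. At a boundary zero on $\Gamma^+$, \eqref{vpde2} collapses to $u^2v_y=0$ with $u\neq 0$, which contradicts the Hopf lemma. Your Step 1 usefully spells out the weak-maximum-principle step the paper leaves implicit. The only slips are cosmetic: the Hopf lemma is Theorem \ref{hopf}, not Theorem \ref{strong}, and $v_x=0$ is not needed because $v=0$ already kills the $vv_x$ term.
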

\begin{proof}
Assume that $v\not\equiv0$.  Since $v$ vanishes along $B$ and as $x\to\infty$, the definition \eqref{conformalvelocity} together with the evenness of $\eta$ and $\psi$ implies that $v=0$ on $L$. Since $v\leq 0$ on $\Gamma^+$ and $\gamma'(\psi)\leq 0$, the strong maximum principle in Theorem \ref{strong} implies that $v<0$ in $\mathcal{R}^+$. Now we claim that $v<0$ on $\Gamma^+.$ If not, $v$ achieves its minimum of $0$ at some point $(x_0,1)\in\Gamma^+$. Then  at this point \eqref{vpde2} simplifies to  $u^2v_y=0$. The dynamic boundary condition \eqref{stripstreameq4} and \eqref{conformalvelequa4} implies that $u\not=0$ at $(x_0,1)$, so we must have $v_y=0$, which contradicts Theorem \ref{hopf}.
\end{proof}

Now we study the behavior of $v$ as $x\to\infty$. To this end, for $M>0$, we split $\mathcal{R}^+$ into a bounded rectangle of length $2M$ and an overlapping semi-infinite strip
\begin{equation*}
    \mathcal{R}_M^+:=\{(x,y)\in\mathcal{R}:x>M\},
\end{equation*}
with the corresponding analogues for boundary components
\begin{align*}
\Gamma_{M}^{+}:=\{(x,y)\in\Gamma:x>M\},\quad B_{M}^{+}:=\{(x,y)\in B:x>M\},\quad L_{M}^{+}:=\{(x,y)\in\mathcal{R}:x=M\}.
\end{align*}

\begin{lemma}\label{closelemma1}
Let $(\eta^*,\psi^*,\alpha^*)$ be a solution of problem \eqref{stripstreameq} with  the associated velocity field $(u^*,v^*)$ form \eqref{conformalvelocity}, $\alpha^*<\alpha_{\mathrm{cr}}$ and $v^*$ satisfies \eqref{nodalv}.  Then for any $M>0$, there exists $\varepsilon
    _M>0$ such that for all solutions $(\eta,\psi,\alpha)$ to \eqref{stripstreameq} with $\|\eta-\eta^{*}\|_{C^{3}(\mathcal{R})}+\|\psi-\psi^{*}\|_{C^{3}(\mathcal{R})}+|\alpha-\alpha^{*}|<\varepsilon_{M}$, the corresponding  vertical velocity $v$ satisfies
    \begin{equation}\label{vopen}
    v<0\quad\mathrm{in~}(\mathcal{R}\cup\Gamma)\cap\{0<x\leq 2M\}.
    \end{equation}
\end{lemma}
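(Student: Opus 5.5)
The argument is a compactness/continuity argument on the bounded set $K_M:=\overline{(\mathcal{R}\cup\Gamma)}\cap\{0\le x\le 2M\}$, using the strict sign of $v^*$ in the interior together with Hopf-type boundary information along the two edges where $v^*$ vanishes. The key observation is that $v^*$ is strictly negative on $K_M$ except along the segments $\{x=0\}$ and $B$, where it vanishes identically. So closeness in $C^0$ alone is not enough; I will control the first derivatives there.

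\textbf{Step 1 (continuity of the map to $v$).} By \eqref{conformalvelocity}, $v$ is a smooth function of $\nabla\psi$ and $\nabla\eta$, since the denominator $|\nabla\eta|^2$ is bounded below by \eqref{stripk}. Hence $\|v-v^*\|_{C^1(K_M)}\le C\varepsilon$ whenever $\|\eta-\eta^*\|_{C^3}+\|\psi-\psi^*\|_{C^3}<\varepsilon$. This is where the $C^3$ closeness is used: it gives $C^2$ control of $v$, and in particular $C^1$ control.

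\textbf{Step 2 (interior, strict negativity).} For any $\delta>0$, on the compact set $K_M\cap\{x\ge\delta,\,y\ge\delta\}$ we have $v^*\le -c_\delta<0$ by \eqref{nodalv}. Step 1 then gives $v<0$ there once $\varepsilon$ is small.

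\textbf{Step 3 (neighbourhoods of the vanishing edges).} Both $v$ and $v^*$ vanish on $B$, by \eqref{conformalvelequa5}. They also vanish on $\{x=0\}$: by the evenness of $\eta,\psi$, the derivatives $\eta_x,\psi_x$ are odd, so \eqref{conformalvelocity} gives $v=0$ at $x=0$.

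I claim that $v^*_y<0$ on $B\cap\{0<x\le 2M\}$ and $v^*_x<0$ on $\{x=0\}\cap\{0<y\le 1\}$, with uniform bounds away from the corners.
\begin{itemize}
\item On $B$ this follows from the Hopf lemma (Theorem \ref{hopf}) applied to \eqref{vpde1}. Since $v^*<0$ in $\mathcal{R}^+$, the zero-order sign hypothesis is not needed at a boundary maximum of value $0$.
\item On the symmetry line I apply the same Hopf argument to $v^*$ in $\mathcal{R}^+$ at points of $\{x=0\}$, which is an interior line of $\mathcal{R}$. Alternatively, I note that $v^*$ is odd in $x$ and solves \eqref{vpde1} on all of $\mathcal{R}$, so $v^*_x\ne 0$ there by Hopf.
\end{itemize}
With these strict derivative bounds, the $C^1$ closeness from Step 1 yields $v_y<0$ near $B$ and $v_x<0$ near $x=0$. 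Integrating from the zero boundary values then gives $v<0$ in thin strips $\{0<x<\delta\}$ and $\{0<y<\delta\}$.

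\textbf{Step 4 (the top boundary and the corners).} On $\Gamma\cap\{\delta\le x\le 2M\}$, Step 2 already applies, since $v^*<0$ on $\Gamma^+$ is included in \eqref{nodalv}.

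The main obstacle is the corners $(0,0)$ and $(0,1)$, where Hopf at a single edge is not available. At $(0,0)$, $v^*$ is odd in $x$ and vanishes on $y=0$, so it behaves like $v^*_{xy}(0,0)\,xy$. I will show $v^*_{xy}(0,0)<0$ using a Serrin-type corner lemma, after reflecting oddly in $y$ across $B$ and oddly in $x$ across $\{x=0\}$. Then $C^2$ closeness, which is available from $C^3$ control of $\eta,\psi$, gives $v_{xy}<0$ near the corner, and hence $v<0$ there.

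At $(0,1)$, $v$ vanishes only on $x=0$. Here I need $v^*_x(0,1)<0$, which I will get from the boundary condition \eqref{vpde2}. If $v^*_x(0,1)=0$, then differentiating \eqref{vpde2} in $x$ and using $u\neq0$ forces $v^*_{xy}(0,1)=0$. This contradicts the Hopf/Serrin lemma for the odd function $v^*$ at a boundary point of $\Gamma$ where $v^*$ attains the value $0$.

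Combining Steps 2–4 and taking $\varepsilon_M$ to be the minimum of the finitely many smallness thresholds gives \eqref{vopen}.
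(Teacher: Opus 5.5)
Your proposal is correct and follows the paper's proof. The paper proves the same three inequalities: $v^*_x<0$ on $L^+\cup\{(0,1)\}$ (Hopf on the symmetry line; at $(0,1)$, the $x$-differentiated boundary condition \eqref{vpde2} together with oddness and Serrin's corner lemma), $v^*_y<0$ on $B^+\cap\{0<x\le 2M\}$ (Hopf), and $v^*_{xy}(0,0)<0$ (Serrin), and then cites Constantin--Strauss for the fact that these conditions are open in $C^2$, which your Steps 2--4 carry out explicitly. The odd reflections at $(0,0)$ are superfluous: Serrin's lemma applies directly at the right-angle corner of $\mathcal{R}^+$, because $v^*$ is $C^2$ up to the corner and every first and second derivative other than $v^*_{xy}$ vanishes there.
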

\begin{proof}
We begin by  considering the inequalities
\begin{subequations}\label{vstar}
    \begin{align}
        &v_x^*<0\quad\mathrm{on~}L^+\cup(0,1),\label{vstar1}\\
        &v_y^*<0 \quad\mathrm{on~}B^+\cap\{0<x\leq 2M\},\label{vstar2}\\
        &v_{xy}^*<0 \quad\mathrm{at~}(0,0).\label{vstar3}
    \end{align}
\end{subequations}
To prove \eqref{vstar1} and \eqref{vstar2}, we differentiate \eqref{vpde2} with respect to $x$ and get
\begin{equation}\label{vstarbc}
(u^*)^2v^*_{xy}-u^*(v^*_x)^2-\eta^*_y(\gamma
(1)u^*+\alpha
^*)v^*=0\quad\mathrm{at~}(0,1).
\end{equation}
Moreover, since $v$ satisfies \eqref{vpde1} and is odd in $x$, at $(0,1)$ we must  have  $v^*=v^*_{xx}=v^*_{yy}=v^*_y=0$. By \eqref{vstarbc}, $v^*_x=0$ at this point shall imply $v^*_{xy}=0$, which contradicts Theorem \ref{serrin}. Moreover, since $v^*$ vanishes along $L^+$, Theorem \ref{hopf} ensures that $v^*_x<0$ on  $L^+$. Similarly, since $v^*$ vanishes along $B^+$, Theorem \ref{hopf} gives $v^*_y<0$ on $B^+\cap\{0<x\leq 2M\}$.

 It remains to prove \eqref{vstar3} at the corner point $(0,0)$. Exactly as above we find $v^*=v^*_{xx}=v^*_{yy}=v^*_y=0$ at this point. Moreover, \eqref{vstar2} implies that $v^*_{xy}=0$ at $(0,0)$, which contradicts Theorem \ref{serrin} and hence we must have $v^*_{xy}<0$ at  $(0,0)$.
Finally, by arguing in \cite{constantinstrauss07}, it can be shown that \eqref{vopen} with \eqref{vstar} defines an open set in $C^2(\mathcal{R}^+\cap\{0\leq x\leq 2M\})$. Thus, $v$ also satisfies \eqref{vopen} and \eqref{vstar} when $\varepsilon_M$ is sufficiently small.
\end{proof}

\begin{lemma}\label{closelemma2}
Let $\alpha_0\in (0,\alpha_{\mathrm{cr}})$ be fixed and $(\eta,\psi,\alpha)$ solve \eqref{stripstreameq} with $\alpha\in(0,\alpha
    _0]$. Then there exists $\delta=\delta(\alpha)>0$ such that if for some $M>0$, $\|(\eta-y,\psi-\psi_{\mathrm{triv}})\|_{C^1(\mathcal{R}_M^+)}<\delta$ and $v\leq 0$ on $L_M^+$, then $v<0$ in $\mathcal{R}_M^+\cup\Gamma_M^+$ or else $v\equiv0$.
\end{lemma}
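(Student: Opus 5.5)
The idea is to compare $v$ on the semi-infinite strip $\mathcal{R}_M^+$ with a positive strict supersolution built from the Sturm--Liouville problem \eqref{ev}. We then apply the maximum principle to the quotient $\tilde v:=v/h$. This weighting is needed because, unlike Proposition \ref{closedcondition}, we do not assume $\gamma'\le0$. The sign of the zeroth-order term is instead supplied by the spectral gap $\nu_0(\tilde\alpha)>0$ for $\tilde\alpha<\tilde\alpha_{\mathrm{cr}}$ established in the proof of Lemma \ref{trivialkernel}.

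\textbf{Step 1: constructing the weight.} Since $\alpha\le\alpha_0<\alpha_{\mathrm{cr}}$, the corresponding $\tilde\alpha$ satisfies $\tilde\alpha\le\tilde\alpha_0<\tilde\alpha_{\mathrm{cr}}$. Fix $\tilde\alpha'\in(\tilde\alpha_0,\tilde\alpha_{\mathrm{cr}})$ and a small $\kappa>0$. Let $h=h(y)$ be the principal eigenfunction of
\[
-h''-\gamma'(\psi_{\mathrm{triv}})h=\nu h,\qquad h'(1)=\tilde\alpha' h(1),\qquad h'(0)=\kappa^{-1}h(0),
\]
a Robin perturbation of the Dirichlet condition \eqref{ev3}. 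By continuity of the principal eigenvalue in $(\tilde\alpha',\kappa)$ and the monotonicity used after \eqref{minivar}, we have $\nu=\nu(\tilde\alpha',\kappa)\ge c_0>0$ for $\kappa$ small. We can also take $h>0$ on $[0,1]$, including $y=0$. Consequently:
\begin{itemize}
\item in the interior, $\Delta h+\gamma'(\psi_{\mathrm{triv}})h=-\nu h\le -c_0h<0$;
\item on $\Gamma$, $h_y-\tilde\alpha h=(\tilde\alpha'-\tilde\alpha)h(1)>0$.
\end{itemize}

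\textbf{Step 2: perturbing to the actual coefficients.} Write \eqref{vpde} on $\Gamma_M^+$ as $u^2v_y-uvv_x-\eta_y(\gamma(1)u+\alpha)v=0$. At the laminar state ($u=\psi_{\mathrm{triv},y}(1)$, $\eta_y=1$, $v=0$ in the coefficient of $v_x$), after dividing by $u^2$ this is exactly $v_y-\tilde\alpha v=0$. The condition $\|(\eta-y,\psi-\psi_{\mathrm{triv}})\|_{C^1(\mathcal{R}_M^+)}<\delta$ makes all coefficients of \eqref{vpde1}--\eqref{vpde2} uniformly $O(\delta)$-close to the laminar ones on $\mathcal{R}_M^+$. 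These are $\gamma'(\psi)|\nabla\eta|^2$, $u$, $v$, and $\eta_y$, expressed through \eqref{conformalvelocity}. Hence for $\delta=\delta(\alpha)$ small, $h$ remains a strict supersolution in $\mathcal{R}_M^+$, and it satisfies the strict boundary inequality on $\Gamma_M^+$ for the true operator.

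\textbf{Step 3: maximum principle for $\tilde v$.} Set $\tilde v=v/h$. Then $\tilde v$ satisfies
\[
\Delta\tilde v+2\frac{\nabla h}{h}\cdot\nabla\tilde v+c\,\tilde v=0,\qquad c=\frac{\Delta h+\gamma'(\psi)|\nabla\eta|^2h}{h}<0,
\]
together with an oblique condition on $\Gamma_M^+$ of the form $b_1\tilde v_y+b_2\tilde v_x+b_0\tilde v=0$, where $b_1>0$ and $b_0<0$ by Step 2. Suppose $\sup\tilde v>0$. Since $v\to0$ as $x\to\infty$ by \eqref{velasy} and $h$ is bounded below, the supremum is attained at some point. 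It cannot lie on $L_M^+$, where $v\le0$, nor on $B_M^+$, where $v=0$. It cannot lie in $\mathcal{R}_M^+$, because $c<0$ there. It cannot lie on $\Gamma_M^+$: at a boundary maximum $\tilde v_x=0$ and $\tilde v_y\ge0$, so the boundary condition gives $b_0\tilde v=-b_1\tilde v_y\le0$, which with $b_0<0$ forces $\tilde v\le0$, a contradiction. Hence $v\le0$ on $\mathcal{R}_M^+\cup\Gamma_M^+$.

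\textbf{Step 4: upgrading to strict inequality.} We then argue as in Proposition \ref{closedcondition}, now applied to $\tilde v$, whose zeroth-order coefficient has the right sign. The strong maximum principle (Theorem \ref{strong}) gives $\tilde v<0$ in $\mathcal{R}_M^+$ unless $v\equiv0$. Hopf's lemma (Theorem \ref{hopf}) combined with the boundary condition rules out $\tilde v=0$ at a point of $\Gamma_M^+$, exactly as $u^2v_y=0$ was excluded there.

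The main obstacle is Step 1--2: producing a weight that is strictly positive up to $y=0$ while keeping a uniform positive margin $c_0$ and a strict top boundary inequality. This is what fixes $\delta$ depending only on $\alpha_0$ and the vorticity, and I would first verify that the Robin perturbation $\kappa$ preserves $\nu>0$ by continuity of the variational characterization \eqref{minivar}.
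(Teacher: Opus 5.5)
Your argument is correct in substance, and it takes a different and more robust route than the paper. The paper divides by the explicit weight $y+\varepsilon$. The equation for $f=v/(y+\varepsilon)$ still has zeroth-order coefficient $\gamma'(\psi)|\nabla\eta|^2$, so the paper invokes $\gamma'\le0$ (not among the lemma's hypotheses) for the interior maximum principle. It also needs the coefficient $b$ in \eqref{fpde2} to be positive; its laminar limit is $1-(1+\varepsilon)\tilde\alpha$, so this requires $\tilde\alpha<1$.

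In fact $y+\varepsilon$ is exactly your $h$ specialised to $\gamma'\equiv0$, with $\kappa=\varepsilon$, $\tilde\alpha'=1/(1+\varepsilon)$ and $\nu=0$. For constant vorticity $\tilde\alpha_{\mathrm{cr}}=1$, so $\tilde\alpha<1$ is the same as $\alpha<\alpha_{\mathrm{cr}}$. But if $\gamma'\le0$ and $\gamma'\not\equiv0$, then $\varphi_0''=-\gamma'(\psi_{\mathrm{triv}})\varphi_0\ge0$ together with $\varphi_0(0)=0$ gives $\tilde\alpha_{\mathrm{cr}}=\varphi_0'(1)/\varphi_0(1)>1$ (e.g.\ $\sqrt{-\lambda}\coth\sqrt{-\lambda}$ in the affine example). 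Hence the paper's claim that \eqref{criticalvalue} yields $b>0$ for all $\alpha<\alpha_{\mathrm{cr}}$ only covers $\tilde\alpha<1$, i.e.\ part of the subcritical range.

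Your principal Robin eigenfunction with $\tilde\alpha'\in(\tilde\alpha_0,\tilde\alpha_{\mathrm{cr}})$ is the right general-vorticity replacement for $y$. The gap $\nu>0$ gives the interior sign with no condition on $\gamma'$. The margin $\tilde\alpha'-\tilde\alpha_0$ covers every $\alpha\le\alpha_0$. And $\delta$ depends only on $\alpha_0$, which is the uniformity Proposition \ref{opencondition} actually uses. The paper's route buys only explicitness. The continuity in $\kappa$ that you flag is standard: the Robin eigenvalue increases to the Dirichlet value $\nu_0(\tilde\alpha')>0$ as $\kappa\downarrow0$.

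There are two things to fix. First, in Step 3 the sign of $b_0$ is reversed. Substituting $v=h\tilde v$ gives
\[
u^2h\,\tilde v_y-uvh\,\tilde v_x+\bigl(u^2h_y-\eta_y(\alpha-\gamma(1)u)h\bigr)\tilde v=0,
\]
whose zeroth-order coefficient has laminar value $\psi_{\mathrm{triv},y}^2(1)(\tilde\alpha'-\tilde\alpha)h(1)>0$. It is $b_0>0$ that turns $b_0\tilde v=-b_1\tilde v_y\le0$ into $\tilde v\le0$; with $b_0<0$, as you wrote it, that sentence is a non sequitur.

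Second, the boundary operator you display is copied from \eqref{vpde2}. Its laminar limit is $v_y-\bigl(\gamma(1)/\psi_{\mathrm{triv},y}(1)+\alpha/\psi_{\mathrm{triv},y}^2(1)\bigr)v$, not $v_y-\tilde\alpha v$. The identification with $\tilde\alpha$, on which your spectral comparison rests, holds only after correcting \eqref{conformalvelequa1}--\eqref{conformalvelequa2} to $u_x+v_y=-\gamma(\psi)\eta_x$ and $u_y-v_x=-\gamma(\psi)\eta_y$. This sign is forced by the laminar flow, since $u_y=\psi_{\mathrm{triv},yy}=-\gamma(\psi_{\mathrm{triv}})$, and it turns $\gamma(1)u+\alpha$ into $\alpha-\gamma(1)u$. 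You should derive this rather than assert it. The paper's stated limit for $b$ tacitly uses the same corrected sign.
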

\begin{proof}
    We choose $\varepsilon,\delta>0$ small enough such that
    \begin{equation}\label{bupositive}
b:=\frac{u^2-\eta_y(\gamma(1)u+\alpha)(1+\varepsilon)}{u^2}>0\quad\mathrm{and}\quad u>0\mathrm{~on~}\Gamma_M^+.
    \end{equation}
Indeed, it can be done since as $\varepsilon,\delta\to0$, we have
\[b\to 1-\bigg(\frac{\alpha}{\psi_{\mathrm{triv},y}^2(1)}-\frac{\gamma(1)}{\psi_{\mathrm{triv},y}(1)}\bigg),\]
uniformly  on $\Gamma_M^+$.
By \eqref{criticalvalue}, we can see that for $\alpha<\alpha_{\mathrm{cr}}$, we have
\[1-\bigg(\frac{\alpha}{\psi_{\mathrm{triv},y}^2(1)}-\frac{\gamma(1)}{\psi_{\mathrm{triv},y}(1)}\bigg)>0.\]
Consider the auxiliary function
$f:=\frac{v}{y+\varepsilon}.$
Some direct computation shows that $f$ satisfies the following problem
\begin{subequations}\label{fpde}
    \begin{align}
        &\Delta f+\frac{2}{y+\varepsilon}f_y+\gamma'(\psi)|\nabla\eta|^2 f=0\quad&&\mathrm{in~}\mathcal{R}_M^+,\label{fpde1}\\
        &(1+\varepsilon)u^2f_y-(1+\varepsilon)uvf_x+bf=0\quad&&\mathrm{on~}\Gamma_M^+,\label{fpde2}\\
        &f=0 &&\mathrm{on~}B_M^+.\label{fpde3}
    \end{align}
\end{subequations}
By \eqref{bupositive}, the coefficients of $f$ and $f_y$ in \eqref{fpde2} are strictly positive. Suppose that $f\not\equiv0$ and achieves its nonnegative supremum over $\mathcal{R}_M^+$ at some $(x_0,y_0)\in\mathcal{R}_M^+\cup\Gamma_M^+$. Given $\gamma'(\psi) \leq 0$, Theorem \ref{strong} implies $(x_0, y_0) \in \Gamma_M^+$. Consequently, we apply Theorem \ref{hopf} to obtain $f_y > 0$ at this point. By assumption $f\geq0$ at this point, the  left hand side of \eqref{fpde2} is strictly positive, which is a contradiction.
\end{proof}

\begin{prop}\label{opencondition}
{\rm(}Open condition{\rm)} Assume that $\gamma'(\psi)\leq 0$ and $(\eta^*,\psi^*,\alpha^*)$ is a solution of \eqref{stripstreameq} with $v^*$ from \eqref{conformalvelocity} satisfying \eqref{nodalv} and $\alpha^*<\alpha
_{\mathrm{cr}}$. Then there exists $\varepsilon>0$ such that for all solution $(\eta,\psi,\alpha)$ to \eqref{stripstreameq} with $\|\eta^*-\eta\|_{C^3(\mathcal{R})}+\|\psi^*-\psi\|_{C^3(\mathcal{R})}+|\alpha
^*-\alpha|\leq\varepsilon$, the corresponding vertical velocity $v$ also satisfies \eqref{nodalv}.
\end{prop}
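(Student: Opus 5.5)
The plan is to glue the two preceding lemmas: Lemma \ref{closelemma1} controls the bounded part $\{0<x\le 2M\}$, and Lemma \ref{closelemma2} controls the tail $\mathcal{R}_M^+$. The only real choice is $M$, which must be fixed from the reference solution $(\eta^*,\psi^*,\alpha^*)$ before the perturbation size is chosen.

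First, fix $\alpha_0\in(\alpha^*,\alpha_{\mathrm{cr}})$, so that every $\alpha$ with $|\alpha-\alpha^*|$ small lies in $(0,\alpha_0]$. (We assume $\alpha^*>0$, as is implicit in the setting $g>0$.) Lemma \ref{closelemma2} then provides $\delta>0$. Choosing $\delta$ uniform in $\alpha\in(0,\alpha_0]$ is possible because the positivity of $b$ in \eqref{bupositive} depends continuously on $\alpha$ and is strict up to $\alpha_0<\alpha_{\mathrm{cr}}$. Next, use the asymptotic conditions \eqref{streamasy} for the starred solution. Since $\eta^*-y$ and $\psi^*-\psi_{\mathrm{triv}}$ lie in $C_0^2$, we can choose $M>0$ with
\[\|(\eta^*-y,\psi^*-\psi_{\mathrm{triv}})\|_{C^1(\mathcal{R}_M^+)}<\delta/2.\]

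With $M$ fixed, Lemma \ref{closelemma1} gives $\varepsilon_M$. Take $\varepsilon<\min\{\varepsilon_M,\delta/2, \alpha_0-\alpha^*\}$. For any solution within $\varepsilon$ of the starred one, the triangle inequality gives
\[\|(\eta-y,\psi-\psi_{\mathrm{triv}})\|_{C^1(\mathcal{R}_M^+)}<\delta.\]
Lemma \ref{closelemma1} gives $v<0$ on $(\mathcal{R}\cup\Gamma)\cap\{0<x\le 2M\}$; in particular $v<0$ on $L_M^+$, since $M\le 2M$. Lemma \ref{closelemma2} then says that either $v<0$ in $\mathcal{R}_M^+\cup\Gamma_M^+$ or $v\equiv 0$ there. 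The alternative $v\equiv0$ on $\mathcal{R}_M^+$ is impossible: by continuity $v$ would vanish on $L_M^+$, contradicting the strict negativity just obtained. Combining the two regions yields \eqref{nodalv}.

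The main obstacle is the uniformity of $\delta$ in Lemma \ref{closelemma2}. As stated, $\delta=\delta(\alpha)$, and we need a single $\delta$ valid for all $\alpha$ near $\alpha^*$. I would handle this by inspecting the proof: the constraints are $u>0$ and $b>0$ on $\Gamma_M^+$, both continuous in $(\alpha,\delta,\varepsilon)$ and strict at $\delta=\varepsilon=0$ uniformly for $\alpha\le\alpha_0$. A second, minor point is to make sure the $C^3$-closeness in Lemma \ref{closelemma1} is measured on the whole strip, so that the same hypothesis also gives the $C^1$ tail bound on $\mathcal{R}_M^+$.
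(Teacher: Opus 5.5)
Your proposal is correct and follows the paper's proof essentially step for step. Both arguments fix $\alpha_0\in(\alpha^*,\alpha_{\mathrm{cr}})$, choose $M$ from the decay of the starred solution so that its tail on $\mathcal{R}_M^+$ is within $\delta/2$, and take $\varepsilon$ below $\varepsilon_M$, $\delta/2$ and $\alpha_0-\alpha^*$; they then combine Lemma \ref{closelemma1} on $\{0<x\le 2M\}$ with Lemma \ref{closelemma2} on $\mathcal{R}_M^+$. Your extra remarks fill in points the paper leaves implicit: that $\delta$ must be uniform over $\alpha\le\alpha_0$ (which is the purpose of fixing $\alpha_0$), that the alternative $v\equiv0$ is ruled out by the strict sign on $L_M^+$, and that $\varepsilon$ should be taken strictly smaller than the minimum and also smaller than $\alpha^*$.
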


\begin{proof}
Fix $\alpha_0\in(\alpha^*,\alpha_{\mathrm{
cr}})$ and choose $M>0$ such that $\|(\eta^*-y,\psi^*-\psi_{\mathrm{triv}})\|_{C^1(\mathcal{
R}_M^+)}<\frac{\delta}{2}$, where $\delta=\delta
(\alpha)$ is as in Lemma \ref{closelemma2}. Let $\varepsilon_M>0$ be chosen such that Lemma \ref{closelemma1} holds for $(\eta^*-y,\psi^*-\psi_{\mathrm{triv}})$. Let $\varepsilon=\min(\varepsilon
_M,\frac{1}{2}\delta
,|\alpha_0-\alpha^*|)$ be such that \eqref{vopen} holds. In particular, $v\leq0$ on $L_M^+$. Since $\|(\eta-y,\psi-\psi_{\mathrm{triv}})\|_{C^1(\mathcal{
R}_M^+)}<\delta$, Lemma \ref{closelemma2} implies that $v<0$ in $\mathcal{
R}_M^+\cup\Gamma_M^+$.
\end{proof}

\section{Flow Forces and Compactness}
In this section, we study an important invariant which is the so-called flow force in \cite{basu, bk, cwwz}.
In our case, the flow force is
given as
\begin{equation}
    \label{flowforcedef}
    S=\int_{x=\mathrm{const}}(P-P_{\mathrm{atm}}+U^2)dY-UVdX,
    \end{equation}
where the integral is taken over a vertical cross section of the fluid and the second term comes from the divergence form of the horizontal component \eqref{euler2} of the momentum equation,
\begin{equation*}
	(U^2+P-P_{\mathrm{atm}})_X+(UV)_Y=0.
\end{equation*}

\begin{lemma}\lb{sinvar}
    Assume that $(\psi,\eta,\alpha)$ solves \eqref{stripstreameq}. Then the flow force \eqref{flowforcedef} can be rewritten as
    \begin{equation}\lb{sxpea}
          S(x;\psi,\eta,\alpha)=\int_0^1\bigg(\frac{1}{2}(u^2-v^2)-\alpha(\eta-1)+(G(\psi)-G(1))+\frac{\mu}{2}\bigg)\eta_y+uv\eta_xdy,
    \end{equation}
    which is independent of $x$.
 \end{lemma}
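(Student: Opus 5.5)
The plan has two parts: rewrite the physical flow force \eqref{flowforcedef} in conformal variables, then show the resulting integrand is a conserved current in the strip $\mathcal{R}$.

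\textbf{Step 1 (the formula).} Parametrize the vertical cross section through a point by its preimage $\{x=\mathrm{const},\,0<y<1\}$. This is the image of the line $x=\mathrm{const}$ under $H$, so it is not a physical vertical line. That is harmless: independence of the contour is exactly what the divergence-form momentum equation gives. Along this curve $dX=\xi_y\,dy=-\eta_x\,dy$ and $dY=\eta_y\,dy$.

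Next express the pressure through Bernoulli's law in dimensionless form. We have $P-P_{\mathrm{atm}}+\tfrac12|\nabla\Psi|^2+\alpha(Y-1)+G(\Psi)=\mathrm{const}$. Evaluating on $\mathcal S$, where $\Psi=1$, $P=P_{\mathrm{atm}}$ and $|\nabla\Psi|^2+2\alpha(Y-1)=\mu$, the constant equals $\tfrac{\mu}{2}+G(1)$. Hence
\[
P-P_{\mathrm{atm}}=\tfrac{\mu}{2}-\tfrac12(u^2+v^2)-\alpha(\eta-1)-(G(\psi)-G(1)).
\]
Adding $u^2$ gives $\tfrac12(u^2-v^2)-\alpha(\eta-1)-(G(\psi)-G(1))+\tfrac{\mu}{2}$. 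The $-UV\,dX$ term becomes $+uv\eta_x\,dy$. This yields \eqref{sxpea}, up to the sign in front of $G(\psi)-G(1)$, which must be matched with the paper's sign convention for $G$ in Bernoulli's law; this only needs careful bookkeeping.

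\textbf{Step 2 (independence of $x$).} The cleanest route is to differentiate directly in conformal variables. Write $S(x)=\int_0^1(a\,\eta_y+b\,\eta_x)\,dy$, where $a$ is the bracketed scalar and $b=uv$. Using the Cauchy--Riemann relations $\eta_{xx}=-\eta_{yy}$, I will show that $\partial_x(a\eta_y+b\eta_x)=\partial_y(\,\cdot\,)$ for an explicit flux, which should be $a\eta_x-b\eta_y$ up to sign. The identities needed are \eqref{conformalvelequa1}--\eqref{conformalvelequa2} and $\nabla\psi$ from \eqref{psixpsiy}, together with $\partial G(\psi)=\gamma(\psi)\nabla\psi$. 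Equivalently, this is just the pullback of the closed one-form $(P-P_{\mathrm{atm}}+U^2)\,dY-UV\,dX$, whose closedness is the divergence form of \eqref{euler2}. Pullbacks of closed forms under the smooth map $H$ are closed, so computing in the physical plane is legitimate and I would invoke that to avoid the long algebra.

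Then $S'(x)$ equals the flux evaluated at $y=1$ minus its value at $y=0$.
\begin{itemize}
\item On $B$: $\eta=0$ gives $\eta_x=0$, and $v=0$ by \eqref{conformalvelequa5}. So the flux $a\eta_x-b\eta_y$ vanishes.
\item On $\Gamma$: $\psi=1$ and $\eta=1+w$. Combining \eqref{conformalvelequa4} with Bernoulli shows that $a$ reduces to $u^2$, i.e.\ the pressure vanishes. The flux is then $u^2\eta_x-uv\eta_y=u(u\eta_x-v\eta_y)$, which is $0$ by \eqref{conformalvelequa3}.
\end{itemize}
Hence $S'(x)=0$.

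\textbf{Main obstacle.} The hard part is the sign and orientation bookkeeping: the correct flux, the orientation of $dX$ on the curve, and the consistency of the $G$-term sign with Bernoulli. Regularity is not an issue, since $C^{3+\beta}$ suffices to differentiate under the integral.
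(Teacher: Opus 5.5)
Your proof is correct, and its skeleton is the paper's. You differentiate $S$ in $x$, use $\eta_{xx}=-\eta_{yy}$ to split the integrand into $\partial_y(a\eta_x-b\eta_y)$ plus an interior remainder, and kill the boundary flux on $\Gamma$ via \eqref{conformalvelequa3}--\eqref{conformalvelequa4}. Your check on $B$ supplies a step the paper leaves out. The $G$-sign you flagged is resolved in the paper by switching to $G(s)=-\int_0^s\gamma(t)\,dt$ inside the proof.

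The genuine difference is how the remainder $a_x\eta_y-a_y\eta_x+b_x\eta_x+b_y\eta_y$ is shown to vanish.
\begin{itemize}
\item The paper expands it with \eqref{conformalvelequa1}--\eqref{conformalvelequa2} and \eqref{psixpsiy}.
\item You recognize $(a\eta_x-b\eta_y)\,dx+(a\eta_y+b\eta_x)\,dy$ as $H^*\omega$ with $\omega=(P-P_{\mathrm{atm}}+U^2)\,dY-UV\,dX$, and use $d(H^*\omega)=H^*(d\omega)=0$.
\end{itemize}
Your route explains why the identity holds and is robust to sign slips, which matters here. With $\Delta\Psi=-\gamma(\Psi)$ one actually has $u_x+v_y=-\gamma(\psi)\eta_x$ and $u_y-v_x=-\gamma(\psi)\eta_y$. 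Combining the printed (opposite-sign) versions with the $G$ required by \eqref{sxpea} would leave a residual $2\gamma(\psi)v|\nabla\eta|^2$. So the direct algebra you listed as an alternative only closes once the signs are made consistent. The paper's route, in exchange, is self-contained in the strip.

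One point to make explicit on your side: the lemma is used before injectivity of $H$ is proved, so the pullback should be done locally. Since $|H'|^2=|\nabla\eta|^2>0$, $H$ is a local biholomorphism. On each such patch $\Psi=\psi\circ H^{-1}$ solves $\Delta\Psi=-\gamma(\Psi)$, so the Bernoulli pressure satisfies \eqref{euler2} and $\omega$ is closed there. Closedness is a local property, so this suffices.
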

\begin{proof}
Note that in the nondimensionless variables, the Bernoulli law is equivalent to
    \begin{equation*}
P+\frac{1}{2}(U^2+V^2)+\alpha(\eta-1)-G(\Psi)=\frac{\mu}{2}+P_{\mathrm{atm}}-G(1),
    \end{equation*}
where $G(s)=-\int_{0}^{s}\gamma(t)dt$.
In the conformal variables, we have the form \x{sxpea}.
Differentiating $S$ with respect to $x$, we obtain
\begin{equation}\label{Sdiff}
    \begin{aligned}
        \frac{d}{dx}S(x;\eta,\psi,\alpha)=\int_{0}^1&\bigg(uu_x-vv_x-\alpha\eta_x-\gamma(\psi)\psi_x\bigg)\eta_y\\
        +&\bigg(\frac{1}{2}(u^2-v^2)-\alpha(\eta-1)+(G(\psi)-G(1))+\frac{\mu}{2}\bigg)\eta_{xy}\\
        +&u_xv\eta_x+uv_x\eta_x-uv\eta_{yy}dy.
    \end{aligned}
\end{equation}
By integrating by parts to the second term and the last term, we can calculate that the right hand side of \eqref{Sdiff} is
\begin{equation*}
\begin{aligned}
       \int_{0}^1&\bigg(uu_x-vv_x-\alpha\eta_x-\gamma(\psi)\psi_x\bigg)\eta_y-\int_0^1(uu_y-vv_y-\alpha\eta_y-\gamma(\psi)\psi_y)\eta_x\\
       &+\int_{0}^1(vu_x\eta_x+uv_x\eta_x+(uv)_y\eta_y)dy+\bigg[\bigg(\frac{1}{2}(u^2-v^2)-\alpha(\eta-1)\eta_x+\frac{\mu}{2}\bigg)\eta_x-uv\eta_y\bigg]\bigg|_{\Gamma}
\end{aligned}
\end{equation*}
by  \eqref{conformalvelequa3}-\eqref{conformalvelequa4}, we can see that the last term vanishes. Now we show that the integral term vanishes as well. By direct calculation and appling \eqref{conformalvelequa1}
-\eqref{conformalvelequa2} and \eqref{psixpsiy} , we find that
\begin{equation*}
\begin{aligned}
    u&u_x\eta_x-vv_x\eta_y-uu_y\eta_x+vv_y\eta_x+u_xv\eta_x+uv_x\eta_x+\eta_y(u_yv+uv_y)-\gamma(\psi)v|\nabla\eta|^2\\
    &=u\gamma(\psi)\eta_y\eta_x-u\gamma(\psi)\eta_y\eta_x+\gamma(\psi)v|\nabla\eta|^2-\gamma(\psi)v|\nabla\eta|^2=0,
    \end{aligned}
\end{equation*}
which shows that
$\frac{d}{dx}S(x;\eta,\psi,\alpha)=0.$
\end{proof}

Since we are working the an unbounded domain, the compact embeddings between the H\"older spaces do not make sense. However, we shall see that, for monotone  waves, the only way to lose compactness is through the existence of a  bore.

A bore is a solution $(\psi,\eta,\alpha)$ of \eqref{stripstreameq} and satisfies \[\lim_{x\to\pm\infty}(\eta,\psi)(x,y)=(\eta_{\pm}(y),\psi_{\pm}(y)),\]
 with distinct limits $(\eta_-,\psi_-)$ and $(\eta_+,\psi_+)$. By a  fundamental translation argument these limits must also solve \eqref{stripstreameq}. Then they are of the form
 \begin{equation*}
     \eta_{\pm}(y)=\hat{\eta}_{\mathrm{triv}}(y,d_{\pm})\quad\mathrm{and}\quad\psi_{\pm}(y)=\hat{\psi}_{\mathrm{triv}}(y,d_{\pm}),
 \end{equation*}
with $d_-\not=d_{+}$. It is easy to compute that \begin{equation}\label{hateta}\hat{\eta}_{\mathrm{triv}}(y;d)=dy,\end{equation} and $\hat{\psi}_{\mathrm{triv}}(y;d)$ satisfies the following problem
\begin{subequations}\label{hatpsi}
	\begin{align}
		&\hat{\psi}_{\mathrm{triv},yy}=-\gamma(\hat\psi_{\mathrm{triv}})d^2\quad&&\mathrm{in~}(0,1),\label{hatpsi1}\\
		&\hat\psi_{\mathrm{triv}}=1 \quad&&\mathrm{on~}y=1,\label{hatpsi2}\\
		& \hat\psi_{\mathrm{triv}}=0 \quad&&\mathrm{on~}y=0,\label{hatpsi3}\\
		&\hat\psi_{\mathrm{triv},y}^2(1)=(\mu-2\alpha(\hat\eta-1))d^2 \quad&&\mathrm{on~}y=1.\label{hatpsi4}
	\end{align}
\end{subequations}
For convenience, in the following analysis, we write $\hat{\psi}_{\mathrm{triv}}(y;d)$ and $\hat\eta_{\mathrm{triv}}(y;d)$ into $\hat\psi(y)$ and $\hat\eta(y)$. Since $\hat\psi$ and $\hat\eta$ also solve the dynamic boundary condition, we obtain
\begin{equation}\label{conju1}
	\hat Q(d_-)=\hat Q (d_+)=\hat Q(1),
\end{equation}
with $$\hat Q(d)=\frac{\hat\psi_y^2(1)}{d^2}+2\alpha(d-1).$$
Moreover, the invariance of the flow force in Lemma \ref{sinvar} implies that
\begin{equation}\label{conju2}
	\hat S(d_-)=\hat S(d_+)=\hat S(1),
\end{equation}
with
\begin{equation*}
	\hat S(d):=S(x;\hat\eta,\hat\psi).
\end{equation*}
It is well-known that \eqref{conju1} and \eqref{conju2} are the \emph{conjugate flow equations}.

Multiplying \eqref{hatpsi1} by $\hat{\psi}_y$ and integrating with respect to $y$, we shall obtain that
\begin{equation}\label{inteid0}
	\frac{1}{2}\hat\psi_y^2(y)-d^2G(\hat\psi(y))=\frac{1}{2}d^2(\mu-2\alpha(d-1))-d^2G(1):=E(d),
\end{equation}
recall \eqref{nocritical}, the laminar solution is unidirectional, we can deduce that
\begin{equation}\label{inteid1}
	\hat \psi_y(y)=d(\mu-2\alpha(d-1)-2G(1)+2G(\hat\psi(y))^{1/2}>0,
\end{equation}
integrating over $[0,1]$ we obtain
\begin{equation*}
	d=\int_{0}^1\frac{ds}{\sqrt{\mu-2\alpha(d-1)-2G(1)+2G(s)}},
\end{equation*}
with $s=\hat\psi$.
Now we define the auxiliary functional
\begin{equation*}
	\mathsf{F}(d)=d-\int_{0}^1\frac{ds}{\sqrt{\mu-2\alpha(d-1)-2G(1)+2G(s)}}.
\end{equation*}
Notice that when $d=1$, we have $\hat\psi_{\mathrm{triv}}=\psi_{\mathrm{triv}}$, by the definition of  laminar solution, we can deduce that $\mathsf{F}(1)=0$. Differentiating $\mathsf{F}$ with respect to $d$,
we obtain
\begin{equation*}
	\mathsf{F}'(d)=1-\alpha\int_{0}^1\frac{ds}{(\mu-2\alpha(d-1)-2G(1)+2G(s))^{3/2}}.
\end{equation*}
Apply the same argument to \eqref{lameq}, we obtain that $\psi_{\mathrm{triv},y}=\sqrt{\mu-2G(1)+2G(s)}$, we find
\begin{equation*}
	\mathsf{F}'(1)=1-\alpha\int_{0}^1\frac{\psi_{\mathrm{triv},y}dy}{\psi^3_{\mathrm{triv},y}}=1-\alpha \int_{0}^1\frac{dy}{\psi^2_{\mathrm{triv},y}},
\end{equation*}
Recall \eqref{criticalvalue}, we can deduce that $\mathsf{F}'(1)>0$.
Moreover, by direct computation, we have
\begin{equation*}
	\mathsf{F}''(d)=-3\alpha^2 \int_{0}^{1} \frac{ds}{[\mu - 2\alpha(d-1) - 2G(1) + 2G(s)]^{5/2}}<0,
\end{equation*}
that is, $\mathsf{F}$ is strict concave for $d>0$. Now we can deduce that for $d\in(0,1]$, $\mathsf{F}(d)=0$ if and only if $d=1$. What is the more complicated is to rule out the \emph{conjugate depth} for $d>1$. By the strict convexity, we can obtain the following lemmas.

\begin{lemma}\label{convexlemma}
	For $\alpha<\alpha_{\mathrm{cr}}$, $\hat Q(d)$ is a strictly convex function for $d>0$. Moreover, there exists a unique $d_*>1$ such that $\hat Q(d_*)=\hat Q(1)$.
\end{lemma}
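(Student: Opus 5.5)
The plan is to write $\hat Q$ explicitly as a function of $d$ and then read off convexity and the root count from its derivatives. By \eqref{inteid1} evaluated at $y=1$ (where $\hat\psi=1$) we have
\[\hat\psi_y^2(1)/d^2=\lambda(d),\]
where $\lambda(d)$ denotes the Bernoulli-type constant carried by the conjugate laminar flow. It is the unique $\lambda$ for which $\hat\psi_y=d\sqrt{\lambda-2G(1)+2G(\hat\psi)}$ and $\hat\psi(0)=0$, $\hat\psi(1)=1$ are compatible, that is,
\[d=\int_0^1\frac{ds}{\sqrt{\lambda-2G(1)+2G(s)}}=:I(\lambda).\]
Since $I$ is strictly decreasing in $\lambda$, this relation defines $\lambda=\lambda(d)$ as a smooth decreasing function, via the inverse function theorem. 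It follows that $\hat Q(d)=\lambda(d)+2\alpha(d-1)$, and $\lambda(1)=\mu$ because at $d=1$ we recover $\psi_{\mathrm{triv}}$ and \eqref{lameq3} holds.

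Next I differentiate. We have $\lambda'(d)=1/I'(\lambda)$ with
\[I'(\lambda)=-\tfrac12\int_0^1 (\lambda-2G(1)+2G(s))^{-3/2}ds<0,\qquad I''(\lambda)=\tfrac34\int_0^1(\cdots)^{-5/2}ds>0.\]
Hence $\lambda''=-I''\lambda'^2/I'^{\,2}\cdot(1/I')$; in sign terms, $\lambda''=-I''(\lambda)/I'(\lambda)^3>0$. Therefore $\hat Q''=\lambda''>0$, and $\hat Q$ is strictly convex on $d>0$, wherever $\lambda(d)$ stays in the admissible range where the integrand is positive. At $d=1$ I get
\[\hat Q'(1)=2\alpha+1/I'(\mu)=2\alpha-2\Big(\int_0^1\frac{dy}{\psi_{\mathrm{triv},y}^2}\Big)^{-1}=2(\alpha-\alpha_{\mathrm{cr}})<0,\]
using $\int_0^1 (\mu-2G(1)+2G(s))^{-3/2}ds=\int_0^1\psi_{\mathrm{triv},y}^{-2}dy$ together with \eqref{criticalvalue}. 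This matches the computation of $\mathsf F'(1)$ already done.

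To get a unique $d_*>1$, I would use three facts. First, $\hat Q$ is decreasing at $1$. Second, $\hat Q$ is strictly convex. Third, $\hat Q(d)\to+\infty$ as $d\to\infty$. The third holds because $\lambda(d)$ is decreasing but bounded below by $\lambda_\infty:=\sup_s(2G(1)-2G(s))$: as $d=I(\lambda)\to\infty$, $\lambda$ decreases to the value where $I$ blows up, and then $\hat Q\ge \lambda_\infty+2\alpha(d-1)\to\infty$ provided $\alpha>0$. Once these are in place, the intermediate value theorem gives some $d_*>1$ with $\hat Q(d_*)=\hat Q(1)$. Convexity allows at most two solutions of $\hat Q(d)=\hat Q(1)$, and one of them is $d=1$, so $d_*$ is unique.

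The main obstacle is the domain issue. $I(\lambda)$ may stay finite as $\lambda$ decreases to the threshold $\lambda_\infty$ (the case where $G$ attains its extremum at an endpoint with nonzero slope), and then $\lambda(d)$ would only be defined for $d<d_{\max}$. I would handle this by checking whether $I(\lambda_\infty)$ is finite. If it is finite, I would instead argue on the admissible interval $(0,d_{\max})$, or extend using the flows $\hat\psi$ that remain unidirectional. I expect the growth/boundary behaviour of $\hat Q$ near the end of the admissible range to be the delicate point.
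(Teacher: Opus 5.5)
Your convexity argument is the paper's. The paper writes $\hat Q=K(d)+2G(1)+2\alpha(d-1)$, with $K$ fixed by $d=\int_0^1(K(d)+2G(s))^{-1/2}\,ds$, and differentiates twice. Your $\lambda=K+2G(1)$ with $\lambda''=-I''(\lambda)/I'(\lambda)^3>0$ is the same computation. The expression you write just before it is off by a factor $(I')^{-2}$, but the sign is unaffected. Your reading of $\hat\psi_y^2(1)/d^2$ as a constant fixed by this integral constraint is the right one. Reading it through \eqref{hatpsi4} would instead force $\hat Q\equiv\mu$. The paper proves nothing beyond convexity. Your observation $\hat Q'(1)=2(\alpha-\alpha_{\mathrm{cr}})<0$ is therefore a correct addition: together with strict convexity it shows that $\hat Q(d)=\hat Q(1)$ has at most one root $d\neq1$, that this root lies in $(1,\infty)$, and that $\hat Q<\hat Q(1)$ between $1$ and it.

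The gap is the existence of $d_*$, and it is not merely technical. In the case you flag, it cannot be closed within unidirectional laminar flows, so neither of your proposed remedies works. Take $\gamma\equiv-1$, which is the paper's constant-vorticity example with $\gamma=1$; it satisfies $\gamma'\le0$ and, by the paper's computation, $M_0<0$. Then $G(s)=s$, $\lambda_\infty=2$ and $I(\lambda)=\sqrt{\lambda}-\sqrt{\lambda-2}$. The admissible range is $d\le d_{\max}=\sqrt2$, on which $\lambda(d)=(d/2+1/d)^2$, while $\mu=9/4$ and $\alpha_{\mathrm{cr}}=3/4$. Now $\hat Q(\sqrt2)=2+2\alpha(\sqrt2-1)<9/4$ whenever $0<\alpha<(\sqrt2+1)/8$. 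Convexity then gives $\hat Q<\hat Q(1)$ on all of $(1,\sqrt2]$, so there is no unidirectional conjugate depth.

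A root $d_*>\sqrt2$ appears only after continuing to laminar states with reversed flow at the bed, where $\hat\psi_y(0)=1-d^2/2<0$. There the square-root representation \eqref{inteid1} is unavailable, and so is your $I(\lambda)$-based convexity proof. Separately, for $\alpha\le0$, which the bare hypothesis $\alpha<\alpha_{\mathrm{cr}}$ permits, one has $\hat Q'=\lambda'+2\alpha<0$ everywhere, so no root exists; your restriction $\alpha>0$ is genuinely needed.

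What your argument does establish is the uniqueness of a root $d\neq1$, its location in $(1,\infty)$, and the sign of $\mu-\hat Q$ on the interval in between. That is exactly what Lemma \ref{ffd1} and Lemma \ref{borenonexistence} use. The cleanest fix is to state the second part of the lemma in that conditional form. If you want existence as stated, you must define $\hat Q$ beyond $d_{\max}$ through the laminar ODE itself, and prove convexity and growth there by a different argument.
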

\begin{proof}
	We only prove the convexity property for $\hat Q$. The total head is given by
	\begin{equation*}
		\hat Q(d)=\left(\frac{\hat\psi_y(1)}{d}\right)^2+2\alpha(d-1),
	\end{equation*}
	by \eqref{inteid0}, we have
	\begin{equation*}
		\hat{Q}(d) =K(d)+2G(1)+2\alpha(d-1),
	\end{equation*}
	with \begin{equation*}
		K(d)=\frac{2E(d)}{d^2}.
		\end{equation*}	
	Thanks to \eqref{inteid0}, we have
	\begin{equation*}
		(2E(d)+2dG(\hat\psi))d\hat\psi=dy,	\end{equation*}
	integrating over $[0,1]$, we obtain
	\begin{equation*}
		1 = \int_{0}^{1} \frac{ds}{\sqrt{2E(d) +2d^2G(s)}},
	\end{equation*}
	that is,
	\begin{equation*}
		d = \int_{0}^{1} \frac{ds}{\sqrt{K(d) + 2G(s)}},
	\end{equation*}
	differentiating with respect to $d$ twice, we eventually obtain
	\begin{equation*}
		K''(d)=\frac{(K'(d))^2\int_0^1\frac{3}{4}(K(d)+2G(s))^{-5/2}ds}{\int_0^1\frac{1}{2}(K(d)+2G(s))^{-3/2}ds}>0,
	\end{equation*}
	which implies that $\hat Q$ is strictly convex.
\end{proof}

\begin{lemma}\label{ffd1}
The function $\hat{S}(d)$ satisfies that
\[\hat{S}'(d)= \frac{1}{2}\left( \mu - \hat{Q}(d) \right).\]
Moreover, for $\alpha<\alpha_{\mathrm{cr}}$, we have $\hat{S}(d_*)>\hat{S}(1)$.
\end{lemma}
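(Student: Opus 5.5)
We first derive a closed formula for $\hat S$ on the laminar family and differentiate it; the inequality $\hat S(d_*)>\hat S(1)$ then follows from the convexity in Lemma \ref{convexlemma}. For $(\hat\eta,\hat\psi)=(dy,\hat\psi(y;d))$ we have $\eta_x=0$, $\eta_y=d$, $v=0$ and $u=\hat\psi_y/d$. Substituting into \eqref{sxpea} and changing variables $s=\hat\psi(y)$ via \eqref{inteid1}, so that $dy=ds/\hat\psi_y$, gives
\begin{equation*}
\hat S(d)=\int_0^1\Big(\frac{\hat\psi_y^2}{2d}+d\big(-\alpha(dy-1)+G(\hat\psi)-G(1)+\tfrac{\mu}{2}\big)\Big)dy .
\end{equation*}
Using \eqref{inteid0} to write $\hat\psi_y^2=2E(d)+2d^2G(\hat\psi)$, equivalently $\hat\psi_y^2/d^2=K(d)+2G(\hat\psi)$, the integrand becomes an explicit function of $d$, $K(d)$ and $\hat\psi$. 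The key bookkeeping fact is that the Bernoulli constant inside \eqref{sxpea} is $\mu$ (the upstream value), whereas the laminar state at depth $d$ has its own head $\hat Q(d)=K(d)+2G(1)+2\alpha(d-1)$.

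\textbf{Step 1: the derivative formula.} Differentiate $\hat S(d)$ in $d$, using $d=\int_0^1 (K(d)+2G(s))^{-1/2}ds$ (from the proof of Lemma \ref{convexlemma}) to express $K'(d)$. I expect the terms containing $K'(d)$ to cancel exactly, because the laminar profile is a critical point of the flow-force functional at fixed mass flux (the classical Benjamin--Lighthill structure). What remains should be the mismatch between $\mu$ and the local head, giving $\hat S'(d)=\tfrac12(\mu-\hat Q(d))$. As a consistency check, $\hat Q(1)=\mu$ by \eqref{lameq3}, so $\hat S'(1)=0$. An alternative route is to view $\hat S$ as $\int_0^1 (p+\rho u^2)\,dY$ with the pressure given by Bernoulli using $\mu$, and differentiate under the integral in the variables $(Y,\psi)$. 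This Step 1 computation is the main obstacle: the signs of $G$ (defined with opposite conventions in two places in the paper) and of the hydrostatic term $-\alpha(dy-1)d$ must be tracked carefully for the cancellation to come out.

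\textbf{Step 2: the inequality.} By Lemma \ref{convexlemma}, $\hat Q$ is strictly convex with $\hat Q(1)=\hat Q(d_*)=\mu$ and $d_*>1$. Hence $\hat Q(d)<\mu$ for $d\in(1,d_*)$. By Step 1, $\hat S'(d)=\tfrac12(\mu-\hat Q(d))>0$ on $(1,d_*)$. Integrating over $[1,d_*]$ gives
\begin{equation*}
\hat S(d_*)-\hat S(1)=\tfrac12\int_1^{d_*}\big(\mu-\hat Q(d)\big)\,dd>0 .
\end{equation*}

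Where the hypothesis $\alpha<\alpha_{\mathrm{cr}}$ enters: it guarantees $\hat Q'(1)<0$, which is what places the second root $d_*$ of $\hat Q=\mu$ to the right of $1$ rather than to the left. Indeed $\hat Q'(1)=K'(1)+2\alpha$, and $K'(1)$ is computed from differentiating $d=\int_0^1 (K+2G)^{-1/2}ds$, which gives $K'(1)=-2/\int_0^1\psi_{\mathrm{triv},y}^{-2}dy=-2\alpha_{\mathrm{cr}}$. Therefore $\hat Q'(1)=2(\alpha-\alpha_{\mathrm{cr}})<0$, consistent with Lemma \ref{convexlemma}.
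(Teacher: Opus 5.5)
Your Step 2 is correct and matches the paper's argument; your check $\hat Q'(1)=K'(1)+2\alpha=2(\alpha-\alpha_{\mathrm{cr}})<0$ is a useful addition. The gap is Step 1, which is the real content of the lemma. You do not derive $\hat S'(d)=\frac12(\mu-\hat Q(d))$; you only predict it, and the mechanism you predict is not what happens. After substituting \eqref{inteid0} into the integrand, the $K'(d)$-terms do not cancel. In the variable $s=\hat\psi$ you suggest, with $w(s)=(K(d)+2G(s))^{-1/2}$, one finds
\begin{equation*}
\hat S(d)=\int_0^1\Big[\frac{1}{2w(s)}+w(s)\Big(\alpha-\alpha\int_0^s w(t)\,dt+G(s)-G(1)+\frac{\mu}{2}\Big)\Big]ds .
\end{equation*}
So $\hat S$ depends on $d$ only through $K(d)$, and every term of $\hat S'(d)$ carries the factor $K'(d)$; if those terms cancelled, $\hat S$ would be constant. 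The same happens in the $y$-variable: the $K'(d)$-contributions, including those hidden in $\partial_d\hat\psi$, add up to $-(K+2G(1))$, not zero, and this piece is needed to reach the final formula.

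The critical-point structure you allude to concerns variations of $\hat\psi$ with fixed Dirichlet data. To use it, you must differentiate before eliminating $\hat\psi_y$ with the first integral, which is what the paper does. Set $L=\frac{\hat\psi_y^2}{2d}+d\big(-\alpha(dy-1)+G(\hat\psi)-G(1)+\frac{\mu}{2}\big)$ and $\hat S(d)=\int_0^1L\,dy$. Then $\partial_{\hat\psi}L-\partial_y(\partial_{\hat\psi_y}L)=dG'(\hat\psi)-\hat\psi_{yy}/d=0$ by \eqref{hatpsi1}, since $G'=-\gamma$. The boundary term $[\hat\psi_y\,\partial_d\hat\psi/d]_{0}^{1}$ vanishes because $\hat\psi(0;d)=0$ and $\hat\psi(1;d)=1$ for every $d$. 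Hence $\hat S'(d)=\int_0^1\partial_dL\,dy$. Only now is the first integral used, in the form $-\frac{\hat\psi_y^2}{2d^2}+G(\hat\psi)=-\frac{\hat\psi_y^2(1)}{2d^2}+G(1)$, and integrating in $y$ gives $\frac12(\mu-\hat Q(d))$. Your $s$-variable route can also be completed, but through an identity rather than a cancellation. Let $I_3=\int_0^1w^3\,ds$ and use $\partial_Kw=-\frac12w^3$, $G=\frac12(w^{-2}-K)$, and $\int_0^1w^3(s)\int_0^sw\,dt\,ds+\int_0^1w(s)\int_0^sw^3\,dt\,ds=d\,I_3$. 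These give $\frac{d\hat S}{dK}=-\frac14I_3(\mu-\hat Q(d))$, and multiplying by $K'(d)=-2/I_3$ yields the formula.
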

\begin{proof}
	Recall \eqref{conformalvelocity} and \eqref{hateta}, we can  plug $\hat\eta=dy$ and $u=\hat\psi_y/d$ into  \eqref{sxpea}. We shall obtain
	\begin{equation}\label{shat}
		\hat{S}(d) = \int_{0}^{1} d\left( \frac{\hat{\psi}_y^2}{2d^2} - \alpha(dy - 1) + G(\hat{\psi}) - G(1) + \frac{\mu}{2} \right)dy.
	\end{equation}
	By \eqref{inteid1}, we have
	\begin{equation*}
		\begin{aligned}
		\hat{S}(d)& = d \int_{0}^{1}\frac{\hat\psi_y^2(1)}{2d^2}+2G(\hat\psi)-2G(1)-\alpha(dy-1)+\frac{\mu}{2}dy\\
		&=d\int_0^1\frac{1}{2}\hat{Q}(d)+2\alpha-\alpha d-\alpha dy+\frac{\mu}{2}+2(G(\hat\psi)-G(1))dy\\
		&=\frac{1}{2}d\hat{Q}(d)-\frac{3}{2}\alpha d^2+(2\alpha+\frac{\mu}{2})d+2d\int_0^1 G(\hat\psi)-G(1)dy.
		\end{aligned}
	\end{equation*}
	To compute $\hat{S}'(d)$, we define $L(y,\hat\psi,\hat\psi_y,d)$ as
	\begin{equation*}
		L(y,\hat\psi,\hat\psi_y,d)=d\left( \frac{\hat{\psi}_y^2}{2d^2} - \alpha(dy - 1) + G(\hat{\psi}) - G(1) + \frac{\mu}{2} \right),
	\end{equation*}
	 and differentiate  \eqref{shat} with respect to $d$, we have
	\begin{equation*}
		\hat{S}'(d)=\int_0^1\frac{\partial L}{\partial\hat\psi}\frac{\partial\hat\psi}{\partial d}+\frac{\partial L}{\partial\hat\psi_y}\frac{\partial\hat\psi_y}{\partial d}+\frac{\partial L}{\partial d}dy,
	\end{equation*}
	Notice that \[\frac{\partial\hat\psi_y}{\partial d}=\frac{\partial}{\partial y}\bigg(\frac{\partial\hat\psi}{\partial d}\bigg)\]
	and integrate by parts, we have
	\begin{equation}\label{hats1}
	\hat{S}'(d)=	\int_0^1\left[\frac{\partial L}{\partial \hat\psi}-\frac{\partial}{\partial y}\left(\frac{\partial L}{\partial\hat\psi}\right)\right]\frac{\partial\hat\psi}{\partial d}dy+\left[\frac{\partial L}{\partial\hat\psi_y}\frac{\partial \hat\psi}{\partial d}\right]_{y=0}^{y=1}+\int_0^1\frac{\partial L}{\partial d}dy.
	\end{equation}
	We show that the first and second term of \eqref{hats1} vanish. It is easy to verify that
	\begin{equation*}
		\frac{\partial L}{\partial\hat\psi}=d G'(\hat\psi)
	\end{equation*}
	and
	\begin{equation*}
		\frac{\partial L}{\partial \hat\psi_y}=\frac{\hat\psi_y}{d},\quad \frac{\partial}{\partial y}\left(\frac{\partial L}{\partial\hat \psi}\right)=\frac{\hat\psi_{yy}}{d},
	\end{equation*}
	together with \eqref{hatpsi1} we have
	\begin{equation*}
		\frac{\partial L}{\partial \hat{\psi}} - \frac{d}{dy} \left( \frac{\partial L}{\partial \hat{\psi}_y} \right) = -\frac{1}{d} \left( \hat{\psi}_{yy} - d^2 G'(\hat{\psi}) \right)=0.
	\end{equation*}
	Moreover, together with \eqref{hatpsi2} and \eqref{hatpsi3}, it is easy to see that the second term of \eqref{hats1} vanishes.
	Then we have
	\begin{equation*}
		\hat{S}'(d)=\int_0^1\frac{\partial L}{\partial d}dy=\int_0^1\left(-\frac{\hat{\psi}_y^2}{2d^2} + G(\hat{\psi}) - 2\alpha dy + \alpha - G(1) + \frac{\mu}{2}\right)dy
	\end{equation*}
	By \eqref{inteid0}, we have
\begin{equation*}
	\begin{aligned}
	\hat{S}'(d)&=\int_0^1\left(-\frac{\hat{\psi}_y(1)^2}{2d^2} + G(1) - 2\alpha dy + \alpha - G(1) + \frac{\mu}{2}\right)dy\\& =\int_0^1 \left(-\frac{\hat{\psi}_y(1)^2}{2d^2} - 2\alpha dy + \alpha + \frac{\mu}{2}\right)dy\\
	&=-\frac{\hat{\psi}_y(1)^2}{2d^2} - \alpha (d-1) + \frac{\mu}{2}\\
	&=\frac{1}{2} \left( \mu - \hat{Q}(d) \right).
	\end{aligned}
\end{equation*}	
 Finally, by trivial integration and Lemma \ref{convexlemma}, we can see that  for $\alpha<\alpha_{\mathrm{cr}}$,
	\begin{equation*}
	\hat{S}(d_*)-\hat S(1)= \int_1^{d_*}\frac{1}{2}(\mu-\hat Q(d))>0.
	\end{equation*}

\end{proof}

Based on the Lemma \ref{convexlemma}-\ref{ffd1}, we can obtain the following result on non-existence of bores.
\begin{lemma}\label{borenonexistence}
	The  conjugate equation \eqref{conju1} and \eqref{conju2} have no solutions other than $d=1$, thus problem \eqref{stripstreameq} does not admit bore solutions as defined in \eqref{hateta} and \eqref{hatpsi}.
\end{lemma}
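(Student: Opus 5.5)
The plan is to argue by contradiction. Suppose $(\psi,\eta,\alpha)$ is a bore with $\alpha<\alpha_{\mathrm{cr}}$ and distinct limiting states $(\hat\eta(\cdot;d_\pm),\hat\psi(\cdot;d_\pm))$. Since the solution comes from the asymptotic normalization \eqref{streamasy}, at least one of $d_\pm$ should equal $1$. In any case, both $d_\pm$ must satisfy the conjugate flow equations \eqref{conju1} and \eqref{conju2}, so it suffices to show that the only $d>0$ with $\hat Q(d)=\hat Q(1)$ and $\hat S(d)=\hat S(1)$ is $d=1$. This forces $d_-=d_+$ and contradicts the definition of a bore.

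\textbf{Step 1: use \eqref{conju1} to restrict to two depths.} By Lemma \ref{convexlemma}, $\hat Q$ is strictly convex on $(0,\infty)$. A strictly convex function takes any given value at no more than two points. Hence $\hat Q(d)=\hat Q(1)$ has the solutions $d=1$ and $d=d_*>1$, and no others. Here I would also check that $d=1$ lies on the decreasing branch of $\hat Q$, i.e.\ $\hat Q'(1)<0$. Using $\hat S'(d)=\frac12(\mu-\hat Q(d))$ from Lemma \ref{ffd1}, together with $\mu=\hat Q(1)$ (which follows from \eqref{lameq3}), one has $\hat S'(1)=0$. The sign of $\hat Q'(1)$ can then be computed from the formula for $K'$ obtained by differentiating $d=\int_0^1 (K+2G(s))^{-1/2}ds$. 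At $d=1$ this should yield $\hat Q'(1)=-2\big(\int_0^1\psi_{\mathrm{triv},y}^{-2}dy\big)^{-1}+2\alpha$, which is negative exactly when $\alpha<\alpha_{\mathrm{cr}}$ by \eqref{criticalvalue}. This confirms that the second root $d_*$ lies to the right of $1$, consistent with the statement of Lemma \ref{convexlemma}.

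\textbf{Step 2: use \eqref{conju2} to exclude $d_*$.} By Lemma \ref{ffd1}, $\hat S(d_*)-\hat S(1)=\frac12\int_1^{d_*}(\mu-\hat Q(d))\,dd$. By strict convexity, $\hat Q(d)<\hat Q(1)=\mu$ on $(1,d_*)$, so this integral is strictly positive and $\hat S(d_*)\neq\hat S(1)$. Consequently the conjugate flow system has only the solution $d=1$. Both limits of a putative bore therefore coincide with the laminar state $(y,\psi_{\mathrm{triv}})$, which contradicts $d_-\neq d_+$.

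\textbf{Main obstacle.} I expect the hardest part to be the justification that the limiting states are exactly the one-parameter family $(\hat\eta(\cdot;d),\hat\psi(\cdot;d))$ and that they share the same $\mu$, $\alpha$ and flow force. This requires a translation argument: take $x_n\to\pm\infty$, use the uniform $C^{3+\beta}$ bounds, and pass to the limit in \eqref{stripstreameq} and in \eqref{sxpea}, where $x$-independence is given by Lemma \ref{sinvar}. One must then show that $x$-independent solutions with $\eta=0$ on $B$ and harmonic $\eta$ have the form $\eta=dy$. This follows because $\eta_{yy}=0$ and $\eta(0)=0$, with $d$ the boundary value at $y=1$.

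One must also show that $\hat\psi$ is unidirectional, so that the quadrature representation \eqref{inteid1} is valid. I would obtain this from \eqref{stripk} in the limit, which gives $\hat\psi_y(1)\neq0$, together with a continuity argument from the laminar flow. This step is where genuine care is needed. The algebraic conjugate-flow part is immediate from Lemmas \ref{convexlemma}--\ref{ffd1}.
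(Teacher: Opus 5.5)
Your proposal is correct and follows the paper's route: the paper gives no separate proof, presenting the lemma as an immediate consequence of Lemma \ref{convexlemma} (strict convexity of $\hat Q$, so $\hat Q(d)=\hat Q(1)$ only for $d\in\{1,d_*\}$) and Lemma \ref{ffd1} ($\hat S(d_*)>\hat S(1)$ via $\hat S'=\frac12(\mu-\hat Q)$). Your computation $\hat Q'(1)=2(\alpha-\alpha_{\mathrm{cr}})<0$ supplies the justification, omitted in the paper, that the second root lies to the right of $d=1$; the unidirectionality of the limiting states that you flag is likewise only assumed there, via \eqref{nocritical}.
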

We are now ready to prove compactness.
\begin{lemma}\label{cptlemma} Let $(\psi_n,\eta_n,\alpha_n)$ be a sequence of solutions of \eqref{stripstreameq}. Assume that \begin{equation}
        \label{seqbound}
        \sup_n\|(\psi_n,\eta_n)\|_{C^{3+\beta}(\mathcal{R})}<\infty\quad\mathrm{and}\quad \inf_n\inf_{\mathcal{R}}(\mu-2\alpha_n(\eta_n-1))|\nabla\eta_n|^2>0,
    \end{equation}
\begin{equation}\label{monotoneass}
    \partial_x\eta_n\leq 0\quad \mathrm{for~}x\geq 0.
\end{equation}
Then we can extract a subsequence such that $(\psi_n,\eta_n)\to(\psi,\eta)$ in $C_b^{3+\beta}(\overline{\mathcal{R}})$.

\end{lemma}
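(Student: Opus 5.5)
The plan follows the standard compactness argument for monotone solitary waves of Chen--Walsh--Wheeler, adapted to the pair $(\psi_n,\eta_n)$. We may pass to a subsequence with $\alpha_n\to\alpha$. By the uniform bound \eqref{seqbound} and Arzel\`a--Ascoli on compact subsets, a further subsequence converges in $C^{3+\beta'}_{\mathrm{loc}}(\overline{\mathcal R})$ for every $\beta'<\beta$ to some $(\psi,\eta)$. The limit lies in $C^{3+\beta}_b$ and solves \eqref{stripstreameq} with parameter $\alpha$. The lower bound in \eqref{seqbound} passes to the limit, so the nondegeneracy \eqref{stripk} holds, and by \eqref{monotoneass} together with evenness, $\partial_x\eta\le 0$ for $x\ge0$. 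Local convergence is not uniform convergence on $\overline{\mathcal R}$, so the task is to rule out mass escaping to $x=\pm\infty$. By evenness it suffices to treat $x\to+\infty$.

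\textbf{Step 1: the limit is a solitary wave.} Since $x\mapsto\eta(x,1)$ is monotone and bounded on $[0,\infty)$, it has a limit. Take translates $(\psi,\eta)(\cdot+x_k,\cdot)$ with $x_k\to\infty$. These converge locally to an $x$-independent solution, which by \eqref{hateta}--\eqref{hatpsi} must be a laminar flow $(\hat\psi(\cdot;d),dy)$ for some depth $d$. The Bernoulli constant $\mu$ and the flow force $S$ of Lemma \ref{sinvar} are conserved along each solution and continuous under $C^1_{\mathrm{loc}}$ convergence. Also, at $x=\pm\infty$ each $(\psi_n,\eta_n)$ tends to the trivial laminar state, which has depth $1$. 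Hence $d$ satisfies the conjugate flow equations \eqref{conju1}--\eqref{conju2}, and Lemma \ref{borenonexistence} forces $d=1$. Therefore $(\psi,\eta)\to(\psi_{\mathrm{triv}},y)$ as $x\to\pm\infty$.

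\textbf{Step 2: uniform decay of the tails.} This is the main obstacle. Suppose convergence in $C^{3+\beta}_b$ fails. Then, after a subsequence, there are $\delta>0$ and points $x_n\to\infty$ with $\|(\psi_n-\psi,\eta_n-\eta)\|_{C^{0}}$ on $[x_n-1,x_n+1]\times[0,1]$ at least $\delta$. The reduction to a $C^0$ failure uses the Schauder estimates of Lemma \ref{schauderlemma} applied to the difference, whose coefficients are uniformly controlled by \eqref{seqbound}. Now translate each $(\psi_n,\eta_n)$ by $x_n$. The translates again converge locally to a solution $(\tilde\psi,\tilde\eta)$, and $\tilde\eta$ is monotone nonincreasing in $x$ on all of $\mathbb R$. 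Monotonicity gives the limits $\tilde\eta(\pm\infty,1)$, and the translation argument of Step 1 identifies the states at $\pm\infty$ as laminar flows with depths $d_\pm$.

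\textbf{Step 3: conclude.} Each translated solution has the same $\mu_n$ and $S_n$ as the trivial flow with parameter $\alpha_n$. The state at $+\infty$ is reached from the far tail of the $n$-th wave, where it is close to the trivial state. So both $d_\pm$ solve the conjugate flow equations, and Lemma \ref{borenonexistence}, with Lemma \ref{convexlemma} and Lemma \ref{ffd1}, gives $d_-=d_+=1$. A monotone profile with both ends at depth $1$ must be flat, so $\tilde\eta\equiv y$ and $\tilde\psi\equiv\psi_{\mathrm{triv}}$.

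On the other hand, $\psi(x+x_n,\cdot)\to\psi_{\mathrm{triv}}$ and $\eta(x+x_n,\cdot)\to y$ by Step 1. So $(\psi_n-\psi,\eta_n-\eta)(\cdot+x_n)\to0$ locally, which contradicts the choice of $\delta$.

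The delicate points are the following. The translation argument must produce a genuine solution with uniform nondegeneracy, which \eqref{seqbound} guarantees. Monotonicity must be used to ensure the limits at $\pm\infty$ exist. And the endpoints must be matched to depth-$1$ conjugate states using continuity of $\mu$ and $S$ under local convergence with uniform bounds.
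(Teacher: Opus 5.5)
Your argument is correct and is essentially the paper's proof, run in contrapositive order. The paper first shows that uniform decay of the tails gives $L^\infty$ convergence, upgrades this to $C^{3+\beta}_b$ convergence via a Schauder estimate for the difference system, and separately rules out non-uniform decay by translating, using monotonicity, and invoking the nonexistence of bores (Lemma \ref{borenonexistence}); your Steps 1--3 use exactly these ingredients. One small point: the difference $(\psi_n-\psi,\eta_n-\eta)$ satisfies a coupled system, which is why the paper invokes Lemma \ref{schauderlemma2} rather than the scalar Lemma \ref{schauderlemma}. Your citation still works if you estimate $\psi_n-\psi$ through its Dirichlet problem and $\eta_n-\eta$ through its oblique problem (obliqueness holds once $\nabla\eta_n\to\nabla\eta$ uniformly), absorbing the lower-order coupling by interpolation.
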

\begin{proof}
We begin by assuming that
\begin{equation}\label{seqasy}
    \lim_{x\to\pm\infty}\sup_n\sup_y|(\psi_n,\eta_n)-(\psi_{\mathrm{triv}}(y),\eta_{\mathrm{triv}}(y))|=0.
\end{equation}
By the Arzela-Ascoli theorem, we can extract subsequences so that $(\psi_n,\eta_n)\to(\psi,\eta)$ in $C^3_{\mathrm{loc}}(\overline{\mathcal{R}})$ and by \eqref{seqasy} also in $L^\infty(\overline{\mathcal{R}})$, for some $(\psi,\eta)$ solves \eqref{stripstreameq}. We take the differences
\begin{equation*}
v_n^{(1)}=\psi_n-\psi\quad\mathrm{and}\quad v_n^{(2)}=\eta_n-\eta
\end{equation*}
satisfying
\begin{equation*}
    \|(v_n^{(1)},v_n^{(2)})\|_{C^0(\mathcal{R})}\to 0\quad\mathrm{as~}n\to\infty.
\end{equation*}
It remains to show that $(\psi_n,\eta_n)\to(\psi,\eta)$ in $C_b^{3+\beta}(\overline{\mathcal{R}})$.

By the mean value theorem, we can see that  $v_n^{(1)}$ and $v_n^{(2)}$ satisfy the problem
\begin{align*}
    &\Delta v_n^{(1)}=\tilde{f}_1v_n^{(1)}+\tilde{f}_{21}\partial_xv_n^{(2)}+\tilde{f}_{22}\partial_yv_n^{(2)}\quad&&\mathrm{in~}\mathcal{R},\\
    &\Delta v_n^{(2)}=0 &&\mathrm{in~}\mathcal{R},\\
    &a_{12}\partial
    _yv_n^{(1)}+a_{21}\partial_x v_n^{(2)}+a_{22}\partial_y v_n^{(2)}+d_1v_n^{(2)}=f_n &&\mathrm{on~}\Gamma,\\
    &c_1v_n^{(1)}+c_2v_n^{(2)}=w_n-w&&\mathrm{on~}\Gamma,\\
&v_n^{(2)}=0&&\mathrm{on~}B,\\
 &v_n^{(1)}=0\quad&&\mathrm{on~} B,
\end{align*}
where
\begin{align*}
    \tilde{f}_1=&-|\nabla\eta_n|^2\int_0^1\gamma'(\psi+t(\psi_n-\psi))dt,\quad \tilde{f}_{21} = -\gamma(\psi_n)(\partial_x \eta_n + \partial_x \eta),\\
    \tilde{f}_{22}=&-\gamma(\psi_n)(\partial_y \eta_n + \partial_y \eta),\\
    a_{11}=&0, \quad a_{12}=2\int_0^1\psi_y+t(\psi_{ny}-\psi_y)dt=\psi_{ny}+\psi_{y},\\
    a_{21}=&-(2\mu-4\alpha_n(\eta-1))\bigg(\int_0^1\eta_x+t(\eta_{nx}-\eta_x)dt\bigg)=-(\mu-2\alpha_n(\eta-1))(\eta_{nx}+\eta_{x}),\\
    a_{22}=&-(2\mu-4\alpha_n(\eta-1))\bigg(\int_0^1\eta_y+t(\eta_{ny}-\eta_y)dt\bigg)=-(\mu-2\alpha_n(\eta-1))(\eta_{ny}+\eta_{y}),\\
    c_1=&0, \quad c_2=1, \quad d_1=2\alpha_n|\nabla\eta|^2,\quad
    f_n= -2(\alpha_n - \alpha)(\eta - 1)|\nabla \eta|^2.
\end{align*}
By \eqref{seqbound}, it is easy to notice that
\begin{equation*}
\begin{aligned}
	(c_1a_{21}-c_2a_{11})^2+(c_1a_{22}-c_2a_{12})^2&=a_{12}^2
=(\psi_{ny}+\psi_y)^2\\
&\geq(\mu-2\alpha_n(\eta_n-1))|\nabla\eta_n|^2>\lambda,
\end{aligned}
\end{equation*}
for some $\lambda>0$,  which implies that Lopatinskii
condition \eqref{lopatinskii} holds on $\Gamma$.
Applying the Lemma \ref{schauderlemma2}, we get the estimate
\begin{equation*}
    \|(v_n^{(1)},v_n^{(2)})\|_{C^{3+\beta}(\mathcal{R})}\leq C\bigg(\|(f_n,w_n-w)\|_{C^{2+\beta}(\Gamma)}+  \|(v_n^{(1)},v_n^{(2)})\|_{C^0(\mathcal{R})}\bigg)\to 0,\quad \text{as}~ ~~ n\to\infty.
\end{equation*}
Thus we have $(\psi_n,\eta_n)\to(\psi,\eta)$ in $C_b^{3+\beta}(\overline{\mathcal{R}})$.

Now we assume that \eqref{seqasy} does not hold. Then we can find a sequence $\{(x_n,y_n)\}\subset\R^2$ with $x_n\to\infty$ and $\varepsilon
>0$ such that
\begin{equation*}
|(\psi_n,\eta_n)(x_n,y_n)-(\psi_{\mathrm{triv}}(y_n),\eta_{\mathrm{triv}}(y_n))|\geq \varepsilon, \quad\mathrm{for~all~}n.
\end{equation*}
By the translation argument and the monotonicity assumption \eqref{monotoneass}
 exactly as in \cite[Lemma 6.3]{chenpoincare}, the sequence of solution $(\psi_n,\eta_n)$ must converge to a bore solution of \eqref{stripstreameq}, which contradicts Lemma \ref{borenonexistence} and \eqref{seqasy} must hold.
\end{proof}

\section{Global Continuum and Main Result}
First we state the following modified version of the global bifurcation theorem, which is our main tool to prove the global continuum.

\begin{theorem}\label{globalbifurthm}\cite[Theorem 6.1]{chennonlinearity}
    Let $\mathscr{X}$ and $\mathscr{Y}$ be Banach spaces, $\mathscr{U}$ be an open subset of $\mathscr{X}\times\mathbb{R}$ with $(0,0)\in\partial\mathscr{U}$. Consider a real-analytic mapping $\mathcal{F}\colon\mathscr{U}\to\mathscr{Y}$. Suppose that
\begin{enumerate}
\item \label{semifredholm} for any $(\mu,x)\in\mathscr{U}$ with $\mathcal{F}(\mu,x)=0$ the Fr\'echet derivative $\mathcal{F}_x(\mu,x)\colon\mathscr{X}\to\mathscr{Y}$ is locally proper;
		\item there exists a continuous curve $\mathscr{C}_\textup{loc}$ of solutions to $\mathcal{F}(\mu,x)=0,$ parameterized as
		\begin{equation*}
		\mathscr{C}_{\textup{loc}}:=\{(\mu,\tilde{x}(\mu)):0<\mu<\mu_* \}\subset\mathcal{F}^{-1}(0),
		\end{equation*}
		for some $\mu_*>0$ and continuous $\tilde{x}$ with values in $\mathscr{X}$ and $\lim_{\mu\searrow0}\tilde{x}(\mu)=0$;
		\item \label{invertible} the linearized operator $\mathcal{F}_x(\mu,\tilde{x}(\mu))\colon\mathscr{X}\to\mathscr{Y}$ is invertible for all $\mu$.
\end{enumerate}
Then $\mathscr{C}_\text{loc}$ is contained in a curve of solutions $\mathscr{C}$,  parameterized as
	\begin{equation*}
		\mathscr{C}:=\{(\mu(s),x(s)):0<s<\infty \}\subset\mathcal{F}^{-1}(0)
	\end{equation*}
	for some continuous $(0,\infty)\ni s\mapsto(x(s),\mu(s) )\in\mathscr{U},$ with the following properties
	\begin{enumerate}
	\item [(i)]One of the following alternatives holds:
		\begin{enumerate}
			\item \label{blow-up}  \textup{(Blow-up)}
	\begin{equation*}
		N(s):=\|x(s)\|_\mathscr{X}+\frac{1}{\operatorname{dist}((\mu(s),x(s)),\partial\mathscr{U})}+\mu(s)\to\infty,\quad s\to\infty.
	\end{equation*}
			\item \label{loss of compactness} \textup{(Loss of compactness)} there exists a sequence $s_n\to\infty$ such that $\operatorname{sup}_n N(s_n)<\infty$ but $\{x(s_n)\}$ has no subsequences converging in $\mathscr{X}$.
		\end{enumerate}
	\item [(ii)] Near each point $(\mu(s_0),x(s_0))\in\mathscr{C}$, we can reparameterize $\mathscr{C}$ so that $s\mapsto(\mu(s),x(s))$ is real analytic.
	
	\item[(iii)] $(\mu(s),x(s))\not\in\mathscr{C}_{\text{loc}}$ for $s$ sufficiently large.
	\end{enumerate}
\end{theorem}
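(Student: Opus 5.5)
The plan is to follow the proof of the analytic global bifurcation theorem of Buffoni--Toland as adapted by Chen, Walsh and Wheeler. The distinctive feature is that the starting point is not a bifurcation point with a one-dimensional kernel. Instead, it is a curve $\mathscr{C}_{\mathrm{loc}}$ emanating from the boundary point $(0,0)\in\partial\mathscr{U}$, along which the linearization is invertible.

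\textbf{Analytic continuation.} First, I will use hypothesis (3) and the real-analytic implicit function theorem. Together they show that $\mathscr{C}_{\mathrm{loc}}$ is locally a real-analytic curve, and that near each of its points $\mathcal{F}^{-1}(0)$ coincides with it.

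Next, I will continue the curve beyond $\mathscr{C}_{\mathrm{loc}}$. At any solution where $\mathcal{F}_x$ is merely Fredholm, I will perform a Lyapunov--Schmidt reduction to a finite-dimensional analytic equation. Fredholmness of index zero should follow from hypothesis (1): local properness gives semi-Fredholmness, and the index is constant along the connected solution set starting from the invertible curve.

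On the reduced analytic set, the structure theory of real-analytic varieties applies in the form of Dancer and Buffoni--Toland. The set decomposes into finitely many analytic arcs meeting at isolated points. Using this, one selects a distinguished arc continuation through each branch point (the "route"), following the Buffoni--Toland construction. This produces a globally defined continuous path $s\mapsto(\mu(s),x(s))$ that is locally analytic after reparametrization, which gives (ii).

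\textbf{Alternatives in (i).} Suppose neither blow-up nor loss of compactness occurs. Then $N(s)$ stays bounded along a sequence, and every such sequence has a convergent subsequence in $\mathscr{X}$.

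The central step is to show that the resulting set $\mathscr{C}$ is then a compact-like closed loop. Local properness, applied to $\mathcal{F}$ restricted to bounded closed subsets of $\mathscr{U}$ (bounded $N$), gives that $\mathcal{F}^{-1}(0)\cap\{N\le K\}$ is compact. The distinguished arcs form a locally finite chain on this compact set. Hence the path must either return to a previously visited point, making it a closed loop, or escape every such compact set.

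A closed loop is excluded because it would pass through $\mathscr{C}_{\mathrm{loc}}$ again. Since $\mathcal{F}^{-1}(0)$ near $\mathscr{C}_{\mathrm{loc}}$ is exactly the curve itself, the path would have to re-enter $\mathscr{C}_{\mathrm{loc}}$ and then run to $(0,0)\in\partial\mathscr{U}$. That forces $1/\mathrm{dist}\to\infty$ along the path, so $N$ blows up, which is alternative (a). This same argument also yields (iii): the path cannot re-enter $\mathscr{C}_{\mathrm{loc}}$ for large $s$ without producing blow-up at the end where $\mu\searrow 0$. To secure this, I will choose the orientation of the parametrization so that $s\to 0$ corresponds to $\mu\searrow0$.

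\textbf{Main obstacle.} I expect the hardest step to be the global route construction through points where $\mathcal{F}_x$ has nontrivial kernel. There one needs the Buffoni--Toland lemma that, in a one-dimensional analytic variety, a path can always be continued without backtracking. One also needs that the Fredholm index remains zero across such points. For the index, I will argue by local constancy along the path, starting from the invertible curve, combined with the semi-Fredholm property from local properness. For the routing itself, I will cite the Buffoni--Toland theory rather than redo it, since the statement is quoted from \cite[Theorem 6.1]{chennonlinearity}.
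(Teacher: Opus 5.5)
Your overall strategy matches the source. The paper gives no proof of its own and simply quotes \cite[Theorem 6.1]{chennonlinearity}, which is proved by adapting the Buffoni--Toland route construction. Your recovery of Fredholm index zero from local properness is also the right one: the semi-Fredholm index is locally constant on $\mathcal{F}^{-1}(0)$ and vanishes on the invertible curve $\mathscr{C}_{\mathrm{loc}}$. Two steps are wrong as written, though.

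\textbf{Compactness.} Hypothesis (1) concerns the \emph{linear} operators $\mathcal{F}_x(\mu,x)$. It yields at best local compactness of $\mathcal{F}^{-1}(0)$ near each zero, not compactness of $\mathcal{F}^{-1}(0)\cap\{N\le K\}$. If the latter held, alternative (b) could never occur, and the paper would not need Section 6 (non-existence of bores, Lemma \ref{cptlemma}) to exclude it. On unbounded domains, bounded sets of solutions need not be precompact. The compactness you use must instead come from negating (b), and it is only available along the tail of the curve. If (a) and (b) both fail, there are $s_n\to\infty$ with $\sup_n N(s_n)<\infty$. After passing to a subsequence, $(\mu(s_n),x(s_n))\to(\mu^*,x^*)\in\mathscr{U}$, where the bound on $1/\operatorname{dist}$ keeps the limit in $\mathscr{U}$. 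Local finiteness of the distinguished arcs near this one point forces the route to traverse some arc twice, and that is what produces a closed loop.

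\textbf{Closed loops and (iii).} Your exclusion of closed loops and your proof of (iii) both rest on ``the route re-enters $\mathscr{C}_{\mathrm{loc}}$, runs back to $(0,0)$, so $N$ blows up''. That is not a contradiction. Blow-up is a permitted outcome in (i), while (iii) must hold whichever alternative occurs. A route that re-entered $\mathscr{C}_{\mathrm{loc}}$ and then followed it backward to $(0,0)$ would satisfy (a) yet violate (iii). Local uniqueness along $\mathscr{C}_{\mathrm{loc}}$ does not determine the direction of travel after re-entry either. The missing ingredient is the Buffoni--Toland property that a route passing twice through a point where $\mathcal{F}_x$ is invertible (for instance any point of $\mathscr{C}_{\mathrm{loc}}$) is a closed loop, i.e.\ periodic in $s$. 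A periodic route has compact image in $\mathscr{U}$. That is impossible here, since $(\mu(s),x(s))\to(0,0)\in\partial\mathscr{U}$ as $s\searrow 0$. This rules out closed loops directly, with no appeal to blow-up, and gives (iii): once the route has left $\mathscr{C}_{\mathrm{loc}}$ it can never come back.
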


Now we apply Theorem \ref{globalbifurthm} to extend the local curve of the solution $\mathscr{C}_{\mathrm{loc}}$ obtained in  Theorem \ref{smallthm} to a global curve of solution $\mathscr{C}$.
\begin{theorem}\label{globalthm}
Assume that $\gamma$ is real-analytic and satisfies all conditions of Theorem \ref{smallthm}. Then the local curve $\mathscr{C}_\textup{loc}$ is contained in a continuous curve of solutions parameterized as
	\begin{equation*}
		\mathscr{C}=\{(\phi(s),w(s),\alpha(s)):0<s<\infty \}\subset\mathcal{U},
	\end{equation*}
with the following properties:
    \begin{enumerate}
        \item [$(a)$] One of two alternatives  hold
        \begin{enumerate}
            \item [$(i)$] as $s\to\infty$,
            \begin{equation*}
                N(s):=\|(\phi(s),w(s)\|_{\mathcal{X}}+\frac{1}{\sigma(w(s),\alpha(s))}+\frac{1}{\alpha(s)}+\frac{1}{\alpha_{\mathrm{cr}}-\alpha(s)}\to\infty,
            \end{equation*}
 \item [$(ii)$] there exists a sequence $s_n\to\infty$ such that $\sup_{n\in\mathbb{N}}N(s_n)<\infty$ but $\{(\phi(s_n),w(s_n))\}$ has no subsequences converging in $\mathcal{X}$.
        \end{enumerate}
    \end{enumerate}
    \item [$(b)$] Near each point  $(\phi(s),w(s),\alpha(s))\in\mathscr{C}$, we can re-parameterize $\mathscr{C}$ so that the mapping $s\to(\phi(s),w(s),\alpha(s))$ is real analytic.
     \item [$(c)$] $(\phi(s),w(s),\alpha(s))\not\in\mathscr{C}_{\mathrm{loc}}$ for $s$ sufficiently large.
\end{theorem}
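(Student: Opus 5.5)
The plan is to apply Theorem \ref{globalbifurthm} with $\mathscr{X}=\mathcal{X}$, $\mathscr{Y}=\mathcal{Y}$, the nonlinear map $\mathcal{F}$ from \eqref{maine}--\eqref{maine1}, and bifurcation parameter $\varepsilon=\alpha_{\mathrm{cr}}-\alpha$. The open set is $\mathscr{U}$, the image of $\mathcal{U}$ from \eqref{zy2} under this change of parameter, intersected with $\{\alpha>0\}$. Since $(0,0,\alpha_{\mathrm{cr}})$ lies on $\{\alpha=\alpha_{\mathrm{cr}}\}$, the point $(0,0)$ lies on $\partial\mathscr{U}$, as the theorem requires.

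The first step is real-analyticity of $\mathcal{F}$ on $\mathscr{U}$. Here I take $\gamma$ real-analytic. The map $w\mapsto\zeta_{[w]}$ is bounded linear, and $\mathcal{A}$ is the composition of the bounded linear solution operator of the Dirichlet problem (the inverse Laplacian from $C_b^{1+\beta}$ to $C_b^{3+\beta}\cap C_0^2$) with the Nemytskii operator $\phi\mapsto\gamma(\phi+\psi_{\mathrm{triv}})$ and polynomial expressions in $\nabla\zeta_{[w]}$. Nemytskii operators of real-analytic functions are analytic on H\"older spaces, and the decay at infinity is preserved because $\gamma(\phi+\psi_{\mathrm{triv}})-\gamma(\psi_{\mathrm{triv}})\in C_0$. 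The map $\mathcal{F}_2$ is a polynomial in traces of these quantities and in $\alpha$, so $\mathcal{F}$ is analytic.

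The hypotheses of Theorem \ref{globalbifurthm} then come from earlier results.
\begin{enumerate}
\item Local properness of $\mathcal{F}_{(\phi,w)}$ at every point of $\mathscr{U}$ (in particular at every zero) is Lemma \ref{localproper}, which uses $\alpha<\alpha_{\mathrm{cr}}$ through Lemma \ref{trivialkernel} for the limiting operator $\mathcal{L}^0$.
\item The local curve $\mathscr{C}_{\mathrm{loc}}=\{(\phi^\varepsilon,w^\varepsilon,\alpha^\varepsilon)\}$ of Theorem \ref{smallthm} is continuous in $\varepsilon$ with values in $\mathcal{X}$. Its limit as $\varepsilon\searrow0$ is $0$ by the expansion \eqref{wsmall}, together with the corresponding bound for $\phi^\varepsilon$ obtained through $\mathcal{T}$. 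I also need $\mathscr{C}_{\mathrm{loc}}\subset\mathscr{U}$, that is $\sigma(w^\varepsilon,\alpha^\varepsilon)>0$. This holds for small $\varepsilon$ since $\sigma(0,\alpha_{\mathrm{cr}})=\mu>0$ and $\sigma$ is continuous.
\item Invertibility of the linearization along $\mathscr{C}_{\mathrm{loc}}$ is Theorem \ref{smallthm}(iii).
\end{enumerate}

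Theorem \ref{globalbifurthm} then yields the curve $\mathscr{C}$ with properties (i)--(iii), and it remains to translate these into the statement. The blow-up quantity $\|x\|+1/\mathrm{dist}((\mu,x),\partial\mathscr{U})+\mu$ must be compared with $N(s)$. The boundary of $\mathscr{U}$ consists of three pieces: $\sigma=0$, $\alpha=\alpha_{\mathrm{cr}}$, and $\alpha=0$. I expect this comparison to be the main obstacle, because $\sigma$ involves an infimum over the unbounded strip.

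On bounded sets of $\mathcal{X}\times\R$, $\sigma$ is locally Lipschitz in the sup norm: $\nabla\zeta_{[w]}$ is controlled by $\|w\|_{C_b^{2+\beta}}$, so $|\sigma(w,\alpha)-\sigma(w',\alpha')|\le C(\|w-w'\|+|\alpha-\alpha'|)$. Consequently, a distance to $\{\sigma=0\}$ tending to zero forces $\sigma\to0$ or $\|w\|\to\infty$, and conversely. Hence $1/\mathrm{dist}$ blows up if and only if one of $1/\sigma$, $1/(\alpha_{\mathrm{cr}}-\alpha)$, $1/\alpha$ blows up or the norm does. The term $+\mu(s)$ equals $\alpha_{\mathrm{cr}}-\alpha\le\alpha_{\mathrm{cr}}$ and is harmless. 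This identifies alternative (i) with blow-up of $N(s)$, and (ii) with the loss-of-compactness alternative, since $\sup_n N(s_n)<\infty$ in both formulations.

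Properties (b) and (c) carry over directly from (ii) and (iii) of Theorem \ref{globalbifurthm}.
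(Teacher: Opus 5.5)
Your strategy is the paper's. Its proof of Theorem~\ref{globalthm} asserts that $\mathcal F$ is real-analytic and cites Lemma~\ref{localproper}, Theorem~\ref{smallthm} and Theorem~\ref{smallthm}(iii) for the three hypotheses of Theorem~\ref{globalbifurthm}. It says nothing about turning the abstract blow-up quantity of Theorem~\ref{globalbifurthm} into $N(s)$. Your extra translation step is worthwhile: intersecting with $\{\alpha>0\}$ is what makes $1/\alpha(s)$ meaningful, and the paper's $\mathcal U$ omits it.

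However, the ``and conversely'' in that step is not supported. Local Lipschitz continuity of $\sigma$ gives only one implication. If $\|(\phi,w)\|_{\mathcal X}$, $1/\sigma$, $1/\alpha$ and $1/(\alpha_{\mathrm{cr}}-\alpha)$ are bounded, then a ball of fixed radius about the point stays in $\mathscr U$, so $1/\mathrm{dist}$ is bounded. That is enough to convert the abstract blow-up alternative into (a)(i). The abstract loss-of-compactness alternative, though, only provides $\sup_n(\|x(s_n)\|+1/\mathrm{dist})<\infty$. To obtain $\sup_n N(s_n)<\infty$ in (a)(ii) you need the reverse: bounded norm together with small $\sigma$ must force a point with $\sigma\le 0$ nearby. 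For the two $\alpha$-terms this is trivial, since you can move $\alpha$ to $0$ or to $\alpha_{\mathrm{cr}}$. For $\sigma$ it is not a continuity statement and needs a construction.

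Here is one way to supply it. The quantity inside $\sigma$ tends to a limit $\ge\mu$ as $|x|\to\infty$, so for $\sigma<\mu$ the infimum is attained at some point $p$. At $p$, either the first factor is $\le\sqrt\sigma$ or $|\nabla\eta(p)|^2\le\sqrt\sigma$.

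\begin{itemize}
\item In the first case $w\ge(\mu-\sqrt\sigma)/(2\alpha)>0$ at $p$. Raising $\alpha$ by $O(\sqrt\sigma)$ then makes that factor nonpositive, or reaches $\alpha_{\mathrm{cr}}$ first.
\item In the second case, apply the minimum modulus principle to the nonvanishing holomorphic function $\eta_y+i\eta_x$. It gives $q\in\Gamma\cup B$ with $|\nabla\eta(q)|\le|\nabla\eta(p)|$. Add to $w$ a compactly supported bump $h$ with $\nabla\zeta_{[h]}(q)=-\nabla\eta(q)$ and $\|h\|_{C^{2+\beta}}\lesssim\sigma^{1/4}$. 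This is a translation-invariant construction, and on $B$ only the $y$-component needs cancelling. The bump makes $\nabla\eta$ vanish at $q$.
\end{itemize}

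In both cases $\mathrm{dist}\lesssim\sigma^{1/4}$ on bounded sets, which closes the argument.

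Separately, your analyticity argument, like the paper's one-line assertion, does not check the target space. For $w\in C_b^{2+\beta}(\Gamma)$ the traces of $\nabla\zeta_{[w]}$ on $\Gamma$ (for example $\partial_x\zeta_{[w]}=w'$) are only $C_b^{1+\beta}$. So $\mathcal F_2$ takes values in $C_b^{1+\beta}(\Gamma)$, not in the $C_b^{2+\beta}(\Gamma)$ factor of $\mathcal Y$. This is a defect of the paper's function spaces rather than of your approach, but ``$\mathcal F_2$ is a polynomial in traces'' does not by itself land in $\mathcal Y$.
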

\begin{proof}
The operator $\mathcal{F}:\mathcal{U}\to\mathcal{Y}$ is real analytic. Theorem \ref{smallthm} ensures the existence of the local curve $\mathscr{C}_{\mathrm{loc}}=\{(\phi^\varepsilon,w^\varepsilon,\alpha^\varepsilon):0<\varepsilon<<1\}\subset\mathcal{U}$.  Lemma \ref{localproper} shows that the linearized operator $\mathcal{F}_{(\phi,w)}(\phi,w,\alpha
)$ is locally proper for $(\phi,w,\alpha)\in\mathcal{U}$. Finally, it follows from the property $(iii)$ of Theorem \ref{smallthm} that $\mathcal{F}_{(\phi,w)}(\phi^\varepsilon,w^\varepsilon,\alpha^\varepsilon)$ is invertible along $\mathscr{C}_{\mathrm{loc}}$. Now all hypotheses of Theorem \ref{globalbifurthm} are satisfied.
\end{proof}

Next we will eliminate or simplify some of the alternatives in Theorem \ref{globalthm} provided that the vorticity function satisfies additionally $\gamma'(\psi) \leq 0$,and either $M_0>0$ or $M_0<0$, where $M_0$ is defined as (\ref{M0}). We begin by ruling out the loss of compactness alternative (ii) in Theorem \ref{globalthm}.

\begin{lemma}\label{nodalglobal}
If the vorticity function $\gamma$ satisfies $\gamma'(\psi) \leq 0$, along with the conditions \eqref{nocritical} and $M_0 < 0$, then the nodal property \eqref{nodalproperty} holds along the global  bifurcation curve $\mathscr{C}$, while the nodal property \eqref{nodaldepression} holds along the global bifurcation curve $\mathscr{C}$ if the vorticity function $\gamma$ satisfies  $\gamma'(\psi)\leq0$ and $M_0>0$.
\end{lemma}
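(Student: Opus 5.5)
We would run the standard continuation (open–closed) argument along the curve, for the case $M_0<0$; the case $M_0>0$ is identical after replacing $v$ by $-v$ and reversing the inequalities. Set
\[
\mathcal{I}:=\{s\in(0,\infty): \text{the solution } (\phi(s),w(s),\alpha(s)) \text{ satisfies } \eqref{nodalv}\},
\]
where $v$ is the vertical velocity \eqref{conformalvelocity} built from $\eta=y+\zeta_{[w(s)]}$ and $\psi=\psi_{\mathrm{triv}}+\phi(s)$. By Lemma \ref{lemma5.1}, \eqref{nodalv} implies $\eta_x<0$ in $\Gamma^+\cup\mathcal{R}^+$. By evenness, $\eta_x>0$ on the negative side, so it suffices to prove $\mathcal{I}=(0,\infty)$. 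Since $s\mapsto(\phi(s),w(s),\alpha(s))$ is continuous into $\mathcal{X}\times\R$ and $\mathscr{C}$ is connected, it is enough to show that $\mathcal{I}$ is nonempty, open and relatively closed.

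\textbf{Nonemptiness.} For small $s$ the curve coincides with $\mathscr{C}_{\mathrm{loc}}$, and Theorem \ref{smallthm}(i) gives \eqref{nodalproperty} there. We must still pass from $\eta_x<0$ to $v<0$. On $\Gamma$, relation \eqref{conformalvelequa3} makes $(u,v)$ parallel to $(\eta_y,\eta_x)$, and $u>0$ (near the laminar flow, or via $\sigma>0$ together with $u\to\psi_{\mathrm{triv},y}(1)>0$), so $v<0$ on $\Gamma^+$. Proposition \ref{closedcondition}, which uses $\gamma'\le0$, then yields $v<0$ in $\mathcal{R}^+$, unless $v\equiv0$; the latter is excluded because the wave is nontrivial.

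\textbf{Openness.} This is exactly Proposition \ref{opencondition}. Its hypotheses are $\gamma'\le0$, $\alpha(s_0)<\alpha_{\mathrm{cr}}$ (true in $\mathcal{U}$) and \eqref{nodalv} at $s_0$. Continuity of the curve in $\mathcal{X}$ gives $C^3$-closeness of $(\eta,\psi,\alpha)$ for $s$ near $s_0$, so \eqref{nodalv} persists.

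\textbf{Closedness.} Let $s_n\to s_0\in(0,\infty)$ with $s_n\in\mathcal{I}$. By continuity, $v(s_n)\to v(s_0)$ in $C^2$, so $v(s_0)\le0$ on $\Gamma^+$. Proposition \ref{closedcondition} then gives either \eqref{nodalv} at $s_0$ or $v(s_0)\equiv0$. The main work is ruling out $v\equiv0$. If $v\equiv0$, then \eqref{conformalvelequa3} and $u\neq0$ on $\Gamma$ force $\eta_x=0$ on $\Gamma$. Hence $w$ is constant, so $w\equiv0$ because $w\in C_0^1$, and consequently $\zeta\equiv0$. Then $\psi$ satisfies $\Delta\psi=-\gamma(\psi)$ with $\psi_x$ solving \eqref{vpde1}, and $\psi_x=u\eta_x-v\eta_y=0$ by \eqref{psixpsiy}. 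So $\psi=\psi(y)$ is laminar with the flat boundary data, and uniqueness for \eqref{trivial} (using the Bernoulli condition $\psi_y(1)^2=\mu$) gives $\psi=\psi_{\mathrm{triv}}$. Thus $(\phi(s_0),w(s_0))=(0,0)$, a trivial solution with $\alpha(s_0)<\alpha_{\mathrm{cr}}$. Since $\mathcal{F}_{(\phi,w)}(0,0,\alpha)$ is invertible (Lemma \ref{invertiblelemma}), the implicit function theorem shows that the only solutions near $(0,0,\alpha(s_0))$ are trivial. But the nearby points $(\phi(s_n),w(s_n))$ are nontrivial: they satisfy $v<0$ strictly, so $w\neq0$. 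This is a contradiction.

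We expect this last step to be the main obstacle. The delicate points are that $\sigma(w,\alpha)>0$ guarantees $u\neq0$ on $\Gamma$, and that local uniqueness at the trivial branch for $\alpha<\alpha_{\mathrm{cr}}$ is legitimately applied in the space $\mathcal{X}$ along the curve. If needed, we would instead use the analytic reparametrization from Theorem \ref{globalthm}(b) to argue that the curve cannot pass through the trivial solution. With these three properties, connectedness gives $\mathcal{I}=(0,\infty)$, and Lemma \ref{lemma5.1} converts \eqref{nodalv} into the nodal property \eqref{nodalproperty} along all of $\mathscr{C}$ (respectively \eqref{nodaldepression} when $M_0>0$).
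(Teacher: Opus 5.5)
Your proposal is correct and follows essentially the paper's argument. Both run an open--closed continuation along the connected curve using Propositions \ref{closedcondition} and \ref{opencondition}, seeded by Theorem \ref{smallthm}(i), and both exclude the degenerate alternative $v\equiv0$ via invertibility of $\mathcal{F}_{(\phi,w)}(0,0,\alpha)$ for $\alpha<\alpha_{\mathrm{cr}}$ and the implicit function theorem. The only difference is organizational: the paper first shows globally that $\mathscr{C}$ contains no trivial solutions (the trivial set is relatively open and closed), whereas you run the same local-uniqueness argument inside the closedness step. You also make explicit the steps $v\equiv0\Rightarrow(\phi,w)=(0,0)$ and $\eta_x<0\Rightarrow v<0$ on $\mathscr{C}_{\mathrm{loc}}$, which the paper leaves implicit.
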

\begin{proof}
 Without loss of generality, we only show the nodal property for condition $M_0<0$. In order to apply the results from Section 3, we first show that the global curve $\mathscr{C}$ contains no trivial solution $(0,0,\alpha_{\mathrm{cr}})$. By the definition of $\mathcal{U}$, any solution on $\mathscr{C}$ must have $\alpha<\alpha_\mathrm{cr}$. Consider now the relatively closed set $\mathscr{T}$ of all trivial solutions in $\mathscr{C}$. Clearly, since $\mathscr{C}$ is closed, this set is relatively closed. Moreover, the operator $\mathcal{F}_{(\phi,w)}(0,0,\alpha)$ is invertible for all $\alpha<\alpha_\mathrm{cr}$. By the implicit function theorem, the trivial solutions must lie on locally unique continuous curves parameterized by $w=w(\alpha)$ with $\alpha<\alpha_\mathrm{cr}$, implying that the set $\mathscr{T}$ is relatively open in $\mathscr{C}$. Since $\mathscr{C}$ is continuous, it must be connected and hence we only have two options: either $\mathscr{C}$ contains only trivial solutions, or it contains none. Since $\mathscr{C}_\text{loc}\subset\mathscr{C}$, the former cannot be true.

 From Theorem \ref{smallthm}, the nodal property \eqref{nodalproperty} holds along the local curve $\mathscr{C}_{\mathrm{loc}}$. Now let $\mathscr{N}\subset\mathscr{C}$ denote the set of all $(\phi,w,\alpha)\in\mathscr{C}$ satisfying \eqref{nodalproperty}. Since $\mathscr{C}$ is connected in $\mathcal{X}\times\mathbb{R}$ and we have shown that there are no trivial solutions in $\mathscr{C}$. The closed condition and the open condition imply that $\mathscr{N}\subset
        \mathscr{C}$ is both relatively open and relatively closed. Since the local curve $\mathscr{C}_\text{loc}\subset\mathscr{N}$, we know that $\mathscr{N}$ is nonempty and thus we have $\mathscr{N}=\mathscr{C}$.
\end{proof}

It follows from Lemma \ref{nodalglobal} that the monotonicity \eqref{monotoneass} holds along $\mathscr{C}$. Thus we have the following result from Lemma \ref{cptlemma}.
\begin{lemma}\label{psiygamma1}
Then the alternative $(ii)$ in Theorem {\rm\ref{globalthm}} can never occur.
\end{lemma}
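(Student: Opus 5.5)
We argue by contradiction. Suppose alternative $(ii)$ of Theorem \ref{globalthm} occurs: there is a sequence $s_n\to\infty$ with $\sup_n N(s_n)<\infty$, but $\{(\phi(s_n),w(s_n))\}$ has no subsequence converging in $\mathcal{X}$. We will show that Lemma \ref{cptlemma} applies to the associated pairs $(\psi_n,\eta_n)$, so that a subsequence does converge in $\mathcal{X}$.

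\textbf{Step 1: bounds from $N$.} Set $\psi_n=\psi_{\mathrm{triv}}+\phi(s_n)$, $\eta_n=y+\zeta_{[w(s_n)]}$ and $\alpha_n=\alpha(s_n)$. The bound $\sup_n N(s_n)<\infty$ gives three things.
\begin{itemize}
\item[(a)] It bounds $\|\phi(s_n)\|_{C^{3+\beta}_b}$ and $\|w(s_n)\|_{C_b^{2+\beta}}$.
\item[(b)] It gives $\inf_n\sigma(w(s_n),\alpha_n)>0$.
\item[(c)] It confines $\alpha_n$ to a compact subinterval of $(0,\alpha_{\mathrm{cr}})$.
\end{itemize}

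To obtain the first half of \eqref{seqbound} we need a $C^{3+\beta}$ bound on $\eta_n$, whereas $w$ is only controlled in $C^{2+\beta}$. We recover the missing derivative from the dynamic condition \eqref{stripstreameq4}. This condition expresses $\partial_y\zeta_{[w]}$ on $\Gamma$, which is the Dirichlet-to-Neumann image of $w$, in terms of $\psi_y|_\Gamma\in C^{2+\beta}$ and lower-order quantities. The ellipticity needed here comes from $\sigma$ being bounded below. Schauder estimates for the harmonic function $\zeta$ then give a uniform $C^{3+\beta}$ bound on $\eta_n$, which is the same gain used in the estimate \eqref{schauderphiw}.

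The second half of \eqref{seqbound} is exactly $\inf_n\sigma(w(s_n),\alpha_n)>0$, which we already have.

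\textbf{Step 2: monotonicity.} Lemma \ref{nodalglobal} gives the nodal property along all of $\mathscr{C}$, so $\partial_x\eta_n<0$ for $x>0$ in the elevation case. This is \eqref{monotoneass}. In the depression case the inequality is reversed, and we run the same argument with the roles of the signs exchanged. The point is that the bore-exclusion step in Lemma \ref{cptlemma} only needs monotonicity in $x\ge0$, so either sign works.

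\textbf{Step 3: convergence and contradiction.} Lemma \ref{cptlemma} now yields a subsequence with $(\psi_n,\eta_n)\to(\psi,\eta)$ in $C_b^{3+\beta}(\overline{\mathcal{R}})$. Its proof also gives uniform decay at infinity, via \eqref{seqasy}, which in turn rests on the nonexistence of bores in Lemma \ref{borenonexistence}.

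It follows that $\phi(s_n)=\psi_n-\psi_{\mathrm{triv}}$ converges in $C^{3+\beta}_b$ and $w(s_n)=\eta_n|_\Gamma-1$ converges in $C^{2+\beta}_b$. The limits lie in $C_0^2$ and $C_0^1$ respectively, because these are closed subspaces and the decay is uniform. Hence $(\phi(s_n),w(s_n))$ converges in $\mathcal{X}$, contradicting the choice of $(s_n)$.

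\textbf{Main obstacle.} The delicate point is the regularity bookkeeping in Step 1: the solution space carries only $C^{2+\beta}$ control on $w$, while Lemma \ref{cptlemma} is stated for $C^{3+\beta}$ norms of $\eta_n$. If the Dirichlet-to-Neumann bootstrap in Step 1 proves awkward, the fallback is to rerun the argument of Lemma \ref{cptlemma} directly in the norms of $\mathcal{X}$. That would use the Lopatinskii-type estimate of Lemma \ref{schauderlemma2} on the difference system, which requires $w_n-w$ only in $C^{2+\beta}(\Gamma)$.
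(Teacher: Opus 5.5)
Your overall route is the paper's. There the lemma is obtained in one line: Lemma \ref{nodalglobal} gives \eqref{monotoneass} along $\mathscr{C}$, and Lemma \ref{cptlemma} then yields a convergent subsequence. Your Steps 2 and 3 reproduce this, and your remark that the limit stays in $C_0^2\times C_0^1$ fills in a detail the paper leaves implicit.

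\textbf{The gap is in Step 1.} This is the one step you add beyond the paper: the bootstrap giving a uniform $C^{3+\beta}$ bound on $\eta_n$. It does not work as described. On $\Gamma$ one has $\eta_x=w_x$ and $\eta_y=1+\partial_y\zeta_{[w]}$, so \eqref{stripstreameq4} reads $\psi_y^2=(\mu-2\alpha w)\bigl(w_x^2+(1+\partial_y\zeta_{[w]})^2\bigr)$. This causes two problems.
\begin{itemize}
\item The term $w_x^2$ is exactly as regular as $\partial_y\zeta_{[w]}$: both are only $C^{1+\beta}$ when $w\in C^{2+\beta}$. So it is not lower order, and no derivative is gained.
\item Isolating $\partial_y\zeta_{[w]}$ requires a square root with $\eta_y\neq0$ on $\Gamma$. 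This fails precisely for overhanging profiles, where $\eta_y=\xi_x$ changes sign.
\end{itemize}
The lower bound on $\sigma$ gives only $|\nabla\eta|\ge c>0$. In two dimensions that is the right ellipticity condition, but it is a condition on the full gradient, not obliqueness. It cannot be used by first extracting $C^{2+\beta}$ Neumann data and then applying Schauder estimates to $\zeta_{[w]}$.

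\textbf{How to repair it.} The bound you need is available from the paper itself.
\begin{itemize}
\item From $\sup_n N(s_n)<\infty$, Schauder estimates for $\zeta_{[w_n]}$ bound $\eta_n$ and $|\nabla\eta_n|$ from above.
\item Then $\inf_n\sigma>0$ bounds both factors $|\nabla\eta_n|^2$ and $\mu-2\alpha_n(\eta_n-1)$ from below.
\item Hence \eqref{C1} holds with a uniform $\delta$, and $0<\alpha_n<\alpha_{\mathrm{cr}}$. Theorem \ref{regularthm} then gives uniform $C^{3+\beta}$ bounds on $(\psi_n,\eta_n)$.
\end{itemize}
This is also how the paper controls $\|(\phi,w)\|_{\mathcal{X}}$ in the proof of Theorem \ref{finalthmglobal}.

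A self-contained alternative is to apply Schauder estimates to $\log H'$, where $H'=\eta_y+i\eta_x$.
\begin{itemize}
\item Its real part $\log|\nabla\eta|=\tfrac12\log\bigl(\psi_y^2/(\mu-2\alpha w)\bigr)$ has uniformly $C^{2+\beta}$ Dirichlet data on $\Gamma$.
\item Its imaginary part $\arg H'$ is constant on $B$, since $\eta_x=0$ there. By Cauchy--Riemann, the real part therefore has zero Neumann data on $B$.
\item This gives $H'\in C^{2+\beta}$, hence $\eta_n\in C^{3+\beta}$, uniformly. It uses only $|\nabla\eta|>0$.
\end{itemize}

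\textbf{On your fallback.} If you rerun the difference argument of Lemma \ref{cptlemma}, impose $v_n^{(1)}=0$ on $\Gamma$ (from $\psi_n=\psi=1$ there) as the second boundary condition. Its Lopatinskii quantity in \eqref{lopatinskii} is $(\mu-2\alpha_n(\eta-1))^2|\nabla\eta_n+\nabla\eta|^2$, which is bounded below. With the condition $v_n^{(2)}=w_n-w$, the resulting estimate has $\|w_n-w\|_{C^{2+\beta}}$ on its right-hand side, which is the very quantity you are trying to show tends to zero.
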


\begin{lemma}\label{psiygamma}
    Assume that $(\psi,\eta,\alpha)$ solves \eqref{stripstreameq} with $0\leq\alpha<\alpha_{\mathrm{cr}}$. Then we have
\begin{enumerate}
    \item [$(i)$] if $\gamma\geq 0$, then  $\psi_y\leq1$ on $\Gamma$,
\item [$(ii)$] if $\gamma< 0$,  then $\psi_y>1$ on $\Gamma$.
\end{enumerate}
\end{lemma}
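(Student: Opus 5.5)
The plan is to compare $\psi$ with the linear function $y$, which has the same boundary values as $\psi$ on $\Gamma\cup B$, and then read off the sign of $\partial_y\psi-1$ on $\Gamma$ from the maximum principle and Hopf's lemma. Set $h:=\psi-y$. By \eqref{stripstreameq1}--\eqref{stripstreameq3},
\begin{equation*}
\Delta h=-\gamma(\psi)|\nabla\eta|^2\quad\mathrm{in~}\mathcal{R},\qquad h=0\quad\mathrm{on~}\Gamma\cup B .
\end{equation*}
Moreover, by \eqref{streamasy}, $h(x,y)\to\psi_{\mathrm{triv}}(y)-y$ uniformly in $y$ as $x\to\pm\infty$. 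The sign of this limit is fixed by \eqref{laminar1}: $\psi_{\mathrm{triv},yy}=-\gamma(\psi_{\mathrm{triv}})$, and $\psi_{\mathrm{triv}}(0)=0$, $\psi_{\mathrm{triv}}(1)=1$. If $\gamma\ge0$, then $\psi_{\mathrm{triv}}$ is concave on $[0,1]$, so $\psi_{\mathrm{triv}}(y)-y\ge0$. If $\gamma<0$, then $\psi_{\mathrm{triv}}$ is strictly convex, so $\psi_{\mathrm{triv}}(y)-y<0$ on $(0,1)$.

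Case (i), $\gamma\ge0$. Here $h$ is superharmonic in $\mathcal{R}$. It vanishes on $\Gamma\cup B$, and $\liminf_{|x|\to\infty}h\ge0$. I will apply the weak minimum principle on the truncated rectangles $\{|x|<R\}\cap\mathcal{R}$ and let $R\to\infty$. On the lateral sides $h\ge -o(1)$, so this gives $h\ge0$ in $\mathcal{R}$. Since $h$ attains its minimum value $0$ at every point of $\Gamma$, the one-sided derivative in the inward direction $-\partial_y$ is nonnegative. Hence $\partial_y h\le0$, i.e. $\psi_y\le1$ on $\Gamma$.

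Case (ii), $\gamma<0$. By \eqref{stripk}, $|\nabla\eta|^2>0$, so $\Delta h=-\gamma(\psi)|\nabla\eta|^2>0$ and $h$ is strictly subharmonic. The same truncation argument, now using $\limsup_{|x|\to\infty}h\le0$, gives $h\le0$. Strict subharmonicity rules out $h\equiv0$, so the strong maximum principle (Theorem \ref{strong}) gives $h<0$ in $\mathcal{R}$. At each point of $\Gamma$, $h$ attains its maximum $0$, and the interior ball condition holds there. The Hopf lemma (Theorem \ref{hopf}), applied on a bounded subdomain, then gives $\partial_y h>0$, i.e. $\psi_y>1$ on $\Gamma$.

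The main obstacle is the unboundedness of $\mathcal{R}$. The maximum principle has to be passed to the limit through the truncations, and this relies on the uniform asymptotics \eqref{streamasy} together with the $C^{3+\beta}_b$ regularity. There is one further point to check: in case (ii) the strict inequality is needed pointwise on all of $\Gamma$, not only in the limit $|x|\to\infty$. This is fine because Hopf's lemma is purely local, so it applies at every finite point of $\Gamma$. The hypothesis $0\le\alpha<\alpha_{\mathrm{cr}}$ is not used in this argument beyond guaranteeing that the solution and its laminar limit are well defined.
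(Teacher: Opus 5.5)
Your proposal is correct and follows the paper's proof. Both use the comparison function $\psi-y$, which vanishes on $\Gamma\cup B$, with the maximum principle and the Hopf lemma, in the superharmonic case $\gamma\ge0$ and the strictly subharmonic case $\gamma<0$. Your truncation step, based on the sign of $\psi_{\mathrm{triv}}(y)-y$ (from the concavity or convexity of $\psi_{\mathrm{triv}}$) and the asymptotics \eqref{streamasy}, justifies the maximum principle on the unbounded strip, which the paper leaves implicit.
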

\begin{proof}
If $\gamma\ge0$, we consider the auxiliary function $\rho(x,y)=\psi(x,y)-y$. Recalling that $\psi = \phi + \psi_{triv}$, we can see that $\rho$ solves the following problem
\begin{equation*}
	\begin{cases}
		\Delta\rho=-\gamma(\psi)|\nabla\eta|^{2}\le0 & \text{in}~\mathcal{R},\\
		\rho=0 & \text{on}~\Gamma\cup B.
	\end{cases}
\end{equation*}
By the strong maximum principle in Theorem B.1 and the Hopf boundary lemma in Theorem B.2, we can deduce that $\rho\ge0$ in $\mathcal{R}$ and its outward normal derivative satisfies $\partial_{y}\rho\le0$ on $\Gamma$. Consequently, we obtain
\begin{equation*}
	\psi_{y}-1 = \phi_{y}+\psi_{\mathrm{triv},y}-1\le0 \quad \text{on}~\Gamma.
\end{equation*}

If $\gamma<0$, we employ the same auxiliary function $\rho(x,y)=\psi(x,y)-y$. In this case, $\rho$ satisfies
\begin{equation*}
	\begin{cases}
		\Delta\rho=-\gamma(\psi)|\nabla\eta|^{2}>0 & \text{in}~\mathcal{R},\\
		\rho=0 & \text{on}~\Gamma\cup B.
	\end{cases}
\end{equation*}
By Theorem B.1, the subharmonic function $\rho$ must attain its maximum value $0$ on the boundary $\Gamma\cup B$. Since the solution is non-trivial, Theorem B.2 ensures that the outward normal derivative at the top boundary $\Gamma$ is strictly positive, yielding $\partial_{y}\rho>0$ on $\Gamma$. Thus, we directly deduce that
\begin{equation*}
	\psi_{y}-1 > 0 \quad \text{on}~\Gamma.
	\end{equation*}
\end{proof}

Based on Lemma \ref{psiygamma1} and Lemma \ref{psiygamma}, we can establish the following simplified version of Theorem \ref{globalthm}.
\begin{theorem}\label{finalthmglobal}
If the vorticity function $\gamma$ is real-analytic and satisfies $\gamma'(\psi) \leq 0$ , along with the conditions \eqref{nocritical} and %either $M_0>0$ or
 $M_0<0$, then the local curve $\mathscr{C}_{\mathrm{loc}}$ is contained in a continuous curve of solution parameterized as
    \[\mathscr{C}=\{(\phi(s),w(s),\alpha(s)):0<s<\infty\}\subset\mathcal{U},\]
with the following properties:
\begin{enumerate}
    \item [$(a)$] As $s\to\infty$
    \begin{enumerate}
        \item [$(i)$] for $\gamma\geq0$,
        \begin{equation*}
            \frac{1}{\inf_{\Gamma}(\mu-2\alpha(s)w(s))}+\frac{1}{\inf_{\Gamma}((\partial_x\zeta_{[w(s)]})^2+(1+\partial_y\zeta_{[w(s)]})^2)}+\frac{1}{\alpha(s)}+\frac{1}{\alpha_{\mathrm{cr}}-\alpha(s)}\to\infty,
        \end{equation*}
        \item [$(ii)$] for $\gamma<0$,
        \begin{equation*}
            \sup_{\Gamma}|\nabla\eta_{[w(s)]}|++\frac{1}{\inf_{\Gamma}((\partial_x\zeta_{[w(s)]})^2+(1+\partial_y\zeta_{[w(s)]})^2))}+\frac{1}{\alpha(s)}+\frac{1}{\alpha_{\mathrm{cr}}-\alpha(s)}\to\infty.
        \end{equation*}
    \end{enumerate}
     \item [$(b)$] Near each point  $(\phi(s),w(s),\alpha(s))\in\mathscr{C}$, we can re-parameterize $\mathscr{C}$ so that the mapping $s\to(\phi(s),w(s),\alpha(s))$ is real analytic.
     \item [$(c)$] $(\phi(s),w(s),\alpha(s))\not\in\mathscr{C}_{\mathrm{loc}}$ for $s$ sufficiently large.
 \item [$(d)$] When the vorticity function $\gamma$ satisfies $M_0<0$, each $(\phi(s),w(s))\in\mathscr{C}$ satisfies the nodal property \eqref{nodalproperty}. %when the vorticity function $\gamma$ satisfies $M_0>0$, each $(\phi(s),w(s))\in\mathscr{C}$ satisfies the nodal property \eqref{nodaldepression}.
\end{enumerate}

\end{theorem}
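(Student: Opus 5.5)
The proof starts from the curve $\mathscr{C}$ given by Theorem \ref{globalthm}, which already yields $(b)$ and $(c)$. The tasks are then to upgrade alternative $(a)$ to the concrete blow-up quantities stated in $(i)$ and $(ii)$, and to record the nodal property $(d)$. Property $(d)$ follows immediately from Lemma \ref{nodalglobal}, since $\gamma'\le0$ and $M_0<0$. By Lemma \ref{nodalglobal}, every solution on $\mathscr{C}$ satisfies \eqref{nodalproperty}, so \eqref{monotoneass} holds uniformly along the curve. Lemma \ref{psiygamma1}, which rests on Lemma \ref{cptlemma} and the non-existence of bores in Lemma \ref{borenonexistence}, then excludes the loss-of-compactness alternative $(ii)$ of Theorem \ref{globalthm}. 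Hence $N(s)\to\infty$ as $s\to\infty$.

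The main step is to show that the unbounded terms in $N(s)$ are controlled by the quantities listed in $(a)$. The terms $1/\alpha(s)$ and $1/(\alpha_{\mathrm{cr}}-\alpha(s))$ appear verbatim. For $\sigma(w,\alpha)$, note that on $\Gamma$ we have $|\nabla\eta|^2=(\partial_x\zeta_{[w]})^2+(1+\partial_y\zeta_{[w]})^2$. Because $\eta$ is harmonic and $|\nabla\eta|^2$ is controlled by its boundary values, a lower bound for $\sigma$ comes from lower bounds on $\inf_\Gamma(\mu-2\alpha w)$ and on $\inf_\Gamma|\nabla\eta|^2$. For the interior of $\mathcal R$ I would apply the maximum principle to the harmonic function $\log|\nabla\eta|$ (equivalently, to $\log|H'|$ for the conformal map). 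Together with the asymptotics $|\nabla\eta|\to1$ and the bottom condition, obtained by reflecting across $B$, this shows that the infimum is attained on $\Gamma$ or at infinity.

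To control $\|(\phi,w)\|_{\mathcal X}$ I would argue by contradiction. Suppose the quantity in $(i)$ (respectively $(ii)$) stays bounded along a sequence $s_n\to\infty$. Then $\sigma$ and $\alpha$ stay in a compact range, and it remains to bound $\|(\phi,w)\|_{\mathcal X}$ by these quantities.

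The first ingredient is the Bernoulli condition \eqref{stripstreameq4}, $\psi_y^2=(\mu-2\alpha w)|\nabla\eta|^2$, combined with Lemma \ref{psiygamma}. If $\gamma\ge0$, then $\psi_y\le1$ on $\Gamma$, which bounds $(\mu-2\alpha w)|\nabla\eta|^2$ from above. Since $|\nabla\eta|^2$ is bounded below, $\mu-2\alpha w$ is bounded above. Since $\mu-2\alpha w$ is also bounded below (a quantity in $(i)$), $w$ is bounded in $L^\infty$, and $|\nabla\eta|$ is bounded on $\Gamma$. If $\gamma<0$, we only have $\psi_y>1$, which gives no upper bound. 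This is why $\sup_\Gamma|\nabla\eta|$ is included in $(ii)$: from it, $\psi_y$ is bounded, and $\mu-2\alpha w$ is then bounded above and below, so again $w\in L^\infty$.

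The second ingredient is elliptic regularity. With $w$ and $\nabla\eta|_\Gamma$ bounded, $\eta$ is bounded in $C^1$, and $\psi$ is bounded in $C^1$ by elliptic estimates for \eqref{stripstreameq1}. I would then bootstrap to $C^{3+\beta}$ using the Schauder estimates of Lemma \ref{schauderlemma2} for the coupled system, whose Lopatinskii condition holds because $\psi_y^2\ge\sigma>0$. This gives $\sup_n N(s_n)<\infty$, a contradiction.

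I expect this bootstrap to be the main obstacle. The boundary condition is nonlinear and oblique, so obtaining a $C^{1+\beta}$ bound for $\eta$ from $L^\infty$ bounds on $w$ and $\nabla\eta$ requires a quasilinear regularity argument. I would first try to rewrite \eqref{stripstreameq4} via the Dirichlet-to-Neumann map for $\zeta_{[w]}$, which turns it into a uniformly elliptic nonlocal equation for $w$ once $\psi_y$ has a uniform Hölder bound on $\Gamma$. I would then close the Hölder bound on $\psi_y$ by boundary gradient estimates for $\psi$, using the now-bounded right-hand side $\gamma(\psi)|\nabla\eta|^2$.
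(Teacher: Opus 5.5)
Your outline is essentially the paper's proof. You rule out loss of compactness through Lemma~\ref{nodalglobal} and Lemma~\ref{psiygamma1}, bound $1/\sigma$ by the two boundary infima via the maximum (modulus) principle, and use Lemma~\ref{psiygamma} to trade terms: $\psi_y\le1$ bounds $|\nabla\eta|$ above when $\gamma\ge0$, and $\psi_y>1$ gives $\mu-2\alpha w>1/\sup_\Gamma|\nabla\eta|^2$ when $\gamma<0$. Two small repairs are needed. First, passing from $\psi_y\le1$ to $\psi_y^2\le1$ requires $\psi_y>0$ on $\Gamma$. The paper gets this from $\psi_y^2\ge\sigma>0$, the connectedness of $\Gamma$, and $\psi_y\to\psi_{\mathrm{triv},y}(1)>0$. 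Second, in case $(ii)$ your claim that $\psi_y$ is bounded is not justified as written, and you do not need it. Since $\eta=0$ on $B$ and the strip has unit width, $|w|\le\sup_{\mathcal R}|\eta_y|$, and the maximum modulus principle for $H'$ bounds this by $\max\{1,\sup_\Gamma|\nabla\eta|\}$. This gives the upper bound on $\mu-2\alpha w$ directly.

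The step you single out as the main obstacle is not re-derived in the paper. Once your bounds give \eqref{C1} with a $\delta$ independent of $n$ (transfer them from $\Gamma$ to $\mathcal R$ by the maximum principle), Theorem~\ref{regularthm} of Appendix~\ref{uniformregu} bounds $\|\eta\|_{C^{3+\beta}}$ and $\|\psi\|_{C^{3+\beta}}$ by $C(\delta)$. That is exactly how the paper controls $\|(\phi(s),w(s))\|_{\mathcal X}$.

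That theorem obtains the $C^{1+\kappa}$ and $C^{2+\kappa}$ bounds for $\eta$ from Lieberman's two-dimensional estimates for nonlinear oblique problems \cite{libtams}, and only afterwards applies linear Schauder estimates. This order is necessary, because Lemma~\ref{schauderlemma2} requires H\"older-continuous coefficients and cannot start from $C^1$ bounds. Your Dirichlet-to-Neumann reformulation, as written, does not yet supply this step, since you give no regularity theory for the resulting nonlinear first-order nonlocal equation.

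If you want a self-contained substitute, work with $\log H'$, where $H'=\eta_y+i\eta_x$:
\begin{itemize}
\item On $\Gamma$ its real part is $\log\psi_y-\tfrac12\log(\mu-2\alpha w)$. Both arguments are bounded below, so this is uniformly $C^\kappa$ once $\psi\in C^{1+\kappa}$ (Dirichlet problem with bounded right-hand side, as in Lemma~\ref{lemmac1}) and $w$ is Lipschitz.
\item Its imaginary part vanishes on $B$.
\item Schwarz reflection across $B$, together with H\"older estimates for conjugate harmonic functions in the doubled strip, then gives a uniform $C^\kappa$ bound on $\nabla\eta$.
\end{itemize}
After that, your linear Schauder bootstrap on the $x$-differentiated system goes through.
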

\begin{proof}
The alternative $(i)$ in Theorem \ref{globalthm}
 is equivalent to
 \begin{equation}\label{alter1eq}
 \|(\phi(s),w(s))\|_{\mathcal{X }}+\frac{1}{\sigma(w(s),\alpha(s))}+\frac{1}{\alpha(s)}+\frac{1}{\alpha_{\mathrm{cr}}-\alpha(s)}\to\infty.
 \end{equation}
 By Theorem \ref{strong} and maximum modulus principle to the factors of $\sigma(w,\alpha)$, we see that the term $1/\sigma(w,\alpha)$ can be controlled by
 \begin{equation*}
     \frac{1}{\inf_{\Gamma}(\mu-2\alpha(s)w(s))}+\frac{1}{\inf_{\Gamma}((\partial_x\zeta_{[w(s)]})^2+(1+\partial_y\zeta_{[w(s)]})^2)}.
 \end{equation*}
By the uniform regularity result in Appendix, the first term $\|(\phi(s),w(s))\|_{\mathcal{X}}$ can be controlled by the constant $\delta$ in \eqref{C1}. Applying Theorem \ref{strong}, we can deduce that $\|(\phi(s),w(s))\|_{\mathcal{X}}$ can be bounded in terms of
\begin{equation*}
   \frac{1}{\inf_{\Gamma}(\mu-2\alpha(s)w(s))}+\sup_{\Gamma}|\nabla\zeta_{[w(s)]}|+\frac{1}{\inf_{\Gamma}((\partial_x\zeta_{[w(s)]})^2+(1+\partial_y\zeta_{[w(s)]})^2)}.
\end{equation*}
 Then we shall find that  \eqref{alter1eq} implies that as $s\to\infty$,
 \begin{equation}\label{alter1eq2}
   \frac{1}{\inf_{\Gamma}(\mu-2\alpha(s)w(s))}+\sup_{\Gamma}|\nabla\eta_{[w(s)]}|+\frac{1}{\inf_{\Gamma}((\partial_x\zeta_{[w(s)]})^2+(1+\partial_y\zeta_{[w(s)]})^2)}+\frac{1}{\alpha(s)}+\frac{1}{\alpha_{\mathrm{cr}}-\alpha(s)
   }\to\infty.
 \end{equation}
 Now we shall apply Lemma \ref{psiygamma} to continue simplifying \eqref{alter1eq2}.

 For $\gamma\geq0$, we can eliminate the second term of \eqref{alter1eq2}. Consider a fixed solution $(\phi,w,\alpha)\in\mathscr{C}$. By the alternative $(i)$ of  Lemma \ref{psiygamma}, we have the estimate
 \begin{equation*}
   \phi_y+\psi_{\mathrm{triv},y}<1\quad\mathrm{on~}\Gamma.
 \end{equation*}
Note that $(\phi_y+\psi_{\mathrm{triv},y})^2\geq\sigma(w,\alpha)>0$ on $\Gamma$, and the asymptotic condition that $\phi_y(x,1)+\psi_{\mathrm{triv},y}(1)\to 1$ as $x\to\pm\infty$,  we can deduce that $\phi_y+\psi_{\mathrm{triv}}>0$ on $\Gamma$ by continuity. Now  we have the following estimate
\begin{equation*}
  (\mu-2\alpha w)((\partial_x\zeta_{[w(s)]})^2+(1+\partial_y\zeta_{[w(s)]})^2)=\phi_y^2+\psi_{\mathrm{triv},y}\leq 1\quad\mathrm{on~}\Gamma.
\end{equation*}

 As for the case $\gamma<0$, by Lemma \ref{psiygamma}, we obtain the lower bound
 \begin{equation*}
   \mu-2\alpha w\geq \frac{1}{(\partial_x\zeta_{[w(s)]})^2+(1+\partial_y\zeta_{[w(s)]})^2)}\quad\mathrm{on~}\Gamma.
 \end{equation*}
 Then the first term of  \eqref{alter1eq2} can be controlled by second term, which proves alternative $(ii)$.
\end{proof}

Now we are in a position to state and prove the main result of this paper.
\begin{theorem}
Fix the gravitational constant $g>0$ and the asymptotic depth $d>0$. Assume that $\gamma$ is real-analytic, satisfies \eqref{nocritical}, $\gamma' \leq 0$, and such that %either $M_0 > 0$ or
 $M_0 < 0$, where $M_0$ defined in \eqref{M0} is entirely determined by the vorticity function $\gamma$. Then there exists a global continuous curve $\mathscr{C}$ of solution to \eqref{euler} parameterized by $s\in(0,\infty)$. Moreover, the follow property holds along $\mathscr{C}$ as $s\to\infty$:
\begin{equation}\label{mainglobal}
\min\bigg\{\inf_{\Gamma}\bigg(\mu-\frac{2\mu}{F^2(s)}\frac{\eta(s)-d}{d}\bigg),
\inf_{\Gamma}(\eta_x^2(s)+\eta_y^2(s)),\frac{\mu}{F(s)},F(s)-F_{\mathrm{cr}}\bigg\}\to 0.
\end{equation}
 When the vorticity function $\gamma$ satisfies $M_0<0$, these solutions are all symmetric and monotone waves of elevation in the sense that $\eta$ is even in $x$ with $\eta_x(x,d)<0$ for $x>0$.
% When the vorticity function $\gamma$ satisfies $M_0>0$, these solutions are all symmetric and monotone waves of depression  in the sense that $\eta$ is even in $x$ with $\eta_x(x,d)>0$ for $x>0$.

\end{theorem}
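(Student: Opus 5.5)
The plan is to obtain the main theorem as a translation of Theorem \ref{finalthmglobal} back into the physical variables. Take the curve $\mathscr{C}=\{(\phi(s),w(s),\alpha(s))\}$ from Theorem \ref{finalthmglobal}. For each $s$, set $\psi=\psi_{\mathrm{triv}}+\phi(s)$ and $\eta=y+\zeta_{[w(s)]}$, and let $\xi$ be the harmonic conjugate normalized as in Section 2. The conformal map $H=\xi+i\eta$ is a local diffeomorphism because $|\nabla\eta|^2>0$ by \eqref{stripk}; I would argue it is a bijection onto its image via the boundary correspondence and the asymptotics \eqref{streamasy}. Then $\Psi=\psi\circ H^{-1}$, together with $(U,V)=(\Psi_Y,-\Psi_X)$ and the pressure recovered from Bernoulli's law, solves \eqref{streamfin}. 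Undoing the dimensionless scaling gives a solution of \eqref{euler}, and the equivalence $\mathcal{F}=0\iff$ \eqref{stripstreameq} from Section 2 makes this step routine. Continuity in $s$ in $\mathcal{X}$ transfers to continuity of the physical quantities.

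Next comes the blow-up alternative \eqref{mainglobal}. I would use \eqref{alphasca}, which gives $\mu=F^2\alpha$, so that $\alpha(s)=\mu/F^2(s)$. Since $\alpha$ is inversely proportional to $F^2$, the conditions $1/\alpha(s)\to\infty$ and $1/(\alpha_{\mathrm{cr}}-\alpha(s))\to\infty$ become, respectively, $\mu/F(s)\to0$ (i.e.\ $F\to\infty$) and $F(s)\to F_{\mathrm{cr}}$, where $F_{\mathrm{cr}}^2=\mu/\alpha_{\mathrm{cr}}$. Because $\alpha<\alpha_{\mathrm{cr}}$ on $\mathcal{U}$, we have $F>F_{\mathrm{cr}}$, so the approach is $F(s)-F_{\mathrm{cr}}\to0$.

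The term $\inf_\Gamma(\mu-2\alpha w)$ rewrites as $\inf_\Gamma\big(\mu-\tfrac{2\mu}{F^2}\tfrac{\eta-d}{d}\big)$ after restoring dimensions via $w=(\eta-d)/d$. The term $\inf_\Gamma((\partial_x\zeta)^2+(1+\partial_y\zeta)^2)$ is precisely $\inf_\Gamma(\eta_x^2+\eta_y^2)$.

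The main obstacle I expect is reconciling the two cases of Theorem \ref{finalthmglobal}(a). Case (i), $\gamma\ge0$, already has the required form, since a sum of reciprocals tending to infinity means the minimum of the quantities tends to zero; a subsequence argument upgrades $\limsup$ to the stated limit if needed. Case (ii), $\gamma<0$, instead contains $\sup_\Gamma|\nabla\eta|$. I would handle this with Lemma \ref{psiygamma}(ii) and the Bernoulli condition \eqref{stripstreameq4}, which give $(\mu-2\alpha w)|\nabla\eta|^2=\psi_y^2$ on $\Gamma$. Combining this with the uniform regularity estimate of the Appendix, I would try to show that $\sup_\Gamma|\nabla\eta|\to\infty$ with the other terms bounded would force $\inf_\Gamma(\mu-2\alpha w)\to0$, so that the minimum in \eqref{mainglobal} still tends to zero. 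If that fails, I would state \eqref{mainglobal} only in the nondegenerate regime.

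Finally, the monotonicity and evenness follow from Theorem \ref{finalthmglobal}(d) and the symmetry built into $\mathcal{X}$. The sign $\eta_x(x,1)<0$ for $x>0$ becomes $\eta_x(x,d)<0$ after rescaling.
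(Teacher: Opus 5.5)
Your overall route is the paper's: take the curve from Theorem~\ref{finalthmglobal}, rebuild the physical solution, and rewrite the blow-up terms using $\alpha=\mu/F^2$ and by undoing the scaling. Your translation of the four quantities in \eqref{mainglobal} is correct.

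The genuine gap is in the step you call routine: proving that $H=\xi+i\eta$ is \emph{injective}, so that $\Omega=H(\mathcal{R})$ is an actual fluid domain and $\Psi=\psi\circ H^{-1}$ makes sense. Nonvanishing of $|\nabla\eta|$ plus the asymptotics only give a local diffeomorphism. For possibly overhanging profiles, the surface curve $x\mapsto(\xi(x,1),\eta(x,1))$ could a priori cross itself. The boundary-correspondence (Darboux--Picard) theorem you allude to needs injectivity on $\partial\mathcal{R}$ as a \emph{hypothesis}. The missing idea is that this is where the nodal property \eqref{nodalproperty} enters, not merely at the end. The paper's argument runs as follows. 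First, $\eta=0$ on $B$ while $\eta>1$ on $\Gamma$, so $H(B)\cap H(\Gamma)=\emptyset$. Second, Hopf's lemma (Theorem~\ref{hopf}) for $\eta$ at $B$ gives $\xi_x=\eta_y>0$ there, so $H|_B$ is injective. Third, suppose $H(x_1,1)=H(x_2,1)$ with $x_1\neq x_2$. Evenness and strict monotonicity of $\eta(\cdot,1)$ force $x_2=-x_1$, and oddness of $\xi$ then gives $\xi(x_1,1)=0$. Since $\xi=0$ on $x=0$, $\xi>0$ on $B^+$ and $\xi\to+\infty$, $\xi$ attains a nonpositive minimum over $\overline{\mathcal{R}^+}$ on $\Gamma^+$. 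Hopf then yields $\xi_y<0$ there, contradicting $\xi_y=-\eta_x>0$ on $\Gamma^+$. Only after this does Darboux--Picard apply and the equivalence of Section~2 run backwards.

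Separately, your plan for the $\gamma<0$ branch of Theorem~\ref{finalthmglobal}(a) does not close as sketched. Lemma~\ref{psiygamma}(ii) gives only $\psi_y>1$ on $\Gamma$, i.e.\ $\inf_\Gamma(\mu-2\alpha w)\ge 1/\sup_\Gamma|\nabla\eta|^2$. This is the wrong direction: $\sup_\Gamma|\nabla\eta|\to\infty$ would force $\inf_\Gamma(\mu-2\alpha w)\to0$ only with an upper bound on $\psi_y|_\Gamma$. The Appendix estimates cannot supply one, since they assume \eqref{C1}, which already includes a bound on $|\nabla\eta|$. Note, though, that the paper does not resolve this either. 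Its proof ends by citing only case (i) of Theorem~\ref{finalthmglobal}, so \eqref{mainglobal} is actually justified there only for $\gamma\ge0$, and a sign-changing $\gamma$ falls under neither case.
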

\begin{proof}
  We first show that the solutions  $(\phi,w,\alpha)\in\mathscr{C}$ in Theorem \ref{finalthmglobal} correspond to solutions of the original problem \eqref{streamori} or the equivalent system \eqref{euler}. Following the arguments in Section 2.2, it remains only to show that $\eta$ is the imaginary part of  the conformal mapping $\xi+i\eta$ defined on the infinite strip $\mathcal{R}$. Since $\eta $ is harmonic, we can easily define $\xi$ by the Cauchy-Riemann equation. By the Darboux-Picard theorem \cite[Corollary 9.16]{burckel}, $\xi+i\eta$ is an injective on the boundary $\partial\mathcal{R}=\Gamma\cup B$. By the construction we have $\eta=0$ on $B$, and the nodal property \eqref{nodalproperty} implies that $\eta>0$ on $\Gamma$. Thus the images of  $\Gamma$ and $B$ can never intersect. Applying Theorem \ref{hopf} to $\eta$ on $B$, we discover that $\eta_y = \xi_x > 0$. It is enough to consider the restriction of $\xi+i\eta$ to the surface $\Gamma$. Suppose for the sake of contradiction that this restriction is not injective. Using the evenness of $\eta$ and the nodal properties \eqref{nodalproperty}, we easily check that $\xi$ achieves its nonpositive infimum over the half-strip $\mathcal R^+$ at a point $(x_0,1) \in \Gamma^+$. By Theorem \ref{hopf}, we obtain that $\xi_y=\eta_x<0$ at this point, which contradicts the nodal property \eqref{nodalproperty}.

 Finally, from \eqref{alphasca} combined with a reversal of the non-dimensional scaling, we find that
  \begin{equation*}
    \begin{aligned}
    &\mu-2\alpha w=\mu-\frac{2\mu}{F^2}(\eta-1)=\mu-\frac{2\mu}{F^2}\frac{\eta^*-d}{d},\\
    &(\partial_x\zeta_{[w(s)]})^2+(1+\partial_y\zeta_{[w(s)]})^2=|\nabla\eta(x,y)|^2=|\nabla\eta^*(x^*,y^*)|,
    \end{aligned}
  \end{equation*}
  where $\eta*=d\eta$ and $(x^*,y^*)=d(x,y)$.
 Now the proof is finished by Theorem \ref{finalthmglobal} $(i)$.
\end{proof}

Let us now briefly explain the four terms in \eqref{mainglobal}. By the symmetry and monotonicity, the infimum in the first term is attained at the crest $x=0$, where
$$\mu-\frac{2\mu}{F^2(s)}\frac{\eta(s)-d}{d}=\frac{\mu U^2}{F^2gd}>0,$$
this first term vanishes
precisely when the fluid is approaching stagnation $U=V=0$ at the crest. The second term in \eqref{mainglobal} provides a simple measure of the non-degeneracy of the conformal mapping. The third term vanishes only when the Froude number $F$
tends to infinity. Finally, the last term vanishes only as $F$ tends to critical value $F_{\mathrm{cr}}$.

\appendix

\section{Uniform Regularity}\label{uniformregu}
In this Appendix, we prove that the $C^{3+\beta}$ norm of $\psi$ and $\eta$ which satisfy \eqref{stripstreameq}
can be controlled by the constant $\delta>0$ in the inequalities
\begin{equation}\label{C1}
    \delta\leq|\nabla\eta|\leq 1/\delta\quad\mathrm{and}\quad \mu-2\alpha(\eta-1)\geq\delta\quad\mathrm{in~}\mathcal{R}.
\end{equation}
Note that the left side of \eqref{stripk}, and the infimum of $\sigma$ can be controlled only in terms of $\delta$.
\begin{theorem}\label{regularthm}
Suppose that $(\psi,\eta,,\alpha)$ solves \eqref{stripstreameq} with $0\leq\alpha\leq\alpha_{\mathrm{
cr
}}$ and \eqref{C1} holds for some $\delta>0$. Then there exists a positive  constant $C=C(\delta)$ such that
\[\|\eta\|_{C^{3+\beta}(\mathcal{R})}<C,\quad \|\psi\|_{C^{3+\beta}(\mathcal{R})}<C.\]
\end{theorem}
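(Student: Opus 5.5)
The plan is a bootstrap: first obtain uniform $C^{1+\beta}$ control of $(\eta,\psi)$ from \eqref{C1}, then use the Schauder theory already invoked in Section 3 (Lemma \ref{schauderlemma}, Lemma \ref{schauderlemma2}) to climb to $C^{3+\beta}$. All estimates are local in $x$ and translation invariant, so it suffices to bound the norms on unit squares $[x_0-1,x_0+1]\times[0,1]$ with constants independent of $x_0$.

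\emph{Step 1: zeroth and first order bounds.} Since $\eta=0$ on $B$ and $|\nabla\eta|\le 1/\delta$, we get $|\eta|\le 1/\delta$ in $\mathcal{R}$. For $\psi$, the assumption \eqref{nocritical} together with $\psi=0$ on $B$ and $\psi=1$ on $\Gamma$ suggests using the maximum principle for $\Delta\psi=-\gamma(\psi)|\nabla\eta|^2$, whose right-hand side is bounded by $\|\gamma\|_{L^\infty([0,1])}\delta^{-2}$ once $0\le\psi\le1$ is known. I will try to obtain $0\le\psi\le 1$ from the unidirectionality: in the original variables $\Psi$ is monotone in $Y$ along vertical lines. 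If that is unavailable, I will use a comparison with $y\pm C y(1-y)$ when $\gamma$ is bounded on a slightly larger interval. Then $\psi\in W^{2,p}$ and $\psi\in C^{1+\beta}$ locally uniformly. The Neumann data $\psi_y^2=(\mu-2\alpha(\eta-1))|\nabla\eta|^2$ on $\Gamma$ is bounded above, using $\alpha\le\alpha_{\mathrm{cr}}$ and $|\eta|\le1/\delta$, and below by $\delta^3$.

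\emph{Step 2: regularity of $\eta$.} Taking square roots (with $\psi_y>0$ on $\Gamma$, which follows from continuity along the connected boundary and the positive asymptotics), the boundary condition becomes
\[
|\nabla\eta|=\frac{\psi_y}{\sqrt{\mu-2\alpha(\eta-1)}}\quad\text{on }\Gamma,
\]
a nonlinear oblique condition for the harmonic function $\eta$. Its right-hand side is $C^\beta$ by Step 1. I will pass to the harmonic conjugate $\xi$ and write $\log|\nabla\eta|=\mathrm{Re}\log(\xi_x+i\eta_x)$. The function $\log(\xi_x+i\eta_x)$ is holomorphic, and it is well defined because $|\nabla\eta|\ge\delta$. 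Its real part therefore has prescribed $C^\beta$ values on $\Gamma$. On $B$ we have $\eta_x=0$, so the imaginary part (the argument) vanishes there. This is a mixed Dirichlet problem on the strip for a harmonic function with bounded data, and it gives a uniform $C^\beta$ bound up to the boundary for $\log(\xi_x+i\eta_x)$, hence $\eta\in C^{1+\beta}$ with $C=C(\delta)$.

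\emph{Step 3: bootstrap.} With $\eta\in C^{1+\beta}$, the right-hand side $-\gamma(\psi)|\nabla\eta|^2$ is $C^\beta$, so Schauder estimates give $\psi\in C^{2+\beta}$. The boundary data for $|\nabla\eta|$ is then $C^{1+\beta}$, which via Step 2 gives $\eta\in C^{2+\beta}$. Iterating once more yields $\psi,\eta\in C^{3+\beta}$. Alternatively, at the last stage I will treat $(\psi,\eta)$ as the coupled system used in Lemma \ref{cptlemma}, whose Lopatinskii condition holds with constant $\lambda\sim\delta^3$, and apply Lemma \ref{schauderlemma2} directly. Every constant depends only on $\delta$, $\alpha_{\mathrm{cr}}$, $\mu$ and $\gamma$.

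The main obstacle is Step 2. The boundary condition only controls $|\nabla\eta|$, not the normal derivative, so the problem is fully nonlinear. Lifting to $\log(\xi_x+i\eta_x)$ and obtaining the uniform Hölder estimate on the unbounded strip, uniformly in translations, is the delicate point; the nondegeneracy $|\nabla\eta|\ge\delta$ is exactly what makes this lifting possible.
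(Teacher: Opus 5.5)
Your argument takes a genuinely different route from the paper at the decisive step.

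\textbf{The paper's route.} In Lemmas~\ref{lemmac1}--\ref{lemmac3}, the paper gets $\psi\in C^{1+1/2}$ from \cite[Theorem 8.33]{Ts}. It then treats $|\nabla\eta|^2=\psi_y^2/(\mu-2\alpha(\eta-1))$ on $\Gamma$ as a fully nonlinear oblique condition and quotes Lieberman's two-dimensional theory \cite{libtams}: Theorem 1 for $C^{1+\kappa}$, and Theorem 3 for $C^{2+\kappa}$ after extending $\mathcal{G}$ off $\mathcal{O}_\delta$ by mollifying in $x$. Only at the end does it differentiate the $\psi$-equation in $x$ to reach $C^{3+\beta}$.

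\textbf{Your route.} You linearize exactly instead. Writing $\log(\xi_x+i\eta_x)=\tau+i\vartheta$, the top condition becomes the Dirichlet condition $\tau=\log\psi_y-\frac12\log(\mu-2\alpha(\eta-1))$ on $\Gamma$, and $\vartheta=0$ on $B$. For the branch choice, note that $\eta_y=\xi_x\ge\delta$ on $B$, since $|\eta_y|=|\nabla\eta|\ge\delta$ there, $B$ is connected and $\eta_y\to1$. Schwarz reflection across $B$ leaves a Dirichlet problem for the bounded harmonic function $\tau$ (with $|\tau|\le|\log\delta|$) on $\{-1<y<1\}$. Translation-invariant boundary Schauder estimates then bound $\tau$ in $C^{k+\beta}$ whenever the data are $C^{k+\beta}$. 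Next, $\vartheta$ is controlled through Cauchy--Riemann. For $k=0$ this uses the Hardy--Littlewood/Privalov conjugate-function bound, and $\sup|\vartheta|$ is controlled because $\vartheta=0$ on $B$ and every point lies within distance $1$ of $B$. Finally $\nabla\eta=e^{\tau}(\sin\vartheta,\cos\vartheta)$.

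\textbf{Comparison.} Your route works and is more elementary. It uses only linear theory for the Laplacian, keeps the target exponent $\beta$ throughout, and repeats the same linear problem at each bootstrap step. The paper's route does not use holomorphy, so it would extend to boundary conditions lacking this structure. The price is a heavy black box and an unspecified exponent $\kappa$.

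\textbf{The gap: the $L^\infty$ bound for $\psi$ in Step 1.} Condition \eqref{nocritical} concerns $\psi_{\mathrm{triv}}$ only. Nothing makes the wave unidirectional or forces $0\le\psi\le1$; for overhanging profiles a vertical line need not even meet $\Omega$ in an interval. Your fallback comparison presupposes the range of $\psi$ it is meant to produce. Yet some bound on $\sup|\psi|$ is indispensable, both for $\gamma(\psi)|\nabla\eta|^2\in L^\infty$ and for the $W^{2,p}$/$C^{1+\beta}$ estimate. The paper relies on it tacitly as well, since the estimate in \cite[Theorem 8.33]{Ts} contains $\sup|\psi|$.

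\textbf{How to close it.} The appendix is used under $\gamma'\le0$ (Theorem~\ref{finalthmglobal}), and there the bound follows.
\begin{itemize}
\item On $\{\psi>1\}$ one has $\gamma(\psi)\le\gamma(1)$, hence $\Delta\psi\ge-\max\{\gamma(1),0\}\,\delta^{-2}$.
\item So $\psi+\max\{\gamma(1),0\}\,y^2/(2\delta^2)$ is subharmonic on this set and is at most $1+\max\{\gamma(1),0\}/(2\delta^2)$ on its boundary.
\item Also $\limsup\psi\le1$ as $|x|\to\infty$.
\end{itemize}
Thus $\psi\le1+\max\{\gamma(1),0\}/(2\delta^2)$, and symmetrically $\psi\ge-\max\{-\gamma(0),0\}/(2\delta^2)$. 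For general $\gamma$, you should list $\sup|\psi|$ among the quantities the constant depends on.
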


Theorem \ref{regularthm} can be proved  by the following three Lemmas. To do this, we reformulate \eqref{stripstreameq4} as
\begin{equation*}
    \mathcal{G}(x,\eta,D\eta;\alpha)=0\quad\mathrm{on~}\Gamma,
\end{equation*}
where
\begin{equation*}
     \mathcal{G}(x,z,p;\alpha):=|p|^2-\frac{\psi_y^2(x,0)}{\mu-2\alpha
     (z-1)}.
\end{equation*}
It is easy to see that $\mathcal{G}$ is smooth and satisfies the inequality
\begin{equation}\label{C2}
    |\mathcal{G}_p(x,z,p;\alpha
    )|>2\delta,
\end{equation}
when it is restricted to the set
\begin{equation*}
    \mathcal{O}_\delta=\{((x,y),z,p)\in\mathcal{R}\times\R\times\R^2:\delta<|p|<1/\delta,~\mu-2\alpha(z-1)>\delta,~y\in(1/2,1)\}.
\end{equation*}

\begin{lemma}\label{lemmac1}
    For any given $\delta>0$, there exist constants $C>0$ and $\kappa>0$ such that any solution $(\psi,\eta,\alpha)$ to \eqref{stripstreameq}  satisfying condition \eqref{C1} and the bound $0\leq\alpha\leq\alpha_{\mathrm{cr}}$ also satisfies the estimate $\|\eta\|_{C^{1+\kappa}(\mathcal{R})}<C$.
\end{lemma}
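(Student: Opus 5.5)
Since $\eta$ is harmonic in $\mathcal{R}$ with $\eta=0$ on $B$, the estimate is a boundary regularity question on $\Gamma$ only. On $\Gamma$, $\eta$ satisfies the fully nonlinear oblique condition $\mathcal{G}(x,\eta,D\eta;\alpha)=0$. The plan follows the Lieberman-type argument used by Haziot and Wheeler in \cite{susannaarma} for constant vorticity. The gradient bound is already built in: \eqref{C1} gives $|\nabla\eta|\le 1/\delta$. So $\|\eta\|_{C^{0+1}}$ is controlled, and $\|\eta\|_{C^0}$ is controlled by $\eta=0$ on $B$ together with the unit height of the strip. What remains is a uniform H\"older estimate for $\nabla\eta$ near $\Gamma$ (on $\{y\in(1/2,1)\}$). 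Near $B$ we simply use interior plus flat-boundary Schauder estimates for harmonic functions vanishing on $B$.

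\textbf{Step 1: uniform bound on $\psi_y|_\Gamma$.} The nonlinearity $\mathcal{G}$ involves $\psi_y$ on $\Gamma$ as a coefficient, so I first need this quantity to be bounded and Lipschitz in $x$ in terms of $\delta$ alone.
\begin{itemize}
\item From \eqref{stripstreameq4}, $|\psi_y|^2=(\mu-2\alpha(\eta-1))|\nabla\eta|^2$ on $\Gamma$. Since $\alpha\le\alpha_{\mathrm{cr}}$, $|\eta|$ is bounded and $|\nabla\eta|\le1/\delta$, this gives a bound on $\psi_y|_\Gamma$.
\item A $C^{1+\kappa}$ bound for $\psi$ then follows from the Dirichlet problem $\Delta\psi=-\gamma(\psi)|\nabla\eta|^2$ with $\psi\in\{0,1\}$ on the boundary. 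The right-hand side is bounded in $L^\infty$ because $|\nabla\eta|\le1/\delta$ and $\gamma$ is locally bounded; the bound on $\psi$ itself comes from the maximum principle or from the bounded stream function. By $W^{2,p}$ estimates, $\psi\in C^{1+\kappa}$ uniformly, so $\psi_y(\cdot,1)$ is uniformly $C^\kappa$ in $x$.
\end{itemize}

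\textbf{Step 2: reduce to Lieberman's oblique-derivative framework.} Regard $\mathcal{G}(x,z,p)$ as a boundary operator with coefficients $C^\kappa$ in $x$ and smooth in $(z,p)$. On $\mathcal{O}_\delta$ we have \eqref{C2}, $|\mathcal{G}_p|>2\delta$. To get genuine obliqueness I also need $\mathcal{G}_p\cdot\nu=2p_2$ bounded below, i.e.\ $\eta_y\ge c(\delta)>0$ on $\Gamma$.
\begin{itemize}
\item If obliqueness in this strong form is not directly available, I will instead use the nondegeneracy $|\mathcal{G}_p|\ge2\delta$ together with the fact that $\mathcal{G}$ is uniformly convex in $p$ (it is quadratic). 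Lieberman's gradient-H\"older estimate for nonlinear oblique problems \cite[Lieberman--Trudinger type]{} applies under convexity or concavity in $p$ plus $|\mathcal{G}_p|$ bounded below, after locally rotating so that $\mathcal{G}_p$ is transverse to $\Gamma$.
\item Alternatively, differentiate tangentially to obtain a linear oblique condition for $\eta_x$ with $C^\kappa$ coefficients. This would use a difference-quotient version, since only $C^\kappa$ dependence in $x$ is known a priori.
\end{itemize}

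\textbf{Step 3: conclude the local estimate.} Apply the estimate of Step 2 on half-balls of radius $1/4$ centered on $\Gamma$. Because the bounds are translation-invariant in $x$, this yields $[\nabla\eta]_{C^\kappa}\le C(\delta)$ uniformly. Combining with the estimate near $B$ gives $\|\eta\|_{C^{1+\kappa}(\mathcal{R})}<C$.

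The main obstacle is Step 2, verifying the structural hypotheses of the nonlinear oblique estimate uniformly in $\delta$. In particular $\mathcal{G}_p=2p$ is transverse to $\Gamma$ only when $\eta_y\neq0$, and overhanging profiles may make $\eta_y$ vanish on $\Gamma$. I would try first to avoid needing transversality to $\Gamma$ itself. The tool is Lieberman's theory for conditions with $|\mathcal{G}_p|$ bounded below, where the direction $\mathcal{G}_p$ can be nontangential relative to a locally flattened coordinate. The fallback is to exploit the conformal structure: write $\xi+i\eta$ and use that $|\nabla\eta|$ is bounded above and below, turning the condition into one on $\log|\nabla\eta|$ (a harmonic function) with a Dirichlet-type boundary value that is H\"older by Step 1. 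Then $\nabla\eta$ is H\"older via bounded harmonic conjugates on the strip.
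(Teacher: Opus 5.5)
Your Step 1 coincides with the paper's first step: the paper applies \cite[Theorem 8.33]{Ts} to \eqref{stripstreameq1}--\eqref{stripstreameq3} to get a uniform $C^{1+1/2}$ bound on $\psi$, which makes $\mathcal{G}$ H\"older in $x$. For the boundary estimate the paper simply invokes \cite[Theorem 1]{libtams}, together with \cite[Theorem 8.29]{Ts} near $B$. That theorem is specifically two-dimensional. Its only requirement on the boundary operator is that $|\mathcal{G}_p|$ be bounded below, which is what \eqref{C2} records. No obliqueness and no convexity are involved, and this is why zeros of $\eta_y$ on $\Gamma$ are harmless. It is the ``theory for $|\mathcal{G}_p|$ bounded below'' you allude to at the end. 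However, the justification in your first bullet of Step 2 would not work:
\begin{itemize}
\item A rotation of coordinates does not change $\mathcal{G}_p\cdot\nu=2\eta_y$ on the fixed line $\Gamma$. Since $\eta_y=\xi_x$ there, this quantity must vanish for an overhanging profile, so Lieberman--Trudinger oblique estimates are unavailable whatever the convexity of $|p|^2$.
\item Tangential differentiation needs $\partial_x[\psi_y(x,1)]$, which is not controlled at this stage. Difference quotients of a merely $C^\kappa$ coefficient are unbounded.
\end{itemize}

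Your conformal fallback, on the other hand, is a correct and genuinely different proof. Write $\eta_y+i\eta_x=H'=e^{h+i\vartheta}$ with $h=\log|\nabla\eta|$; this is legitimate because $\delta\le|H'|\le 1/\delta$ by \eqref{C1}. By \eqref{stripstreameq4}, on $\Gamma$ one has $h=\log|\psi_y|-\tfrac12\log(\mu-2\alpha(\eta-1))$. This is $C^\kappa$ in $x$ with a bound depending only on $\delta$ and Step 1, since $\psi_y^2\ge\delta^3$ there and $\eta$ is Lipschitz. The bounded harmonic function $h$ is then $C^\kappa$ up to $\Gamma$, so $|\nabla h|\le C(1-y)^{\kappa-1}$. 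By Cauchy--Riemann, $|\nabla\vartheta|=|\nabla h|$, and the Hardy--Littlewood lemma (Privalov's theorem) gives $\vartheta\in C^\kappa$. Hence $(\eta_y,\eta_x)=e^{h}(\cos\vartheta,\sin\vartheta)\in C^\kappa$, and odd reflection of $\eta$ across $B$ handles the bottom.

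Compared with the paper, this avoids the black box and uses exactly two special features: $\eta$ is harmonic, and $D\eta$ enters \eqref{stripstreameq4} only through $|D\eta|$ (the classical Levi-Civita idea). It has several advantages:
\begin{itemize}
\item It makes transparent why overhanging is harmless.
\item It never needs \eqref{C2} to hold globally, which is the issue the paper patches by mollification in the next lemma.
\item It bootstraps directly: $\nabla\eta\in C^\kappa$ gives $\psi_y|_\Gamma\in C^{1+\kappa}$, hence $h|_\Gamma\in C^{1+\kappa}$ and $\eta\in C^{2+\kappa}$.
\end{itemize}
The paper's citation is shorter, and it would cover an arbitrary nonlinear condition $\mathcal{G}(x,\eta,D\eta)=0$ with $|\mathcal{G}_p|$ bounded below.

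One caveat is shared by both arguments. Step 1 needs a uniform bound on $\sup_{\mathcal{R}}|\psi|$, both to bound $\gamma(\psi)$ and because the $C^{1+\kappa}$ Dirichlet estimate has $\|\psi\|_{C^0}$ on the right. For general $\gamma$ this bound does not follow from the maximum principle as you suggest.
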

\begin{proof}
    Applying \cite[Theorem 8.33]{Ts} to \eqref{stripstreameq1}-\eqref{stripstreameq3}, we obtain $\|\psi\|_{C^{1+\frac{1}{2}}(\mathcal{R})}<C$.
Applying \cite[Theorem 1]{libtams} to \eqref{harmoniceta}, \eqref{stripstreameq4} and \eqref{etalow}, along with \cite[Theorem 8.29]{Ts} for the bottom boundary, the proof can be finished.
\end{proof}
\begin{lemma}
For any $\delta,\varepsilon
>0$ and $K>0$ there exist constants $C>0$ and $\kappa>0$ such that any solution $(\psi,\eta,\alpha)$ to \eqref{stripstreameq} satisfying $0\leq\alpha\leq\alpha_{\mathrm{cr}}$, \eqref{C1} and $\|\eta\|_{C^{1+\varepsilon
}(\mathcal{R})}<K$, also satisfies $\|\eta\|_{C^{2+\kappa}(\mathcal{R})}<C$.
\end{lemma}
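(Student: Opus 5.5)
The plan is to treat the upper boundary condition as a nonlinear oblique (Bernoulli-type) condition for the harmonic function $\eta$ and bootstrap from $C^{1+\varepsilon}$ to $C^{2+\kappa}$. The same splitting as in Lemma \ref{lemmac1} is used: the bottom boundary $B$ is handled by linear Dirichlet theory, the top boundary $\Gamma$ by nonlinear oblique derivative theory. Since $\mathcal{R}$ is an infinite strip and all constants must be uniform in $x$, every estimate is local, on rectangles $[x_0-2,x_0+2]\times(0,1)$. The constants depend only on $\delta,\varepsilon,K$ and not on $x_0$, because the problem is translation invariant in $x$.

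\textbf{Step 1: $\psi$ in $C^{2+\kappa}$.} First improve $\psi$. The right-hand side of \eqref{stripstreameq1} is $-\gamma(\psi)|\nabla\eta|^2$. Here $\psi$ is bounded in $C^{1+1/2}$ (from the proof of Lemma \ref{lemmac1}) and $\eta$ is bounded in $C^{1+\varepsilon}$ by assumption, so this right-hand side is bounded in $C^{\varepsilon'}$ with $\varepsilon'=\min(\varepsilon,1/2)$. The boundary data of $\psi$ are constant on $\Gamma$ and $B$. Hence the local Schauder estimates for the Dirichlet problem (\cite[Theorem 6.6]{Ts} in its local boundary form) give $\|\psi\|_{C^{2+\varepsilon'}}\le C$ uniformly. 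In particular the trace $\psi_y|_\Gamma$ is bounded in $C^{1+\varepsilon'}(\Gamma)$.

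\textbf{Step 2: the top boundary.} Write \eqref{stripstreameq4} as $\mathcal{G}(x,\eta,D\eta;\alpha)=0$, where the coefficient $\psi_y^2(x,1)$ entering $\mathcal{G}$ now lies in $C^{1+\varepsilon'}$. By \eqref{C2}, $|\mathcal{G}_p|>2\delta$ on $\mathcal{O}_\delta$, and by \eqref{C1} the solution's graph $(x,\eta,D\eta)$ stays in $\mathcal{O}_\delta$. Therefore the boundary operator is uniformly oblique: $\mathcal{G}_p\cdot\nu=2\eta_y$. Uniform obliqueness needs a lower bound on $|\eta_y|$ at $\Gamma$, not just on $|\mathcal{G}_p|$. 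I would extract it from \eqref{C1} together with the relation $\psi_x=0$ on $\Gamma$: if $\eta_y$ were small then $|\eta_x|\ge\delta/2$, and differentiating the top condition tangentially would contradict the bound on $\psi_y$. If this fails, I would instead use the general nonlinear theory with $|\mathcal{G}_p|$ bounded below, as in Lieberman.

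With obliqueness in hand, freeze the coefficients: set $b(x):=\int_0^1\mathcal{G}_p(x,\eta,tD\eta+(1-t)D\eta(x_0);\alpha)\,dt$, or more simply view $\mathcal{G}(x,\eta,D\eta)=0$ as a linear oblique condition $b\cdot D\eta=h$ with coefficients $b=(\eta_x,\eta_y)$ and $h$ built from $\psi_y^2/(\mu-2\alpha(\eta-1))$. Since $\eta\in C^{1+\varepsilon}$, the coefficients $b$ and $h$ are in $C^{\varepsilon}$. The linear oblique Schauder estimate (\cite[Lemma 6.29]{Ts}, or \cite[Theorem 4]{libtams}) for harmonic $\eta$ then gives $\eta\in C^{1+\varepsilon}$ again, which is circular. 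So I would instead use the nonlinear result \cite[Theorem 6.31]{Ts}/Lieberman's nonlinear oblique $C^{2+\kappa}$ theorem. It requires $\mathcal{G}\in C^{1+\kappa}$ in $(x,z,p)$, which holds by Step 1, together with obliqueness and a $C^{1+\varepsilon}$ bound. Its conclusion is $\|\eta\|_{C^{2+\kappa}}$ near $\Gamma$.

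An alternative route, if a citation is unavailable, is to differentiate in $x$. The function $\eta_x$ is harmonic and satisfies the linear oblique condition $\mathcal{G}_p\cdot D\eta_x=-\mathcal{G}_x-\mathcal{G}_z\eta_x$, whose coefficients are in $C^{\varepsilon'}$. Difference quotients justify this, and the linear oblique estimates then give $\eta_x\in C^{1+\kappa}$. The bound on $\eta_{yy}=-\eta_{xx}$ follows, which yields $\eta_{xy}$ and hence $C^{2+\kappa}$.

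\textbf{Step 3: the bottom boundary and patching.} Near $B$, $\eta$ is harmonic with $\eta=0$, so the interior and Dirichlet boundary Schauder estimates give $C^{2+\kappa}$ bounds controlled by $\|\eta\|_{C^0}$. Patching the two boundary regions with interior estimates, uniformly in $x_0$, completes the argument.

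The main obstacle is Step 2: verifying genuine uniform obliqueness relative to the normal at $\Gamma$ (not merely $|\mathcal{G}_p|>2\delta$), and the fact that the coefficient $\psi_y(\cdot,1)$ has only the regularity obtained in Step 1.
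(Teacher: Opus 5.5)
Your Steps 1 and 3 match the paper's proof: Schauder estimates for $\psi$ put $\psi_y^2|_\Gamma$ in $C^{1+\varepsilon}$, and \cite[Theorem 8.29]{Ts} handles the bottom. The gap is in Step 2, and it is not a technicality. The uniform obliqueness you try to establish is false in this setting. Consequently the tools you actually invoke do not apply: \cite[Lemma 6.29, Theorem 6.31]{Ts}, an ``oblique'' nonlinear theorem, and the linear oblique estimate for $\eta_x$ in your alternative route.

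To see why obliqueness fails: on $\Gamma$ the Cauchy--Riemann equations give $\eta_y=\xi_x$, the horizontal component of the tangent $(\xi_x,\eta_x)(x,1)$ to the parametrized surface \eqref{freesurface}. The formulation is designed to allow overhanging profiles, for which $\xi(\cdot,1)$ is not monotone. Since $\xi_x\to1$ at infinity, $\eta_y$ must then vanish somewhere on $\Gamma$ and is negative on the overhanging portion. There $\mathcal{G}_p\cdot\nu=2\eta_y=0$, yet \eqref{C1} holds perfectly well because $|\eta_x|=|\nabla\eta|\ge\delta$ at such points. Your proposed contradiction cannot exist: \eqref{stripstreameq4} constrains only $|\nabla\eta|^2$, not its direction, and $\psi_x=0$ on $\Gamma$ is trivially true since $\psi\equiv1$ there.

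What replaces obliqueness is a specifically two-dimensional fact. For $\Delta$ with a boundary operator $b\cdot\nabla$, $b$ real, the complementing condition reduces to $b\neq0$, so only the nondegeneracy $|\mathcal{G}_p|>2\delta$ from \eqref{C2} is needed. This is exactly what \cite[Theorem 3]{libtams} provides, and the paper applies it directly to $\mathcal{G}(x,\eta,D\eta;\alpha)=0$. You name this in your fallback sentence, but it has to be the argument, not a contingency.

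It also needs a step you omit. Lieberman's theorem assumes the nondegeneracy globally in $(x,z,p)$, whereas \eqref{C2} holds only on $\mathcal{O}_\delta$; indeed $\mathcal{G}_p=2p$ vanishes at $p=0$. Knowing that the solution's graph lies in $\mathcal{O}_\delta$ is not enough. You must replace $\mathcal{G}$ by an extension $\overline{\mathcal{G}}$ that agrees with it near the solution, is nondegenerate everywhere, and keeps the $C^{1+\varepsilon}$ dependence on $x$. The paper builds this following \cite[Lemma 2]{libtams}, mollifying in $x$ alone.

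Your differentiate-in-$x$ route could be salvaged the same way. Replace the oblique linear estimate for $\eta_x$ by the corresponding two-dimensional estimate under $|b|\ge\lambda$; that is again Lieberman's theory rather than the oblique theory you cite.
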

\begin{proof}
    In the following analysis, $C$ and $\delta$ depend on $K,\delta$ and $\varepsilon$ which can vary from line to line. Firstly, note that the $C^{\varepsilon}$ norm of the right side of \eqref{stripstreameq1} can be controlled by $C$. By the classical Schauder estimates \cite{agmon}, we obtain $\|\psi\|_{C^{2+\varepsilon}(\mathcal{R})}<C$. Likewise, notice that the $C^{1+\varepsilon}$ norm of the right side of \eqref{stripstreameq4} can be controlled by $C$.  It should be pointed out that this theorem is stated for problems where \eqref{C2} holds globally, this restriction can be overcome by constructing the extension $\overline{G}$ of $G$ in \cite[Lemma 2]{libtams} using mollification in $x$ alone. Thus \cite[Theorem 3]{libtams} and \cite[Theorem 8.29]{Ts} for the bottom boundary can yield $\|\eta\|_{C^{2+\kappa
    }(\mathcal{R})}<C$.
\end{proof}

\begin{lemma}\label{lemmac3}
    For any $\delta>0$, $\varepsilon\in(0,\beta]$ and $K>0$, there exists a constant $C>0$ depending only on $K,\varepsilon$ and $\delta$ such that $(\psi,\eta,\alpha)$ to \eqref{stripstreameq}, satisfying $0\leq\alpha\leq\alpha
    _{\mathrm{cr}}$, \eqref{C1} and $\|\eta\|_{C^{2+\varepsilon}(\mathcal{R})}<K$ also satisfies $\|\eta\|_{C^{3+\beta}(\mathcal{R})}<C$ and $\|\psi\|_{C^{3+\beta}(\mathcal{R})}<C$.
\end{lemma}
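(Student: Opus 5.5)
The proof is a bootstrap: first upgrade $\eta$ from $C^{2+\varepsilon}$ to $C^{3+\varepsilon}$, then use the equation for $\psi$ to lift $\psi$ to $C^{3+\beta}$, and finally run the same argument again with exponent $\beta$ to get $\eta\in C^{3+\beta}$. Throughout, $C$ denotes a constant depending only on $K,\varepsilon,\delta$ (and on $\|\gamma\|_{C^{2+1}}$ and $\alpha_{\mathrm{cr}}$, which are fixed). All estimates are uniform because the Schauder and oblique-derivative estimates are local, with constants invariant under $x$-translation in the strip.

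\emph{Step 1: a first bound on $\psi$.} Since $\|\eta\|_{C^{2+\varepsilon}}<K$, the coefficient $|\nabla\eta|^2$ lies in $C^{1+\varepsilon}$. From the previous lemma we already have $\psi\in C^{2+\varepsilon}$, so $\psi\in C^{1+\varepsilon}$ and $\gamma\in C^{2+1}_{\mathrm{loc}}$ give $\gamma(\psi)|\nabla\eta|^2\in C^{1+\varepsilon}$ with norm at most $C$. The boundary data in \eqref{stripstreameq2}--\eqref{stripstreameq3} are constants. Applying the higher-order Schauder estimates for the Dirichlet problem in the strip (\cite{agmon}, or \cite[Theorem 6.19]{Ts} localized with cutoffs) gives $\|\psi\|_{C^{3+\varepsilon}(\mathcal{R})}\le C$.

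\emph{Step 2: upgrading $\eta$ via the nonlinear oblique condition.} The function $\eta$ is harmonic, vanishes on $B$, and satisfies $\mathcal{G}(x,\eta,D\eta;\alpha)=0$ on $\Gamma$, where $\mathcal{G}(x,z,p)=|p|^2-\psi_y^2(x,1)/(\mu-2\alpha(z-1))$. Step 1 makes $x\mapsto\psi_y^2(x,1)$ a $C^{2+\varepsilon}$ function, so $\mathcal{G}$ is $C^{2+\varepsilon}$ in $x$ and smooth in $(z,p)$ on $\mathcal{O}_\delta$; moreover \eqref{C2} gives uniform obliqueness. The key move is to differentiate the boundary condition tangentially. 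The derivative $\eta_x$ is harmonic, vanishes on $B$, and satisfies on $\Gamma$ the linear oblique condition
\[
\mathcal{G}_p\cdot D\eta_x+\mathcal{G}_z\,\eta_x=-\mathcal{G}_x .
\]
Here the coefficients $\mathcal{G}_p=2D\eta$ and $\mathcal{G}_z$ lie in $C^{1+\varepsilon}$, since $\eta\in C^{2+\varepsilon}$, and $\mathcal{G}_x\in C^{1+\varepsilon}$. This condition is oblique because $|\mathcal{G}_p|>2\delta$ and $\mathcal{G}_p\cdot\nu=2\eta_y$; I expect \eqref{C1}, combined with \eqref{stripstreameq4} and the sign information, to keep $\eta_y$ bounded away from zero on $\Gamma$. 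If that fails, I would invoke the nonlinear theory of \cite[Theorem 3]{libtams} directly, using the extension $\overline{\mathcal{G}}$ built by mollifying in $x$ as in the previous lemma. Either way, the linear Schauder estimate for oblique derivative problems (\cite[Lemma 6.29]{Ts} and its higher-order version), together with \cite[Theorem 8.29]{Ts} near $B$, gives $\|\eta_x\|_{C^{2+\varepsilon}}\le C$. Since $\eta_{yy}=-\eta_{xx}$, this yields $\|\eta\|_{C^{3+\varepsilon}}\le C$.

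\emph{Step 3: reaching exponent $\beta$.} Now $|\nabla\eta|^2\in C^{2+\varepsilon}\subset C^{1+\beta}$, and $\gamma(\psi)\in C^{2}\subset C^{1+\beta}$ because $\gamma\in C^{2+1}$ and $\psi\in C^{3}$. So the right-hand side of \eqref{stripstreameq1} is in $C^{1+\beta}$, and Schauder gives $\|\psi\|_{C^{3+\beta}}\le C$. Repeating Step 2 with exponent $\beta$ gives $\|\eta\|_{C^{3+\beta}}\le C$.

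The main obstacle is Step 2: verifying obliqueness of the differentiated boundary operator with the correct regularity of its coefficients, and making sure the constants are uniform on the unbounded strip. For the latter I would localize to unit squares $[k-1,k+1]\times[0,1]$ and use the translation invariance of the estimates.
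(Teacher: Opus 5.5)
Your bootstrap is correct and has the same skeleton as the paper's: start from $\psi\in C^{2+\varepsilon}$, lift to exponent $3+\varepsilon$, then rerun with $\beta$. It differs in two respects.

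For $\psi$, the paper works with $\phi=\psi-\psi_{\mathrm{triv}}$. It differentiates the interior equation in $x$, applies a $C^{2+\varepsilon}$ Dirichlet estimate to $\phi_x$, and recovers $\phi_{yy}$ from the equation. Your direct higher-order Dirichlet Schauder estimate is equivalent.

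More substantively, the paper's written argument never upgrades $\eta$. It lifts only $\phi$ and then asserts that repeating with $\varepsilon=\beta$ yields $\|\eta\|_{C^{3+\beta}}<C$. That repetition already presupposes $\eta\in C^{2+\beta}$, and any gain for $\eta$ has to come from \eqref{stripstreameq4}. Your Step 2 supplies exactly this missing step: you differentiate $\mathcal{G}(x,\eta,D\eta;\alpha)=0$ along $\Gamma$ and get a linear first-order boundary problem for the harmonic function $\eta_x$, with coefficients and data in $C^{1+\varepsilon}$.

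The one claim to drop is your expectation that $\eta_y$ stays bounded away from zero on $\Gamma$. By Cauchy--Riemann $\eta_y=\xi_x$, which vanishes wherever the free surface has a vertical tangent. That is precisely the overhanging case this paper is designed to include, so the genuinely oblique estimate \cite[Lemma 6.29]{Ts} is not available in general.

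You do not need it. In two dimensions, the symbol of $b\cdot D$ on the decaying solutions $e^{i\omega x}e^{|\omega|(y-1)}$ of Laplace's equation is $ib_1\omega+b_2|\omega|$. This vanishes for $\omega\neq0$ only when $b=0$. Hence $|\mathcal{G}_p|=2|D\eta|\geq2\delta$, from \eqref{C1}--\eqref{C2}, already gives the complementing condition uniformly. That is exactly the hypothesis $|\sigma|^2\geq c$ of Lemma \ref{schauderlemma}.

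Apply that lemma with $k=2$ to $u=\eta_x$, taking $\sigma=2D\eta|_\Gamma$, zeroth-order coefficient $\mathcal{G}_z$ and data $-\mathcal{G}_x$, first with exponent $\varepsilon$ and then $\beta$. This closes Step 2 directly, without the detour through \cite{libtams}.
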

\begin{proof}
    Since $\|\eta\|_{C^{2+\varepsilon
    }(\mathcal{R})}<K$, we get $\|\psi\|_{C^{2+\varepsilon}(\mathcal{R})}<C$ by considering \eqref{stripstreameq1}-\eqref{stripstreameq3}. In particular, $\|\phi\|_{C^{2+\varepsilon}(\mathcal{R})}<C$ where $\phi$ is defined by $\mathcal{F}_1(\phi,w,\alpha)$.
Note that the function $\phi_x$ satisfies
\begin{equation*}
    \begin{aligned}
        \Delta\phi_x&=f\quad&&\mathrm{in~}\mathcal{R},\\
        \phi_x&=0 &&\mathrm{on~}\Gamma\cup B,\\
    \end{aligned}
\end{equation*}
where \[f=-\gamma'(\phi+\psi_{\mathrm{triv}})\phi_x|\nabla\eta|^2-2\gamma
(\phi+\psi_{\mathrm{triv}})(\eta_x\eta_{xx}+\eta_y\eta_{xy})\in C^{0,\varepsilon}(\mathcal{ R }).\]
We apply the Schauder estimate in \cite{agmon} and can deduce that $\|\phi_x\|_{C^{2+\varepsilon
}(\mathcal{R})}<C$.
 Finally, notice that
 \[\phi_{yy}=-\phi_{xx}-\gamma(\phi+\psi_{\mathrm{triv}})|\nabla\eta|^2+\gamma(\psi_{\mathrm{triv}})\in C^{1+\varepsilon}(\mathcal{
 R}),\]
we can conclude that $\|\phi\|_{C^{3+\varepsilon
}(\mathcal{R})}<C$. In particular, $\|\phi\|_{C^{2+\beta}(\mathcal{R})}<C$.  We can repeat the above argument with $\varepsilon=\beta$ and obtain that $\|\eta\|_{C^{3+\beta}(\mathcal{R})}<C$ and $\|\psi\|_{C^{3+\beta}(\mathcal{R})}<C$.
\end{proof}
\section{Maximum Principles}
We refer to  \cite{fraenkel, gidas, serrin} for the following maximum principles for elliptic PDEs.
Let $\Omega\subset\R^2$ be a connected, open set (possibly unbounded), consider the the second-order operator
	$$L=\Delta+c(x),$$
	where $c\in C^0(\bar\Omega)$. Let $u\in C^2(\Omega)\cap C^0(\bar\Omega)$  be a classical solution of $Lu=0$ in $\Omega$.
\begin{theorem}\label{strong}{\rm(}Strong maximum principle{\rm)}	
Suppose $u$ attains a maximum on $\bar\Omega$ at a point in the interior of $\Omega$. If $c\leq 0$ in $\Omega$ or if $\sup_\Omega u=0$, then $u$ is a constant function.
		\end{theorem}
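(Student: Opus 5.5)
The plan is to reduce both cases to the classical strong maximum principle of E. Hopf for operators $\Delta+\tilde c$ with $\tilde c\le 0$, applied to a subsolution whose maximum is nonnegative. The argument is then the usual open–closed argument on the set where the maximum is attained, using connectedness of $\Omega$. One caveat has to be fixed first. In the case $c\le0$ the maximum $M:=\sup_\Omega u$ must be read as nonnegative. For example, $u=-\cosh x_1$ solves $\Delta u-u=0$ and has an interior maximum $-1$, yet is not constant. So in that case I would prove the statement under $M\ge 0$, which is how it is used in the paper: for $v\le0$, for $\rho$, and for $f$ attaining a nonnegative supremum.

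\emph{Reduction.} Write $c=c^+-c^-$ with $c^\pm\ge0$ continuous. If $c\le0$, set $\tilde c=c$; then $u$ satisfies $\Delta u+\tilde c u=0\ge0$ with $\tilde c\le0$ and $M\ge0$. If instead $\sup_\Omega u=0$, then $u\le0$, so $-c^+u\ge0$. The equation gives $\Delta u-c^-u=-c^+u\ge0$, and $u$ is a subsolution for $\tilde L=\Delta-c^-$ with $\tilde c=-c^-\le0$ and $M=0$. In both cases we have $\tilde Lu\ge0$, $\tilde c\le0$, $M\ge0$, and $M$ is attained at an interior point.

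\emph{Open–closed argument.} Let $\Sigma=\{x\in\Omega:u(x)=M\}$. It is nonempty by hypothesis and relatively closed by continuity. For openness, suppose $\Omega\setminus\Sigma$ is nonempty while $\Sigma$ is not open. Then I would choose a point of $\Omega\setminus\Sigma$ closer to $\Sigma$ than to $\partial\Omega$, and take the largest open ball $B_R(z)\subset\Omega\setminus\Sigma$ centred there. Its closure is compact in $\Omega$ and touches $\Sigma$ at some $x_0$, with $u<M$ in $B_R(z)$ and $u(x_0)=M$. This configuration is contradicted by the Hopf lemma: $u$ is $C^2$ at the interior point $x_0$ and attains a maximum there, so $\nabla u(x_0)=0$, whereas the lemma gives a nonzero outward normal derivative. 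Hence $\Sigma$ is open, so $\Sigma=\Omega$ by connectedness, and $u\equiv M$.

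\emph{Main obstacle: the barrier.} The key step is the Hopf lemma in this setting. Consider the annulus $A=B_R(z)\setminus\overline{B_{R/2}(z)}$ and the barrier $h=e^{-ar^2}-e^{-aR^2}$ with $r=|x-z|$. A direct computation gives
\[
\tilde L h\ge e^{-ar^2}\big(4a^2r^2-4a-\|\tilde c\|_{L^\infty(\overline{B_R})}\big)>0
\quad\text{in }A
\]
for $a$ large. This uses only the boundedness of $c$ on the compact set $\overline{B_R(z)}$, so unboundedness of $\Omega$ causes no difficulty. On the inner sphere $u\le M-\epsilon$ for some $\epsilon>0$, so $w=u-M+\epsilon h$ satisfies $w\le0$ on $\partial A$. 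Moreover, $\tilde L w\ge -\tilde c M+\epsilon\tilde L h>0$, since $\tilde c\le0$ and $M\ge0$; this is exactly where the sign of $M$ enters. The weak maximum principle then yields $w\le0$ in $A$. Since $w(x_0)=0$, we get $\partial_\nu u(x_0)\ge-\epsilon\,\partial_\nu h(x_0)>0$, the desired contradiction.
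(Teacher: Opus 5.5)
Your argument is correct: the splitting $c=c^+-c^-$ to treat the case $\sup_\Omega u=0$ with no sign condition on $c$, the barrier $e^{-ar^2}-e^{-aR^2}$ on an annulus, the interior-ball contradiction and connectedness together form the classical Hopf proof from the references the paper cites instead of giving a proof of its own. Your caveat is also right, since as literally stated the case $c\le 0$ fails for a negative maximum (your example $u=-\cosh x_1$ with $c=-1$ is valid). The standard sources state this case for a nonnegative maximum, which is exactly the version you prove.
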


\begin{theorem}
	\label{hopf} {\rm(}Hopf boundary lemma{\rm)} Suppose that $u$ attains a maximum on $\bar\Omega$ at a point $x^*\in\partial\Omega$ for which there exists an open ball $B\subset\Omega$ such that $\overline{B}\cap\partial\Omega=\{x^*\}$. Assume either $c\leq 0$ in $\Omega$ or else $\sup_B u=0$. Then $u$ is a constant function or $$\nu\cdot\nabla u(x^*)>0,$$
	where $\nu$ is the outward unit normal to $\Omega$ at $x^*$.
\end{theorem}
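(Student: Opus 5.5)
I would prove this by the classical barrier argument of E.~Hopf on an annulus inside the interior ball $B$, using Theorem~\ref{strong} to obtain strict inequality away from $x^*$.

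\textbf{Setup.} Write $B=B_R(y_0)$, so that $|x^*-y_0|=R$, and set $M:=u(x^*)=\max_{\bar\Omega}u$. Assume $u$ is not constant. In the case $\sup_B u=0$ we have $M=0$; in the case $c\le 0$ I will take $M\ge 0$, which is the setting in which the lemma is used (the case $M<0$ would have to be excluded or treated separately). Split $c=c^+-c^-$ with $c^\pm\ge 0$. In either case the computation
\[
(\Delta-c^-)(u-M)=-c^+u+c^-M\ge 0 \quad\text{in } B
\]
holds. Indeed, if $c\le 0$ then $c^+=0$ and $c^-M\ge 0$; if $\sup_B u=0$ then $M=0$ and $-c^+u\ge 0$. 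Thus $u-M$ is a subsolution of an operator with nonpositive zeroth-order coefficient.

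\textbf{Strict inequality inside.} Applying Theorem~\ref{strong} on the connected set $B$ (or on $\Omega$) gives $u<M$ throughout $B$, since $u$ is not constant. In particular, $u-M\le -\eta<0$ on the inner sphere $\partial B_{R/2}(y_0)$ for some $\eta>0$, because that sphere is compact.

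\textbf{Barrier.} On the annulus $A:=B_R(y_0)\setminus\overline{B_{R/2}(y_0)}$, let
\[
v(x)=e^{-a|x-y_0|^2}-e^{-aR^2}.
\]
In two dimensions, with $r=|x-y_0|$, we have
\[
(\Delta-c^-)v\ge e^{-ar^2}\bigl(4a^2r^2-4a-\sup_{\bar B}c^-\bigr)>0 \quad\text{on } A,
\]
once $a$ is large, since $r\ge R/2$ there. Set $w:=u-M+\epsilon v$ with $\epsilon\le \eta\, e^{aR^2/4}$. Then $w\le 0$ on the inner sphere, and $w=u-M\le 0$ on the outer sphere because $v=0$ there. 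Moreover $(\Delta-c^-)w>0$ in $A$. The weak maximum principle for $\Delta-c^-$ on the bounded domain $A$ (where a nonnegative interior maximum is impossible) then gives $w\le 0$ in $A$.

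\textbf{Conclusion.} Since $w(x^*)=0$, $x^*$ is a maximum point of $w$ on $\bar A$, so the outward normal difference quotients of $w$ at $x^*$ satisfy $\nu\cdot\nabla w(x^*)\ge 0$. Therefore
\[
\nu\cdot\nabla u(x^*)\ge -\epsilon\,\nu\cdot\nabla v(x^*)=2\epsilon aR\,e^{-aR^2}>0 .
\]

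\textbf{Main obstacle.} I expect the main difficulty to be technical rather than conceptual. The first issue is that $u\in C^2(\Omega)\cap C^0(\bar\Omega)$ need not be differentiable at $x^*$. I would handle this by stating the conclusion for $\liminf_{t\to0^+}\bigl(u(x^*)-u(x^*-t\nu)\bigr)/t$, which is all the barrier argument actually controls; this agrees with the normal derivative whenever it exists, as it does in all applications in the paper. The second issue is the sign bookkeeping for $M$ in the case $c\le 0$, which needs the care described in the setup.
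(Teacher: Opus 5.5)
Your argument is correct and is the classical Hopf barrier proof; the paper gives no proof of its own and only cites Fraenkel, Gidas--Ni--Nirenberg and Serrin, where this is exactly the argument. The case $c\le 0$ with $M<0$ must be excluded, not ``treated separately'': for $c\equiv-1$, the nonconstant function $u=-\cosh x_1$ on $\Omega=\{x_1>0\}$ solves $Lu=0$, attains its maximum $-1$ at the origin (which has interior ball $B_1((1,0))$), and has $\nu\cdot\nabla u=0$ there, so the stated lemma, like Theorem~\ref{strong}, genuinely needs $\max u\ge 0$ when $c\le 0$ — the setting in which you (and the paper) use it.
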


\begin{theorem}\label{serrin}
{\rm(}Serrin edge point lemma{\rm)} Let $x^*\in \partial\Omega$ be an edge point in the sense that near $x^*$ it consists of two transversally intersecting $C^2$ hypersurfaces $\{\gamma(x)=0\}$ and $\{\sigma(x)=0\}$. Suppose that $\gamma,\sigma<0$ in $\Omega$. If $u\in C^2(\bar\Omega)$, $u>0$, $u(x^*)=0$. Assume for a bluntness function $B$, for the edge $\{\gamma=0\}\cap\{\sigma=0\}$ related to operator $L$, satisfies
	\begin{equation*}
		B(x^*)=0\quad\mathrm{and}\quad \partial_\tau B(x^*)=0,	\end{equation*}
		for every derivative $\partial_\tau$ tangential to $\{\gamma=0\}\cap\{\sigma=0\}$. Then for any unit vector $s$ outward from $\Omega$ at $x^*$, either $\partial_s u(x^*)<0$ or $\partial_s^2 u(x^*)<0$.
\end{theorem}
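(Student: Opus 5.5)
The plan is to follow Serrin's original comparison-function argument, localized at the edge point $x^*$. Let $\gamma,\sigma$ be the two $C^2$ defining functions, with $\gamma,\sigma<0$ in $\Omega$ and $\nabla\gamma(x^*),\nabla\sigma(x^*)$ linearly independent by transversality. The key intermediate goal is a quantitative lower bound
\[
u(x)\ \ge\ c\,\gamma(x)\sigma(x)\qquad\text{in } \Omega\cap B_\rho(x^*)
\]
for some $c,\rho>0$. Once this holds, the conclusion is read off from a second-order Taylor expansion of $u$ at $x^*$ along the ray $x^*-ts$, $t>0$ small, which enters $\Omega$ because $s$ points outward. Since $u(x^*)=0$,
\[
u(x^*-ts)=-t\,\partial_s u(x^*)+\tfrac{t^2}{2}\partial_s^2u(x^*)+o(t^2).
\]
Also $\gamma\sigma(x^*-ts)=t^2\,(\partial_s\gamma)(\partial_s\sigma)(x^*)+o(t^2)$. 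This product is of exact order $t^2$ with a strictly positive coefficient whenever $s$ is strictly outward from both faces. Comparing these two expansions gives the dichotomy.

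First I straighten the edge. After an affine change of variables, $\gamma$ and $\sigma$ become the two coordinate functions near $x^*$. Then $\Omega\cap B_\rho(x^*)$ is a curvilinear sector with opening angle $<\pi$ (or a quadrant after a $C^2$ diffeomorphism). The operator $L=\Delta+c$ becomes a uniformly elliptic operator with continuous coefficients.

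Next I build the barrier
\[
w=\gamma\sigma+\kappa(\gamma^2+\sigma^2)-K|x-x^*|^2\gamma\sigma ,
\]
with $\kappa$ small and $K$ large. This $w$ vanishes on both faces and is positive in the sector near $x^*$. The point of the construction is that $Lw>0$ in a small neighbourhood $\Omega\cap B_\rho(x^*)$. Here the bluntness hypothesis enters. The leading term $L(\gamma\sigma)$ at $x^*$ equals $2\nabla\gamma\cdot\nabla\sigma$ plus lower order terms, and this is exactly the bluntness function $B$. The assumptions $B(x^*)=0$ and $\partial_\tau B(x^*)=0$ say that this leading term vanishes to first order at the edge. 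That leaves room for the correction terms to make $Lw$ of one sign, controlled by $|x-x^*|$ and by $\gamma^2+\sigma^2$.

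I then compare $u$ with $\varepsilon w$ on $D=\Omega\cap B_\rho(x^*)$. On $\partial D\cap\partial\Omega$ we have $w=0\le u$. On the compact set $\partial B_\rho\cap\overline\Omega$, $u$ is bounded below by a positive constant away from the faces. Near the faces we use Hopf's lemma (Theorem \ref{hopf}) at the smooth boundary points adjacent to the edge to get $u\ge c'\operatorname{dist}$, which dominates $\varepsilon w$. To handle the sign of $c$, I first replace $u$ by $u/h$ with $h>0$ solving a local problem, so that the zeroth-order coefficient becomes nonpositive. Then the maximum principle (Theorem \ref{strong}) applied to $\varepsilon w-u$ gives $u\ge\varepsilon w\ge c\gamma\sigma$ on $D$, which is the key bound.

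The main obstacle is the sign bookkeeping in the final Taylor step. Write $a=\partial_s\gamma(x^*)$ and $b=\partial_s\sigma(x^*)$; strict outwardness gives $a,b>0$. The bound yields
\[
-t\,\partial_su(x^*)+\tfrac{t^2}{2}\partial_s^2u(x^*)\ \ge\ c\,ab\,t^2+o(t^2).
\]
Hence $\partial_su(x^*)\le0$. If $\partial_s u(x^*)=0$, the $t^2$ coefficient is strictly controlled, so $\partial_s^2u(x^*)$ is nonzero with a definite sign. I must check carefully that this sign agrees with the dichotomy as stated, $\partial_s u(x^*)<0$ or $\partial_s^2u(x^*)<0$. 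Concretely, I would verify whether $\partial_s^2$ is meant as the derivative taken along the inward parametrisation, or whether, as in the applications in Lemma \ref{closelemma1}, the lemma is applied to $-v$. I would resolve this first, using the elementary example $u=\gamma\sigma$ in a quadrant, before writing the details.
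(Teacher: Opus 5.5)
Your route (a comparison bound $u\ge c\,\gamma\sigma$ on $\Omega\cap B_\rho(x^*)$, followed by a Taylor expansion along the inward ray $x^*-ts$) is Serrin's classical argument. That is all the paper relies on: it states the lemma without proof, citing Serrin, Fraenkel and Gidas--Ni--Nirenberg. However, the barrier you propose fails, and the barrier is the heart of the proof.

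On the face $\{\gamma=0\}$ your $w$ equals $\kappa\sigma^2>0$, so it does \emph{not} vanish on the faces. The comparison $\varepsilon w\le u$ therefore breaks down on $\partial D\cap\partial\Omega$, and at the endpoints of the arc $\partial B_\rho\cap\overline\Omega$, as soon as $u$ vanishes on a face. This is exactly the situation in Lemma~\ref{closelemma1}, where $-v^*$ vanishes on $L^+$ and $B^+$.

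Yet the $\kappa$-term is the only part of $w$ that makes $Lw$ positive. With $\kappa\le0$ you lose it. The term $-K|x-x^*|^2\gamma\sigma$ has the wrong sign (in flattened coordinates $\Delta(|y|^2y_1y_2)=12y_1y_2$) and is only quadratic. So nothing absorbs the $O(|x-x^*|)$ terms $2B$, $\sigma\Delta\gamma+\gamma\Delta\sigma$ and the first-order terms.

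What works is a \emph{cubic} correction, e.g.\ $w=\gamma\sigma\bigl(1-K(\gamma+\sigma)\bigr)$. It vanishes on both faces and is positive in $\Omega$ near $x^*$. With $B=\nabla\gamma\cdot\nabla\sigma$,
\[
\Delta w=2B\bigl(1-2K(\gamma+\sigma)\bigr)+\sigma\Delta\gamma+\gamma\Delta\sigma+2K\bigl(|\sigma||\nabla\gamma|^2+|\gamma||\nabla\sigma|^2\bigr)+O\bigl(K(|\gamma|+|\sigma|)^2\bigr).
\]
This is where bluntness really enters. In two dimensions the edge is the single point $x^*$, so the tangential condition is vacuous. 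Then $B(x^*)=0$, $B\in C^1$ and transversality give $|B|\le C|x-x^*|\le C'(|\gamma|+|\sigma|)$. Hence $Lw\ge0$ on $\Omega\cap B_\rho(x^*)$ for $K$ large and then $\rho$ small.

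The sign of the zeroth-order coefficient can be handled more simply than by your $u/h$ device. Writing $c=c^+-c^-$, since $u\ge0$ we have $(\Delta-c^-)u=-c^+u\le0$, and $c^-w$ is quadratic, hence absorbed. Since $w\le C\rho\min(|\gamma|,|\sigma|)$ on the arc, your Hopf argument there then closes.

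On the sign question, your own inequality settles it. If $\partial_su(x^*)=0$, it yields $\partial_s^2u(x^*)\ge 2c\,ab>0$ in your notation. So the provable statement is Serrin's: $\partial_su(x^*)<0$ or $\partial_s^2u(x^*)>0$.

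The printed conclusion $\partial_s^2u(x^*)<0$ is a sign slip and is false as stated. Take $u=xy$ on $\{x>0,\,y>0\}$ with $L=\Delta$, $\gamma=-x$, $\sigma=-y$ and $s=-(1,1)/\sqrt2$: then $\partial_su(0)=0$ and $\partial_s^2u(0)=1$.

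No reinterpretation of $\partial_s^2$ is needed, because the paper's use is consistent with the corrected sign. In Lemma~\ref{closelemma1}, with $u=-v^*$: at $(0,1)$ only the fact that $\partial_su$ and $\partial_s^2u$ do not both vanish is used. At $(0,0)$ one gets $\partial_s^2u=-v^*_{xy}>0$, which is exactly \eqref{vstar3}.

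The same example with $s=(-1,0)$ gives $\partial_su(0)=\partial_s^2u(0)=0$. So \emph{outward} must mean nontangentially outward, and your restriction $\partial_s\gamma(x^*),\partial_s\sigma(x^*)>0$ is a necessary hypothesis, not a convenience.
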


\section{Schauder Estimates for Elliptic Problems in Infinite Strips}
Consider an infinite strip domain in $\R^2$
\begin{equation*}
	\mathcal{R}:=\{(x,y)\in\R^2:0<y<1\}
\end{equation*}
with the top boundary
\begin{equation*}
	\Gamma:=\{(x,y)\in\R^2:y=1\},
\end{equation*}
and the bottom $y=0$.

Consider the linear equation and use the Einstein summation convention
\begin{equation}\label{linearlemma}
	\begin{aligned}
		&Au:=a^{ij}\partial_{ij}u+b^i \partial_iu+cu=f\quad&&\mathrm{in~}\mathcal{R},\\
		&Bu:=\sigma^i \partial_i u+\mu u=g &&\mathrm{on~}\Gamma,\\
		&u=0&&\mathrm{on~}y=0.
	\end{aligned}
\end{equation}
Here we assume that $A$ is uniformly elliptic and  $B$ is uniformly oblique such that $a^{ij}=a^{ji}$, $a^{ij}\xi_i\xi_j\geq c|\xi|^2$, and $|\sigma|^2\geq c$, for some $c>0$. Moreover, we assume  that the coefficients have
the regularity $a^{ij},b^i,c\in C_b^{k-2+\beta}$ and $\sigma^i,\mu\in C_b^{k-1+\beta}(\Gamma)$ for some integer $k\geq2$ and $\beta\in(0,1)$.

\begin{lemma}\cite{agmon}\label{schauderlemma}
	If $u\in C^{k+\beta}_b(\bar{\mathcal{R}})$ solves \eqref{linearlemma}, then
	\begin{equation*}
\|u\|_{C^{k+\beta}(\bar{\mathcal{R}})}\leq C(\|f\|_{C^{k-2+\beta}(\bar{\mathcal{R}})}+\|g\|_{C^{k-1+\beta}(\Gamma)}+\|u\|_{C^0(\bar{\mathcal{R})}}),
	\end{equation*}
	where the constant $C$ depends merely on the obliqueness constants and the norms of the coefficients.

\end{lemma}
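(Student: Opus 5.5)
The estimate follows from localizing to unit-size boxes in the strip and applying the classical interior and boundary Schauder estimates of Agmon--Douglis--Nirenberg uniformly in the centre of the box. Every constant will depend only on the ellipticity and obliqueness constants and on the $C_b$ norms of the coefficients. Since these are translation invariant in $x$, a single estimate on a reference box transfers to every translate.

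\emph{Covering and the reference estimates.} Fix $x_0\in\R$ and set
\[
D_{x_0}:=\{(x,y)\in\mathcal{R}:|x-x_0|<1\},\qquad D'_{x_0}:=\{(x,y)\in\mathcal{R}:|x-x_0|<2\}.
\]
Near $\Gamma$, the operator $A$ together with the oblique condition $B$ satisfies the complementing condition. Indeed, in two dimensions uniform obliqueness $|\sigma|^2\geq c$ is exactly what is needed, provided $\sigma$ is not tangential. Here I would use the normal component bound implicit in the hypothesis as it is applied in the paper, where $\sigma^2$ is the relevant coefficient. Near $y=0$ we have a Dirichlet condition, and the interior is handled by interior estimates.

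The local boundary Schauder estimate with a cutoff then gives
\[
\|u\|_{C^{k+\beta}(D_{x_0})}\le C\Big(\|f\|_{C^{k-2+\beta}(D'_{x_0})}+\|g\|_{C^{k-1+\beta}(\Gamma\cap \partial D'_{x_0})}+\|u\|_{C^0(D'_{x_0})}\Big),
\]
first for $k=2$ and then for higher $k$ by the standard induction. In the induction one differentiates tangentially in $x$, applies the $k-1$ estimate to $\partial_x u$, and recovers $\partial_y^2 u$ from the equation using $a^{22}\ge c$. The constant $C$ is independent of $x_0$ because the translated coefficients have the same norms.

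\emph{Globalization.} Taking the supremum over $x_0$, the $C^{k}$ part of the norm is immediate. The H\"older seminorm uses the local seminorm $|\cdot|_\beta$ with $|y|<1$, or, for the unweighted global norm, pairs of points at distance less than $1$. Such pairs lie in a common box $D_{x_0}$. Pairs at distance at least $1$ are bounded by twice the sup norm of the top derivatives, which is already controlled.

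\emph{Main obstacle.} The delicate point is verifying the complementing (Lopatinskii) condition uniformly and tracking that the constant depends only on the stated quantities. I would handle the corner-free geometry by choosing the box height split: a cutoff supported in $y>1/3$ for the $\Gamma$ estimate and in $y<2/3$ for the Dirichlet estimate. Every estimate is then a standard half-space ADN estimate with flat boundary.
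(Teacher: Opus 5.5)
Your proposal is correct and takes essentially the same route as the paper, which simply cites the Agmon--Douglis--Nirenberg estimates; the unit-box localization with constants independent of $x_0$ and the globalization of the H\"older seminorm are exactly the routine steps that citation leaves implicit. One small correction: your caveat that $\sigma$ must be non-tangential is unnecessary, because in two dimensions with real coefficients the decaying characteristic root is non-real, so the complementing condition for $\sigma^i\partial_i$ already holds uniformly under $|\sigma|^2\ge c$ alone.
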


	Moreover, we also consider the elliptic equation for $u_i$, $i=1,2$ which vanish on the bottom and of the form
	\begin{equation}\label{ellipticui}
	\begin{aligned}
		&\Delta u_1+d_{ij} \partial_j u_i+e_i u_i=0\quad&&\mathrm{in~}\mathcal{R},\\
		&\Delta u_2=0&&\mathrm{in~}\mathcal{R},\\
		&a_{ij}\partial_i u_j+b_iu_i=f\quad&&\mathrm{on~}\Gamma,\\
		&c_i u_i=g\quad&&\mathrm{on~}\Gamma,
		\end{aligned}
	\end{equation}
	where $\partial_1=\partial_x$ and $\partial_2=\partial_y$. We shall give the Schauder estimates for \eqref{ellipticui}, the proof can be obtained by the similar arguments in \cite{susannaarma,vol}.
	\begin{lemma}\label{schauderlemma2}
		Fix $k\geq 1$ and $\beta\in(0,1)$. Suppose that the coefficients in \eqref{ellipticui} have the regularity $a_{ij}, b_i\in C_{b}^{k-1+\beta}(\Gamma)$, $c_i\in C_b^{k+\beta}(\Gamma)$ and $d_{ij},e_i\in C_b^{k-2+\beta}(\mathcal{R})$. If there exists some constant
$\lambda>0$ such that
		\begin{equation}
			\label{lopatinskii}
			(c_1a_{21}-c_2a_{11})^2+(c_1a_{22}-c_2a_{12})^2\geq \lambda.
		\end{equation}
Then \eqref{ellipticui} enjoys the Schauder estimate
		\begin{equation*}
			\|u_1\|_{C^{k+\beta}(\mathcal{R})}+\|u_2\|_{C^{k+\beta}(\mathcal{R})}\leq C(\|f\|_{C^{k-1+\beta}(\Gamma)}+\|g\|_{C^{k+\beta}(\Gamma)}+\|u_1\|_{C^0(\mathcal{R})}+\|u_2\|_{C^0(\mathcal{R})}),
		\end{equation*}
		where the constant $C>0$ depends only on $k,\beta,\lambda$
 and the stated norms of the coefficients.	\end{lemma}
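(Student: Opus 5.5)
The statement to prove is Lemma \ref{schauderlemma2}, the Schauder estimate for the coupled system \eqref{ellipticui} under the complementing (Lopatinskii) condition \eqref{lopatinskii}. I would reduce it to the classical Agmon--Douglis--Nirenberg theory \cite{agmon} applied on bounded pieces, and then use the translation invariance of the strip to obtain a uniform estimate.

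\textbf{Step 1: localization.} Cover $\overline{\mathcal{R}}$ by translates of the fixed rectangle $Q_j=[j-1,j+1]\times[0,1]$, with enlargements $Q_j'=[j-2,j+2]\times[0,1]$. Let $\chi_j$ be cutoffs equal to $1$ on $Q_j$, supported in $Q_j'$, with $C^{k+1}$ bounds independent of $j$. For each $j$, the pair $(\chi_j u_1,\chi_j u_2)$ solves a system of the same form. The new right-hand sides involve only lower-order terms of $u_i$, supported in $Q_j'$. It then suffices to prove, with a constant $C$ independent of $j$,
\[\|u\|_{C^{k+\beta}(Q_j)}\le C\big(\|f\|_{C^{k-1+\beta}(\Gamma\cap Q_j')}+\|g\|_{C^{k+\beta}(\Gamma\cap Q_j')}+\|u\|_{C^{k-1+\beta}(Q_j')}\big).\]
Taking the supremum over $j$ and interpolating, $\|u\|_{C^{k-1+\beta}}\le \epsilon\|u\|_{C^{k+\beta}}+C_\epsilon\|u\|_{C^0}$, gives the claimed global estimate after absorbing the $\epsilon$-term.

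\textbf{Step 2: ADN at the boundaries.} The interior system is diagonal in its principal part, $\Delta u_1$ and $\Delta u_2$, with weights $s_i=0$, $t_j=2$. The bottom boundary conditions are Dirichlet for both components, which is complementing. On $\Gamma$ the first condition $a_{ij}\partial_i u_j+b_iu_i=f$ has order $1$ and the second $c_iu_i=g$ has order $0$; hence the data regularities $C^{k-1+\beta}$ and $C^{k+\beta}$.

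For the complementing condition on $\Gamma$, freeze the coefficients at a point and take the Fourier variable $\xi\ne0$ in $x$. Decaying solutions of $\Delta u=0$ in $y<1$ have the form $u_i=A_ie^{|\xi|(y-1)}e^{ix\xi}$. The boundary operators give the $2\times2$ matrix
\[\begin{pmatrix} i\xi a_{11}+|\xi|a_{21} & i\xi a_{12}+|\xi|a_{22}\\ c_1 & c_2\end{pmatrix}.\]
Its determinant is $i\xi(c_2a_{11}-c_1a_{12})+|\xi|(c_2a_{21}-c_1a_{22})$. The complementing condition requires this determinant to be nonzero. I expect it to be equivalent to a lower bound on the sum of squares of the two real brackets; this should match \eqref{lopatinskii}, up to the index convention. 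Checking that this matching is correct is a pure bookkeeping step. The uniform lower bound $\lambda$ gives a uniform complementing constant, so the ADN constant depends only on $k,\beta,\lambda$ and the coefficient norms.

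\textbf{Step 3: uniformity and the main obstacle.} The ADN estimate is applied on each $Q_j'$, whose corners are cut off away from $x=j\pm1$. Since the geometry is the same for all $j$ and the coefficient norms are uniform, $C$ is independent of $j$. The main obstacle is precisely the verification in Step 2 that \eqref{lopatinskii} is the correct complementing condition with uniform constants, including the role of the first-order coupling $d_{ij}\partial_ju_i$. That coupling is lower order in the ADN weighting, so it only enters through the $\|u\|_{C^{k-1+\beta}}$ term. I would follow \cite{susannaarma,vol} for this computation.
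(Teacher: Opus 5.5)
Your plan (localize on translated rectangles, apply Agmon--Douglis--Nirenberg with constants uniform under translation, then interpolate and absorb) is the standard way to fill in this lemma. It is consistent with what the paper intends, since the paper gives no argument of its own beyond saying it follows ``by the similar arguments in \cite{susannaarma,vol}''. The absorption step presupposes $u_i\in C_b^{k+\beta}$, as in Lemma \ref{schauderlemma}. The step you set aside as bookkeeping, however, is exactly where the statement is inconsistent. You need to resolve it explicitly, not expect a match.

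With the top condition read literally as $a_{ij}\partial_iu_j$, your determinant is correct. Its modulus is $|\xi|\big((c_2a_{11}-c_1a_{12})^2+(c_2a_{21}-c_1a_{22})^2\big)^{1/2}$, which is \eqref{lopatinskii} with $a$ transposed, not \eqref{lopatinskii} itself. Under this literal reading the lemma is false:
\begin{itemize}
\item Take $c=(0,1)$, $a_{12}=1$, and all other coefficients zero. Then \eqref{lopatinskii} holds with $\lambda=1$.
\item The top conditions become $\partial_xu_2=f$, $u_2=g$, so $u_1$ receives no condition on $\Gamma$.
\item Indeed $u_2=0$, $u_1=\sin(nx)\sinh(ny)/\sinh n$ solves \eqref{ellipticui} with $f=g=0$ and $\|u_1\|_{C^0}\le 1$, while $\|u_1\|_{C^{k+\beta}}\ge n^k$.
\end{itemize}

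The lemma is true, and your argument proves it, if the top operator is read as
\[a_{ij}\partial_ju_i=a_{11}\partial_xu_1+a_{12}\partial_yu_1+a_{21}\partial_xu_2+a_{22}\partial_yu_2.\]
This is the convention actually used when the lemma is applied in Lemma \ref{cptlemma}: there $a_{12}$ multiplies $\partial_yv_n^{(1)}$ and $a_{11}=0$. With this reading the frozen-coefficient determinant is $i\xi(c_2a_{11}-c_1a_{21})+|\xi|(c_2a_{12}-c_1a_{22})$. Its modulus is exactly $|\xi|$ times the square root of the left side of \eqref{lopatinskii}, hence at least $\sqrt{\lambda}\,|\xi|$. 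That is the uniform complementing bound you need, and you should state this reinterpretation rather than leave it as ``up to the index convention''.

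One smaller point concerns the range of $k$. With weights $s_i=0$, $t_j=2$, $r_1=-1$, $r_2=-2$, the ADN estimate requires $l=k-2\ge 0$, so your argument covers only $k\ge 2$. For $k=1$ the hypothesis $d_{ij},e_i\in C_b^{k-2+\beta}$ is not meaningful anyway, and the paper only uses $k=3$.
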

 ~\\
{\bf Data Availability}: This article has no associated data.
~\\
~\\
{\bf Declarations}
~\\
~\\
{\bf Conflict of interest}: The authors declare that there are no conflict of interest.

\bibliographystyle{ams}

\begin{thebibliography}{99}


\bibitem{av} A. Aasen, K. Varholm, Traveling gravity water waves with critical layers,
\emph{J. Math. Fluid Mech.} \textbf{20} (2018), 161-187.

\bibitem{agmon} S. Agmon, A. Douglis, L. Nirenberg, Estimates near the boundary for solutions of elliptic partial
differential equations satisfying general boundary conditions. {I}, \emph{Comm. Pure Appl. Math.} \textbf{12} (1959), 623-727.

\bibitem{alinhac} S. Alinhac, Existence d'ondes de rar\'efaction pour des syst\`emes
quasi-lin\'eaires hyperboliques multidimensionnels, \emph{Comm. Partial Differential Equations}, \textbf{14} (1989), 173-230.

\bibitem{amickacta}  C. J.  Amick,  L. E.  Fraenkel, J. F. Toland, On the {S}tokes conjecture for
the wave of extreme form, \emph{Acta Math.} \textbf{148} (1982), 193-214.

\bibitem{amickarma}C. J.  Amick, J. F. Toland,  On solitary water-waves of finite amplitude,
\emph{Arch. Rational Mech. Anal.} \textbf{76} (1981), 9-95.



\bibitem{bbmm} T. Barbieri, M. Berti, A. Maspero, M. Mazzucchelli, Bifurcation of gravity-capillary Stokes waves
with constant vorticity, \emph{J. Differential Equations}, \textbf{451} (2026), Paper No. 113753, 36 pp.

\bibitem{basu} B. Basu, A flow force reformulation for steady irrotational water waves,
\emph{J. Differential Equations}, \textbf{268} (2020), 7417-7452.

\bibitem{bk} B. Basu, F. Kogelbauer, Global bifurcation of irrotational water waves using a flow force formulation,
\emph{J. Differential Equations}, \textbf{301} (2021), 73-96.



\bibitem{burckel} R. B. Burckel, An introduction to classic complex analysis, Vol. 1, Pure and
Applied Mathematics, vol. 82. Academic Press, Inc, New York, 1979.

\bibitem{cvw} R. M. Chen, K. Varholm, S. Walsh, M. H. Wheeler, Vortex-carrying solitary gravity waves of large amplitude,
\emph{Comm. Math. Phys.} \textbf{406} (2025), Paper No. 149, 46 pp.

\bibitem{chenpoincare} R. M. Chen, S. Walsh, M. H. Wheeler, Existence and qualitative theory for stratified solitary water
waves, \emph{Ann. Inst. H. Poincar\'e{} C Anal. Non Lin\'eaire}, \textbf{35} (2018), 517-576.

\bibitem{chennonlinearity} R. M. Chen, S. Walsh, M. H. Wheeler, Center manifolds without a phase space for quasilinear
problems in elasticity, biology, and hydrodynamics, \emph{Nonlinearity}, \textbf{35} (2022), 1927–1985.

\bibitem{cwwz} J. Chu, X. Wang, L. Wang, Z. Zhang, A flow force reformulation of steady periodic fixed-depth
irrotational equatorial flows, \emph{J. Differential Equations}, \textbf{292} (2021), 220-246.

 \bibitem{cr} M. G. Crandall, P. H. Rabinowitz, Bifurcation from simple eigenvalues,
 \emph{J. Funct. Anal.} \textbf{8} (1971), 321-340.

 \bibitem{c-book} A. Constantin, Nonlinear Water Waves with Applications to Wave-Current
 Interactions and Tsunamis, SIAM, Philadelphia, 2011.

 \bibitem{constantincpam04} A. Constantin, W. Strauss, Exact steady periodic water waves
 with vorticity, \emph{Comm. Pure Appl. Math.} \textbf{57} (2004), 481-527.

 \bibitem{constantinstrauss07}A. Constantin, W. Strauss, Rotational steady water waves near stagnation, \emph{Philos. Trans. R. Soc. Lond. Ser. A Math. Phys. Eng. Sci.} \textbf{365} (2007), 2227-2239.

 \bibitem{constantinacta} A. Constantin, W. Strauss, E. V\u arv\u aruc\u a, Global bifurcation of steady
 gravity water waves with critical layers, \emph{Acta. Math.} \textbf{217} (2016), 195-262.

 \bibitem{csv} A. Constantin, W. Strauss, E. V\u arv\u aruc\u a, Large-amplitude steady
 downstream water waves, \emph{Comm. Math. Phys.} \textbf{387} (2021), 237-266.

\bibitem{constantinarma11} A. Constantin, E. V\u arv\u aruc\u a, Steady periodic water waves with
constant vorticity: regularity and local bifurcation, \emph{Arch. Rational. Mech. Anal.} \textbf{199} (2011), 33-67.



\bibitem{dz} G. Dai, Y. Zhang, Global bifurcation structure and some properties of steady periodic
water waves with vorticity, \emph{J. Differential Equations}, \textbf{349} (2023), 125-137.

\bibitem{dpmw} J. D\'{a}vila, M. del Pino, M. Musso, M. H. Wheeler, Overhanging solitary vater waves, \emph{Invent. Math.} (2026), https://doi.org/10.1007/s00222-026-01404-w.

\bibitem{dj} M. -L. Duberil-Jacotin, Sur la d\'etermination rigoureuse des ondes permanents p\'eriodiques d'ampleur dinie,
\emph{J. Math. Pures Appl.} \textbf{13} (1934), 217-291.

\bibitem{matssiam} M. Ehrnstr\"{o}m, J. Escher, E. Wahl\'{en}, Steady water waves with
multiple critical layers, \emph{SIAM J. Math. Anal.} \textbf{43} (2011), 1436-1456.

\bibitem{fraenkel}  L. E. Fraenkel,  An introduction to maximum principles and symmetry in elliptic problems,
Cambridge Tracts in Mathematics, \textbf{128}, Cambridge University Press, Cambridge, 2000.

\bibitem{friedrichs} K. O. Friedrichs, D. H. Hyers, The existence of solitary waves,
\emph{Comm. Pure Appl. Math.} \textbf{7} (1954), 517-550.

\bibitem{ger} F. Gerstner, Theorie der Wellen samt einer daraus abgeleiteten
Theorie der Deichprofile, \emph{Ann. Phys.} 2 (1809), 412-445.


\bibitem{gidas} B. Gidas, W. Ni, L. Nirenberg, Symmetry and related properties via the maximum principle,  \emph{Commun. Math. Phys.}\textbf{68} (1979), 209-243.


 \bibitem{Ts} D. Gilbarg, T.S. Trudinger, Elliptic Partial Differential Equations of Second Order,
 Springer-Verlag, New York, 1998.

\bibitem{gw} M. D. Groves, E. Wahl\'{e}n, Small-amplitude Stokes and solitary gravity water waves
with an arbitrary distribution of vorticity, \emph{Phys. D}, \textbf{237} (2008), 1530-1538.

\bibitem{hcmp} S. V. Haziot, Stratified large-amplitude steady periodic water waves with critical layers,
\emph{Comm. Math. Phys.} \textbf{381} (2021), 765-797.

\bibitem{susannaarma} S. V. Haziot, M. H. Wheeler, Large-amplitude steady solitary water waves with
constant vorticity, \emph{Arch. Ration. Mech. Anal.} \textbf{247} (2023),  Paper No. 27, 49 pp.



\bibitem{hcpde} D. Henry, Large amplitude steady periodic waves for fixed-depth rotational flows,
\emph{Comm. Partial Differential Equations}, \textbf{38} (2013), 1015-1037.

\bibitem{hur08} V. M.  Hur, Exact solitary water waves with vorticity,
\emph{Arch. Ration. Mech. Anal.} \textbf{188} (2008), 213-244.

    \bibitem{hurmrl} V. M.  Hur, Symmetry of solitary water waves with vorticity,
    \emph{Math. Res. Lett.} \textbf{15} (2008), 491-509.

\bibitem{hw} V. M. Hur, M. H. Wheeler, Overhanging and touching waves in constant vorticity flows,
\emph{J. Differential Equations}, \textbf{338} (2022), 572-590.

\bibitem{kirch}  K. Kirchg\"assner, Nonlinearly resonant surface waves and homoclinic bifurcation,
Advances in applied mechanics, {V}ol.\ 26, Academic Press, Boston, 1988.

\bibitem{kkl} V. Kozlov, N. Kuznetsov, E. Lokharu, Solitary waves on constant vorticity flows with
an interior stagnation point, \emph{J. Fluid Mech.} \textbf{904} (2020), A4, 18 pp.

\bibitem{kl} V. Kozlov, E. Lokharu, Global bifurcation and highest waves on water of finite depth,
\emph{Arch. Ration. Mech. Anal.} \textbf{247} (2023), Paper No. 98, 30 pp.

\bibitem{lannesjams}  D. Lannes, Well-posedness of the water-waves equations,
\emph{J. Amer. Math. Soc.} \textbf{18} (2005), 605-654.

\bibitem{libtams} G. M. Lieberman, Two-dimensional nonlinear boundary value problem for
elliptic equations, \emph{Trans. Am. Math. Soc} \textbf{300} (1987), 287-295.



\bibitem{Mielke} A. Mielke, Reduction of quasilinear elliptic equations in cylindrical
domains with applications, \emph{Math. Methods Appl. Sci.} \textbf{10} (1988), 51-66.

\bibitem{serrin} J. Serrin, A symmetry problem in potential theory, \emph{Arch. Ration. Mech. Anal.} \textbf{43} (1971), 304–318.

\bibitem{dan} D. Sinambela, Large-amplitude solitary waves in two-layer density stratified
              water, \emph{SIAM J. Math. Anal.} \textbf{53} (2021), 4812-4864.

\bibitem{sto} G. Stokes, On the theory of oscillatory waves,
\emph{Trans. Camb. Philos. Soc.} \textbf{8} (1847), 441-473.

\bibitem{ter1961} A. M. Ter-Krikorov, A solitary wave on the surface of a turbulent liquid,
\emph{\v Z. Vy\v cisl. Mat i Mat. Fiz.} \textbf{1} (1961), 1077-1088.

\bibitem{Var} K. Varholm, Global bifurcation of waves with multiple critical layers,
\emph{SIAM J. Math. Anal.} \textbf{52} (2020), 5066-5089.

\bibitem{vol} V. Volpert, Elliptic partial differential equations. {V}olume 1:
{F}redholm theory of elliptic problems in unbounded domains. Monographs in Mathematics, vol. 101, Birkh\"auser/Springer Basel AG, Basel, 2011.

\bibitem{wahlenjde09}E. Wahl\'en, Steady water waves with a critical layer,
\emph{J. Differential Equations}, \textbf{246} (2009), 2468-2483.

\bibitem{wg}E. Wahl\'en, M. D. Groves, Small-amplitude {S}tokes and solitary gravity water waves with
an arbitrary distribution of vorticity, \emph{Phys. D} \textbf{237} (2008), 1530-1538.

\bibitem{ww} E. Wahl\'en, J. Weber,  Global bifurcation of capillary-gravity water waves with
overhanging profiles and arbitrary vorticity. \emph{Int. Math. Res. Not. IMRN} \textbf{20} (2023), 17377-17410.

\bibitem{wahlenduke}E. Wahl\'en, J. Weber, Large-amplitude steady gravity water waves with
general vorticity and critical layers, \emph{Duke Math. J.} \textbf{173} (2024), 2197-2258.

\bibitem{milesjma} M. H. Wheeler, Large-amplitude solitary water waves with vorticity,
\emph{SIAM J. Math. Anal.}, \textbf{45} (2013), 2937-2994.

\bibitem{wjfm} M. H. Wheeler, The {F}roude number for solitary water waves with
vorticity, \emph{J. Fluid Mech.} \textbf{768} (2015), 91-112.

\bibitem{warma} M. H. Wheeler, Solitary water waves of large amplitude generated by surface
pressure, \emph{Arch. Ration. Mech. Anal.} \textbf{218} (2015), 1131-1187.

\end{thebibliography}

\end{document}